\theoremstyle{plain}
\newtheorem{theorem}{Theorem}[section]
\newtheorem{prop}[theorem]{Proposition}
\newtheorem{lemma}[theorem]{Lemma}
\newtheorem{cor}[theorem]{Corollary}
\newtheorem*{conj1}{Conjecture 1} 
\newtheorem*{conj2}{Conjecture 2} 
\theoremstyle{definition}
\theoremstyle{remark}
\newtheorem{remark}[theorem]{Remark}
\newcommand{\sheaf}[1]{\mathscr{#1}}
\newcommand{\UU}{\sheaf{U}}
\newcommand{\BB}{\sheaf{B}}
\newcommand{\PP}{\sheaf{P}}
\newcommand{\XX}{\sheaf{X}}
\newcommand{\YY}{\sheaf{Y}}
\newcommand{\QQ}{\sheaf{Q}}
\newcommand{\pp}{\mathfrak{p}}
\newcommand{\Group}[1]{\mathbf{#1}}
\newcommand{\Z}{\mathbb Z}
\newcommand{\Q}{\mathbb Q}
\newcommand{\F}{\mathbb F}
\newcommand{\Gm}{\Group{G}_{\text{m}}}
\DeclareSymbolFont{cyrletters}{OT2}{wncyr}{m}{n}
\DeclareMathSymbol{\Sha}{\mathalpha}{cyrletters}{"58}
\begin{document}

\title[ Local-global principle]
{ Local-global principle for unitary groups   over function fields of
  $p$-adic curves}

\author[Parimala]{R.\ Parimala }
\address{Department of Mathematics \& Computer Science \\ %
Emory University \\ %
400 Dowman Drive~NE \\ %
Atlanta, GA 30322, USA}
\email{praman@emory.edu}

\author[Suresh]{V.\ Suresh}
\address{Department of Mathematics \& Computer Science \\ %
Emory University \\ %
400 Dowman Drive~NE \\ %
Atlanta, GA 30322, USA}
\email{suresh.venapally@emory.edu}

\date{}

\begin{abstract}  Let $K$ be a $p$-adic field and $F$ the function field of a curve over $K$.
Let $G$ be a connected linear algebraic group over $F$ of classical type. Suppose the prime $p$ is a good prime for $G$.
Then we prove that projective homogeneous spaces under $G$ over $F$ satisfy a local global principle for rational points
with respect to discrete valuations of $F$. If $G$ is a semisimple simply connected group over $F$, then we also prove that 
 principal  homogeneous spaces under $G$ over $F$ satisfy a local global principle for rational points
with respect to discrete valuations of $F$. 
\end{abstract} 
 
\maketitle

\def\ZZ{${\mathbf Z}$}
\def\ih{${\mathbf H}$}
\def\RR{${\mathbf R}$}
\def\IF{${\mathbf F}$}
\def\QQ{${\mathbf Q}$}
\def\IP{${\mathbf P}$}

\date{}

 Let $k$ be a number field and $G$ a semisimple simply connected linear algebraic group over $k$.
 Classical Hasse principle asserts that a principal homogeneous space under $G$ over $k$ has a rational point if it has rational points  
 over all completions of $k$.  This is a theorem due to Kneser (classical groups), Harder (for exceptional groups other than $E_8$)
 and Chernousov (for $E_8$). Harder also proves a Hasse principle for rational points on projective homogeneous spaces  under 
 connected linear algebraic groups over $k$. 
 
 Questions related to Hasse principle have been extensively studied over `semiglobal fields', namely function fields of curves over 
 complete discretely valued fields with respect to their discrete valuations.  Considerable  progress is has been made  possible due 
 to the patching techniques  of Harbater, Hartmann and Krashen. One could look for analogous Hasse principles 
 for simply connected groups in this context.  However Hasse principle fails for simply connected groups
  in this generality (\cite{CTHHKPS}).  If $K$ is a $p$-adic  field and $G$ is  semisimple simply connected quasi split linear algebraic 
  group over the function field of  a curve over $K$ with $p \neq 2, 3, 5$, it was proved in (\cite{CTPS}) that Hasse principle 
  holds for $G$. This led to the following  two conjectures (\cite{CTPS}).
  
  {\it Let $F$ be the function field of a $p$-adic curve and $\Omega_F$  the set of all discrete valuations of $F$.
  For $\nu \in F$, let $F_\nu$ be the completion of $F$ at $\nu$. }
  
  \begin{conj1} 
  \label{proj} Let $Y$ be a projective homogeneous space under a connected linear algebraic group $G$ over $F$. 
  Then $Y$ satisfies Hasse principle with respect to $\Omega_F$.
  \end{conj1}
  
 \begin{conj2}
  \label{prin}  Let $G$ be a semisimple simply connected linear algebraic group over $F$ and $Y$ a principal homogeneous space
  under $G$ over $F$. Then $Y$ satisfies Hasse principle with respect to $\Omega_F$. 
  \end{conj2}

  There has been considerable  progress towards these conjectures for classical groups in the `good characteristic case'.
  Let $G$ be a semisimple simply connected linear algebraic group  of classical type over $F$. 
  We say that the prime $p$ is {\it good} for $G$,  if  $p \neq 2$ for $G$  of type 
  $B_n$, $C_n$, $D_n$ ($D_4$ non trialitarian) and $p $ does not divide $n+1$ for $G$ is of type $^1\!\!A_n$ and 
  $p $ does not divide $2(n+1)$ for  $G$ is of type $^2\!\!A_n$.   Let $G$ be any connected linear algebraic group over $F$.
  We say that $G$ is of  {\it classical type}  if every factor of the simply connected cover $\tilde{G}$ 
   of the  semi-simplification  of $G/Rad(G)$
  is of classical type. 
  We say that $p$ is {\it good} for $G$ if $p$ is good for every factor of $\tilde{G}$.

Suppose $p \neq 2$.  It was proved in (\cite{CTPS}) that a quadratic form  $q$ over $F$ of rank at least 3 is isotropic 
  over $F$ if and only if $q$ is  isotropic over $F_\nu$ for all $\nu \in \Omega_F$. 
 A local global principle  for generalized Severi-Brauer varieties, under an assumption on the  roots of unity in $F$, 
 is due to Reddy and Suresh (\cite{RS}).
 Let $A$ be a central simple algebra over $F$ with an involution $\sigma$ of either kind. 
   If $\sigma$ is of the second kind, then assume that 
 ind$(A) \leq 2$. Let $h$ be an hermitian form over $(A, \sigma)$. Then 
 Wu (\cite{Wu}) proved the validity of conjecture 1 for the  unitary groups of  $(A, \sigma)$. Hence  conjecture 
 1  holds for  all groups of type $B_n$,  $C_n$, $D_n$ and for special groups of type  $^1\!\!A_n$ and $^2\!\!A_n$ in the good characteristic case (\cite[Corollary 1.4]{Wu}).

  Conjecture 2  for groups of type $B_n$, $C_n$, $D_n$ is due to  Hu and Preeti independently (\cite{Hu}, \cite{preeti}). 
  Conjecture 2 for $G = SL_1(A)$ with index of $A$ square free is a consequence of the injectivity 
  of the Rost invariant due to Merkurjev-Suslin (\cite{MS})  and a result of Kato (\cite{Ka})  on the injectivity of $H^3(F, \Q/\Z(2)) \to \prod_{\nu \in 
  \Omega_F}H^3(F_\nu, \Q/\Z(2))$.  The case $^2\!\!A_n$, namely the unitary groups of algebras of index at most 2 with unitary 
  involution is due to Hu and Preeti (\cite{Hu}, \cite{preeti}). 
  
  The two main open cases concerning Conjectures 1 and 2  for classical groups  were types $^1\!\!A_n$ and $^2\!\!A_n$.
  The  Conjecture 2  for  $^1\!\!A_n$, namely  a local global principle for reduced norms in the good characteristic case was settled by 
  the authors and Preeti (\cite{PPS}). 
  
  The aim of this paper is to settle Conjecture 1 and Conjecture 2
  in the affirmative in the good characteristic case for all  groups of types $^1\!\!A_n$ and  $^2\!\!A_n$, 
   thereby completing the proof for all   classical groups  in the good characteristic case.  
     In fact we prove the following.

\begin{theorem}(cf. \ref{proj-pgl}) 
Let $K$ be a $p$-adic   field   
  and $F$ the function field of a smooth projective  curve over $K$.   Let $A$ be a central simple algebra over $F$ of index 
  coprime to $p$.  Then Conjecture 1 holds for $PGL(A)$.   
 \end{theorem}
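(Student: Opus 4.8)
The plan is to reduce Conjecture~1 for $PGL(A)$ first to a local-global principle for the indices of central simple algebras over $F$, and then to the Hasse principle for a single generalized Severi-Brauer variety over a field containing a primitive $\ell$-th root of unity, where one can invoke the theorem of Reddy and Suresh (\cite{RS}).

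The first step is the classification of the varieties in question. Since $PGL(A)$ is an inner form of $PGL_n$, $n=\deg A$, the $*$-action on the Dynkin diagram of type $A_{n-1}$ is trivial, so every projective homogeneous space under $PGL(A)$ over $F$ is, by the usual description of projective homogeneous spaces under $PGL(A)$, a generalized flag variety $\SB(d_1,\dots,d_k;A)$ of chains $I_1\subset\cdots\subset I_k$ of right ideals of $A$ of reduced dimensions $d_1<\cdots<d_k$ in $\{1,\dots,n-1\}$. For every field extension $L/F$, such a flag variety has an $L$-point if and only if $\mathrm{ind}(A_L)$ divides every $d_i$, i.e.\ divides $\gcd(d_1,\dots,d_k)$; since $\mathrm{ind}(A_L)$ also divides $n$, this amounts to $\mathrm{ind}(A_L)\mid e$ with $e:=\gcd(d_1,\dots,d_k,n)$, i.e.\ to $\SB(e,A)(L)\neq\emptyset$. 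Taking $L=F$ and $L=F_\nu$, we see that Conjecture~1 for $PGL(A)$ follows once we prove, for each divisor $e$ of $n$, the implication
\begin{equation*}
\bigl(\mathrm{ind}(A_\nu)\mid e\ \text{ for all }\ \nu\in\Omega_F\bigr)\ \Longrightarrow\ \mathrm{ind}(A)\mid e. \tag{$\star$}
\end{equation*}

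The second step reduces $(\star)$ to a single prime. Writing $A$ as a tensor product of its primary components, $\mathrm{ind}(A)$ is the product of their indices and $\mathrm{ind}(A)\mid e$ holds if and only if it holds prime by prime, compatibly with extension of scalars to each $F_\nu$; since $p\nmid\mathrm{ind}(A)$, only primes $\ell\neq p$ are involved. So it suffices to prove: if $B$ is a central simple $F$-algebra whose period is a power of $\ell$ (with $\ell\neq p$) and $\mathrm{ind}(B_\nu)\mid\ell^m$ for all $\nu\in\Omega_F$, then $\mathrm{ind}(B)\mid\ell^m$; equivalently, taking $B$ to be a matrix algebra of $\ell$-power degree $\geq\ell^m$ over the $\ell$-primary division algebra, that $\SB(\ell^m,B)$ satisfies the Hasse principle.

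This prime-power case is where I expect the main obstacle to lie, and the plan is to enlarge $F$ so as to make \cite{RS} applicable without damaging the hypotheses. Let $K_0\subseteq F$ be the algebraic closure of $K$ in $F$, a $p$-adic field, and set $F'=F(\mu_\ell)=F\cdot K_0(\mu_\ell)$. Since $\ell\neq p$, the extension $K_0(\mu_\ell)/K_0$ is unramified of degree dividing $\ell-1$, so $[F':F]$ is prime to $\ell$; consequently $\mathrm{ind}(B_{F'})=\mathrm{ind}(B)$, since prime-to-$\ell$ scalar extension preserves the index of an $\ell$-primary algebra. Every $\nu'\in\Omega_{F'}$ restricts to some $\nu\in\Omega_F$ with $F'_{\nu'}/F_\nu$ finite, so $\mathrm{ind}(B_{\nu'})\mid\mathrm{ind}(B_\nu)\mid\ell^m$, i.e.\ $\SB(\ell^m,B)$ is everywhere locally soluble over $F'$. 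As $\mu_\ell\subset F'$, \cite{RS} yields $\SB(\ell^m,B_{F'})(F')\neq\emptyset$, so $\mathrm{ind}(B_{F'})\mid\ell^m$, hence $\mathrm{ind}(B)\mid\ell^m$, which proves $(\star)$ and the theorem. The delicate point is exactly the elimination of the roots-of-unity hypothesis of \cite{RS}: a primitive $\ell$-th root of unity is cheap, costing only a prime-to-$\ell$ extension, but if \cite{RS} needs roots of unity of order $\ell^m$, then adjoining them may cost an extension of degree divisible by $\ell$, across which the index is no longer controlled. In that case the plan is a descent: reduce the period of $B$ by passing to a well-chosen extension over which \cite{RS} applies, and carry the resulting splitting data back down to $F$ using the local-global principle for reduced norms of \cite{PPS}; executing this descent is, I expect, the genuinely new ingredient.
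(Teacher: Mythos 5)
Your reductions in the first two steps are correct, and they correctly isolate what must be proved: the divisibility local-global principle $(\star)$ at one prime $\ell\neq p$. But the final step, where you attempt to invoke \cite{RS}, has a genuine gap that you yourself flag but do not close. The hypothesis in \cite{RS} (as the present paper states explicitly in \S\ref{lgp-gl}: ``If $F$ contains a primitive $n^{\rm th}$ root of unity\dots'') is that $F$ contain a primitive root of unity of order $n=\mathrm{ind}(A)$, not merely of order $\ell$. So for the $\ell$-primary component of index $\ell^m$ with $m\geq 2$, the prime-to-$\ell$ extension $F(\mu_\ell)/F$ does not suffice, and adjoining $\mu_{\ell^m}$ may indeed cost an extension of degree divisible by $\ell$, across which the index is no longer controlled. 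The proposed ``descent'' using \cite{PPS} is only sketched as a plan and is not an argument; as it stands the proof is incomplete.

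The paper takes a genuinely different route that avoids the roots-of-unity issue altogether. It works directly on a regular model $\XX'$ on which the union of the ramification locus of $A$ and the special fibre has normal crossings, applies patching (using rationality of $PGL(A)$, \cite[Theorem 3.6]{HHK1}) to reduce to showing $Z(F_x)\neq\emptyset$ for points $x$ of the special fibre, and then, at a closed point $Q$ with regular parameters $\pi,\delta$, uses the index identity $\mathrm{ind}(A\otimes_F L_Q)=\mathrm{ind}(A\otimes_F L_{Q,\pi})$ from \cite[Corollary 5.6]{PPS}. Since $L_{\nu_\pi}\subseteq L_{Q,\pi}$ and the local hypothesis controls $\mathrm{ind}(A\otimes L_{\nu_\pi})$, this gives $Z(F_Q)\neq\emptyset$ with no reference to roots of unity and no appeal to \cite{RS}. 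The structural upshot is that what replaces your unexecuted ``descent'' is precisely the branch-field index computation at closed points of a well-chosen model; if you want to salvage your approach you should look there for the missing ingredient rather than try to push roots of unity down.
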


  \begin{theorem}(cf. \ref{isotropic1}) Let $K$ be a $p$-adic  field and
 $F_0$ the  function field of  a  smooth projective  curve over $K$.   
Let $F/F_0$ be a quadratic extension and $A$ be  central simple  algebra over $F$ of index $n$
 with an $F/F_0$- involution $\sigma$. 
 Suppose that $2n$ is coprime to  $p$. 
  Let $h$ be an hermitian form over $(A, \sigma)$. 
  If $A = F$, then assume that the  rank of $h$ is  at least 2. 
  Then Conjecture 1 holds for $ U(A, \sigma, h)$.   
  \end{theorem}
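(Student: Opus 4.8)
The plan is to reduce Conjecture 1 for $U(A,\sigma,h)$, by way of the classification of projective homogeneous spaces, to two inputs --- the local--global principle for generalized Severi--Brauer varieties of $A$, which is the preceding theorem (Theorem~\ref{proj-pgl}, i.e.\ Conjecture 1 for $PGL(A)$), and a local--global principle for isotropy of hermitian forms over $(A,\sigma)$ --- and then to prove the latter by field patching. Let $Y$ be a projective homogeneous space under $U(A,\sigma,h)$, equivalently under its identity component $SU(A,\sigma,h)$, which is semisimple of absolute type $A_N$ with $N+1=\rk(h)\cdot\deg(A)$. By the Borel--Tits classification, made explicit for unitary groups as in \cite{Wu}, $Y$ is the variety $Y_S$ of parabolic subgroups of a prescribed type $S$, a subset of the Dynkin diagram stable under the $*$-action, and for a field $L\supseteq F$ one has $Y_S(L)\ne\emptyset$ exactly when $SU(A,\sigma,h)_L$ has an $L$-parabolic of type $S$. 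Via Morita equivalence --- under which the flip of the $*$-action becomes passage to $h$-orthogonal complements --- this translates into two conditions on $L$ determined by $S$: writing $a_1<\dots<a_k$ for the vertices of $S$ in the first half of the diagram, $Y_S(L)\ne\emptyset$ if and only if $\mathrm{ind}(A_L)$ divides each $a_i$ and the Witt index of $h_L$ is at least $a_k/\mathrm{ind}(A_L)$. Now assume $Y_S(F_\nu)\ne\emptyset$ for all $\nu\in\Omega_F$. Since $\mathrm{ind}(A_{F_\nu})$ divides $\mathrm{ind}(A)$, the first condition holding at every $\nu$ forces, by Theorem~\ref{proj-pgl} applied to the generalized Severi--Brauer varieties $SB_{a_i}(A)$, that $\mathrm{ind}(A)$ divides each $a_i$; and the second condition at every $\nu$ forces the Witt index of $h_{F_\nu}$ to be at least $r:=a_k/\mathrm{ind}(A)$ for every $\nu$. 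Hence it suffices to prove the following statement $(\star_r)$: \emph{if the Witt index of $h_{F_\nu}$ is at least $r$ for every $\nu\in\Omega_F$, then the Witt index of $h_F$ is at least $r$}. (When $A=F$ only $r=1$ occurs for $\rk(h)=2$; the hypothesis $\rk(h)\ge 2$ there is exactly what the induction below requires.)

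I would prove $(\star_r)$ by induction on $r$, the inductive step being immediate. If $(\star_1)$ holds and $h_{F_\nu}$ has Witt index at least $r\ge 2$ for all $\nu$, then $h$ splits off a hyperbolic plane over $F$, say $h\cong h_0\perp h'$ with $h_0$ hyperbolic of rank $2$ and $\rk(h')=\rk(h)-2\ge 2r-2\ge 2$; by Witt cancellation $h'$ satisfies the hypothesis of $(\star_{r-1})$, and we conclude. So everything comes down to $(\star_1)$: a hermitian form of rank at least $2$ over $(A,\sigma)$ that is isotropic over every $F_\nu$ is isotropic over $F$. Applying Morita equivalence once more and writing $A\cong M_k(D)$, we may assume $A=D$ is a division algebra; thus $D$ is division with $\mathrm{ind}(D)=\deg(D)$ coprime to $2p$ and $\sigma$ a unitary $F/F_0$-involution.

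For $(\star_1)$ I would run the patching method of Harbater--Hartmann--Krashen, following \cite{CTPS}, \cite{Wu} and \cite{PPS}. Choose a regular projective model $\mathcal{X}\to\Spec(\mathcal{O}_K)$ with function field $F$ in which the ramification divisor of $D$ together with the support of the coefficients of $h$ is a divisor with normal crossings, and let $\mathcal{X}_0$ be the closed fibre. The completions $F_\eta$ of $F$ at the divisorial valuations attached to the generic points $\eta$ of the components of $\mathcal{X}_0$ are among the $F_\nu$, so by hypothesis $h$ is isotropic over each $F_\eta$, hence also over each branch field $F_{(P,\eta)}\supseteq F_\eta$. By the patching local--global principle for isotropy of hermitian forms (cf.\ \cite{Wu}), it therefore suffices to prove that $h$ is isotropic over $F_P:=\Frac(\wh{\mathcal{O}}_{\mathcal{X},P})$ for every closed point $P$ of $\mathcal{X}_0$, knowing isotropy over all the branch fields at $P$. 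This is a local question about the two-dimensional complete regular local ring $\wh{\mathcal{O}}_{\mathcal{X},P}$ (with finite residue field), its at most two height-one primes lying on $\mathcal{X}_0$, and the restriction of $(D,\sigma,h)$ there; using Saltman's structure theory for division algebras over such rings together with a second-residue analysis of $h$ along the components of $\mathcal{X}_0$ through $P$, one reduces --- after the blow-ups needed to separate branches and make all ramification transverse --- the isotropy of $h_{F_P}$ to isotropy of hermitian forms of strictly smaller rank, and ultimately to forms over residue fields that are finite fields or function fields of curves over finite fields, where the required isotropy and Hasse principle statements are classical.

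The step I expect to be the principal obstacle is exactly this local analysis over the closed-point fields $F_P$ when $D$ has large index. In the case $\mathrm{ind}(D)\le 2$ treated in \cite{Wu}, hermitian forms over $(D,\sigma)$ are controlled by quadratic forms over $F_0$, so one can fall back on the quadratic theory of \cite{CTPS}; for $D$ of arbitrary index coprime to $2p$ there is no such reduction, and one must carry out the residue and ramification bookkeeping directly for hermitian forms over higher-index division algebras --- tracking how the residue of $h$ at a component of $\mathcal{X}_0$ interacts with the (lower-degree) residue division algebra of $D$ there, keeping this compatible with the twisting imposed by the unitary involution, and propagating isotropy through the successive blow-ups. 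This is precisely the machinery developed for the reduced-norm local--global principle in \cite{PPS}, and it is its availability --- which rests on every relevant residue algebra again having degree prime to $p$ --- that makes it possible to complete the argument.
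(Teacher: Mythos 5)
Your high-level outline matches the paper's: reduce, via the classification of projective homogeneous spaces under a unitary group, to (a) an index-divisibility condition handled by Theorem~\ref{proj-pgl}, and (b) a Witt-index condition, which by induction on the Witt index reduces to a local--global principle for isotropy; then attack the isotropy statement by patching over a suitable regular model with normal crossings, reducing to the fields $F_P$ at closed points. You also correctly identify the place where the argument must actually do work: passing from isotropy over the branch fields to isotropy over $F_P$ for a division algebra $D$ of large index.

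However, that is precisely where your proposal leaves a genuine gap rather than a proof. You gesture at ``Saltman's structure theory'' and ``the machinery developed for the reduced-norm local--global principle in \cite{PPS},'' but neither of those supplies what is actually used here. The new ingredient that makes the step work, and that is absent from your sketch, is a precise structure theorem for central simple algebras with unitary $F/F_0$-involution over a two-dimensional complete local field with finite residue field (Sections~\ref{alg-2-local} and~\ref{alg-2dimlocal} of the paper): if $\mathrm{cores}_{F/F_0}(A)=0$ and $\mathrm{ind}(A)=n\ge 3$, then $F/F_0$ is unramified, $F$ contains a primitive $n$-th root of unity $\rho$ with $N_{F/F_0}(\rho)=1$, and $A\cong(\delta,\pi)_n$ for parameters $\pi,\delta$ generating the maximal ideal (Proposition~\ref{2dim-alg}). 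This in turn rests on the classification of dihedral extensions of local fields via the corestriction--dihedral dictionary (Sections~\ref{cores} and~\ref{dihedral-local}), none of which appears in your argument. It is this explicit cyclic form, together with the norm condition $N_{F/F_0}(\rho)=1$, that lets one build a $\sigma$-invariant maximal order $\Lambda=S+Sy+\cdots+Sy^{n-1}$ (Lemma~\ref{order}), diagonalize $h$ with entries of the form (unit)\,$\cdot x^{r'}y^{s'}$ (Lemma~\ref{rank1form}, Corollary~\ref{ranknforms}), and then deduce isotropy over $F_{0Q}$ from isotropy over a single branch field $F_{0Q,\pi}$ using that $\mathrm{Int}(xy)\circ\sigma$ is again an involution (Corollary~\ref{isotropic}). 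Your proposal names the obstacle but does not supply, or even locate, these ideas; a ``second-residue analysis'' of $h$ or an appeal to residue fields being global function fields of positive characteristic does not, by itself, give the $\sigma$-compatible maximal-order structure that the reduction requires. Minor additional point: you do not separately treat the closed points $P$ where $F\otimes_{F_0}F_{0P}$ splits, where the group degenerates to a general linear group and one must instead invoke Corollary~\ref{proj-gl}.
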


   \begin{theorem}(cf. \ref{lgp-unitary})  Let $K$ be a $p$-adic  field and
 $F_0$ the  function field of  a  smooth projective  curve over $K$. 
Let $F/F_0$ be a quadratic extension and $A$ a central simple algebra over $F$ of index $n$
 with an $F/F_0$- involution $\sigma$. 
 Suppose that $2n$ is coprime to $p$.  
  Then Conjecture 2  holds for $SU(A, \sigma)$.   
   \end{theorem}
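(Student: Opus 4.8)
The plan is to reduce the assertion --- the triviality of the kernel of $H^{1}(F_{0}, SU(A,\sigma))\to\prod_{\nu\in\Omega_{F_{0}}}H^{1}((F_{0})_{\nu}, SU(A,\sigma))$ --- to two local--global statements, by means of the reduced norm. Let $\Sigma$ be the norm-one torus of $F/F_{0}$, so that $\Sigma(L)=\{z\in(F\otimes_{F_{0}}L)^{\times}:z\,\sigma(z)=1\}$ for $L/F_{0}$. Over a separable closure the reduced norm on $U(A,\sigma)$ is the determinant $\GL_{m}\to\Gm$ ($m=\deg A$), so $\mathrm{Nrd}$ gives a short exact sequence of smooth $F_{0}$-groups $1\to SU(A,\sigma)\to U(A,\sigma)\xrightarrow{\mathrm{Nrd}}\Sigma\to 1$, and hence, for every $L/F_{0}$, an exact sequence of pointed sets
\[
U(A,\sigma)(L)\xrightarrow{\ \mathrm{Nrd}\ }\Sigma(L)\xrightarrow{\ \partial\ }H^{1}(L, SU(A,\sigma))\xrightarrow{\ j\ }H^{1}(L, U(A,\sigma)),
\]
in which $\ker j$ is the image of $\partial$, with $\partial(z_{1})=\partial(z_{2})$ iff $z_{1}z_{2}^{-1}\in\mathrm{Nrd}(U(A,\sigma)(L))$. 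Thus, if $\xi\in H^{1}(F_{0},SU(A,\sigma))$ is everywhere locally trivial, then so is $j(\xi)$; granting \emph{(i)} below, $j(\xi)=0$, so $\xi=\partial(z)$ for some $z\in\Sigma(F_{0})$; the local triviality of $\xi$ at $\nu$ says $z\in\mathrm{Nrd}(U(A,\sigma)((F_{0})_{\nu}))$; granting \emph{(ii)} below, $z\in\mathrm{Nrd}(U(A,\sigma)(F_{0}))$, i.e.\ $\xi=0$. So it suffices to prove: \emph{(i)} $H^{1}(F_{0},U(A,\sigma))\to\prod_{\nu}H^{1}((F_{0})_{\nu},U(A,\sigma))$ has trivial kernel; and \emph{(ii)} any $z\in\Sigma(F_{0})$ lying in $\mathrm{Nrd}(U(A,\sigma)((F_{0})_{\nu}))$ for all $\nu\in\Omega_{F_{0}}$ lies in $\mathrm{Nrd}(U(A,\sigma)(F_{0}))$.

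For \emph{(i)}: $U(A,\sigma)$ is the automorphism group of the rank one hermitian form $\qf{1}$ on the free right $A$-module $A$, so $H^{1}(F_{0},U(A,\sigma))$ is the set of isometry classes of rank one hermitian forms $\qf{a}$ over $(A,\sigma)$ (with $a\in A^{\times}$, $\sigma(a)=a$), under $a\sim\sigma(c)\,a\,c$. If $\qf{a}$ and $\qf{b}$ are isometric over every $(F_{0})_{\nu}$, then the rank two hermitian form $\qf{a,-b}$ over $(A,\sigma)$ is hyperbolic, hence isotropic, over every $(F_{0})_{\nu}$; since a rank two hermitian form is hyperbolic as soon as it is isotropic (an isotropic vector spans a hyperbolic plane, which already exhausts the module), Theorem~\ref{isotropic1} applied to $U(A,\sigma,\qf{a,-b})$ --- concretely, to its projective homogeneous space of isotropic lines --- forces $\qf{a,-b}$ to be isotropic, hence hyperbolic, over $F_{0}$, and so $\qf{a}\cong\qf{b}$. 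Note $\qf{a,-b}$ has rank $2\ge 2$, so the case $A=F$ is covered too.

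Part \emph{(ii)} is the crux; it is the unitary (${}^{2}\!A$) analogue of the local--global principle for reduced norms (Conjecture~2 for $SL_{1}$) of \cite{PPS}, and I would reduce it to \cite{PPS}. Each $w\in\Omega_{F}$ restricts to some $\nu\in\Omega_{F_{0}}$, and $F\otimes_{F_{0}}(F_{0})_{\nu}$ is either $F_{w}$ or a product of the completions of $F$ above $\nu$, with $\mathrm{Nrd}(U(A,\sigma)((F_{0})_{\nu}))\subseteq\prod_{w\mid\nu}\mathrm{Nrd}((A\otimes_{F}F_{w})^{\times})$; so the local hypothesis on $z$ forces its image in $F_{w}^{\times}$ to lie in $\mathrm{Nrd}((A\otimes_{F}F_{w})^{\times})$ for every $w\in\Omega_{F}$. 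Since $F$ is again the function field of a smooth projective curve over $K$ and $\mathrm{ind}(A)$ is prime to $p$, \cite{PPS} yields $z\in\mathrm{Nrd}(A^{\times})$, hence $z\in\Sigma(F_{0})\cap\mathrm{Nrd}(A^{\times})$. It then remains to pass from $\Sigma(F_{0})\cap\mathrm{Nrd}(A^{\times})$ down to $\mathrm{Nrd}(U(A,\sigma))$: by Hilbert~90 one has $\Sigma(F_{0})=\{s\,\sigma(s)^{-1}:s\in F^{\times}\}$, and $a\mapsto a\,\sigma(a)^{-1}$ carries $A^{\times}$ into $U(A,\sigma)$, so that the obstruction $[\Sigma(F_{0})\cap\mathrm{Nrd}(A^{\times})]/\mathrm{Nrd}(U(A,\sigma))$ --- a form of the unitary $SK_{1}$ of $(A,\sigma)$ --- is a subquotient of $F^{\times}/(F_{0}^{\times}\cdot\mathrm{Nrd}(A^{\times}))$ that vanishes over each $(F_{0})_{\nu}$ (as $A$ is tamely ramified there, $\mathrm{ind}(A)$ being prime to $p$); I would then finish by a second local--global argument for this unitary $SK_{1}$, modeled on \cite{PPS} over a regular model of $F_{0}$. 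The hard part will be precisely this last passage --- isolating $\mathrm{Nrd}(U(A,\sigma))$ inside $\Sigma(F_{0})$ and controlling the unitary $SK_{1}$ --- together with the bookkeeping relating the valuations of $F$ to those of $F_{0}$; should a clean reduction to \cite{PPS} prove elusive, the fallback is to rerun the ramification analysis and field-patching of \cite{PPS} directly in the unitary setting.
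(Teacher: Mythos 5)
Your overall skeleton matches the paper's: split the problem via the exact sequence $1\to SU(A,\sigma)\to U(A,\sigma)\xrightarrow{\mathrm{Nrd}}R^{1}_{F/F_{0}}(\Gm)\to1$ into a local--global statement for $H^{1}(-,U(A,\sigma))$ and one for the image of the reduced norm inside the norm-one torus. Your part \emph{(i)} is essentially the paper's Theorem~\ref{uas}: the class in $H^{1}(F_{0},U(A,\sigma))$ corresponds to a rank-one form $\qf{a}$, and one compares $\qf{a}$ with $\qf{1}$ by applying the isotropy local--global principle (Theorem~\ref{isotropic1}) to $\qf{1,-a}$; the paper carries this out for arbitrary rank $h$ but the argument is the same.

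The gap is squarely in part \emph{(ii)}, and you yourself flag it. Reducing to the statement that $z\in\mathrm{Nrd}(A^{\times})$ via \cite{PPS} is fine (your bookkeeping of places $w$ of $F$ above $\nu$ of $F_{0}$ works, since every nontrivial discrete valuation of $F$ restricts nontrivially to $F_{0}$). But this does not bring you home: knowing $z\in\Sigma(F_{0})\cap\mathrm{Nrd}(A^{\times})$ is weaker than knowing $z=\theta^{-1}\sigma(\theta)$ with $\theta\in\mathrm{Nrd}(A^{\times})$. Writing $z=\mu^{-1}\sigma(\mu)$ by Hilbert~90, what you actually need is some $c\in F_{0}^{\times}$ with $c\mu\in\mathrm{Nrd}(A^{\times})$, i.e.\ the class of $\mu$ in $F^{\times}/(F_{0}^{\times}\cdot\mathrm{Nrd}(A^{\times}))$ must vanish, and $z\in\mathrm{Nrd}(A^{\times})$ by itself gives no purchase on this. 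Your proposed ``second local--global argument for a form of unitary $SK_{1}$'' is precisely the hard part of the paper: it occupies \S\ref{cores}--\S\ref{whitehead} (corestriction of cyclic extensions, classification of dihedral extensions of local fields, algebras with unitary involution over 2-local and 2-dimensional complete fields, triviality of $USK_{1}$ over 2-local fields) and culminates in the patching theorem (\ref{patching-su}) of \S\ref{lgp-patching}, where one matches local corrections $a_{x}$ across the whole patching diagram and only \emph{then} invokes \cite{PPS}. Your claim that the obstruction group ``vanishes over each $(F_{0})_{\nu}$ (as $A$ is tamely ramified there)'' is exactly the content of \ref{whitehead-trivial}, which itself relies on the dihedral-extension theory of \S\ref{cores}--\S\ref{dihedral-local}; it is not an observation that comes for free. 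So your ``fallback'' --- rerunning the ramification analysis and patching directly in the unitary setting --- is not a fallback but \emph{is} the proof, and until it is carried out the argument is incomplete.
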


  As a consequence  we   have the following.

  \begin{theorem}(cf. \ref{conjecture1}) 
   Let $K$ be a $p$-adic  field and
 $F$ the  function field of  a  smooth projective  curve over $K$. 
  Let $G$ be a connected linear algebraic group over $F$ of classical type ($D_4$ nontrialitarian) 
  with $p$ good for $G$.   Then Conjecture 1 holds for $G$.
   \end{theorem}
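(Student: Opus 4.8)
The plan is to reduce the assertion, type by type, to three results already at our disposal: Conjecture~1 for $\PGL(A)$ (Theorem~\ref{proj-pgl}), Conjecture~1 for the unitary group $\Group{U}(A,\sigma,h)$ of a hermitian form with $\sigma$ of the second kind (Theorem~\ref{isotropic1}), and Conjecture~1 for absolutely simple groups of type $B_n$, $C_n$, $D_n$ ($D_4$ nontrialitarian) in good characteristic (\cite[Corollary~1.4]{Wu}).

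First I would cut $G$ down to an absolutely simple adjoint group. Let $Y=G/P$ be a projective homogeneous space, $P$ a parabolic $F$-subgroup. The radical $\rad(G)$ lies in every Borel subgroup of $G_{\ol F}$, hence in $P$, so $Y=\bigl(G/\rad(G)\bigr)\big/\bigl(P/\rad(G)\bigr)$ is a projective homogeneous space under the semisimple group $G/\rad(G)$; for the same reason the center of $G/\rad(G)$ lies in the image of $P$, so $Y$ is also a projective homogeneous space under the associated adjoint group. Thus we may take $G$ semisimple adjoint, in which case $G=\prod_i R_{L_i/F}(H_i)$ with $L_i/F$ finite separable field extensions and each $H_i$ absolutely simple adjoint over $L_i$; by the very definition of goodness, $p$ is good for $G$ iff $p$ is good for each $H_i$. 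Parabolic subgroups of a product are products of parabolics, and parabolic subgroups of a Weil restriction $R_{L/F}(H)$ have the form $R_{L/F}(Q)$ for parabolics $Q$ of $H$, so $Y=\prod_i R_{L_i/F}(Z_i)$ with $Z_i$ a projective homogeneous space under $H_i$ over $L_i$. A finite product of $F$-varieties has an $F$-point precisely when every factor does, and has an $F_\nu$-point only if every factor does, while $R_{L_i/F}(Z_i)(F_\nu)=\prod_{w\mid\nu}Z_i(L_{i,w})$; since $L_i/F$ is finite, every $w\in\Omega_{L_i}$ restricts to some $\nu\in\Omega_F$ and then appears among the completions $L_{i,w}$ with $w\mid\nu$. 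Hence it suffices to prove Conjecture~1 for each $Z_i$ over $L_i$, and $L_i$ is again the function field of a smooth projective curve over a $p$-adic field, namely the algebraic closure of $K$ in $L_i$. So from now on $G$ is absolutely simple adjoint of classical type with $p$ good, and $Y$ is a projective homogeneous space under $G$ over $F$.

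Now I would run through the three cases. If $G$ has type ${}^1\!\!A_n$, then $G=\PGL(A)$ for a central simple $F$-algebra $A$ of degree $n+1$; goodness gives $p\nmid n+1$, so $\mathrm{ind}(A)$ is prime to $p$ and Theorem~\ref{proj-pgl} applies verbatim. If $G$ has type ${}^2\!\!A_n$, then $G=\Group{PGU}(A,\sigma)$ for a central simple algebra $A$ of degree $n+1\ge 2$ over a quadratic field extension $F'/F$, with $\sigma$ an $F'/F$-involution; goodness gives $p\nmid 2(n+1)$, so $2\,\mathrm{ind}(A)$ is prime to $p$. The homomorphism $\Group{U}(A,\sigma)\to\Group{PGU}(A,\sigma)$ is surjective with central kernel (the norm-one torus $R^1_{F'/F}(\Gm)$), so these groups have the same parabolic subgroups, hence the same projective homogeneous spaces; and $\Group{U}(A,\sigma)=\Group{U}(A,\sigma,h_0)$ for $h_0$ the rank-one hermitian form over $(A,\sigma)$, so Theorem~\ref{isotropic1} applies to $Y$ (its small-rank proviso is vacuous, since $A$ has degree $\ge 2$ over its center). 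Finally, if $G$ has type $B_n$, $C_n$ or $D_n$ ($D_4$ nontrialitarian), then $p\ne 2$ and Conjecture~1 for $G$ is \cite[Corollary~1.4]{Wu}. Combined with the reduction above, this proves the theorem.

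The genuinely new input is Theorem~\ref{isotropic1}, which lifts the earlier index-$\le 2$ restriction in the ${}^2\!\!A_n$ case; everything else is bookkeeping plus citations. The one point that calls for a little care is the passage from the adjoint unitary group to a hermitian-form unitary group in the ${}^2\!\!A_n$ case --- equivalently, identifying the projective homogeneous spaces of $\Group{PGU}(A,\sigma)$ with those of $\Group{U}(A,\sigma,h)$ --- together with checking that the goodness and index hypotheses of the three input theorems survive the three reduction steps.
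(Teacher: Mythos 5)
Your proof is correct and follows the same overall strategy as the paper's: reduce to the absolutely simple case and then settle the three types separately, invoking Theorem~\ref{proj-pgl} for ${}^1\!\!A_n$, Theorem~\ref{isotropic1} for ${}^2\!\!A_n$ (after passing from the adjoint unitary group to a hermitian-form unitary group), and Wu's result for $B_n$, $C_n$, $D_n$. The only difference is one of exposition: the paper leaves the reduction to the absolutely simple case implicit, citing \cite[Theorem 2.20]{BT}, \cite[Proposition 6.10]{MPW2} and the proof of \cite[Corollary 4.6]{Wu}, and it passes through the simply connected cover of $G/\mathrm{Rad}(G)$, whereas you pass to the adjoint quotient and write out the Weil restriction decomposition $\prod_i R_{L_i/F}(H_i)$ explicitly, checking directly that both the hypothesis (points over all completions) and the hypotheses of the three input theorems transfer to each absolutely simple factor over the new base $L_i$.
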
   
  
  \begin{theorem} 
  (cf. \ref{conjecture2}) Let $K$ be a $p$-adic  field and
 $F_0$ the  function field of  a  smooth projective  curve over $K$. 
  Let $G$ be a  semisimple simply connected  linear algebraic group over $F$   with $p$ good for $G$. 
  If $G$ is of  classical type ($D_4$ nontrialitarian), then Conjecture 2 holds for $G$.
   \end{theorem}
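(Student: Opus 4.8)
The plan is to decompose $G$ into absolutely almost simple pieces and then invoke the type-by-type results, the only genuinely new input being Theorem~\ref{lgp-unitary}. First I would apply the structure theory of semisimple simply connected groups to write $G \cong \prod_i R_{L_i/F_0}(G_i)$, with each $L_i/F_0$ a finite separable extension and each $G_i$ absolutely almost simple simply connected over $L_i$. Then $H^1(F_0, G) \cong \prod_i H^1(L_i, G_i)$, and for every $\nu \in \Omega_{F_0}$, base change of the Weil restriction together with Shapiro's lemma gives $H^1(F_{0,\nu}, R_{L_i/F_0}G_i) \cong \prod_{w\mid\nu} H^1((L_i)_w, G_i)$, the product over the finitely many places $w$ of $L_i$ above $\nu$. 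Hence a class $\xi = (\xi_i) \in H^1(F_0, G)$ is trivial, resp.\ locally trivial at all of $\Omega_{F_0}$, exactly when each $\xi_i$ is trivial, resp.\ trivial at all places of $L_i$ lying over $\Omega_{F_0}$. Since every discrete valuation of $L_i$ restricts to a discrete valuation of $F_0$, the latter set of places is all of $\Omega_{L_i}$; and $L_i$ is again the function field of a smooth projective curve over a finite --- hence $p$-adic --- extension of $K$. Thus it suffices to prove Conjecture~2 for each $G_i$ over $L_i$. The Killing--Cartan type of $G_i$ equals that of the corresponding factor of $\tilde{G}$, so $p$ remains good for $G_i$, the coprimality conditions persist, and the standing exclusion of trialitarian $D_4$ is preserved. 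I am therefore reduced to an absolutely almost simple simply connected group of classical type, $D_4$ non-trialitarian, over the function field $L$ of a $p$-adic curve, with $p$ good.

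Next I would run through the classification. For type $^1\!\!A_n$ the group is $SL_1(A)$ with $A$ central simple over $L$ of degree $n+1$; $p$ good means $p \nmid n+1$, so $\mathrm{ind}(A)$ is coprime to $p$, and Conjecture~2 is the local--global principle for reduced norms, which is \cite{PPS} (for square-free index it already follows from injectivity of the Rost invariant of Merkurjev--Suslin \cite{MS} and Kato's theorem \cite{Ka}). For types $B_n$, $C_n$ and $D_n$ ($D_4$ non-trialitarian) one has $p \ne 2$, the group is the spin, symplectic or special orthogonal group of the relevant algebra with involution, and Conjecture~2 is due independently to Hu and to Preeti (\cite{Hu},\cite{preeti}). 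For type $^2\!\!A_n$ the group is $SU(B, \sigma)$ with $B$ central simple of degree $n+1$ over a quadratic extension $L'/L$ and $\sigma$ an $L'/L$-involution; $p$ good gives $p \nmid 2(n+1)$, hence $p \nmid 2\,\mathrm{ind}(B)$ since $\mathrm{ind}(B) \mid n+1$, and Conjecture~2 for $SU(B,\sigma)$ over $L$ is exactly Theorem~\ref{lgp-unitary}. Combining these four cases with the reduction above proves the theorem.

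The hard part is entirely Theorem~\ref{lgp-unitary}, the $^2\!\!A_n$ case of arbitrary index, which is this paper's new contribution; the remaining work in this deduction is purely organizational --- checking that the product and Weil-restriction reductions respect the ``function field of a $p$-adic curve'' hypothesis and the precise good-prime condition attached to each Killing--Cartan type, that the local cohomology really does split as the product over the finitely many places above a given $\nu$, and that no trialitarian $D_4$ factor sneaks in when passing to the factors of $\tilde{G}$.
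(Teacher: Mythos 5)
Your proposal is correct and takes essentially the same route as the paper: the content is the type-by-type case analysis ($^1\!A_n$ from \cite{PPS}, $^2\!A_n$ from Theorem~\ref{lgp-unitary}, $B_n,C_n,D_n$ from \cite{Hu},\cite{preeti}). The one place you are more explicit than the paper is the preliminary reduction to absolutely almost simple factors via the decomposition $G\cong\prod_i R_{L_i/F_0}(G_i)$, Shapiro's lemma, and the observation that $L_i$ is again the function field of a smooth projective curve over a ($p$-adic) finite extension of $K$ with the good-prime hypothesis preserved; the paper handles this reduction more tersely (invoking it as ``well known'' in the proof of Theorem~\ref{conjecture1} and leaving it implicit in \ref{conjecture2}), but it is the same reduction.
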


  Here is an outline of the structure of the paper.  The plan is to reduce the questions on local global principle
   with respect to discrete valuations to one for the patching fields  in the setting of Harbater, Hartmann and Krashen (\cite{HHK1}) and then to deal with the question 
   in the patching setting.  The reduction to the patching setting requires an understanding of the structure of central 
   simple algebras with involutions of the second kind over the branch fields (\cite{HHK1}), which are 2-local  fields. 
   This leads to the study of cyclic extensions over quadratic extensions  of local fields with zero corestriction.
  Let $F_0$ be a field,  $F/F_0$ be a quadratic extension and  $L/F$  a cyclic extension of degree coprime 
  to char$(F_0)$.      It was proved in (\cite[Proposition 24]{HKRT}) that   
  the corestriction of $L/F$ from $F$ to $F_0$ is zero if and only if $L/F_0$ is a dihedral extension.  
In \S \ref{cores}  we reprove this statement for the sake of completeness and deduce some consequences
for dihedral extensions. 
 In \S \ref{dihedral}  we study dihedral extensions over an arbitrary fields. 
  In \S \ref{dihedral-local} we describe all   dihedral extensions over local fields. 
In \S \ref{alg-2-local} and \S \ref{alg-2dimlocal} we describe the structure of central simple algebras with unitary 
involutions over 2-local fields and 2-dimensional complete fields with finite residue fields. These fields surface 
in the patching setting.
In \S\ref{whitehead}, we show that the reduced Whitehead groups over 2-local fields are trivial. 
In \S\ref{lgp-gl}, we prove a local global principle for  generalized Severi-Brauer varieties without any assumption on the existence  of roots of unity, completing the proof of Conjecture 1 for groups of type $^1\!\!A_n$.
 In \S  \ref{lgp-herm}, we prove a local global principle for isotropy of hermitian forms over division algebras 
 with unitary involutions. The idea is to construct good maximal orders  invariant under involution over 
 2-dimensional complete regular local rings. This is possible due to the complete understanding of 
 the structure of the algebras with unitary involutions studied in \S \ref{alg-2-local}. 
 This settles Conjecture 1 for groups of type $^2\!\!A_n$ in the  good characteristic case. 
In \S \ref{lgp-patching},  we prove the local global  principle for principal homogeneous spaces under  simply 
connected unitary groups in the patching setting. Finally, in 
\S \ref{lgp-dvr} we prove the local global principle for  special unitary groups with respect to discrete valuations, thereby completing the validity of Conjecture 2 for groups of type  $^2\!\!A_n$.
   More generally (cf. \ref{conjectures}), we  prove  Conjectures 1 and 2 for groups of  classical type   over function fields of curves over local fields.
 
  Throughout  this paper,  a projective homogeneous space $Z$  under a connected  linear algebraic group $G$  is a projective variety $Z$ with transitive $G$-action  over the separable closure such that the stabilizer  is a parabolic subgroup.  
   
 \section{Preliminaries}

 \begin{lemma}
 \label{finitefields} Let $\F_q$ be the finite field  with $q$ elements and $\F_{q^2}$ the degree two extension of $\F_q$.
 Suppose  $q$ is odd and $\sqrt{-1} \not\in \F_q$. Then $\F_{q^2} = \F_q(\sqrt{-1})$.
 Let $d$ be the maximum integer such that $\F_{q^2}$ contains a primitive $2^d$-th root of unity $\rho$.
 Then $N_{\F_{q^2}/\F_q} (\rho) = -1$.  
 \end{lemma}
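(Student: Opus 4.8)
The plan is to translate the hypothesis $\sqrt{-1}\notin\F_q$ into a congruence on $q$ and then compute the norm map on the $2$-primary part of $\F_{q^2}\mult$ by hand. First I would note that, since $q$ is odd, $q\equiv 1$ or $3\pmod 4$, and that $q\equiv 1\pmod 4$ would mean $4\mid q-1$, so the cyclic group $\F_q\mult$ would contain an element of order $4$ whose square is the unique element of order $2$, namely $-1$; this would give $\sqrt{-1}\in\F_q$, contrary to hypothesis. Hence $q\equiv 3\pmod 4$. Moreover $X^2+1$ is then irreducible over $\F_q$, so $\F_q(\sqrt{-1})$ is a degree-$2$ extension of $\F_q$; since $\F_{q^2}$ is the unique degree-$2$ extension of $\F_q$ inside a fixed algebraic closure, $\F_{q^2}=\F_q(\sqrt{-1})$, which is the first assertion.

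For the second assertion, recall that $\F_{q^2}\mult$ is cyclic of order $q^2-1=(q-1)(q+1)$, and write $v_2$ for the $2$-adic valuation. Since $q\equiv 3\pmod 4$ we have $v_2(q-1)=1$, hence $d=v_2(q^2-1)=1+v_2(q+1)$, i.e.\ $v_2(q+1)=d-1$. Thus a primitive $2^d$-th root of unity $\rho$ is exactly a generator of the (cyclic) $2$-Sylow subgroup of $\F_{q^2}\mult$. The nontrivial element of $\mathrm{Gal}(\F_{q^2}/\F_q)$ is the Frobenius $x\mapsto x^q$, so $N_{\F_{q^2}/\F_q}(\rho)=\rho\cdot\rho^q=\rho^{q+1}$.

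It remains to evaluate $\rho^{q+1}$. Since $v_2(q+1)=d-1<d$, we have $\gcd(q+1,2^d)=2^{d-1}$, so the element $\rho^{q+1}$ has order $2^d/2^{d-1}=2$ in $\F_{q^2}\mult$; as $-1$ is the only element of order $2$ in a field of odd characteristic, $N_{\F_{q^2}/\F_q}(\rho)=\rho^{q+1}=-1$, as claimed. There is no real obstacle in this argument: the only point that needs care is the observation that $\sqrt{-1}\notin\F_q$ is equivalent to $q\equiv 3\pmod 4$, which is precisely what forces $v_2(q-1)=1$ and makes the norm computation come out to $-1$ rather than to a higher-order root of unity.
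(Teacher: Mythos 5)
Your proof is correct, but it takes a genuinely different route from the paper's. You translate $\sqrt{-1}\notin\F_q$ into the congruence $q\equiv 3\pmod 4$, observe $v_2(q-1)=1$ so $v_2(q+1)=d-1$, identify $\rho$ as a generator of the $2$-Sylow subgroup of the cyclic group $\F_{q^2}\mult$, and then evaluate $N_{\F_{q^2}/\F_q}(\rho)=\rho^{q+1}$ explicitly as an element of order exactly $2$. The paper instead argues at the level of square-class groups: since $\rho$ is a maximal $2$-power root of unity it is not a square, so $\F_{q^2}\mult/\F_{q^2}^{*2}=\{1,\rho\}$, and since the norm map on finite fields is surjective it stays surjective on square classes onto $\F_q\mult/\F_q^{*2}=\{1,-1\}$, forcing $N(\rho)=-1$. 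Your approach is more computational and self-contained (it does not appeal to norm surjectivity, deriving everything from the formula $N(x)=x^{q+1}$ and a valuation count), while the paper's is more structural and shorter, trading the explicit order calculation for the standard fact that the norm of a finite-field extension is onto.
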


 \begin{proof} Since $\sqrt{-1} \not\in \F_q$ and $q$ is odd, we have 
 $\F_q^*/\F_q^{*2} = \{ 1, -1\}$.  Since there is a unique extension of degree 2 of $\F_q$, 
 $\F_{q^2} = \F_q(\sqrt{-1})$. Let $d$ be the maximum integer such that $\F_{q^2}$ 
 contains a primitive $2^d$-th root of unity $\rho$. Since  there is no $2^{d+1}$-th primitive root of unity in 
 $\F_{q^2}$, $\rho\not\in \F_{q^2}^{*2}$. Hence $ \F_{q^2}^*/ \F_{q^2}^{*2} = \{ 1, \rho \}$.
 Since $N_{\F_{q^2}/\F_q} : \F_{q^2}^* \to \F_q^*$ is surjective, 
 $N_{\F_{q^2}/\F_q} : \F_{q^2}^*/ \F_{q^2}^{*2} \to \F_q^*/\F_{q}^{*2}$ is surjective.
 Hence  $N_{\F_{q^2}/\F_q} (\rho) = -1$.  
 \end{proof}

 \begin{cor}
 \label{localfields} Let $K_0$ be a  local field and $K/K_0$ the quadratic unramified extension. 
Suppose that  the  characteristic of the residue field  of $K_0$ is odd and $\sqrt{-1} \not\in K_0$. Then $K  = K_0(\sqrt{-1})$.
 Let $d$ be the maximum integer such that $K$ contains a primitive $2^d$-th root of unity $\rho$.
 Then $N_{K/K_0} (\rho) = -1$.  
 \end{cor}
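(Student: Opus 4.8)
The plan is to reduce the statement to its finite‑field analogue, Lemma~\ref{finitefields}, by passing to residue fields; the point is that since the residue characteristic $p$ is odd, reduction is an isomorphism on prime‑to‑$p$ roots of unity, in particular on $2$‑power roots of unity. First I would check the equality $K=K_0(\sqrt{-1})$. Since $p$ is odd, Hensel's lemma shows that $-1$ is a square in $K_0$ if and only if it is a square in the residue field $\F_q$ of $K_0$, so the hypothesis $\sqrt{-1}\notin K_0$ gives $\sqrt{-1}\notin\F_q$, which is exactly the hypothesis needed to invoke Lemma~\ref{finitefields}. The polynomial $x^2+1$ has unit discriminant $-4$, so $K_0(\sqrt{-1})/K_0$ is a quadratic \emph{unramified} extension, with residue field $\F_q(\sqrt{-1})=\F_{q^2}$ by Lemma~\ref{finitefields}; as $K/K_0$ is the unique unramified quadratic extension of $K_0$, we conclude $K=K_0(\sqrt{-1})$, and that the residue field of $K$ is $\F_{q^2}$.

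Next I would compare $2$‑power roots of unity upstairs and in the residue field. Because $p$ is odd, the kernel of reduction $\mathcal{O}_K^{\times}\to\F_{q^2}^{\times}$ is pro‑$p$, hence has no nontrivial $2$‑torsion; together with the Teichm\"uller lift this shows that reduction carries the group of $2$‑power roots of unity of $K$ isomorphically onto the $2$‑primary part of $\F_{q^2}^{\times}$. In particular the maximal $d$ such that $\mu_{2^d}\subset K$ equals the maximal $d$ such that $\mu_{2^d}\subset\F_{q^2}$, and a primitive $2^d$‑th root of unity $\rho\in K$ reduces to a primitive $2^d$‑th root of unity $\ol\rho\in\F_{q^2}$.

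Finally I would compute the norm. Since $K/K_0$ is unramified, the norm is compatible with reduction on units, so $\ol{N_{K/K_0}(\rho)}=N_{\F_{q^2}/\F_q}(\ol\rho)=-1$ by Lemma~\ref{finitefields}. On the other hand $\rho^{2^d}=1$ forces $N_{K/K_0}(\rho)$ to be a $2^d$‑th root of unity in $K_0$; and reduction is injective on $2$‑power roots of unity of $K_0$ (again because $p$ is odd), so from the fact that its reduction $-1$ has order exactly $2$ we deduce that $N_{K/K_0}(\rho)$ has order $2$ in $K_0^{\times}$, whence $N_{K/K_0}(\rho)=-1$. There is no real obstacle here; the only delicate point is this last step — upgrading the congruence $N_{K/K_0}(\rho)\equiv -1$ modulo the maximal ideal to the exact equality $N_{K/K_0}(\rho)=-1$ — which is handled by arguing with the order of the torsion element rather than with the unit directly, everything else being a routine transfer of Lemma~\ref{finitefields} along the residue map.
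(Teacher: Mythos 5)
Your proof is correct and is essentially the argument the paper intends: the paper states Corollary~\ref{localfields} without proof, evidently because it is a direct transfer of Lemma~\ref{finitefields} to the complete local field via reduction modulo the maximal ideal (Hensel/Teichm\"uller lift on prime-to-$p$ roots of unity, compatibility of the norm with reduction for the unramified extension), which is precisely the route you take.
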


 \begin{lemma}
 \label{finitefields2} Let $\F_q$ be the finite field  with $q$ elements and $\F_{q^2}$ the degree two extension of $\F_q$.
 Let $m \geq 1$.   Suppose  $q$ is odd and $\sqrt{-1} \not\in \F_q$.  
 If   $\F_{q^2}$ contains a primitive $2^{m+1}$-th root of unity,  then $\F_q^* \subset \F_{q^2}^{*2^m}$. 
 \end{lemma}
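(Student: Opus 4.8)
The plan is to work entirely inside the cyclic group $\F_{q^2}^*$ of order $q^2-1$ and to translate every containment of subgroups into a divisibility statement among their orders. Recall that in a cyclic group of order $N$ there is, for each divisor $d$ of $N$, a unique subgroup of order $d$, and the subgroup of order $d_1$ is contained in the subgroup of order $d_2$ precisely when $d_1 \mid d_2$. So the whole proof is a bookkeeping of orders.

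First I would identify the two subgroups in play. The image $\F_{q^2}^{*2^m}$ of the $2^m$-th power map is the unique subgroup of $\F_{q^2}^*$ of order $(q^2-1)/\gcd(2^m,q^2-1)$ (the kernel of $x\mapsto x^{2^m}$ has order $\gcd(2^m,q^2-1)$), while $\F_q^*$ is the unique subgroup of order $q-1$. Hence the desired inclusion $\F_q^*\subseteq\F_{q^2}^{*2^m}$ is equivalent to
\[
(q-1)\ \Big|\ \frac{q^2-1}{\gcd(2^m,q^2-1)},
\]
and since $q^2-1=(q-1)(q+1)$ this is in turn equivalent to $\gcd(2^m,q^2-1)\mid q+1$.

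Next I would pin down the relevant $2$-adic valuations. Since $q$ is odd and $\sqrt{-1}\notin\F_q$, the element $-1$ is a nonsquare in the cyclic group $\F_q^*$ (cf. Lemma~\ref{finitefields}), which forces $q\equiv 3\pmod 4$; consequently $v_2(q-1)=1$ and $v_2(q^2-1)=v_2(q-1)+v_2(q+1)=1+v_2(q+1)$. The hypothesis that $\F_{q^2}$ contains a primitive $2^{m+1}$-th root of unity says exactly that $2^{m+1}\mid q^2-1$, i.e. $m+1\le v_2(q^2-1)=1+v_2(q+1)$, so $m\le v_2(q+1)$. In particular $m<v_2(q^2-1)$, hence $\gcd(2^m,q^2-1)=2^m$ and $2^m\mid q+1$. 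Feeding this back into the criterion of the previous paragraph yields $\F_q^*\subseteq\F_{q^2}^{*2^m}$, as wanted.

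There is no genuine obstacle here; the only point requiring care is the valuation bookkeeping, and in particular using the hypothesis $\sqrt{-1}\notin\F_q$ to get the strict inequality $v_2(q-1)<v_2(q^2-1)$, which is precisely what allows one to replace $\gcd(2^m,q^2-1)$ by $2^m$ and thereby reduce the claim to $2^m\mid q+1$.
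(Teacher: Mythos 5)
Your proof is correct, but it takes a genuinely different route from the paper's. The paper argues structurally: since $\sqrt{-1}\notin\F_q$, the kernel of the $2^m$-th power map on $\F_q^*$ is exactly $\{\pm1\}$, so $\F_q^*/\F_q^{*2^m}$ has order two with nontrivial coset represented by $-1$; it then checks directly that $-1\in\F_{q^2}^{*2^m}$, since any primitive $2^{m+1}$-th root of unity $\zeta\in\F_{q^2}$ satisfies $\zeta^{2^m}=-1$. That gives $\F_q^*=\F_q^{*2^m}\cup(-1)\F_q^{*2^m}\subset\F_{q^2}^{*2^m}$ at once. You instead translate the subgroup containment in the cyclic group $\F_{q^2}^*$ into the order-divisibility condition $\gcd(2^m,q^2-1)\mid q+1$ and then verify it by $2$-adic valuation bookkeeping, using $\sqrt{-1}\notin\F_q$ to force $q\equiv 3\pmod 4$ and hence $v_2(q-1)=1$. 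Both arguments are sound; the paper's is slightly tighter because the hypothesis on $\sqrt{-1}$ is used exactly once (to identify the kernel of the power map) and the root of unity is used exactly once (to exhibit $-1$ as a $2^m$-th power), whereas your version first establishes the general equivalence with the gcd condition and then feeds in valuations. One small streamlining available to you: once you know $2^m\mid q+1$, the conclusion $\gcd(2^m,q^2-1)\mid q+1$ is immediate, so the intermediate observation that $\gcd(2^m,q^2-1)=2^m$ is not actually needed.
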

 
 \begin{proof} Since $\sqrt{-1} \not\in \F_q^*$, the only $2^m$-th roots of unity in $\F_q$ are $\pm 1$.
 Hence we have an exact sequence of groups  $1 \to \{ \pm 1 \}\to  \F_q^* \to \F_q^{*2^n} \to 1$, where the
 last map is given by $x \to x^{2^n}$.  Thus  the order of $\F_q^*/\F_q^{*2^m}$ is 2.
 Since $-1 \not\in \F_q^{*2}$, $-1 \not\in \F_q^{*2^m}$ and $\F_q^* = \F_q^{*2^m} \cup (-1) \F_q^{*2^m}$.
 Since $\F_{q^2}^*$ contains a primitive $2^{m+1}$-th root of unity,  $-1 \in \F_{q^2}^{*2^m}$.
 Thus $\F_q^* \subset \F_{q^2}^{*2^m}$. 
 \end{proof}
 
 \begin{cor}
 \label{localfields2} Let $K_0$ be a  local field and $K/K_0$ the quadratic unramified extension. 
Suppose that  the  characteristic of the residue field of $K_0$  is odd and $\sqrt{-1} \not\in K_0$.
 Let $m \geq 1$.     If   $K$ contains a primitive $2^{m+1}$-th root of unity,  then every unit in the valuation  ring of  $K_0$
 is in $K^{*2^m}$. 
 \end{cor}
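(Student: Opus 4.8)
The plan is to reduce the assertion to Lemma \ref{finitefields2} on the residue fields by means of Hensel's lemma. Let $\mathcal{O}_{K_0}$ and $\mathcal{O}_K$ denote the valuation rings of $K_0$ and $K$, and let $\mm$ be the maximal ideal of $\mathcal{O}_K$. Since $K/K_0$ is unramified of degree $2$, the residue field of $K_0$ is a finite field $\F_q$ of odd order $q$ and the residue field of $K$ is $\F_{q^2}$. First I would check that $\sqrt{-1}\notin\F_q$: otherwise, since $2$ is a unit in $\mathcal{O}_{K_0}$, Hensel's lemma applied to $X^2+1$ would lift a square root of $-1$ to $K_0$, contradicting the hypothesis. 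Thus the hypotheses of Lemma \ref{finitefields2} are in force for $\F_q\subset\F_{q^2}$.

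Next, let $\zeta\in K$ be a primitive $2^{m+1}$-th root of unity. As $K/K_0$ is unramified, $\zeta$ is a unit in $\mathcal{O}_K$, and since $2^{m+1}$ is coprime to the residue characteristic, reduction modulo $\mm$ carries the group of $2^{m+1}$-th roots of unity in $K$ isomorphically onto that in $\F_{q^2}$; in particular the image $\bar\zeta$ is a primitive $2^{m+1}$-th root of unity in $\F_{q^2}$. Hence $\F_{q^2}$ contains a primitive $2^{m+1}$-th root of unity, and Lemma \ref{finitefields2} yields $\F_q^*\subset\F_{q^2}^{*2^m}$.

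Now take a unit $u\in\mathcal{O}_{K_0}^*$ and let $\bar u\in\F_q^*$ be its reduction. By the previous step $\bar u=\bar v^{\,2^m}$ for some $\bar v\in\F_{q^2}^*$; lift $\bar v$ to a unit $v\in\mathcal{O}_K^*$, and set $w:=u/v^{2^m}\in\mathcal{O}_K^*$, which is congruent to $1$ modulo $\mm$. To conclude I would apply Hensel's lemma to $f(X)=X^{2^m}-w$: one has $f(1)=1-w\in\mm$ while $f'(1)=2^m\in\mathcal{O}_K^*$ because $2$ is a unit, so $f$ has a root in $\mathcal{O}_K$, i.e. $w\in K^{*2^m}$. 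Therefore $u=v^{2^m}w\in K^{*2^m}$, as required. The only points needing a little care are the compatibility of roots of unity with reduction and the applicability of Hensel's lemma, both of which rest on $2$ being a unit in $\mathcal{O}_K$; there is no essential obstacle here, the content of the corollary being entirely carried by Lemma \ref{finitefields2}.
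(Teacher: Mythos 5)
Your proof is correct and takes exactly the route the paper intends: reduce to Lemma \ref{finitefields2} on the residue fields and lift via Hensel's lemma, which applies because $2$ is a unit; the verification that $\sqrt{-1}\notin\F_q$ and that a primitive $2^{m+1}$-th root of unity reduces to a primitive one are the right bridging observations.
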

 
 \begin{lemma}
 \label{finitefields3} 
 Let $\F_q$ be the  finite field  with $q$ elements. 
 Let $m \geq 1$  be   coprime to  $q$.  
 Suppose that $\F_q$ does not contain any nontrivial   $m^{\rm th}$ root of unity.
 Then $\F_q^* = \F_q^{*m}$.  
  \end{lemma}
 
 \begin{proof} Since  $ \F_q^*$ does not contain nontrivial  $m^{\rm th}$ root of unity,  
 the only $m^{\rm th}$  root  of unity in $\F_q$ is $ 1$.
 Hence  the homomorphism $ \F_q^* \to \F_q^{*^m} $,   $x \mapsto x^{^m}$,  is an isomorphism. 
 Thus $\F_q^*  =  \F_{q}^{*^m}$. 
 \end{proof}

 \begin{cor}
 \label{localfields3} 
 Let $K_0$ be a  local field. 
 Let $m \geq 1$ be   coprime  to  the  characteristic of the residue field of $K_0$. 
 Suppose that $K_0$ does not contain any nontrivial   $m^{\rm th}$ root of unity.
 Then  every unit in the valuation ring  of $K_0$ is  in $ K_0^{m}$.  
  \end{cor}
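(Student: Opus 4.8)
The plan is to reduce the assertion to the corresponding statement over the residue field, namely Lemma \ref{finitefields3}, by means of Hensel's lemma. Write $R$ for the valuation ring of $K_0$, $\mm$ for its maximal ideal, and $\F_q = R/\mm$ for the (finite) residue field. Since $m$ is coprime to the characteristic of the residue field, $m$ is coprime to $q$, and in particular $m$ is a unit in $R$.

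First I would verify that $\F_q$ contains no nontrivial $m$-th root of unity. As $\gcd(m,q) = 1$, the polynomial $x^m - 1$ is separable over $\F_q$, so all of its roots are simple; hence by Hensel's lemma every $m$-th root of unity of $\F_q$ lifts to one in $R \subset K_0$, and the reduction map $\mu_m(K_0) \to \mu_m(\F_q)$ is a bijection. Since $K_0$ contains no nontrivial $m$-th root of unity, neither does $\F_q$. Lemma \ref{finitefields3} then gives $\F_q^* = \F_q^{*m}$.

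Next, given a unit $u \in R^*$, its image $\bar u \in \F_q^*$ is an $m$-th power, say $\bar u = \bar v^{\,m}$ with $\bar v \in \F_q^*$; lift $\bar v$ to a unit $v \in R^*$. Then $w := u v^{-m}$ is a unit with $w \equiv 1 \pmod{\mm}$. I would now apply Hensel's lemma to $f(x) = x^m - w$: one has $f(1) = 1 - w \in \mm$ while $f'(1) = m$ is a unit in $R$, so $f$ has a root $t \in R$ with $t^m = w$, whence $u = (vt)^m \in K_0^{m}$. This shows every unit of $R$ lies in $K_0^{m}$.

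I do not expect any genuine obstacle here: the whole argument rests only on $m$ being invertible in $R$ (so that Hensel's lemma applies both to $x^m - 1$ and to $x^m - w$) together with the already-proved Lemma \ref{finitefields3}. The single point that warrants an explicit (if routine) check is that the hypothesis "$K_0$ has no nontrivial $m$-th root of unity" descends to the residue field $\F_q$, which is precisely the separability of $x^m - 1$ over $\F_q$ noted above.
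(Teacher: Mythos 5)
Your proof is correct and is precisely the (omitted) argument the paper intends: the paper states this corollary immediately after Lemma \ref{finitefields3} with no written proof, so the implicit reduction to the residue field via Hensel's lemma is exactly what you have spelled out. Both Hensel applications are justified since $m$ is a unit in $R$, and the passage of the ``no nontrivial $m$-th root of unity'' hypothesis down to $\F_q$ is the one point genuinely worth recording, which you did.
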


Let $F$ be a  discretely valued field  with  the valuation ring $R$ and   residue field $K$.
    We say that an element $a \in F$ is a {\it unit in } $F$ if $a \in R$ is a unit. 
 Let $n\geq 1$ be an integer coprime to char$(K)$.
Then we have the residue map $\partial : H^d(F, \mu_n^{\otimes i}) \to  H^{d-1}(F, \mu_n^{i-1})$. 
Let $H^d_{nr}(F, \mu_n^{\otimes i})$ be the kernel of $\partial$.
An element $\alpha \in H^d_{nr}(F, \mu_n^{\otimes i})$ is called an  {\it unramified} element. 
If  $F$ is complete, then we have an isomorphism $H^d(K, \mu_n^{\otimes i}) \simeq H^d_{nr}(F, \mu_n^{\otimes i})$. 

We end this section with the following  result on reduced norms.  

\begin{prop}
\label{reduced-norms}
Let $K$ be a  global  field   with no real orderings and  $F$  a complete discretely  valued field with residue field $K$. 
Let $A$ be  a central simple algebra over $F$ of index $n$  coprime to  char$(K)$.
Let $ (L, \sigma) \in H^1(K, \Z/n\Z)$ be the residue of $A$.  Let $\theta \in F^*$ be a unit.
If the image of $\theta  \in K^*$ is a norm from the extension $L/K$, then $\theta$ is a reduced norm from 
$A$. 
\end{prop}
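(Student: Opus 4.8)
The plan is to descend along the unramified extension $\tilde L/F$ that lifts $L/K$: over $\tilde L$ the algebra $A$ becomes unramified, so the problem reduces to surjectivity of a reduced norm over the residue global field $L$, which holds by the Hasse--Schilling--Maass theorem precisely because $L$ (like $K$) has no real orderings. The hypothesis that $\bar\theta$ is a norm from $L/K$ is exactly what is needed to push the conclusion back down from $\tilde L$ to $F$.

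Since $\mathrm{Nrd}_A(A^*)$ depends only on the Brauer class of $A$, we may assume $A=D$ is a division algebra, so $\deg D=n$. Let $d=[L:K]$ and let $\tilde L/F$ be the unramified extension with residue field $L$; it is cyclic of degree $d$. Because the residue map commutes with restriction along unramified extensions, the Brauer class of $D\otimes_F\tilde L$ over $\tilde L$ has residue $\mathrm{res}_{L/K}\bigl((L,\sigma)\bigr)=0$, i.e. $D\otimes_F\tilde L$ is unramified; hence it contains an Azumaya $\mathcal O_{\tilde L}$-order $\mathcal B$ of degree $n$, and we set $\bar B=\mathcal B\otimes_{\mathcal O_{\tilde L}}L$, a central simple $L$-algebra. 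Now $L$ is again a global field with no real orderings (an ordering of $L$ would restrict to one of $K$), so by Hasse--Schilling--Maass the reduced norm $\mathrm{Nrd}_{\bar B}\colon\bar B^*\to L^*$ is surjective. Write $\bar\theta=N_{L/K}(\ell)$ with $\ell\in L^*$ and choose $\bar b\in\bar B^*$ with $\mathrm{Nrd}_{\bar B}(\bar b)=\ell$; since $\mathcal O_{\tilde L}$ is complete local, lift $\bar b$ to $b\in\mathcal B^*$. Because reduced norms are defined integrally on Azumaya algebras and reduce modulo the maximal ideal of $\mathcal O_{\tilde L}$ to the reduced norm of $\bar B$, the element $\tilde\ell:=\mathrm{Nrd}_{D\otimes_F\tilde L}(b)$ is a unit of $\tilde L$ with residue $\ell$.

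By the compatibility of reduced norms with norm maps, applied to the finite extension $\tilde L/F$, the element $\nu:=N_{\tilde L/F}(\tilde\ell)$ lies in $\mathrm{Nrd}_D(D^*)=\mathrm{Nrd}_A(A^*)$; and since $\tilde L/F$ is unramified, $\nu$ is a unit of $F$ with residue $N_{L/K}(\ell)=\bar\theta$. Hence $\theta\nu^{-1}\in 1+\mm$, where $\mm$ is the maximal ideal of $R$, and every such element is a reduced norm from $A$: for a central element $x\in 1+\mm$ one has $\mathrm{Nrd}_D(x)=x^{n}$, and since $n$ is prime to $\mathrm{char}(K)$ the $n$-th power map is surjective on $1+\mm$ by Hensel's lemma. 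Therefore $\theta=\nu\cdot(\theta\nu^{-1})$ lies in the subgroup $\mathrm{Nrd}_A(A^*)$ of $F^*$, as required. The one substantive point is the structural fact that base change along the unramified lift $\tilde L/F$ of the residue character kills the ramification of $A$, so that $D\otimes_F\tilde L$ acquires an Azumaya order and a residue algebra; granting this, the argument merely assembles the integrality of reduced norms on Azumaya orders, their norm compatibility, and the Hasse--Schilling--Maass theorem over $L$.
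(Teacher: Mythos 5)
Your proof is correct, and it takes a genuinely different route from the paper's. The paper decomposes the Brauer class as $A = A_0 + (E,\tilde\sigma,\pi)$ with $A_0$ unramified, shows that the cup product $A\cdot(\theta)\in H^3(F,\mu_n^{\otimes 2})$ vanishes (the cyclic part because $\theta$ is a norm from $E/F$, the unramified part because $H^3_{nr}(F,\mu_n^{\otimes 2})\cong H^3(K,\mu_n^{\otimes 2})=0$ as $\operatorname{cd}(K)=2$), and then invokes \cite[Theorem 4.12]{PPS} to conclude $\theta\in\mathrm{Nrd}(A)$ from $A\cdot(\theta)=0$. You instead work structurally: pass to the unramified lift $\tilde L/F$ where $A$ becomes unramified, produce an Azumaya order and its residue algebra over $L$, solve the reduced norm equation over $L$ using Hasse--Schilling--Maass, lift the solution and push it down to $F$ via the norm principle for reduced norms, and finally absorb the remaining discrepancy in $1+\mathfrak m$ by Hensel's lemma using that $n$ is prime to the residue characteristic. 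The paper's argument is shorter but leans on the rather heavy cohomological criterion of \cite{PPS} tailored to this setting; your argument is self-contained, relying only on classical facts (existence and reduction of Azumaya orders over complete DVRs, the norm principle $N_{M/F}(\mathrm{Nrd}_{A_M}((A_M)^\ast))\subset\mathrm{Nrd}_A(A^\ast)$, Hasse--Schilling--Maass, Hensel). Both proofs use the same two hypotheses in the same essential places: the norm condition on $\bar\theta$ to handle the ramified part, and the absence of real orderings of $K$ (hence of $L$) to handle the unramified part.
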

 
\begin{proof} Let $E/F$ be the unramified extension with residue field $L$ and $\tilde{\sigma}$ 
a generator of Gal$(E/F)$  lifting   $\sigma$.   Let $R$ be the valuation ring of $F$ and  $\pi \in R$ be a parameter.
Then $A = A_0 + (E, \tilde{\sigma}, \pi)$  for some 
 central simple algebra $A_0$ over $F$ representing a class in $H^2_{nr}(F, \mu_n)$ (cf. \cite[Lemma 4.1]{PPS}).
 Since $F$ is complete and the image of $\theta$ in $K$ is a norm from $L/K$, 
 $\theta$ is a norm from $E/F$.  Hence $(E, \tilde{\sigma}, \pi) \cdot (\theta) = 0 \in H^3(F, \mu_n^{\otimes 2})$.
 Since $A_0$ is unramified on $R$ and  $\theta$ is a unit, $A_0 \cdot (\theta) \in H^3_{nr}(F,\mu_n^{\otimes 2})$.
 Since $K$ is a global field with no real orderings,   cd$(K) = 2$ and $H^3(K, \mu_n^{\otimes 2}) = 0$.
Hence $H^3_{nr}(F, \mu_n^{\otimes 2})= 0 $ and  $A_0 \cdot (\theta) = 0$.
In particular   $A\cdot (\theta) = 0 \in H^3(F, \mu_n^{\otimes 2})$ and by (\cite[Theorem 4.12]{PPS}) 
 $\theta$ is a reduced norm from $A$.  
\end{proof}

\section{Dihedral Extensions}
\label{dihedral}
  Let $G$ be a dihedral group of order  $2m \geq 4$.  Let  $\sigma$ and $\tau$ be the  generators of $G$   with 
  $\sigma^m = 1$, $\tau^2 = 1$ and $\tau \sigma \tau = \sigma^{-1}$.
  The subgroup  generated by $\sigma$ is  the {\it rotation}  subgroup of 
  $G$ and  for $0 \leq i \leq m-1$,  $\sigma^i \tau$ are the  {\it reflections}. 
  
  Let $F_0$ be a field and $E/F_0$  a field  extension. We say that $E/F_0$ is a {\it dihedral extension} if 
  $E/F_0$ is  Galois with Galois group isomorphic to a dihedral group.  In this section we prove some basic facts about dihedral extensions. 
  
   \begin{lemma}
 \label{subextndihedral}
 Let  $F_0$ be  a  field   and $E/F_0$ a  dihedral extension. Let $F$ be the fixed of the rotation subgroup of Gal$(E/F_0)$. 
 If $M/F$ is a  sub extension of $E/F$ with $M \neq F$, then $M/F_0$ is a dihedral extension. 
 \end{lemma}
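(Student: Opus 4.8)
The plan is to translate the statement into pure group theory via the Galois correspondence. Write $G = \mathrm{Gal}(E/F_0)$, a dihedral group of order $2m$ with generators $\sigma,\tau$ satisfying $\sigma^m = 1$, $\tau^2 = 1$, $\tau\sigma\tau = \sigma^{-1}$, so that its rotation subgroup is $C = \langle\sigma\rangle$. By the definition of $F$ we have $\mathrm{Gal}(E/F) = C$ and $[F:F_0] = 2$. Since $E/F$ is Galois (cyclic of order $m$), a subextension $M$ of $E/F$ corresponds to a subgroup $H \le C$, namely $H = \mathrm{Gal}(E/M)$ and $M = E^H$, with $[M:F] = [C:H] =: k$; the hypothesis $M \neq F$ means $H \subsetneq C$, i.e.\ $k \ge 2$.

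First I would check that $M/F_0$ is Galois, that is, $H \trianglelefteq G$. This is immediate: $H$ is a subgroup of the cyclic group $C$, hence characteristic in $C$, and $C \trianglelefteq G$ (it has index $2$); therefore $H \trianglelefteq G$. Consequently $M = E^H$ is Galois over $F_0$ with $\mathrm{Gal}(M/F_0) \cong G/H$, a group of order $|G|/|H| = 2[C:H] = 2k$.

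It then remains to identify $G/H$ as the dihedral group of order $2k$. Let $\bar\sigma,\bar\tau$ be the images of $\sigma,\tau$ in $G/H$. They generate $G/H$ and satisfy $\bar\tau^{2} = \bar 1$ and $\bar\tau\bar\sigma\bar\tau^{-1} = \overline{\sigma^{-1}} = \bar\sigma^{-1}$. Moreover $\langle\bar\sigma\rangle = C/H$ is cyclic of order $k$, and $\bar\tau \notin C/H$ because $\tau \notin C \supseteq H$, so $\langle\bar\sigma\rangle$ has index $2$ in $G/H$. Hence $\bar\sigma$ and $\bar\tau$ satisfy the defining presentation of the dihedral group of order $2k$, and a comparison of orders shows that the resulting surjection from that dihedral group onto $G/H$ is an isomorphism. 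Since $k \ge 2$ we have $2k \ge 4$, so $M/F_0$ is a dihedral extension, as claimed.

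The argument is elementary and I do not anticipate a genuine obstacle; the one point to keep in mind is the degenerate case $k = 2$ (for instance when $M$ is the compositum of $F$ with a quadratic subextension of $E/F$), where ``dihedral of order $4$'' is to be read as the Klein four-group $\Z/2\Z \times \Z/2\Z$ — consistent with the convention $2m \ge 4$ fixed at the start of this section.
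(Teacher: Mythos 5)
Your proof is correct and takes essentially the same approach as the paper's: pass to the Galois-theoretic picture, verify that the subgroup $H = \mathrm{Gal}(E/M)$ is normal in $G$, and then read off the dihedral presentation of $G/H$ from the images of $\sigma$ and $\tau$. The only cosmetic difference is that you establish normality of $H$ by the characteristic-in-normal argument, while the paper writes $H = \langle\sigma^i\rangle$ and computes $\tau\sigma^i\tau = \sigma^{-i}$ directly; these are the same observation.
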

 
 \begin{proof}    Let Gal$(E/F_0)$ be generated by $\sigma$ and $\tau$ with $\sigma^m = 1$, $\tau^2 = 1$ and 
 $\tau \sigma \tau = \sigma^{-1}$. Then $E/F$ is cyclic  with Gal$(E/F)$ generated by $\sigma$.
 Let $M/F$ is a sub extension of $E/F$. The extension  $M/F$ is cyclic with Gal$(M/F)$ generated by the restriction of $\sigma$ to 
 $M$. Since  $M = E^{\sigma^i}$ for some $i$ and $\tau \sigma^i \tau = \sigma^{-i}$,   the extension 
 $M/F_0$ is Galois with the Gal$(M/F_0)$ generated by the restriction of $\sigma$ and $\tau$ to $M$.
 Since $M \neq F$, the restriction of $\sigma$ to $M$ is nontrivial. Since $F \subset M$, the restriction of $\tau$ to $M$ is nontrivial.
 Hence $M/F_0$ is dihedral. 
 \end{proof}

  \begin{lemma}
  \label{subextndihedral-notgalois} Let $E/F_0$ be a dihedral extension and $F$ the fixed field of the rotation subgroup of 
  Gal$(E/F_0)$. Let $F_0\subseteq L  \subseteq E$ with   $F \not\subset L$. If   $L/F_0$  is Galois, 
   then   $[L : F_0] \leq 2$. 
  \end{lemma}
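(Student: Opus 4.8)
The plan is to pass to group theory through the Galois correspondence and then make an elementary count. Write $G=\mathrm{Gal}(E/F_0)$, a dihedral group of order $2m$ with generators $\sigma,\tau$ as in \S\ref{dihedral}, and let $R=\langle\sigma\rangle$ be the rotation subgroup, so that $F=E^R$. The intermediate field $L$ corresponds to a subgroup $H=\mathrm{Gal}(E/L)\le G$; since $L/F_0$ is Galois, $H$ is normal in $G$, and since the Galois correspondence reverses inclusions, the hypothesis $F\not\subset L$ translates into $H\not\subseteq R$. As $G\smallsetminus R$ is exactly the set of reflections $\sigma^i\tau$, this says precisely that the normal subgroup $H$ contains a reflection. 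What we must prove, $[L:F_0]\le 2$, is the same as $[G:H]\le 2$, i.e.\ $|H|\ge m$.

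So the heart of the matter is the purely group-theoretic claim: \emph{a normal subgroup $H$ of a dihedral group $G$ of order $2m$ that contains a reflection has index at most $2$.} First I would use normality to produce a rotation in $H$: conjugating a reflection $\sigma^i\tau\in H$ by $\sigma$ and using $\tau\sigma^{-1}=\sigma\tau$ gives $\sigma^{i+2}\tau\in H$, hence $\sigma^2=(\sigma^{i+2}\tau)(\sigma^i\tau)^{-1}\in H$, so $\langle\sigma^2\rangle\subseteq H$. If $m$ is odd then $\langle\sigma^2\rangle=R$, so $H$ strictly contains $R$ (it also contains $\sigma^i\tau\notin R$) and therefore $H=G$. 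If $m$ is even, then $H$ contains the $m/2$ rotations $\sigma^{2k}$ ($0\le k<m/2$) together with the $m/2$ reflections $(\sigma^i\tau)\sigma^{2k}=\sigma^{i-2k}\tau$; these are $m$ distinct elements, so $|H|\ge m$ and $[G:H]\le 2$, as wanted. (Equivalently, one may phrase this via conjugacy classes: a normal subgroup containing a reflection contains an entire conjugacy class of reflections, hence either all $m$ reflections, forcing $H=G$, or $m/2$ of them together with $\langle\sigma^2\rangle$.)

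I do not expect any serious obstacle: once the Galois dictionary is set up, everything reduces to bookkeeping with the dihedral relations $\tau\sigma^j=\sigma^{-j}\tau$, and the only mild point of care is the degenerate case $m=2$, where $G$ is the Klein four-group but the rotation subgroup $R=\langle\sigma\rangle$ and the notion of reflection still make sense and the same computation goes through (here $\langle\sigma^2\rangle$ is trivial and $H$ contains just the one reflection together with the identity, giving $|H|\ge 2=m$). The argument is a short companion to the computation underlying Lemma~\ref{subextndihedral}.
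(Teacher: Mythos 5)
Your proof is correct, and it takes a genuinely different route from the paper's. You reduce immediately to the group-theoretic statement that a normal subgroup $H$ of a dihedral group $G$ of order $2m$ containing a reflection must have index at most $2$, and you prove it directly by conjugating a reflection by $\sigma$ to manufacture $\sigma^2 \in H$ and then counting. The paper instead considers the compositum $M = FL$: by its Lemma \ref{subextndihedral}, $M/F_0$ is dihedral, while on the other hand $\mathrm{Gal}(M/F_0) \cong \mathrm{Gal}(L/F_0) \times \mathrm{Gal}(F/F_0)$ since $L$ and $F$ are linearly disjoint Galois extensions of $F_0$; it then invokes the fact that the only dihedral group expressible as a direct product of two nontrivial subgroups is $\mathbb{Z}/2 \times \mathbb{Z}/2$, forcing $[L:F_0] = 2$. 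The two approaches appeal to complementary facts about dihedral groups — yours about normal subgroups meeting the set of reflections, the paper's about direct-product decompositions — and your argument has the small advantage of being self-contained (not relying on Lemma \ref{subextndihedral}), while the paper's fits more tightly into the surrounding development by reusing that lemma and avoiding explicit element computations.
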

  
  \begin{proof} Suppose that  $F \not\subset L$ and $L/F_0$ is Galois. Let $M = FL$.
  Suppose that $L \neq F_0$.  Then $M \neq F$ and hence  $M/F_0$  dihedral (\ref{subextndihedral}). 
   Since $F \not\subset L$, $[M : F] = [L : F_0]$. 
  Since $L/F_0$  and $F/F_0$ are Galois   extensions, $M/F_0$ is Galois with Gal$(M/F_0)$ isomorphic to Gal$(L/F_0) \times $Gal$(F/F_0)$.
  Since the only  dihedral group which is isomorphic to a direct product of two nontrivial subgroups is $\Z/2 \times \Z/2$,   $[L : F_0]  = 2$.   
  \end{proof}

\begin{lemma}
\label{cores1}
Let $F_0$ be a field and $E/F_0$ a dihedral extension of degree $2m$ and $F$ the fixed field of the rotation subgroup of Gal$(E/F_0)$.
 Then there exist exactly $m$   subfields  $E'$ of $E$ containing $F_0$ with 
$[E' : F_0] = [E : F]$ and $E'F = E$. 
Further if $E'$ is  any such  subfield  of $E$ and $\ell_1$, $\ell_2$, $\cdots$, $\ell_r$ is any sequence of prime numbers 
with $[E : F] = \ell_1 \cdots \ell_r$,    then 
there   exist subfields 
  $F_0 = L_0 \subset L_1 \subset \cdots \subset L_r = E'$ with $[L_i : L_{i-1}] = \ell_i$.  
 \end{lemma}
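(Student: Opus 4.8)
The plan is to pass to the Galois group and argue entirely with subgroups of the dihedral group $G = \mathrm{Gal}(E/F_0)$ via the Galois correspondence. Write $G = \langle\sigma,\tau\rangle$ with $\sigma^m = \tau^2 = 1$ and $\tau\sigma\tau = \sigma^{-1}$, and let $R = \langle\sigma\rangle$ be the rotation subgroup, so that $F = E^R$, $[E:F] = m$ and $[F:F_0] = 2$. A field $F_0 \subseteq E' \subseteq E$ is of the form $E' = E^H$ for a unique subgroup $H \le G$, and under this correspondence the condition $[E':F_0] = [E:F] = m$ translates to $|H| = 2$, while $E'F = E^H E^R = E^{H\cap R}$, so the condition $E'F = E$ translates to $H \cap R = \{1\}$. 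A subgroup of order $2$ is $\langle g\rangle$ with $g$ of order $2$, and such a $g$ avoids $R$ precisely when $g$ is a reflection, i.e. $g = \sigma^i\tau$ with $0 \le i \le m-1$; the $m$ elements $\sigma^i\tau$ are distinct and generate $m$ distinct subgroups $\langle\sigma^i\tau\rangle$. Hence there are exactly $m$ subfields $E'$ with the stated properties, namely the fixed fields $E^{\langle\sigma^i\tau\rangle}$.

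For the second assertion, fix one such $E' = E^H$ with $H = \langle\sigma^j\tau\rangle$. First I would determine all subgroups of $G$ containing $H$: I claim these are exactly the subgroups $D_e := \langle\sigma^{m/e},\sigma^j\tau\rangle$ of order $2e$, one for each divisor $e$ of $m$, with rotation part $\langle\sigma^{m/e}\rangle$, ordered by $D_e \subseteq D_{e'} \iff e \mid e'$. Indeed, if $K \supseteq H$ then $K$ contains the reflection $\sigma^j\tau$, so $K \not\subseteq R$; writing $e = |K\cap R|$ we have $e \mid m$ and $K\cap R = \langle\sigma^{m/e}\rangle$, and since $\sigma^i\tau\cdot(\sigma^j\tau) = \sigma^{i-j} \in K\cap R$ for every reflection $\sigma^i\tau \in K$, we get $K = (K\cap R)\cup (K\cap R)\sigma^j\tau = D_e$. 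Now given primes $\ell_1,\dots,\ell_r$ with $m = \ell_1\cdots\ell_r$, set $e_i := m/(\ell_1\cdots\ell_i)$ for $0 \le i \le r$, so that $e_0 = m$, $e_r = 1$, each $e_i$ divides $m$, $e_i \mid e_{i-1}$ and $e_{i-1}/e_i = \ell_i$. Then $G = D_{e_0} \supsetneq D_{e_1} \supsetneq \cdots \supsetneq D_{e_r} = H$ with $[D_{e_{i-1}} : D_{e_i}] = e_{i-1}/e_i = \ell_i$, and setting $L_i := E^{D_{e_i}}$ yields subfields $F_0 = L_0 \subset L_1 \subset \cdots \subset L_r = E'$ with $[L_i : L_{i-1}] = \ell_i$, as required. (Here $D_{e_0} = \langle\sigma,\sigma^j\tau\rangle = G$ since $\sigma^{-j}(\sigma^j\tau) = \tau$, and $D_{e_r} = \langle 1,\sigma^j\tau\rangle = H$.)

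The whole argument is elementary group theory once the Galois correspondence is in place; the only points that need a moment's care are the two dictionary entries, that $[E':F_0]=[E:F]$ corresponds to $|H|=2$ and that $E'F = E$ corresponds to $H\cap R = \{1\}$, together with the classification of the subgroups of $G$ containing a prescribed reflection $\langle\sigma^j\tau\rangle$. None of these presents a genuine obstacle, so I expect the proof to be short.
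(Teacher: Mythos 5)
Your proof is correct and takes essentially the same approach as the paper: both parts rest on the Galois correspondence, identifying the $m$ intermediate fields as the fixed fields of the reflections $\sigma^i\tau$, and the chain $L_0\subset\cdots\subset L_r$ you produce via the descending chain of dihedral subgroups $D_{e_i}=\langle\sigma^{\ell_1\cdots\ell_i},\sigma^j\tau\rangle$ is literally the same tower the paper obtains by setting $L_j = E'\cap M_j$ for a cyclic tower $F=M_0\subset\cdots\subset M_r=E$, since $E'\cap M_j = E^{\langle\sigma^j\tau\rangle}\cap E^{\langle\sigma^{\ell_1\cdots\ell_j}\rangle} = E^{D_{e_j}}$. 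Your version is slightly more explicit in classifying the subgroups of $G$ containing $\langle\sigma^j\tau\rangle$, which makes the degree computation $[L_i:L_{i-1}]=\ell_i$ immediate rather than left to the reader, but there is no substantive difference.
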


\begin{proof}  Let $\sigma$ be a generator of the rotation subgroup of Gal$(E/F_0)$ and 
$\tau$ a reflection.    For  $0 \leq i\leq m-1$ $i$,  
let  $E_i = E^{\tau\sigma^i}$ the subfield of $E$ fixed by $\tau\sigma^i$. 
Then $[E : E_i] = 2$,  $[E_i : F_0] = m$ and $E_i F = E$. 
Since only elements of  order 2 in Gal$(E/F_0)$ which are not identity on $F$ are 
the reflections $\tau\sigma^i$, $0 \leq i \leq m-1$, any  $E'$ with the given properties 
coincides with $E_i$ for some $i$.

Let  $E' = E_i$ for some $i$.  
Suppose $m = \ell_1 \cdots \ell_r$ with $\ell_j$'s primes. 
Since $E/F$ is a cyclic extension, there exist subfields $F = M_0 \subset M_1  \subset \cdots \subset M_r = E$
such that $[M_j : M_{j-1} ] = \ell_i$ for all $i$. 
Then $L_j = E' \cap M_j$ have the required property.  
 \end{proof}

\begin{lemma}
\label{dihedral-norm1}  Let $F_0$ be a field   and $F/F_0$ a quadratic Galois extension. 
Let $m  \geq 2$  be coprime to char$(F_0)$.  Suppose that $F$ contains a primitive $m^{\rm th}$ root of unity $\rho$.
Let $a \in F_0^*$.  Suppose that  $[F(\sqrt[m]{a}) : F] = m$. 
 Then $ F(\sqrt[m]{a})/F_0$ is dihedral   if and only if   $N_{F/F_0}(\rho) = 1$.   
\end{lemma}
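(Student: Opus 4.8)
The plan is to analyze the Galois theory of $E:=F(\sqrt[m]{a})$ over $F_0$ directly: I would compute how a lift of the nontrivial automorphism of $F/F_0$ acts on a chosen $m$-th root of $a$, and read off the isomorphism type of $\mathrm{Gal}(E/F_0)$. To set up, put $\alpha=\sqrt[m]{a}$, so $\alpha^m=a\in F_0^*$. Since $F$ contains the primitive $m$-th root of unity $\rho$ and $[F(\alpha):F]=m$, Kummer theory shows $E/F$ is cyclic of degree $m$; choose a generator $\sigma$ of $\mathrm{Gal}(E/F)$ with $\sigma(\alpha)=\rho\alpha$. Moreover $E/F_0$ is Galois: it is separable since $m$ is coprime to $\mathrm{char}(F_0)$ and $F/F_0$ is separable, and it is normal because $E$ is a splitting field over $F_0$ — the polynomial $X^m-a\in F_0[X]$ splits in $E$ (its roots $\rho^i\alpha$ lie in $E$ as $\rho\in F\subseteq E$), and $F/F_0$, being Galois, is the splitting field over $F_0$ of a separable polynomial, so $E$ is the splitting field of their product. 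Hence $|\mathrm{Gal}(E/F_0)|=[E:F_0]=2m$ and $\langle\sigma\rangle=\mathrm{Gal}(E/F)$ is a normal subgroup of index $2$.

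The key computation is as follows. Let $\tau_0$ be the nontrivial element of $\mathrm{Gal}(F/F_0)$ and choose $\tilde{\tau}\in\mathrm{Gal}(E/F_0)$ restricting to $\tau_0$. Since $\rho\in F$ and $\tau_0^2=1$, we have $\tau_0(\rho)=\rho^s$ for some $s$ with $s^2\equiv 1\pmod m$, and $N_{F/F_0}(\rho)=\rho\,\tau_0(\rho)=\rho^{1+s}$; thus $N_{F/F_0}(\rho)=1$ if and only if $s\equiv-1\pmod m$. Since $a\in F_0^*$, we get $\tilde{\tau}(\alpha)^m=\tilde{\tau}(a)=a$, so $\tilde{\tau}(\alpha)=\rho^t\alpha$ for some $t$. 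Evaluating the automorphisms $\tilde{\tau}\sigma$ and $\sigma^s\tilde{\tau}$ on $\alpha$ gives $\rho^{s+t}\alpha$ in both cases, and both fix $F$; since $E=F(\alpha)$, this yields $\tilde{\tau}\sigma\tilde{\tau}^{-1}=\sigma^s$. Similarly $\tilde{\tau}^2(\alpha)=\rho^{t(s+1)}\alpha$ and $\tilde{\tau}^2$ fixes $F$, so $\tilde{\tau}^2=\sigma^{t(s+1)}$.

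It remains to assemble the equivalence. If $N_{F/F_0}(\rho)=1$, i.e. $s\equiv-1\pmod m$, then $\tilde{\tau}\sigma\tilde{\tau}^{-1}=\sigma^{-1}$ and $\tilde{\tau}^2=\sigma^{t(s+1)}=1$. Since $\sigma$ has order exactly $m$ and $\tilde{\tau}\notin\langle\sigma\rangle$, the subgroup $\langle\sigma,\tilde{\tau}\rangle$ strictly contains $\langle\sigma\rangle$, hence equals $\mathrm{Gal}(E/F_0)$; and since the relations $\sigma^m=\tilde{\tau}^2=1$, $\tilde{\tau}\sigma\tilde{\tau}=\sigma^{-1}$ define a group of order at most $2m$ while $|\mathrm{Gal}(E/F_0)|=2m$, the group $\mathrm{Gal}(E/F_0)$ is the dihedral group of order $2m$, so $E/F_0$ is dihedral. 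Conversely, suppose $E/F_0$ is dihedral, so $\mathrm{Gal}(E/F_0)$ is dihedral of order $2m$ with rotation subgroup of order $m$. If $m=2$, then $\rho=-1\in F_0$, so $\tau_0(\rho)=\rho$, $s=1\equiv-1\pmod 2$, and $N_{F/F_0}(\rho)=1$. If $m\geq 3$, every element of order $m$ in a dihedral group of order $2m$ lies in the rotation subgroup (the reflections have order $2<m$); since $\sigma$ has order $m$, $\langle\sigma\rangle$ must be the rotation subgroup, so $\tilde{\tau}$ is a reflection and $\tilde{\tau}\sigma\tilde{\tau}^{-1}=\sigma^{-1}$. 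Comparing with $\tilde{\tau}\sigma\tilde{\tau}^{-1}=\sigma^s$ forces $s\equiv-1\pmod m$, i.e. $N_{F/F_0}(\rho)=1$.

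The only slightly delicate point is the converse: one must invoke the elementary fact that, for $m\geq 3$, a cyclic subgroup of order $m$ in a dihedral group of order $2m$ is necessarily its rotation subgroup — so that conjugation by $\tilde{\tau}$ genuinely inverts $\sigma$ — and the degenerate case $m=2$ (where the dihedral group of order $4$ is the Klein four group, and $N_{F/F_0}(\rho)=1$ automatically) has to be handled on its own.
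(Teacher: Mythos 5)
Your proof is correct and follows essentially the same route as the paper's: both identify $\mathrm{Gal}(E/F_0)$ as generated by $\sigma$ and a lift of the nontrivial automorphism of $F/F_0$, and then compute the conjugation relation to see that it is dihedral exactly when $N_{F/F_0}(\rho)=\tau_0(\rho)\rho=1$. The only difference is that the paper streamlines the computation by taking the specific lift $\tau$ that fixes $\sqrt[m]{a}$ (the nontrivial automorphism of $E/F_0(\sqrt[m]{a})$), so that $\tau^2=1$ holds automatically, whereas you work with an arbitrary lift $\tilde\tau$ and must track the extra exponent $t$ and verify $\tilde\tau^2=\sigma^{t(s+1)}=1$; your treatment is also somewhat more explicit about the group-theoretic fact that for $m\ge 3$ the cyclic subgroup of order $m$ in a dihedral group of order $2m$ is unique.
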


\begin{proof}  Let $E  = F(\sqrt[m]{a})$ and  $E' = F_0(\sqrt[m]{a})$.    Since $a \in F_0^*$, we have $E = E'F$.
Since $[E: F_0] = 2m$, we have  $[E : E'] = 2$. Let $\sigma$ be the automorphism of $F(\sqrt[m]{a})$ given by $\sigma(\sqrt[m]{a}) = \rho  \sqrt[m]{a}$ and
 $\tau $   the nontrivial automorphism of $E/E'$.
Since $\tau$ is nontrivial on $F$, it follows that $\tau \neq \sigma^i $ for any $i$. 
Hence $E/F_0$ is Galois and Gal$(E/F_0)$ is generated by $\sigma$ and $\tau$. 
 Since  the order of $\sigma$ is $m$ and $\tau^2 = 1$, Gal$(E/F_0)$ is dihedral if and only if $\tau \sigma \tau = \sigma^{-1}$.

We have $\tau \sigma \tau (\sqrt[m]{a} ) = \tau \sigma(\sqrt[m]{a}) = 
\tau (\rho \sqrt[m]{a}) = \tau(\rho) \sqrt[m]{a} =   \tau(\rho) \rho \rho^{-1}   \sqrt[m]{a}  = 
 \tau(\rho) \rho  \sigma^{-1} (\sqrt[m]{a})$. 
 Hence $\tau\sigma \tau = \sigma^{-1}$ if and only if $ N_{F/F_0}(\rho )  = \tau(\rho ) \rho  = 1$. 
 \end{proof}

We end this section with the following:

 \begin{lemma} 
 \label{ei-split} Let $F_0$ be a  field   and   $n \geq 2$  an integer  with $2n$ coprime to  char$(F_0)$.   
 Let $E/F_0$ be a dihedral extension of degree $2n$ and $\sigma$ and $\tau$ generators on Gal$(E/F_0)$
 with $\sigma$ a rotation and $\tau$ a reflection. 
 Let  $F = E^\sigma$ and  $E_i = E^{\sigma^i \tau}$ for $1 \leq i \leq n$. 
 Let $M/F_0$ be a field extension. 
 Suppose $F \otimes_{F_0} M $ is a field and $E\otimes _{F_0}  M$ is isomorphic to 
  $\prod_1^n (F \otimes_{F_0} M)$.
  Then there exists $i$   such that $E_i \otimes_{F_0} M \simeq  M  \times E'_i$ for  some $M$-algebra 
  $E'_i$. 
 \end{lemma}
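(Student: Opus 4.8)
The plan is to translate both hypotheses into Galois theory over $F_0$ and then read off the right reflection. Write $\mathrm{Gal}(E/F_0)=\langle\sigma,\tau\rangle$ as in the statement, and put $L=F\otimes_{F_0}M$; by hypothesis $L$ is a field, and since $F/F_0$ is quadratic Galois, $L/M$ is a quadratic Galois extension with $\mathrm{Gal}(L/M)=\{1,\tau_L\}$, where $\tau_L$ is obtained from the nontrivial element of $\mathrm{Gal}(F/F_0)$ by base change to $M$. Since $E/F$ is Galois of degree $n$, we have $E\otimes_{F_0}M=E\otimes_F L$, an étale $L$-algebra of degree $n$, and the hypothesis $E\otimes_{F_0}M\cong\prod_1^n(F\otimes_{F_0}M)=L^n$ says precisely that this algebra is split over $L$. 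Equivalently, every $F$-algebra embedding of $E$ into an algebraic closure of $L$ lands inside $L$; and because $E/F$ is normal these embeddings all have one common image $\widehat E\subseteq L$, which via any one of them is $F_0$-isomorphic to $E$ (so $\widehat E/F_0$ is Galois with group $\mathrm{Gal}(E/F_0)$) and which contains $F$.

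The heart of the argument is to show that $\tau_L$ stabilizes $\widehat E$ and restricts there to an automorphism of order $2$. Because $F\otimes_{F_0}M$ is a field, there is no $F_0$-embedding $F\hookrightarrow M$; since $F\subseteq\widehat E$, this forces $\widehat E\not\subseteq M$, so the compositum $\widehat EM$ (inside $L$) strictly contains $M$ and hence equals $L$, as $[L:M]=2$. As $\widehat E/F_0$ is Galois, so is $\widehat E/(\widehat E\cap M)$, and the restriction map $\mathrm{Gal}(L/M)=\mathrm{Gal}(\widehat EM/M)\to\mathrm{Gal}(\widehat E/(\widehat E\cap M))$ is an isomorphism. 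Therefore $[\widehat E:\widehat E\cap M]=2$, the group $\mathrm{Gal}(L/M)$ preserves $\widehat E$, and $\tau_L$ restricts to the nontrivial automorphism of $\widehat E$ over $\widehat E\cap M$, so that $\widehat E^{\tau_L}=\widehat E\cap M$.

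Now fix an $F$-algebra isomorphism $\phi\colon E\xrightarrow{\ \sim\ }\widehat E$ and set $g:=\phi^{-1}\circ(\tau_L|_{\widehat E})\circ\phi\in\mathrm{Gal}(E/F_0)$. Then $g$ has order $2$ and $E^g=\phi^{-1}(\widehat E\cap M)$. Since $\phi$ restricts on $F$ to the standard inclusion $F\hookrightarrow L$ while $\tau_L|_F$ is nontrivial, $g$ acts on $F\subseteq E$ by the nontrivial element of $\mathrm{Gal}(F/F_0)$; hence $g\notin\langle\sigma\rangle$. An element of order $2$ in $\mathrm{Gal}(E/F_0)$ lying outside the rotation subgroup is a reflection (the only non-reflection of order $2$, when $n$ is even, is $\sigma^{n/2}\in\langle\sigma\rangle$), so $g=\sigma^k\tau$ for some $k$, which we may take in $\{1,\dots,n\}$. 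Consequently $E_k=E^{\sigma^k\tau}=E^g=\phi^{-1}(\widehat E\cap M)$, so $\phi$ restricts to an $F_0$-algebra embedding $E_k\hookrightarrow M$. The induced multiplication map $E_k\otimes_{F_0}M\to M$ is then a surjection of $M$-algebras; since $E_k/F_0$ is separable, $E_k\otimes_{F_0}M$ is étale over $M$, so this surjection exhibits $M$ as a direct factor, giving $E_k\otimes_{F_0}M\cong M\times E_k'$. Taking $i=k$ completes the proof.

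The step I expect to require the most care is the middle one: that $\tau_L$ stabilizes $\widehat E$ with fixed field of index $2$. This is where both hypotheses enter — splitness of $E\otimes_{F_0}M$ to produce $\widehat E\subseteq L$, and $F\otimes_{F_0}M$ being a field to produce $\widehat E\not\subseteq M$ — together with the normality of $\widehat E/F_0$ and the compositum identity $\mathrm{Gal}(\widehat EM/M)\cong\mathrm{Gal}(\widehat E/(\widehat E\cap M))$. The remaining points — classifying the order-$2$ elements of $\mathrm{Gal}(E/F_0)$ outside $\langle\sigma\rangle$, and using an $F_0$-point of $E_k$ in $M$ to split off a copy of $M$ from $E_k\otimes_{F_0}M$ — are routine.
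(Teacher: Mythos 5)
Your proof is correct, and it takes a genuinely different route from the paper. The paper argues by induction on $n$, with separate treatments of the odd and even cases: for $n$ odd it observes that $E_i\otimes_{F_0}M$ is forced to decompose as $\prod_1^r M\times\prod_1^s (F\otimes_{F_0}M)$ with $r+2s=n$, so $r\geq 1$ for \emph{every} $i$; for $n$ even it locates a quadratic subextension $F_1/F_0$ of $E$ with $F_1\neq F$ and $F_1\otimes_{F_0}M\simeq M\times M$, then descends to the dihedral extension $E/F_1$ of degree $n$ and invokes the inductive hypothesis. Your argument is direct and inductive-free: the splitting hypothesis $E\otimes_{F_0}M\simeq L^n$ is read as saying that $E$ has a distinguished copy $\widehat{E}\subseteq L$, the hypothesis that $F\otimes_{F_0}M$ is a field forces $\widehat{E}M=L$, the compositum isomorphism $\mathrm{Gal}(L/M)\xrightarrow{\sim}\mathrm{Gal}(\widehat{E}/\widehat{E}\cap M)$ exhibits $\tau_L|_{\widehat{E}}$ as an order-$2$ element of $\mathrm{Gal}(\widehat{E}/F_0)$ acting nontrivially on $F$, hence (transported through $\phi$) a reflection $\sigma^k\tau$, and its fixed field $E_k$ embeds $F_0$-linearly into $M$, which splits off a factor of $M$ from the \'etale algebra $E_k\otimes_{F_0}M$. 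Each step checks out: the normality of $E/F$ makes $\widehat{E}$ well-defined, the normality of $\widehat{E}/F_0$ makes $\tau_L$ preserve $\widehat{E}$, and the classification of order-$2$ elements outside $\langle\sigma\rangle$ in a dihedral group is as you state. Your approach buys a uniform, conceptual proof with no case split on the parity of $n$; the paper's buys a more hands-on computation (and, in the odd case, the stronger conclusion that every $E_i$ works).
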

 
\begin{proof}  The proof is  by induction on $n$. 
Suppose that $n = 2$. Then $F = F_0(\sqrt{a})$, $E_1 = F_0(\sqrt{b})$,  $E_2 = F_0(\sqrt{ab})$  and 
$E = F(\sqrt{b})$.  Suppose that $M(\sqrt{a})  = F \otimes_{F_0}M$ is a field and 
$E\otimes_{F_0}M$ is not a field. Then  $a$ is not a square in $M$ and  $E \otimes _{F_0}M \simeq  M(\sqrt{a})  \times M(\sqrt{a})$.
Then  either $b$ is a square in $M$ or $ab$ is a square in $M$.
Thus either $E_1 \otimes_{F_0}M  \simeq M  \times M$ or 
$E_2 \otimes_{F_0} \simeq M  \times M$. 

Suppose $n \geq 3$. Suppose that $M(\sqrt{a}) = F \otimes_{F_0}M$ is a field 
and $E \otimes_{F_0} M  \simeq   \prod_1^n M(\sqrt{a})$. 

Suppose $n$ is odd.  
Since $E_i \otimes_{F_0} F \simeq E$ and  $F/F_0$ is of degree 2,
it follows that $E_i \otimes_{F_0}M  \simeq \prod_1^r M  \times \prod_1^s M(\sqrt{a})$.  
Since $[E_i : F_0] = n$   is odd, $ r \geq 1$.

Suppose that $n$ is even. Then, by (\ref{cores1}), there exists  a quadratic extension $F_1/F_0$ contained in 
$E$ and $F_1 \neq F$.  Let $F' = FF_1$. Then $F'/F_0$ is a biquadratic extension.
Hence there is a quadratic extension $F_2/F_0$ contained in $F'$ with $F \neq F_2$ and $F_1 \neq F_2$.
Further every quadratic extension of $F_0$ contained in $E$ is either $F$ or $F_1$ or $F_2$. 
 Since every $E_i$ contains a quadratic extension of $F_0$ (\ref{cores1}) and $F \not\subset E_i$, 
  half of $E_i$ contain $F_1$ and the remaining half of $E_i$ contain $F_2$. 
  Further $E/F_1$  and $E/F_2$ are dihedral extensions of  degree $n$. 

Since $E \otimes_{F_0}M  \simeq   \prod_1^n  M(\sqrt{a})$, 
$F' \otimes_{F_0}M \simeq    M(\sqrt{a}) \times M(\sqrt{a})$. Thus, by the case  $n = 2$, either $F_1 \otimes_{F_0} M 
\simeq M \times M$ or 
$F_2 \otimes_{F_0}M \simeq M  \times M $.  Without loss of generality, assume that 
$F_1 \otimes_{F_0} M 
\simeq M \times M$.   Then $F_1$ is isomorphic to a subfield of $M$ and hence $M/F_1$ is an extension of fields.

Since   $F' = F_1(\sqrt{a})$ and $a$ is not a square in $M$,  $F' \otimes_{F_1}M$ is a field. 
Since  $E\otimes_{F_0} M \simeq   E\otimes_{F_1}F_1 \otimes_{F_0}M  \simeq E\otimes_{F_1}(M \times M) 
\simeq E\otimes_{F_1}M \times E\otimes_{F_1}M$ and 
$E\otimes_{F_0}M \simeq \prod_1^n M(\sqrt{a})$,  it follows that 
  $E\otimes_{F_1} M  \simeq 
 \prod_1^{n/2} M(\sqrt{a})$. 
 Since $E/F_1$ is dihedral and $[E : F_1] < [E : F_0]$,   by induction,  there exists an $i$ such that  $E_i \otimes_{F_1}M  \simeq  M  \times E_i''$ for 
 some $M$-algebra $E''_i$.  
 We have  $E_i \otimes_{F_0} M  \simeq E_i \otimes_{F_1}F_1\otimes _{F_0} M
 \simeq E_i \otimes_{F_1} (M  \times M) \simeq  E_i\otimes_{F_1}M  \times E_i\otimes_{F_1}M  \simeq  M  \times E''_i \times E_i \otimes_{F_1}M$. Hence $E_i \otimes_{F_0}M \simeq M \times E_i'$  for some $E'_i$.  
\end{proof}

\section{Corestriction of Cyclic Extensions over Quadratic Extensions}
\label{cores}
In this section we realize cyclic extensions over quadratic extensions with corestriction   zero  as 
dihedral extensions. 

Let $K$ be a field and $A$ a Galois module over $K$. For $n \geq 0$, let $H^n(K, A)$ denote the $n^{\rm th}$ Galois cohomology group
with values in $A$ (cf. \cite[Ch. VI]{Neukirch}). 
 For an extension of fields $M/K$, let res$ = $ res$_{M/K}  : H^n(K, A) \to H^n(M, A)$  be  the restriction homomorphism and
 for a finite extension $L/K$, let cores $  = $ cores$_{L/K}  : H^n(L, A) \to H^n(K, A)$ be the corestriction homomorphism (cf. \cite[p. 47]{Neukirch}).

Let  $F_0$ be  a  field and $F/F_0$ a   Galois  extension of degree 2. 
Let $\tau_0$ be the nontrivial automorphism of 
$F/F_0$.   Let $\overline{F}$ be an  algebraic closure of $F$.   
 Let $\tilde{\tau} \in $ Gal$(\overline{F}/F_0)$ be  such that $\tilde{\tau}$ restricted to $F$ is $\tau_0$.  
 Since $\tilde{\tau} \not\in  $ Gal$(\overline{F}/F)$ and $[F  : F_0] = 2$, 
  Gal$(\overline{F}/F_0) = $  Gal$(\overline{F}/F) \cup  $ Gal$(\overline{F}/F) \tilde{\tau}$
 and $\tilde{\tau}^2 \in$ Gal$(\overline{F}/F)$. 
 Let  Hom$_c$(Gal$(\overline{F}/F), \Z/m\Z)$ be the group of continuous homomorphisms from 
 Gal$(\overline{F}/F)$ to $\Z/m\Z$ with profinite topology on Gal$(\overline{F}/F)$ and discrete topology on $\Z/m\Z$. 
 Since the action of Gal$(\overline{F}/F)$ on $\Z/m\Z$ is trivial, we have 
 $H^1(F, \Z/m\Z) \simeq$  Hom$_c$(Gal$(\overline{F}/F), \Z/m\Z)$.  The group Hom$_c($Gal$(\overline{F}/F), \Z/m\Z)$ also classifies 
 isomorphism classes of pairs $(E, \sigma)$ with $E/F$ a cyclic extension of degree dividing $m$ and $\sigma$ a generator of Gal$(E/F)$.

 \begin{lemma}
 \label{cores-cal}
Let $\phi \in $ Hom$_c($Gal$(\overline{F}/F), \Z/m\Z)$.  
Then cores$(\phi) : $ Gal$(\overline{F}/F_0) \to \Z/m\Z$ is the homomorphism given by 
cores$(\phi)(\theta) = \phi(\theta) + \phi(\tilde{\tau}\theta \tilde{\tau}^{-1} )$ for all $\theta \in $ Gal$(\overline{F}/F)$ 
and cores$(\phi)(\tilde{\tau}) = \phi(\tilde{\tau}^2)$. 
 \end{lemma}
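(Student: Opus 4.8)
The plan is to compute the corestriction explicitly using the standard description of corestriction on cochains via coset representatives, and then identify the resulting formula with the one in the statement. Recall that $\Gal(\overline{F}/F_0) = \Gal(\overline{F}/F) \sqcup \Gal(\overline{F}/F)\tilde\tau$, so $\{1, \tilde\tau\}$ is a set of right coset representatives of $\Gal(\overline{F}/F)$ in $\Gal(\overline{F}/F_0)$. First I would recall the cochain-level formula for corestriction (transfer) in degree $1$: for a homomorphism $\phi \colon \Gal(\overline{F}/F) \to \Z/m\Z$ (with trivial action), viewed as a $1$-cocycle, the corestricted cocycle on $\theta \in \Gal(\overline{F}/F_0)$ is obtained by summing, over the coset representatives $g \in \{1,\tilde\tau\}$, the value $\phi$ takes on the ``$\Gal(\overline{F}/F)$-part'' of $g\theta$ after reindexing cosets — concretely, $\cores(\phi)(\theta) = \sum_{g} \phi\big(g\,\theta\,\overline{g\theta}^{\,-1}\big)$, where $\overline{x}$ denotes the chosen representative of the coset $\Gal(\overline{F}/F)x$.

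Next I would evaluate this formula on the two types of elements appearing in the statement. For $\theta \in \Gal(\overline{F}/F)$: the coset of $\theta$ is $\Gal(\overline{F}/F)$ itself, so $\overline{\theta} = 1$; and the coset of $\tilde\tau\theta$ is $\Gal(\overline{F}/F)\tilde\tau$ (since $\tilde\tau\theta\tilde\tau^{-1} \in \Gal(\overline{F}/F)$, as $\Gal(\overline{F}/F)$ is normal in $\Gal(\overline{F}/F_0)$), so $\overline{\tilde\tau\theta} = \tilde\tau$. Hence the two summands are $\phi(1\cdot\theta\cdot 1^{-1}) = \phi(\theta)$ and $\phi(\tilde\tau\theta\cdot\tilde\tau^{-1}) = \phi(\tilde\tau\theta\tilde\tau^{-1})$, giving $\cores(\phi)(\theta) = \phi(\theta) + \phi(\tilde\tau\theta\tilde\tau^{-1})$ as claimed. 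For $\theta = \tilde\tau$: the coset of $1\cdot\tilde\tau = \tilde\tau$ is $\Gal(\overline{F}/F)\tilde\tau$, so $\overline{\tilde\tau} = \tilde\tau$, contributing $\phi(\tilde\tau\cdot(\tilde\tau)^{-1}) = \phi(1) = 0$; the coset of $\tilde\tau\cdot\tilde\tau = \tilde\tau^2$ is $\Gal(\overline{F}/F)$ (since $\tilde\tau^2 \in \Gal(\overline{F}/F)$), so $\overline{\tilde\tau^2} = 1$, contributing $\phi(\tilde\tau^2\cdot 1^{-1}) = \phi(\tilde\tau^2)$. Thus $\cores(\phi)(\tilde\tau) = \phi(\tilde\tau^2)$, as required. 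Finally I would note that these two evaluations determine $\cores(\phi)$ on all of $\Gal(\overline{F}/F_0)$, since every element is either in $\Gal(\overline{F}/F)$ or of the form $\theta\tilde\tau$ with $\theta \in \Gal(\overline{F}/F)$, and $\cores(\phi)$ is a homomorphism, so $\cores(\phi)(\theta\tilde\tau) = \cores(\phi)(\theta) + \cores(\phi)(\tilde\tau)$.

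The main obstacle — really the only subtlety — is getting the cochain-level transfer formula exactly right, including the correct handling of left versus right cosets and the precise placement of the representatives $\overline{g\theta}$, so that the verification that $\cores(\phi)$ is indeed a homomorphism (not merely a cocycle) comes out cleanly; here it does, because the $\Gal(\overline{F}/F)$-action on $\Z/m\Z$ is trivial and $\Gal(\overline{F}/F)$ is normal, so all the ``twisting'' terms that normally obstruct this collapse. One should also check the well-definedness — independence of the choice of $\tilde\tau$ lifting $\tau_0$ — but since any two lifts differ by an element of $\Gal(\overline{F}/F)$ and conjugation by such an element is an inner automorphism acting trivially on the abelian target after summing over the coset, the formula is unchanged; alternatively one can simply take the formula as the definition of $\cores$ in this low-degree case, which is what the reference \cite[p.~47]{Neukirch} provides, and the lemma is then a direct unwinding.
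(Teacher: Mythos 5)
Your proof is correct and is essentially the same argument as the paper's: the paper's entire ``proof'' of this lemma is a pointer to Neukirch--Schmidt--Wingberg, where the transfer (Verlagerung) description of $\mathrm{cores}$ on $H^1$ with trivial coefficients appears, and your computation is precisely the unwinding of that formula. Your evaluation of $\sum_{g\in\{1,\tilde\tau\}}\phi\bigl(g\theta\,\overline{g\theta}^{\,-1}\bigr)$ using the right-coset representatives $\{1,\tilde\tau\}$, in the two cases $\theta\in\mathrm{Gal}(\overline F/F)$ and $\theta=\tilde\tau$, is carried out correctly, so the proposal matches the paper's approach with the details filled in.
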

 
 \begin{proof} See   \cite[p 53]{Neukirch}.
 \end{proof}
 
 \begin{prop}
\label{galois} (cf. \cite[Proposition 24]{HKRT})
Let  $F_0$ be  a  field   and $F/F_0$ a quadratic Galois  field extension.
Let $E/F$ be a cyclic extension of degree  $m$ and $\sigma$ a generator of Gal$(E/F)$.
Then  cores$_{F/F_0}(E, \sigma)$ is zero if and only if  $E/F_0$ is dihedral extension.   
 \end{prop}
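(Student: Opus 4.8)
The plan is to use the explicit description of the corestriction map from Lemma~\ref{cores-cal} and translate it into conditions on the field extension $E/F_0$. Recall that $(E,\sigma)$ corresponds to a surjective $\phi \in \Hom_c(\Gal(\overline F/F),\Z/m\Z)$ with kernel $\Gal(\overline F/E)$. By Lemma~\ref{cores-cal}, $\cores(\phi)$ is the homomorphism on $\Gal(\overline F/F_0)$ determined by $\cores(\phi)(\theta) = \phi(\theta) + \phi(\tilde\tau\theta\tilde\tau^{-1})$ for $\theta\in\Gal(\overline F/F)$ and $\cores(\phi)(\tilde\tau) = \phi(\tilde\tau^2)$. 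So the first step is simply to spell out what it means for $\cores(\phi)$ to vanish: it forces $\phi(\tilde\tau^2)=0$ and $\phi(\tilde\tau\theta\tilde\tau^{-1}) = -\phi(\theta)$ for all $\theta\in\Gal(\overline F/F)$.

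Next I would argue the "only if" direction. Assume $\cores_{F/F_0}(E,\sigma)=0$. The condition $\phi(\tilde\tau\theta\tilde\tau^{-1})=-\phi(\theta)$ shows that $\ker\phi = \Gal(\overline F/E)$ is stable under conjugation by $\tilde\tau$; since it is obviously normal in $\Gal(\overline F/F)$, and $\Gal(\overline F/F_0)$ is generated by $\Gal(\overline F/F)$ together with $\tilde\tau$, it follows that $\Gal(\overline F/E)$ is normal in $\Gal(\overline F/F_0)$, i.e.\ $E/F_0$ is Galois. Put $H=\Gal(E/F_0)$; it contains the cyclic normal subgroup $\langle\bar\sigma\rangle=\Gal(E/F)$ of index $2$. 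Let $\bar\tau\in H$ be the image of $\tilde\tau$; then $\bar\tau^2=1$ because $\phi(\tilde\tau^2)=0$ means $\tilde\tau^2\in\Gal(\overline F/E)$, and the conjugation relation $\phi(\tilde\tau\theta\tilde\tau^{-1})=-\phi(\theta)$ translates, on picking $\theta$ with $\phi(\theta)=1$ (a lift of $\sigma$), into $\bar\tau\bar\sigma\bar\tau^{-1}=\bar\sigma^{-1}$. Hence $H$ is generated by $\bar\sigma$ of order $m$ and $\bar\tau$ of order $2$ with $\bar\tau\bar\sigma\bar\tau=\bar\sigma^{-1}$; since $|H|=2m$, $H$ is dihedral (covering the degenerate cases $m\le 2$ by the usual convention that $\Z/2\times\Z/2$ and $\Z/2$ count as dihedral, or just handling them directly).

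For the converse, suppose $E/F_0$ is dihedral with $\Gal(E/F_0)=\langle\bar\sigma,\bar\tau\rangle$, $\bar\sigma^m=\bar\tau^2=1$, $\bar\tau\bar\sigma\bar\tau=\bar\sigma^{-1}$, and $F=E^{\langle\bar\sigma\rangle}$; after replacing $\tilde\tau$ by a suitable element we may assume $\tilde\tau$ maps to $\bar\tau$ in $\Gal(E/F_0)$ — this is possible because any lift of $\tau_0\in\Gal(F/F_0)$ differs from $\tilde\tau$ by an element of $\Gal(\overline F/F)$, and among these we can choose one restricting to $\bar\tau$ on $E$. Then $\tilde\tau^2$ restricts to $\bar\tau^2=1$ on $E$, so $\phi(\tilde\tau^2)=0$; and for $\theta\in\Gal(\overline F/F)$ with restriction $\bar\sigma^j$ to $E$, the relation $\bar\tau\bar\sigma^j\bar\tau^{-1}=\bar\sigma^{-j}$ gives $\phi(\tilde\tau\theta\tilde\tau^{-1}) = -j = -\phi(\theta)$ (using that $\phi$ factors through $\Gal(E/F)\cong\Z/m\Z$ sending $\bar\sigma\mapsto 1$). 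By Lemma~\ref{cores-cal} these two facts give $\cores(\phi)=0$, as desired.

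The main obstacle, and the only place any care is needed, is the bookkeeping in the converse: one must choose the lift $\tilde\tau\in\Gal(\overline F/F_0)$ compatibly with the chosen reflection $\bar\tau\in\Gal(E/F_0)$, since the formula in Lemma~\ref{cores-cal} is stated for one fixed $\tilde\tau$ (and a priori $\tilde\tau^2$ need not be trivial on $E$ for an arbitrary lift). Once that compatibility is arranged the computation is immediate. A secondary point is to make sure the argument is not vacuous when $m=1$ or $m=2$; the cleanest route is to note that $m=1$ forces $E=F$ and the statement is trivial, while for $m=2$ "$E/F_0$ dihedral" should be read as "$E/F_0$ Galois with group $\Z/2\times\Z/2$", and one checks directly that $\cores$ of the nontrivial character vanishes iff the relevant lift squares into $\Gal(\overline F/E)$, which is exactly the biquadratic condition.
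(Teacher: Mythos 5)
Your argument is correct and follows essentially the same route as the paper: translate $(E,\sigma)$ into $\phi\in\mathrm{Hom}_c(\mathrm{Gal}(\overline F/F),\Z/m\Z)$, apply the explicit formula from Lemma~\ref{cores-cal}, and read off normality of $\mathrm{Gal}(\overline F/E)$, $\bar\tau^2=1$, and $\bar\tau\bar\sigma\bar\tau^{-1}=\bar\sigma^{-1}$ in the forward direction, reversing the computation in the converse. The one place you flag as delicate — choosing $\tilde\tau$ to restrict to the chosen reflection $\bar\tau$ on $E$ — is exactly what the paper does as well, and your treatment of the degenerate cases $m\le 2$ is a reasonable (and harmless) addition that the paper handles implicitly via its convention that $\Z/2\times\Z/2$ counts as dihedral.
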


\begin{proof}  Since $E/F$ is a cyclic extension with generator $\sigma$, 
we have an isomorphism $\phi_0 : $ Gal$(E/F) \to \Z/m\Z$ given by $\phi_0(\sigma^i) \to i \in \Z/m\Z$.
 Let $\phi : $ Gal$(\overline{F}/F) \to \Z/m\Z$  be the  composition   Gal$(\overline{F}/F) \to $    Gal$(E/F) \buildrel{\phi_0}\over{\to} \Z/m\Z$.  The pair 
  $(E, \sigma) $ corresponds to the element $\phi$ in   Hom$_c$(Gal$(\overline{F}/F), \Z/m\Z)$. 
  Then cores$(\phi) : $ Gal$(\overline{F}/F_0) \to \Z/m\Z$ is the homomorphism given by 
cores$(\phi)(\theta) = \phi(\theta) + \phi(\tilde{\tau} \theta \tilde{\tau}^{-1})$ for all $\theta \in $ Gal$(\overline{F}/F)$ 
and cores$(\phi)(\tilde{\tau}) = \phi(\tilde{\tau}^2)$ (cf. \ref{cores-cal}). 

Suppose cores$_{F/F_0}(E, \sigma)$ is the zero homomorphism. Then cores$(\phi) : $ Gal$(\overline{F}/F_0) \to \Z/m\Z$ is the zero homomorphism
Let $\theta \in $ Gal$(\overline{F}/F)$. 
Then  $0 = $ cores$( \phi)(\theta) =  \phi(\theta) + \phi(\tilde{\tau}\theta \tilde{\tau}^{-1})$  and hence  
$\phi(\tilde{\tau} \theta \tilde{\tau}^{-1}) = - \phi(\theta)$.  
Suppose $\theta \in $ Gal$(\overline{F}/E) \subseteq $ Gal$(\overline{F}/F)$. 
Since Gal$(\overline{F}/E)$ is the kernel of $\phi$,   $\phi(\tilde{\tau} \theta \tilde{\tau}^{-1} ) = - \phi(\theta) = 0$ and  hence 
$\tilde{\tau}\theta \tilde{\tau}^{-1} \in $ Gal$(\overline{F}/E)$. 
%Further Gal$(\overline{F}/E) $ is a normal subgroup of Gal$(\overline{F}/F)$. 
Since  Gal$(\overline{F}/F_0)$ is generated by Gal$(\overline{F}/F)$ and $\tilde{\tau}$, 
Gal$(\overline{F}/E)$ is a normal subgroup of Gal$(\overline{F}/F_0)$.  Hence $E/F_0$ is a Galois extension. 
 
Let us denote the restriction of $\tilde{\tau}$ to $E$ by $\tau$.
Since $\tau \sigma \tau^{-1} $ is identity on $F$ and $E/F$ is Galois, $\tau \sigma \tau^{-1} \in $ Gal$(E/F)$.
Let $\tilde{\sigma} \in $ Gal$(\overline{F}/F)$ with restriction to $E$ equal to $\sigma$.
Since $\phi(\tilde{\tau} \tilde{\sigma} \tilde{\tau} ^{-1})= - \phi(\tilde{\sigma}) = \phi(\tilde{\sigma}^{-1}) $, it follows that 
$\phi_0(\tau \sigma \tau^{-1} ) =  \phi_0(\sigma^{-1})$. Since $\phi_0$ is an isomorphism, 
$\tau \sigma \tau^{-1} = \sigma^{-1}$.  Since  $\phi(\tilde{\tau}^2) = $ cores$(\phi)(\tilde{\tau}) = 0$,
it follows that $\phi_0(\tau^2) = 0$.  Since $\phi_0$ is an isomorphism, $\tau^2 $ is the identity  on $E$.
Since Gal$(E/F_0)$ is generated by $\sigma$ and $\tau$, with $\sigma^m  = 1$, $\tau^2 = 1$ and $\tau \sigma \tau^{-1} = \sigma^{-1}$,
Gal$(E/F_0)$ is a dihedral group of order $2m$. 

Conversely, suppose Gal$(E/F_0)$ is a dihedral extension. Since the subgroup of Gal$(E/F_0)$ generated by $\sigma$ is of index 2,
Gal$(E/F_0)$ is generated by $\sigma$ and $\tau$  with $\tau^2 = 1$ and $\tau\sigma\tau^{-1} =\sigma^{-1}$. 
Since $\tau \neq \sigma^i$ for all $i$, $\tau$ is not identity on $F$. 
Let $\tilde{\tau}$ be an extension of $\tau$ to $\overline{F}$.
Then, we have cores$(\phi)(\theta) = \phi(\theta) + \phi(\tilde{\tau} \theta\tilde{\tau}^{-1})$ for all $\theta \in $ Gal$(\overline{F}/F)$
and cores$(\phi)(\tilde{\tau}) = \phi(\tilde{\tau}^2)$ (\ref{cores-cal}).
Let $\theta \in $ Gal$(\overline{F}/F)$. Since Gal$(E/F)$ is cyclic and generated by $\sigma$, 
$\theta$ restricted to $E$ is $\sigma^i$ for some $i$.  
Since $\tau \sigma^i \tau^{-1} = \sigma^{-i}$,    
$\theta \tilde{\tau}\theta \tilde{\tau}^{-1} \in $ Gal$(\overline{F}/E)$. 
Since  the kernel of $\phi$ is Gal$(\overline{F}/E)$,  
cores$(\phi)(\theta) = \phi(\theta) + \phi(\tilde{\tau} \theta\tilde{\tau}^{-1}) = 
\phi( \theta \tilde{\tau} \theta \tilde{\tau}^{-1}) = 0$ for all  $\theta \in $ Gal$(\overline{F}/F)$. 
Since $\tau^2$ is identity on $E$, $\tilde{\tau}^2 \in $ Gal$(\overline{F}/E)$ and hence
 cores$(\phi)(\tilde{\tau}) = \phi(\tilde{\tau}^2)= 0$.
 Since Gal$(\overline{F}/F_0)$ is generated by Gal$(\overline{F}/F)$ and $\tilde{\tau}$,
 cores$(\phi) = 0$.
\end{proof}

\begin{cor}
\label{cores-field-zero} Let $F_0$ be a field   and $F/F_0$ a quadratic Galois extension. 
Let $m  \geq 2$ be coprime to char$(F_0)$.  Suppose that $F$ contains a primitive $m^{\rm th}$ root of unity $\rho$.
Let $a \in F_0^*$.  Suppose that  $[F(\sqrt[m]{a}) : F] = m$. 
Let $\sigma$ be the automorphism of $F(\sqrt[m]{a})$ given by $\sigma(\sqrt[m]{a}) = \rho  \sqrt[m]{a}$. 
 Then cores$(F(\sqrt[m]{a}),\sigma)$ is  zero if and only if   $N_{F/F_0}(\rho) = 1$.   
\end{cor}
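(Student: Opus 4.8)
The plan is to deduce Corollary~\ref{cores-field-zero} by combining Proposition~\ref{galois} with Lemma~\ref{dihedral-norm1}, both of which are already available. Write $E = F(\sqrt[m]{a})$ and let $\sigma$ be the automorphism with $\sigma(\sqrt[m]{a}) = \rho\sqrt[m]{a}$. Since $[E:F] = m$ by hypothesis and $\rho$ is a primitive $m$-th root of unity lying in $F$, Kummer theory tells us that $E/F$ is cyclic of degree $m$ with $\sigma$ a generator of $\mathrm{Gal}(E/F)$, so the pair $(E,\sigma)$ defines a class in $H^1(F,\Z/m\Z)$ as in the setup of \S\ref{cores}.

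By Proposition~\ref{galois}, applied to this cyclic extension $E/F$ and the quadratic Galois extension $F/F_0$, the corestriction $\cores_{F/F_0}(E,\sigma)$ is zero if and only if $E/F_0$ is a dihedral extension. On the other hand, Lemma~\ref{dihedral-norm1} — whose hypotheses are exactly that $F/F_0$ is quadratic Galois, $m \geq 2$ is coprime to $\mathrm{char}(F_0)$, $\rho \in F$ is a primitive $m$-th root of unity, $a \in F_0^*$, and $[F(\sqrt[m]{a}):F] = m$, all of which hold here — asserts that $E/F_0 = F(\sqrt[m]{a})/F_0$ is dihedral if and only if $N_{F/F_0}(\rho) = 1$. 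Chaining the two equivalences gives $\cores(E,\sigma) = 0 \iff N_{F/F_0}(\rho) = 1$, which is the claim.

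The only point requiring a word of care is that the $\sigma$ appearing in Lemma~\ref{dihedral-norm1} is defined by the same formula $\sigma(\sqrt[m]{a}) = \rho\sqrt[m]{a}$ as the one in the corollary, so there is no ambiguity about which generator is used; and one should note that "$\cores_{F/F_0}(E,\sigma)$ is zero" as a class in $H^1(F_0,\Z/m\Z)$ is the same as the corestriction homomorphism $\cores(\phi)$ of Lemma~\ref{cores-cal} being the zero homomorphism, which is precisely the reformulation used in the proof of Proposition~\ref{galois}. I do not anticipate a genuine obstacle here: the corollary is essentially a repackaging of \ref{galois} and \ref{dihedral-norm1}, and the proof is a two-line citation. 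If anything, the mild bookkeeping point is making sure the degree hypothesis $[E:F]=m$ is what guarantees $(E,\sigma)$ is a genuine cyclic extension of degree exactly $m$ (rather than a proper quotient), so that both cited results apply verbatim.
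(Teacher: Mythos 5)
Your proof is correct and follows exactly the paper's own argument: the paper proves Corollary~\ref{cores-field-zero} by citing Proposition~\ref{galois} and Lemma~\ref{dihedral-norm1} and chaining the two equivalences, just as you do. The extra bookkeeping remarks you add (Kummer theory gives cyclicity of degree $m$, the same generator $\sigma$ appears in both cited results) are sound and harmless.
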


\begin{proof}   
The lemma follows from   (\ref{galois}) and (\ref{dihedral-norm1}). 
\end{proof}

\begin{lemma}
\label{norm12}
Let $F_0$ be a field of characteristic not 2 and $F/F_0$ a quadratic extension. 
Let $n  \geq 1$.  Let $\rho$ be a    $2^n$-th root of unity in $F$. 
Suppose that  $\sqrt{-1} \not\in F_0$.  Then $N_{F/F_0}(\rho) = \pm 1$. 
\end{lemma}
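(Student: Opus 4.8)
The plan is to show that $N_{F/F_0}(\rho)$ is a $2^n$-th root of unity which lies in $F_0$, and then that the only such roots of unity in $F_0$ are $\pm 1$, the point being that $\sqrt{-1}\notin F_0$.

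First I would record that, since $\mathrm{char}(F_0)\neq 2$, the quadratic extension $F/F_0$ is separable, hence Galois; let $\tau_0$ denote its nontrivial automorphism, so that $N_{F/F_0}(\rho)=\rho\,\tau_0(\rho)\in F_0^*$. Because $\rho^{2^n}=1$ we have $\tau_0(\rho)^{2^n}=\tau_0(\rho^{2^n})=1$, and therefore $N_{F/F_0}(\rho)^{2^n}=\rho^{2^n}\,\tau_0(\rho)^{2^n}=1$. Thus $\zeta:=N_{F/F_0}(\rho)$ is an element of $F_0^*$ satisfying $\zeta^{2^n}=1$.

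Next I would argue that any $2^n$-th root of unity in $F_0$ is $\pm 1$. Suppose $\zeta\in F_0^*$ has exact multiplicative order $2^k$ for some $0\le k\le n$. If $k\geq 2$, then $\zeta^{2^{k-2}}$ has exact order $4$, i.e.\ it is a primitive fourth root of unity, so $\zeta^{2^{k-2}}=\pm\sqrt{-1}\in F_0$, contradicting the hypothesis $\sqrt{-1}\notin F_0$. Hence $k\le 1$ and $\zeta\in\{1,-1\}$. Applying this to $\zeta=N_{F/F_0}(\rho)$ yields $N_{F/F_0}(\rho)=\pm 1$.

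There is no genuine obstacle in this argument; the only substantive observation is the elementary fact that a field not containing $\sqrt{-1}$ contains no primitive fourth (and a fortiori no primitive $2^j$-th for $j\ge 2$) root of unity, which forces every $2$-power root of unity in $F_0$ into $\{1,-1\}$.
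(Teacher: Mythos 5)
Your proof is correct and follows essentially the same route as the paper's: show $N_{F/F_0}(\rho)=\rho\,\tau_0(\rho)$ is a $2^n$-th root of unity in $F_0$, then conclude it must be $\pm 1$ since $\sqrt{-1}\notin F_0$. The only difference is that you explicitly justify why $F_0$ contains no $2^n$-th roots of unity beyond $\pm 1$ (via the primitive fourth root argument), a point the paper simply asserts.
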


\begin{proof}  If $n = 1$, then $\rho = -1$ and hence $N_{F/F_0}(-1) = (-1)^2 = 1$.
Suppose $n \geq 2$.  
 Let $\tau_0$ be the nontrivial automorphism of $F/F_0$.  Since $\rho$ is a  ${2^n}$-th root of unity,  
   $\tau(\rho)$ is also ${2^n}$-th root of unity 
 and hence  $  \rho \tau(\rho) $  is a  ${2^n}$-th root of unity  in $F_0$.
Since $\pm 1$ are the only $2^n$-th roots of unity in $F_0$,  $N_{F/F_0}(\rho) =  \rho\tau(\rho)   = \pm 1$. 
\end{proof}

\begin{cor}
\label{cores-field-zero-2} Let $F_0$ be a field of characteristic not 2 and $F/F_0$ a quadratic extension. 
Let $n  \geq 2$. Suppose that $F$ contains a primitive $2^n$-th root of unity $\rho$
and $\sqrt{-1} \not\in F_0$. 
Let $a \in F_0^*$.  Let $1 \leq d \leq n$. Suppose that  $F[(\sqrt[2^d]{a}) : F] = 2^d$. 
Let $\sigma_d$ be the automorphism of $F(\sqrt[2^d]{a})$ given by $\sigma(\sqrt[2^d]{a}) = \rho^{2^{n-d}} \sqrt[2^d]{a}$. 
If $d < n$, then cores$_{F/F_0}(F(\sqrt[2^d]{a}), \sigma_d) $ is zero. Further  $N_{F/F_0}(\rho) = 1$  if and only if 
cores$(F(\sqrt[2^n]{a}),\sigma_n)$ is  zero.   
\end{cor}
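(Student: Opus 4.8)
The plan is to deduce both assertions directly from Corollary \ref{cores-field-zero} together with the norm computation in Lemma \ref{norm12}. For $1 \le d \le n$ set $\rho_d := \rho^{2^{n-d}}$. Since $\rho$ is a primitive $2^n$-th root of unity in $F$, the element $\rho_d$ is a primitive $2^d$-th root of unity in $F$, and by definition $\sigma_d$ is exactly the automorphism of $F(\sqrt[2^d]{a})$ sending $\sqrt[2^d]{a}$ to $\rho_d\sqrt[2^d]{a}$. As $\mathrm{char}(F_0)\neq 2$, the integer $2^d$ is coprime to $\mathrm{char}(F_0)$, and by hypothesis $[F(\sqrt[2^d]{a}):F] = 2^d$; hence Corollary \ref{cores-field-zero} applies verbatim with $m = 2^d$ and primitive root $\rho_d$, giving
\[
\cores_{F/F_0}\bigl(F(\sqrt[2^d]{a}),\sigma_d\bigr) = 0 \iff N_{F/F_0}(\rho_d) = 1.
\]

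Next I would reduce $N_{F/F_0}(\rho_d)$ to $N_{F/F_0}(\rho)$. Multiplicativity of the norm gives $N_{F/F_0}(\rho_d) = N_{F/F_0}(\rho)^{2^{n-d}}$, and Lemma \ref{norm12}, applied to the $2^n$-th root of unity $\rho$ and using $\sqrt{-1}\notin F_0$, gives $N_{F/F_0}(\rho) = \pm 1$. If $d < n$, then $2^{n-d}$ is even, so $N_{F/F_0}(\rho_d) = (\pm 1)^{2^{n-d}} = 1$, and the displayed equivalence yields $\cores_{F/F_0}(F(\sqrt[2^d]{a}),\sigma_d) = 0$, which is the first assertion. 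If $d = n$, then $\rho_n = \rho$, so the displayed equivalence reads $\cores_{F/F_0}(F(\sqrt[2^n]{a}),\sigma_n) = 0 \iff N_{F/F_0}(\rho) = 1$, which is the second assertion.

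There is essentially no obstacle here: the statement is a bookkeeping consequence of the two preceding results. The only points requiring care are to check that the root of unity defining $\sigma_d$ is genuinely primitive of order $2^d$ — so that Corollary \ref{cores-field-zero} is applicable and $\sigma_d$ really generates a degree-$2^d$ cyclic group — which is immediate from $\rho$ being primitive of order $2^n$; and to invoke $\sqrt{-1}\notin F_0$ precisely where Lemma \ref{norm12} needs it, namely to pin $N_{F/F_0}(\rho)$ down to $\pm 1$, since this is what forces the even power $N_{F/F_0}(\rho)^{2^{n-d}}$ to equal $1$ when $d<n$.
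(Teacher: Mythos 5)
Your proof is correct and follows essentially the same route as the paper: reduce to Corollary \ref{cores-field-zero} applied with the primitive $2^d$-th root of unity $\rho^{2^{n-d}}$, then pin down its norm using Lemma \ref{norm12} and the hypothesis $\sqrt{-1}\notin F_0$. The only cosmetic difference is that you compute $N_{F/F_0}(\rho^{2^{n-d}})$ as $N_{F/F_0}(\rho)^{2^{n-d}}$, whereas the paper writes it as $N_{F/F_0}(\rho^{2^{n-d-1}})^2$; both reduce the $d<n$ case to the observation that an even power of $\pm 1$ is $1$.
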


\begin{proof}   Suppose $d < n$.  
Then  $N_{F/F_0}(\rho^{2^{n-d}})  = N_{F/F_0}(\rho^{2^{n-d-1}})^2 = 1$ (\ref{norm12}) and hence 
 cores$(E_d, \sigma_d) $ is zero  (\ref{cores-field-zero}). 
 
 Suppose $n = d$. Then, by (\ref{cores-field-zero}), cores$( F(\sqrt[2^n]{a}), \sigma_n)$ is zero if and only if  $N_{F/F_0}(\rho) = 1$. 
\end{proof}

\begin{lemma}
\label{cores-field-zero-odd} Let $F_0$ be a field of characteristic not 2 and $F/F_0$ a quadratic extension. 
Let $\ell$ be a  prime not equal to char$(F_0)$. 
Let $n  \geq 1$. Suppose that $F$ contains a primitive $\ell^n$-th root of unity $\rho$
and  $F_0$ does not contain any  nontrivial  $\ell^{\rm th}$ root of unity. 
Let $a \in F_0^*$. Suppose that  $[F(\sqrt[\ell^n]{a}) : F] = \ell^n$. 
Let $\sigma$ be the automorphism of $F(\sqrt[\ell^n]{a})$ given by $\sigma(\sqrt[\ell^n]{a}) = \rho \sqrt[\ell^n]{a}$. 
Then cores$_{F/F_0}(F(\sqrt[\ell^n]{a}), \sigma) $ is zero. 
\end{lemma}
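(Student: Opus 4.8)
The statement to prove is Lemma~\ref{cores-field-zero-odd}: with $F/F_0$ quadratic, $\ell$ a prime $\neq\operatorname{char}(F_0)$, and $\rho\in F$ a primitive $\ell^n$-th root of unity with $F_0$ containing no nontrivial $\ell$-th root of unity, and $a\in F_0^*$ with $[F(\sqrt[\ell^n]{a}):F]=\ell^n$, one has $\cores_{F/F_0}(F(\sqrt[\ell^n]{a}),\sigma)=0$. By Corollary~\ref{cores-field-zero} applied with $m=\ell^n$, this is equivalent to showing $N_{F/F_0}(\rho)=1$. So the whole problem reduces to a norm computation on a root of unity, exactly parallel to Corollary~\ref{cores-field-zero-2} but for odd primes (and $\ell=2$ is excluded here by hypothesis $\ell\neq\operatorname{char}(F_0)$ together with the standing setup — actually $\ell$ could be $2$, but the hypothesis ``$F_0$ contains no nontrivial $\ell$-th root of unity'' forces $-1\notin F_0$ when $\ell=2$, and the argument below still works).

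The key step is: \emph{$N_{F/F_0}(\rho)$ is an $\ell^n$-th root of unity lying in $F_0^*$, hence by hypothesis equals $1$.} Indeed, let $\tau_0$ be the nontrivial automorphism of $F/F_0$. Since $\rho^{\ell^n}=1$, also $\tau_0(\rho)^{\ell^n}=1$, so $N_{F/F_0}(\rho)=\rho\,\tau_0(\rho)$ is a product of $\ell^n$-th roots of unity, hence itself an $\ell^n$-th root of unity; and it is fixed by $\tau_0$, so it lies in $F_0^*$. Now the $\ell^n$-th roots of unity in $F_0$ form a subgroup of $\mu_{\ell^n}(\bar F_0)\cong\Z/\ell^n$; a subgroup of a cyclic $\ell$-group that contains no element of order $\ell$ must be trivial. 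Since by hypothesis $F_0$ has no nontrivial $\ell$-th root of unity — equivalently no element of order $\ell$ in $\mu_{\ell^n}(F_0)$ — the only $\ell^n$-th root of unity in $F_0$ is $1$. Therefore $N_{F/F_0}(\rho)=1$.

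Then I would conclude by invoking Corollary~\ref{cores-field-zero}: its hypotheses are exactly those of the present lemma (with $m=\ell^n$, noting $\ell^n\geq 2$ and $\ell^n$ coprime to $\operatorname{char}(F_0)$ since $\ell\neq\operatorname{char}(F_0)$, $F$ contains the primitive $\ell^n$-th root $\rho$, $a\in F_0^*$, and $[F(\sqrt[\ell^n]{a}):F]=\ell^n$), and $\sigma$ is the prescribed generator $\sigma(\sqrt[\ell^n]{a})=\rho\sqrt[\ell^n]{a}$. Hence $\cores_{F/F_0}(F(\sqrt[\ell^n]{a}),\sigma)=0$ if and only if $N_{F/F_0}(\rho)=1$, and we have just verified the latter.

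There is essentially no obstacle here; the lemma is an immediate specialization of the machinery already set up. The only point requiring a line of care is the observation that a nontrivial subgroup of a cyclic $\ell$-group necessarily contains the unique subgroup of order $\ell$, so the ``no nontrivial $\ell$-th root of unity in $F_0$'' hypothesis genuinely kills all $\ell^n$-th roots of unity in $F_0$ and not merely the ones of order $\ell$. Alternatively one can argue directly: if $\zeta\in F_0$ has order $\ell^k$ with $1\le k\le n$, then $\zeta^{\ell^{k-1}}\in F_0$ has order exactly $\ell$, contradicting the hypothesis; hence $k=0$ and $\zeta=1$. This makes the norm computation complete, and the lemma follows.
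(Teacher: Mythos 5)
Your proof is correct and follows essentially the same route as the paper: observe that $N_{F/F_0}(\rho)$ is an $\ell^n$-th root of unity in $F_0$, use the hypothesis on $\ell$-th roots of unity to conclude it equals $1$, and then apply Corollary~\ref{cores-field-zero}. You have merely filled in the (standard) detail that a nontrivial finite subgroup of a cyclic $\ell$-group contains an element of order $\ell$, which the paper leaves implicit.
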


\begin{proof}
Since $\rho^{\ell^n} = 1$, $N_{F/F_0}(\rho)^{\ell^n} = 1$.
Since $F_0$ has no  nontrivial  $\ell^{\rm th}$ root of unity,   $N_{F/F_0}(\rho) = 1$ and   by (\ref{cores-field-zero}),  cores$(E, \sigma) = 0$. 
\end{proof}

\section{Dihedral  Extensions Over  Local Fields}
\label{dihedral-local}

Let  $F_0$ be   a complete discrete valued field with residue field  $\kappa_0$.  
Let $E/F_0$ be a dihedral extension of degree $2m$ with $2m$ coprime to char$(\kappa_0)$.
Let   $F \subseteq E$ be the fixed field of   the rotation subgroup of Gal$(E/F_0)$.
In this section we first  determine the degree of $E/F_0$ if $F/F_0$ is ramified and then we go on 
to describe all the dihedral extensions  of local fields.  

We begin with the following:

\begin{lemma}
\label{dihedral-cyclic}
Let  $F_0$ be   a complete discrete valued field with residue field  $\kappa_0$.  
Let $E/F_0$ be a dihedral extension of degree $2m$ with $2m$ coprime to char$(\kappa_0)$.
Suppose   the subfield $F$ of $E$ fixed by the rotation subgroup of Gal$(E/F_0)$  is ramified over $F_0$.
Let $L/F_0$ be an extension contained in $E$.  If    $F \not\subseteq L$ and $L/F_0$ is either unramified or totally ramified, 
then   $[L : F_0] \leq 2$. 
\end{lemma}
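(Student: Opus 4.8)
The plan is to pass from $L$ to the quadratic compositum $M=FL\subseteq E$, use Lemma~\ref{subextndihedral} to see that $\mathrm{Gal}(M/F_0)$ is dihedral, and then combine two facts: the inertia subgroup of a tamely ramified Galois extension of a complete discretely valued field is cyclic and normal, and every cyclic normal subgroup of a dihedral group of order at least $6$ is contained in the rotation subgroup.

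If $L=F_0$ there is nothing to prove, so assume $L\neq F_0$ and set $M=FL$. The only subfields of $F$ are $F_0$ and $F$, and $F\not\subseteq L$, so $L\not\subseteq F$; hence $F\subsetneq M\subseteq E$ and $F\cap L=F_0$ (as $[F:F_0]=2$), giving $[M:F]=[L:F_0]=:n$ and $[M:F_0]=2n$. By Lemma~\ref{subextndihedral}, $M/F_0$ is dihedral with $F$ the fixed field of the rotation subgroup $\mathrm{Gal}(M/F)$; thus $\mathrm{Gal}(M/F_0)$ is dihedral of order $2n$ with cyclic rotation subgroup $\mathrm{Gal}(M/F)$. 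Since $M\subseteq E$, $2n$ divides $2m$ and is therefore coprime to $\mathrm{char}(\kappa_0)$, so $M/F_0$ is tamely ramified, and its inertia subgroup $I$ is cyclic and normal in $\mathrm{Gal}(M/F_0)$. Because $F/F_0$ is ramified, $F$ is not contained in the maximal unramified subextension $M^{I}$, which means $I\not\subseteq\mathrm{Gal}(M/F)$: $I$ is a cyclic normal subgroup of the dihedral group $\mathrm{Gal}(M/F_0)$ not contained in its rotation subgroup.

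It then remains to invoke the elementary group-theoretic fact that, for a dihedral group $\langle\sigma,\tau\rangle$ of order $2n$ with $n\geq 3$, no cyclic normal subgroup escapes the rotation subgroup $\langle\sigma\rangle$: if a normal subgroup $N$ contained a reflection $\sigma^{i}\tau$, then $\sigma(\sigma^{i}\tau)\sigma^{-1}=\sigma^{i+2}\tau\in N$, hence $(\sigma^{i+2}\tau)(\sigma^{i}\tau)=\sigma^{2}\in N$, so $N\supseteq\langle\sigma^{2},\sigma^{i}\tau\rangle$, which contains the two distinct reflections $\sigma^{i}\tau$ and $\sigma^{i+2}\tau$ (distinct since $n\geq 3$) and hence is not cyclic. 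Applying this with $N=I$ forces $n\leq 2$, i.e.\ $[L:F_0]\leq 2$. The only delicate ingredient is the cyclicity of the inertia group in the tame case — precisely where the coprimality of $2m$ with $\mathrm{char}(\kappa_0)$ enters — and one should keep in mind the borderline case $n=2$, where $\mathrm{Gal}(M/F_0)\cong(\Z/2)^2$ and $I$ may legitimately differ from the rotation subgroup, which shows the bound $2$ is sharp. Note that the hypothesis that $L/F_0$ is itself unramified or totally ramified is not actually used in this argument (it is in any case automatic once $[L:F_0]\leq 2$); alternatively one can run the same inertia/dihedral argument directly inside $E/F_0$, using that an unramified $L$ lies in $E^{I(E/F_0)}$ and the analogous statement with the totally ramified subextension, to conclude $m\leq 2$ first.
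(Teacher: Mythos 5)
Your proof is correct, and it takes a genuinely different route from the paper's. The paper splits into the two cases in the hypothesis: when $L/F_0$ is unramified it descends cyclicity through the residue field ($\kappa_L/\kappa_0$ cyclic because $LF/F$ is unramified cyclic and $F/F_0$ is totally ramified), and when $L/F_0$ is totally ramified it writes $L=F_0(\sqrt[d]{\pi})$ and uses the fact that cyclicity of $LF/F$ forces a primitive $d$-th root of unity into $F$, hence into $F_0$; in either case $L/F_0$ is cyclic Galois and Lemma~\ref{subextndihedral-notgalois} finishes. Your argument instead passes to $M=FL$, identifies $\mathrm{Gal}(M/F_0)$ as dihedral via Lemma~\ref{subextndihedral}, and exploits that the inertia subgroup $I$ of a tame Galois extension of a complete discretely valued field is cyclic and normal --- a point that is legitimate here because $[M:F_0]$ is coprime to $\mathrm{char}(\kappa_0)$, which also guarantees separability of the residue extension even when $\kappa_0$ is imperfect --- combined with the elementary group-theoretic observation that a dihedral group of order $\geq 6$ has no cyclic normal subgroup meeting the reflections. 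The upshot is that your argument is more uniform (no case split) and, as you observe, strictly stronger: it dispenses entirely with the hypothesis that $L$ be unramified or totally ramified, and in fact proves at once the content of Proposition~\ref{dihedral-ram} (that $[E:F_0]\leq 4$ when $F/F_0$ is ramified), of which the stated lemma is then a formal consequence. The paper's route is more explicit and hews to the Kummer-theoretic flavor used elsewhere, but your inertia-group argument is the cleaner conceptual explanation of why ramified $F/F_0$ caps the dihedral degree at $4$.
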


\begin{proof}
Let $L/F_0$ be an extension contained in $E$ with   $F \not\subseteq L$. 
We show that $L/F_0$ is cyclic. 
 
Suppose that $L/F_0$ is unramified. 
Let $\kappa$ be the residue field of $F$ and $\kappa'$ the residue field of $L$.
Then $\kappa = \kappa_0$ and $[\kappa' : \kappa_0] = [L : F_0]$.
Since $F/F_0$ is totally ramified, $LF/F$ is an extension of degree $[L : F_0]$ and the residue field of
$LF$ is also $\kappa'$.  Since $LF \subset E$ and $E/F$ is cyclic, $LF/F$ is cyclic.
In particular $\kappa'/\kappa_0$ is cyclic. Since $L/F_0$ is unramified and $F_0$ is complete,
$L/F_0$ is cyclic and by (\ref{subextndihedral-notgalois}), $[L : F_0] \leq 2$. 

Suppose that $L/F_0$ is totally ramified of degree $d$.  Since  $d$ is coprime to char$(\kappa_0)$, 
$L = F_0(\sqrt[d]{\pi })$ for some parameter $\pi  \in F_0$ (cf. \cite[Lemma 2.4]{PPS}).  
Since $F \not\subset L$, $[LF : F] = [L :F_0]$. 
Since  $E/F$ is cylic, $LF/F$ is cyclic. 
Since  $LF = F(\sqrt[d]{\pi})$ and $[LF : F] = [L : F_0] = d$,   $F$ contains a primitive $d^{\rm th}$ root of unity.
 Since $F/F_0$ is totally ramified, $F_0$ contains a primitive $d^{\rm th}$ root of unity. 
In particular $L/F_0$ is cyclic and by (\ref{subextndihedral-notgalois}), $[L : F_0] \leq 2$. 
\end{proof}

\begin{prop}
\label{dihedral-ram}
Let  $F_0$ be   a complete discrete valued field with residue field  $\kappa_0$.  
Let $E/F_0$ be a dihedral extension of degree $2m$ with   $2m$  coprime to char$(\kappa_0)$. 
If the subfield of $E$ fixed by the rotation subgroup of Gal$(E/F_0)$  is ramified over $F_0$, then  $[E : F_0]  \leq 4$.
\end{prop}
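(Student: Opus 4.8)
The plan is to show that if $F/F_0$ is ramified, then the rotation subgroup $\langle\sigma\rangle$ of $\mathrm{Gal}(E/F_0)$, which has order $m$ and generates the cyclic extension $E/F$, cannot be too large, and in fact $m \le 2$. Write $m = \ell_1\cdots\ell_r$ as a product of primes. The key point is Lemma \ref{cores1}: for any such factorization there is a chain of subfields $F_0 = L_0 \subset L_1 \subset \cdots \subset L_r = E_i$ inside one of the reflection fields $E_i = E^{\sigma^i\tau}$, with $[L_j : L_{j-1}] = \ell_j$ and $F \not\subset E_i$ (since $E_i \cap F = F_0$). I want to apply Lemma \ref{dihedral-cyclic}: that lemma says that a subextension $L/F_0$ of $E$ with $F \not\subseteq L$ which is either unramified or totally ramified has degree at most $2$.

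First I would reduce to the case where $m$ is a prime power, and then show even a prime $\ell$ forces a contradiction unless $m \le 2$. Concretely: suppose $r \ge 2$ (so $m \ge 4$, or $m = 2\ell$, etc.) — actually the cleanest route is to take the smallest step $L_1/F_0$ in the chain, of prime degree $\ell_1$. Over a complete discretely valued field with residue characteristic prime to $\ell_1$, any degree-$\ell_1$ extension is either unramified or totally ramified (there is no room for mixed ramification when the degree is prime and prime to the residue characteristic: the ramification index divides $\ell_1$, hence is $1$ or $\ell_1$). So $L_1/F_0$ is unramified or totally ramified, and $F \not\subseteq L_1$ because $L_1 \subseteq E_i$ and $E_i \cap F = F_0$ while $L_1 \ne F_0$. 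By Lemma \ref{dihedral-cyclic}, $[L_1 : F_0] = \ell_1 \le 2$, so $\ell_1 = 2$.

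Now I iterate. Having $L_1 = F_0(\sqrt{b})$ for some $b$, I would like to say $E/L_1$ is again dihedral (or cyclic) over a complete discretely valued base and repeat the argument with $L_2/L_1$. For this I need $E/L_1$ to be dihedral with rotation field still ramified: indeed $\mathrm{Gal}(E/L_1)$ is the preimage in $\mathrm{Gal}(E/F_0)$ of the order-$2$ subgroup fixing $L_1$; since $F \not\subset L_1$, this subgroup is generated by $\sigma$ and some reflection, hence $E/L_1$ is dihedral of order $2(m/?)$ — here I should be careful and instead argue directly with $E_i/L_1$. Actually the simplest finish: if $r \ge 2$, then $L_2/L_1$ has prime degree $\ell_2$ prime to the residue characteristic of $L_1$ (same residue characteristic), so $L_2/L_1$ is unramified or totally ramified over the complete field $L_1$; and the fixed field of the rotation subgroup of $\mathrm{Gal}(E/L_1)$ — which is $FL_1$ — is still ramified over $L_1$ (it contains the ramified extension $F/F_0$ base-changed up, and $FL_1 \ne L_1$ since $F \not\subset L_1$). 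Also $FL_1 \not\subseteq L_2$ because $L_2 \subseteq E_i$ and $E_i$ does not contain any conjugate of the rotation field. So Lemma \ref{dihedral-cyclic} applied over $L_1$ gives $[L_2 : L_1] \le 2$, which is consistent, and I need one more observation to kill $r \ge 2$ entirely: with $\ell_1 = \ell_2 = 2$, the biquadratic field $L_2/F_0$ sits inside $E_i$, and $L_2$ is not cyclic over $F_0$ (it is $\Z/2\times\Z/2$), yet $L_2/F_0$ is a subextension of $E_i/F_0$ which is cyclic — contradiction, since subextensions of cyclic extensions are cyclic. Hence $r \le 1$, i.e. $m$ is $1$ or a prime, and the prime case gave $m = 2$; therefore $[E : F_0] = 2m \le 4$.

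The main obstacle I anticipate is the bookkeeping in the inductive step: making sure that at each stage the relevant subextension genuinely satisfies the hypotheses of Lemma \ref{dihedral-cyclic} — namely that it lies in a reflection field (so $F$, resp. the current rotation field, is not contained in it) and that it is unramified-or-totally-ramified. The unramified-or-totally-ramified dichotomy is automatic only for prime degree, which is why the chain-of-primes refinement from Lemma \ref{cores1} is essential; one cannot apply the dichotomy to $E_i/F_0$ directly. Once the prime-by-prime structure is in place, the contradiction via "a cyclic extension has no biquadratic subextension" is immediate, and the bound $[E:F_0]\le 4$ follows.
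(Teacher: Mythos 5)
Your first step correctly handles the case where $m$ has an odd prime factor, matching the paper's first case: choose that odd prime as $\ell_1$ in the chain from Lemma \ref{cores1}, and Lemma \ref{dihedral-cyclic} forces $\ell_1 \le 2$, a contradiction. But the remaining case $m = 2^k$, $k \ge 2$, is where your argument breaks. The final contradiction rests on the claim that ``$L_2/F_0$ is a subextension of $E_i/F_0$ which is cyclic,'' and this is false: $E_i = E^{\sigma^i\tau}$, and for $m > 2$ the reflection subgroup $\langle\sigma^i\tau\rangle$ is not normal in $\mathrm{Gal}(E/F_0)$ (its conjugate by $\sigma$ is $\langle\sigma^{i+2}\tau\rangle$), so $E_i/F_0$ is not even Galois, let alone cyclic. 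Likewise the assertion that $L_2/F_0$ is $\Z/2\Z \times \Z/2\Z$ is unjustified: $L_2/F_0$ is a tower of two quadratics and need not be Galois at all. (In fact, by Lemma \ref{subextndihedral-notgalois} a Galois subextension of $E/F_0$ avoiding $F$ has degree $\le 2$, so proving that $L_2/F_0$ is Galois is exactly the missing content.) A secondary flaw in your intermediate step: ``$FL_1/L_1$ is still ramified over $L_1$'' fails when $L_1/F_0$ is totally ramified, since two totally ramified quadratics $F$ and $L_1$ compose to give $FL_1/L_1$ unramified.

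The paper's treatment of the power-of-$2$ case is genuinely different and closes exactly this gap. It takes a degree-$4$ subfield $L \subset E_i$, concludes from Lemma \ref{dihedral-cyclic} that $L/F_0$ is neither unramified nor totally ramified, so $L = F_0(\sqrt{u})(\sqrt{\pi})$ with $u \in F_0$ a unit and $\pi$ a parameter in $F_0(\sqrt{u})$; writing $F = F_0(\sqrt{\pi_1})$ and $\pi = v\pi_1$ for a unit $v \in F_0(\sqrt{u})$, one obtains a degree-$4$ \emph{unramified} subfield $L' = F_0(\sqrt{u},\sqrt{v})$ of $LF \subset E$ not containing the ramified field $F$, and then \ref{dihedral-cyclic} applied to $L'$ gives the contradiction. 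Some construction of this kind is necessary; reducing to prime steps via \ref{cores1} alone cannot detect the power-of-$2$ obstruction, because each individual quadratic step is consistent with \ref{dihedral-cyclic}.
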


\begin{proof} 
 Let $F$ be the subfield of $E$ fixed  the rotation subgroup of Gal$(E/F_0)$.  Then $[F :F_0] = 2$. 
Suppose that $F/F_0$ is ramified.  Then $F/F_0$ is totally ramified.

Suppose that $[E : F_0]  = 2m \geq 5$.   
Suppose there is an odd prime $\ell$ dividing $m$.
Then there exists an extension   $L/F_0$   of degree $\ell$ such that $L \subset E$ and $F \not\subset L$ (\ref{cores1}). 
Since  $\ell$ is a  prime, $L/F_0$ is either unramified or totally ramified. 
Then, by   (\ref{dihedral-cyclic}), $[L : F_0] = \ell \leq 2$, 
leading to a contradiction.

Suppose there is no odd prime dividing $m$. Then $4$ divides $m$.
Thus there exists an extension   $L/F_0$   of degree $4$ such that $L \subset E$ and $F \not\subset L$ (\ref{cores1}). 
Since $[L : F_0] = 4$,   by   (\ref{dihedral-cyclic}),   $L/F_0$  is neither totally ramified nor unramified.  
Since $[L  : F_0] = 4$,  $L  = F_0(\sqrt{u})(\sqrt{\pi})$ 
for some $u \in F_0$ a unit and $\pi$ a parameter in $F_0(\sqrt{u})$.
Since $F/F_0$ is ramified, $F = F_0(\sqrt{\pi_1})$ for some parameter $\pi_1$ in $F_0$. 
Since $F_0(\sqrt{u})/F_0$ is unramified, 
$\pi_1$ is a parameter in $F_0(\sqrt{u})$ and hence  $\pi  = v \pi_1$ for some unit  $v \in F_0(\sqrt{u})$.
 Let $L' = F_0(\sqrt{u})(\sqrt{v})$.
Since $[LF : F_0] = 8$ and $LF = F_0(\sqrt{u})(\sqrt{v}, \sqrt{\pi_1})$, 
$[L' : F_0] = 4$. 
 Since  $L'/F_0$ is unramified, by (\ref{dihedral-cyclic}), $[L' : F_0]   \leq 2$, 
leading to a contradiction.
   \end{proof}
   
\begin{cor}
\label{cores-ram} 
Let  $F_0$ be   a complete discrete valued field with residue field  $\kappa_0$ of characteristic not 2. 
Let $F/F_0$ be a  ramified quadratic field extension. Let $E/F$ be a cyclic extension of degree coprime to 
char$(\kappa_0)$  and $\sigma$ a generator of 
Gal$(E/F)$. If cores$_{F/F_0}(E, \sigma)$ is zero, then $[E : F]  \leq 2$. 
\end{cor}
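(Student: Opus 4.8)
The plan is to obtain this as an immediate consequence of Propositions~\ref{galois} and~\ref{dihedral-ram}. First I would record that since the residue field $\kappa_0$ has characteristic different from $2$, the field $F_0$ itself has characteristic different from $2$; hence the degree-$2$ extension $F/F_0$ is separable, and therefore Galois. Thus the hypotheses of Proposition~\ref{galois} are met by the quadratic Galois extension $F/F_0$ and the cyclic extension $E/F$ with generator $\sigma$.

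Next, I would dispose of the trivial case $E = F$, where $[E:F] = 1 \leq 2$ and there is nothing to prove, and assume $E \neq F$, writing $m = [E:F] \geq 2$. Because $\cores_{F/F_0}(E,\sigma)$ is zero, Proposition~\ref{galois} shows that $E/F_0$ is a dihedral extension, necessarily of degree $2m$, and $2m$ is coprime to $\mathrm{char}(\kappa_0)$ since $[E:F]$ is coprime to it and $\mathrm{char}(\kappa_0) \neq 2$. In the dihedral group $\mathrm{Gal}(E/F_0)$ the index-$2$ cyclic subgroup $\langle \sigma \rangle = \mathrm{Gal}(E/F)$ is precisely the rotation subgroup, so the fixed field of the rotation subgroup is exactly $F$. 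The hypothesis of the corollary says $F/F_0$ is ramified, so Proposition~\ref{dihedral-ram} applies and yields $[E : F_0] \leq 4$; since $[F:F_0] = 2$, this gives $2m \leq 4$, i.e. $[E:F] \leq 2$.

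I do not expect any real obstacle here: the only content beyond invoking the two cited propositions is the bookkeeping that identifies $F$ with the fixed field of the rotation subgroup of $\mathrm{Gal}(E/F_0)$ (so that the ramification hypothesis on $F/F_0$ is the hypothesis needed by Proposition~\ref{dihedral-ram}) and the verification that $F/F_0$ is Galois so that Proposition~\ref{galois} is applicable. This is a direct corollary.
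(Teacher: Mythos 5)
Your proof is correct and follows the same route as the paper's: invoke Proposition~\ref{galois} to conclude $E/F_0$ is dihedral, identify $F$ as the fixed field of the rotation subgroup, and then apply Proposition~\ref{dihedral-ram} using the ramification hypothesis. The only difference is that you spell out the minor bookkeeping (separability of $F/F_0$, the trivial case $E = F$, and the coprimality of $2m$ with $\mathrm{char}(\kappa_0)$) that the paper leaves implicit.
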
 

\begin{proof} Suppose  cores$_{F/F_0}(E, \sigma)$ is zero.
Then $E/F_0$ is Galois with Gal$(E/F_0)$  dihedral (\ref{galois}). 
 Since  $F/F_0$ is ramified,  by (\ref{dihedral-ram}), $[E : F]  \leq 2$.
\end{proof}

\begin{prop} 
\label{totallyram} Let $K_0$ be a local field  and $L/K_0$  be  dihedral  extension of degree $2m$.
Let $K$ be the  subfield of $L$ fixed by the rotation subgroup of Gal$(L/K_0)$. If $K/K_0$ is unramified, then 
$L/K$ is totally ramified.
\end{prop}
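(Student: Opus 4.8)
The plan is to translate the statement into the Galois group $G=\mathrm{Gal}(L/K_0)$ and to use that over a local field the inertia subgroup has cyclic quotient. Let $R=\mathrm{Gal}(L/K)$ be the rotation subgroup, a cyclic group of order $m$ generated by a rotation $\sigma$, and let $\tau$ be a reflection, so $G=\langle\sigma,\tau\rangle$ is dihedral of order $2m$. Let $I\subseteq G$ be the inertia subgroup of $L/K_0$. Since $L/K_0$ is a finite extension of the complete field $K_0$, the valuation of $K_0$ extends uniquely to $L$, the decomposition group is all of $G$, and $e(L/K_0)=|I|$ with $G/I$ isomorphic to the Galois group of the residue extension of $L/K_0$; similarly $I\cap R$ is the inertia subgroup of $L/K$ and $e(L/K)=|I\cap R|$.

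First I would show $I\subseteq R$. Since $K/K_0$ is unramified, $e(L/K_0)=e(L/K)$, whence $|I|=|I\cap R|$ and therefore $I\subseteq R$. Next, because $K_0$ is a local field its residue field is finite, so the residue extension of $L/K_0$ has cyclic Galois group; hence $G/I$ is cyclic. Now $I$ is contained in the cyclic group $R$, so $I=\langle\sigma^{m/e}\rangle$, the subgroup of order $e:=e(L/K)$, and since $\tau\sigma\tau^{-1}=\sigma^{-1}$ the subgroup $I$ is normal in $G$. Thus $G/I$ is generated by the image of $\sigma$, of order $m/e$, together with the image of $\tau$, an involution inverting it. If $m/e\ge 2$ this group is non-cyclic — nonabelian when $m/e\ge 3$ and the Klein four-group when $m/e=2$ — contradicting the cyclicity of $G/I$. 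Hence $m/e=1$, i.e.\ $I=R$, which is precisely the assertion that $L/K$ is totally ramified.

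The only delicate point is the bookkeeping with inertia groups: $|I|=e(L/K_0)$ and $I\cap R$ equals the inertia subgroup of $L/K$, of order $e(L/K)$. These are the standard functoriality properties of decomposition and inertia groups in the complete (equivalently, local) case, where decomposition groups fill out the whole Galois group and $ef$ equals the degree; after that the argument is purely group-theoretic. Equivalently, one may argue with the maximal unramified subextension $M$ of $L/K$: its fixing group $\mathrm{Gal}(L/M)$ is the subgroup of order $e$ in $R$, which is normal in $G$, so $M/K_0$ is Galois; as $M/K$ and $K/K_0$ are unramified, $M/K_0$ is unramified, hence $\mathrm{Gal}(M/K_0)$ is cyclic; but $\mathrm{Gal}(M/K_0)\cong G/\mathrm{Gal}(L/M)$ is dihedral of order $2m/e$, so $m/e=1$ and $M=K$, giving the conclusion. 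I expect the main (minor) obstacle to be simply this reduction, the content being the observation that a cyclic quotient of a dihedral group forces the inertia subgroup to be the full rotation subgroup.
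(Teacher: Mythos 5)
Your proof is correct and, in its second "equivalent" formulation (via the maximal unramified subextension $M$ of $L/K$, which one checks equals the maximal unramified subextension of $L/K_0$), is exactly the paper's argument: the paper observes that $L^{nr}\supseteq K$, invokes Lemma~\ref{subextndihedral} to conclude $L^{nr}/K_0$ would be dihedral if $L^{nr}\neq K$, and derives a contradiction with cyclicity of unramified extensions of local fields. Your primary presentation via the inertia subgroup $I$ and the cyclicity of $G/I$ is just a repackaging of the same idea, with the group-theoretic step (a proper cyclic quotient of a dihedral group cannot contain the image of both $\sigma$ and $\tau$) carried out directly rather than via the cited lemma.
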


\begin{proof}   
Let $L^{nr}$ be the maximal unramified sub extension of $L/K_0$.
Suppose that   $K/K_0$ is unramified. Then $K \subseteq L^{nr}$.
Suppose that $K \neq L^{nr}$. Then, by  (\ref{subextndihedral}), $L^{nr}/K_0$ is dihedral. 
Since $K_0$ is a local field and $L^{nr}/K_0$ is unramified, 
$L^{nr}/K_0$ is cyclic.  Since a dihedral group  can not be cyclic,  $L^{nr} = K$. 
 \end{proof}

\begin{remark}
\label{degree4}
Let $K_0$ be a local field with characteristic of the residue field not 2. 
Let $\pi \in K_0$ be a parameter and $u \in K_0$ a unit 
which is not a square. Since $K_0^*/K_0^{*2} = \{ 1, \pi, u, u\pi \}$ (cf. \cite[4.1, p. 217]{Sc}), 
$L = K_0(\sqrt{u} , \sqrt{\pi})$ is the unique degree four extension with Galois group $\Z/2\Z \times \Z/2\Z$.
Since $\Z/2\Z \times \Z/2\Z$ is the dihedral group of order 4,   $L/K_0$ is the unique   dihedral  extension  of degree 4.  
\end{remark}

\begin{theorem}
\label{dihedraleven}
 Let $K_0$ be a local field with characteristic of the residue field not 2 and  $\pi \in K_0$ be a parameter.  
Let  $d$  be   the maximum integer such that  $K_0(\sqrt{-1})$ contains a primitive $2^d$-th root of unity. 
Then there exists a dihedral extension of $K_0$ of degree $2^{n+1}$ with  $n \geq 2$ if and only if $\sqrt{-1} \not\in K_0$ and $n < d$. 
In this case $K_0(\sqrt{-1}, \sqrt[2^n]{\pi})$ is the unique   dihedral  extension of order $2^{n+1}$. 
\end{theorem}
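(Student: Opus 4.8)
The plan is to first pin down the structure of a dihedral $E/K_0$ of degree $2^{n+1}$, then run the implications both ways. Suppose $E/K_0$ is dihedral of degree $2^{n+1}$ with $n\ge 2$, and let $F\subseteq E$ be the fixed field of the rotation subgroup, so $[F:K_0]=2$. Since $[E:K_0]=2^{n+1}\ge 8>4$, Proposition \ref{dihedral-ram} forces $F/K_0$ to be unramified, and then Proposition \ref{totallyram} shows $E/F$ is cyclic and totally ramified of degree $2^n$. As $2^n$ is coprime to the residue characteristic, $E=F(\sqrt[2^n]{b})$ for some $b\in F^*$ with $v_F(b)$ odd (by the usual description of tame totally ramified extensions, cf.\ \cite[Lemma 2.4]{PPS}), and after replacing $b$ by a suitable power times a $2^n$-th power we may assume $v_F(b)=1$; being cyclic over $F$ forces $\mu_{2^n}\subseteq F$. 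Fix a generator $\sigma$ of $\mathrm{Gal}(E/F)$ with $\sigma(\beta)=\zeta\beta$, $\beta=\sqrt[2^n]{b}$, $\zeta$ a primitive $2^n$-th root of unity in $F$, and let $\tau\in\mathrm{Gal}(E/K_0)$ restrict to the nontrivial element $\tau_0$ of $\mathrm{Gal}(F/K_0)$. A short computation with $\beta$ (the same one underlying Lemma \ref{dihedral-norm1}) gives $\tau\sigma\tau^{-1}=\sigma^{r}$ where $\zeta^{r}=\tau_0(\zeta)$, and $\tau^2=\sigma^{j}$ where $\zeta^{j}=N_{F/K_0}(u)$ for any $u\in F^*$ with $u^{2^n}=\tau_0(b)/b$. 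Dihedrality of $\mathrm{Gal}(E/K_0)$ (equivalently $\mathrm{cores}_{F/K_0}(E,\sigma)=0$ by Proposition \ref{galois}, via Lemma \ref{cores-cal}) is exactly the pair of conditions $\tau_0(\zeta)=\zeta^{-1}$ and $\tau^2=1$.

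For the forward implication: since $\tau_0$ must invert $\mu_{2^n}\supseteq\mu_4$ and $n\ge 2$, it cannot fix $\sqrt{-1}$, so $\sqrt{-1}\notin K_0$; hence by Corollary \ref{localfields}, $F=K_0(\sqrt{-1})$ and $N_{F/K_0}(\rho)=-1$ for a primitive $2^d$-th root of unity $\rho$. Writing $\zeta=\rho^{2^{d-n}t}$ with $t$ odd (possible since $n\le d$), the condition $\tau_0(\zeta)=\zeta^{-1}$, i.e.\ $N_{F/K_0}(\zeta)=1$, becomes $(-1)^{2^{d-n}t}=1$, which holds iff $d-n\ge 1$; so $n<d$. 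For the converse, assume $\sqrt{-1}\notin K_0$ and $2\le n<d$, put $F=K_0(\sqrt{-1})$ and $E=K_0(\sqrt{-1},\sqrt[2^n]{\pi})=F(\sqrt[2^n]{\pi})$. Then $[F(\sqrt[2^n]{\pi}):F]=2^n$ because $v_F(\pi)=1$, the field $F$ contains the primitive $2^n$-th root of unity $\rho^{2^{d-n}}$, and $N_{F/K_0}(\rho^{2^{d-n}})=(-1)^{2^{d-n}}=1$ since $d-n\ge 1$; so Lemma \ref{dihedral-norm1} (with $m=2^n$, $a=\pi$) shows $E/K_0$ is dihedral, necessarily of degree $2^{n+1}$.

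It remains to prove uniqueness. Keep the notation $E=F(\sqrt[2^n]{b})$, $v_F(b)=1$, $b=\pi w$ with $w\in\mathcal{O}_F^*$, for an arbitrary dihedral $E/K_0$ of degree $2^{n+1}$ (so $F=K_0(\sqrt{-1})$ and $n<d$ by the above). Galois-ness over $K_0$ gives $\tau_0(w)/w=\tau_0(b)/b\in F^{*2^n}$, hence $N_{F/K_0}(w)=w\,\tau_0(w)\equiv w^2\pmod{F^{*2^n}}$; since $N_{F/K_0}(w)$ is a unit of $K_0$ and $\mu_{2^{n+1}}\subseteq F$ (as $n+1\le d$), Corollary \ref{localfields2} puts $N_{F/K_0}(w)\in F^{*2^n}$, so $w^2\in F^{*2^n}$. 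Thus the class of $w$ has order at most $2$ in $\mathcal{O}_F^*/\mathcal{O}_F^{*2^n}\cong \mu_{2^d}/\mu_{2^{d-n}}\cong \Z/2^n$, i.e.\ $w\in F^{*2^n}$ or $w\equiv\rho^{2^{n-1}}\pmod{F^{*2^n}}$. In the first case $E=F(\sqrt[2^n]{\pi w})=F(\sqrt[2^n]{\pi})=K_0(\sqrt{-1},\sqrt[2^n]{\pi})$, as wanted. The second case must be excluded: there we may take $b=\pi\rho^{2^{n-1}}$, and then $\tau_0(b)/b=\rho^{(q-1)2^{n-1}}=(\rho^{(q-1)/2})^{2^n}$ where $q=|\kappa_0|\equiv 3\pmod 4$ (because $\sqrt{-1}\notin K_0$), so one may take $u=\rho^{(q-1)/2}$ and compute $\tau^2(\beta)/\beta=N_{F/K_0}(u)=N_{F/K_0}(\rho)^{(q-1)/2}=(-1)^{(q-1)/2}=-1$ since $(q-1)/2$ is odd; hence $\tau^2=\sigma^{2^{n-1}}\neq 1$, so $\mathrm{Gal}(E/K_0)$ is the generalized quaternion group, not dihedral, contradicting the hypothesis (equivalently, $\mathrm{cores}_{F/K_0}(E,\sigma)(\tilde\tau)=\phi_0(\tau^2)=2^{n-1}\neq 0$, contradicting Proposition \ref{galois}). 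Therefore $E=K_0(\sqrt{-1},\sqrt[2^n]{\pi})$.

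The main obstacle is this last exclusion: the two conditions $\tau_0(\zeta)=\zeta^{-1}$ and $\tau^2=1$ are genuinely independent, and separating the honest dihedral extension from the quaternionic "impostor" $F(\sqrt[2^n]{\pi\rho^{2^{n-1}}})$ requires both the explicit evaluation of $\tau^2$ on $\beta$ and the exact structure of $\mathcal{O}_F^*/\mathcal{O}_F^{*2^n}$ when $n<d$; the forward implication and the construction are then straightforward applications of Propositions \ref{dihedral-ram}, \ref{totallyram} and Lemma \ref{dihedral-norm1}.
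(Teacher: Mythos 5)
Your proof is correct, and the forward and converse implications follow essentially the same lines as the paper: both use Propositions \ref{dihedral-ram} and \ref{totallyram} to force $F/K_0$ unramified and $E/F$ totally ramified, both read off $\sqrt{-1}\notin K_0$, and both apply Lemma \ref{dihedral-norm1} together with Corollary \ref{localfields} to get the numerical condition $n<d$ (you deduce $\sqrt{-1}\notin K_0$ slightly more directly, from $\tau_0$ inverting $\mu_4$, where the paper invokes \ref{subextndihedral-notgalois}).

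The uniqueness step, however, is where you genuinely diverge. The paper's route uses Lemma \ref{cores1} to produce a reflection fixed field $L'\subset L$ with $L'K=L$; since $L'/K_0$ is then totally ramified and tame, $L'=K_0(\sqrt[2^n]{\pi_0})$ for a parameter $\pi_0$ \emph{of $K_0$}, so $L=K(\sqrt[2^n]{\pi_0})$ with $\pi_0/\pi$ a unit of $K_0$, and Corollary \ref{localfields2} immediately gives $\pi_0/\pi\in K^{*2^n}$, hence $L=L_n$. You instead keep $b\in F^*$ with $v_F(b)=1$, so the unit $w$ lives only in $\mathcal{O}_F^*$; this only gets you $w^2\in F^{*2^n}$, forcing you to separately exclude the ``impostor'' class $w\equiv\rho^{2^{n-1}}$. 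Your exclusion is correct (and the resulting observation that $F(\sqrt[2^n]{\pi\rho^{2^{n-1}}})/K_0$ is a Galois, generalized-quaternion but non-dihedral extension is a nice dividend), but the obstacle you call ``the main obstacle'' is really an artifact of not normalizing $b$ to lie in $K_0$ via \ref{cores1}; the paper's choice of $L'$ renders it invisible. One small point you should make explicit: the key assertion ``Galois-ness over $K_0$ gives $\tau_0(b)/b\in F^{*2^n}$'' is used before it is stated, and its proof (Kummer theory forces $\tau_0(b)\equiv b^k\ (\mathrm{mod}\ F^{*2^n})$ with $k$ odd, and then $v_F(b)=1$ with $b/\pi$ a unit forces $k\equiv 1\ (\mathrm{mod}\ 2^n)$) deserves a sentence; similarly the formula $\tau^2=\sigma^j$ with $\zeta^j=N_{F/K_0}(u)$ carries a hidden factor $N_{F/K_0}(\zeta)^s$ that vanishes only once $n<d$ is in hand, so the logical order should be adjusted.
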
 

\begin{proof}  Suppose that $\sqrt{-1} \not\in K_0$ and  $2 \leq n < d$. 
Let   $\pi \in K_0$ be a parameter and $L_n  = K_0(\sqrt{-1}, \sqrt[2^n]{\pi})$.
Let $\rho \in K_0(\sqrt{-1})$ be a primitive $2^d$-th root of unity. Then $ \rho' = \rho^{2^{d-n}}$ is a primitive $2^n$-th root of unity and 
 $N_{K_0(\sqrt{-1})/K_0}(\rho') = (-1)^{2^{n-d}} = 1$ (cf. \ref{localfields}). Hence,   by  ( \ref{dihedral-norm1}),   $L_n/K_0$ is a dihedral extension.

 Suppose, conversely,  there exists a dihedral extension $L/K_0$ of degree $2^{n+1}$ with $n \geq 2$. 
Let $K$ be the subfield of $L$ fixed by the rotation subgroup of  Gal$(L/K_0)$.  
 Since $n \geq 2$, 
by (\ref{dihedral-ram}),   $K/K_0$ is  unramified.   By (\ref{totallyram}), $L/K$ is totally ramified. 
By (\ref{cores1}), there exists a subfield  $L'$ of $L$ with $[L' : K_0] = [L : K]$ and $L'K = L$ .
Since $K/K_0$ is unramified and $L/K$ is totally ramified, $L'/K_0$ is totally ramified.
Since the characteristic of the residue field of $K_0$ is not 2 and $[L': K_0 ] = [L : K] = 2^n$,
$L' = K_0(\sqrt[2^n]{\pi_0})$ for some parameter  $\pi_0 \in K_0$. Hence $L = K(\sqrt[2^n]{\pi_0})$. 

Suppose that $\sqrt{-1} \in K_0$.   Let $L_1 = K_0(\sqrt[4]{\pi_0}) \subset L'$.
Then $L_1/K_0$ is cyclic of degree 4, leading to a contradiction (\ref{subextndihedral-notgalois}). 
Hence $\sqrt{-1} \not\in K_0$.
 
 Since $L = K(\sqrt[2^n]{\pi_0})$ is a cyclic extension of  $K $ of degree $2^n$,
 $K$ contains a primitive $2^n$-th root of unity $\rho$. 
 Since $n \geq 2$, $\sqrt{-1} \in K$ and hence $K = K_0(\sqrt{-1})$.  Thus  by the maximality of $d$, $n \leq d$. 
  Suppose $n = d$.   
  Since $K_0$ is a local field,   by (\ref{localfields}), $N_{K/K_0}(\rho) = -1$.
 Hence, by (\ref{dihedral-norm1}),  $L/K_0$ is not dihedral, a contradiction. 
 Therefore $n < d$ and $L \simeq L_n$, proving the uniqueness of dihedral extensions of 
 degree $2^{n+1}$ over $K_0$.
 
 \end{proof}

\begin{theorem}
\label{dihedralodd}
Let $K_0$ be a local field with characteristic of the residue field not 2 and  $\pi \in K_0$ be a parameter.  
Let $\ell$ be an odd prime  not equal to  the characteristic of the residue field   of $K_0$. 
 Let $\rho$ be a primitive $\ell^{\rm th}$  root of unity and   $d$ be the  maximum integer such that 
$K_0(\rho)$ contains a primitive $\ell^d$-th root of unity. 
Then there exists a dihedral extension of $K_0$ of degree $2\ell^n$ with $n \geq 1$ if and only if 
$[K_0(\rho): K_0] = 2$ and $1 \leq n \leq  d$. 
In this case $K_0(\rho, \sqrt[\ell^n]{\pi})$ is the unique   dihedral  extension of order $2\ell^n$. 
\end{theorem}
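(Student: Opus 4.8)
The plan is to mirror the proof of Theorem~\ref{dihedraleven} with the prime $2$ replaced by the odd prime $\ell$, the roles of Corollary~\ref{localfields}--\ref{localfields2} now being played by elementary counting in $(\Z/\ell^n)^*$ and in the finite residue field. \emph{Sufficiency:} assume $[K_0(\rho):K_0]=2$ and $1\le n\le d$, and put $K=K_0(\rho)$. Since $\rho$ has order prime to $\mathrm{char}(\kappa_0)$, $K/K_0$ is unramified, so $\pi$ is still a parameter of $K$, $X^{\ell^n}-\pi$ is Eisenstein over $K$, and $[K(\sqrt[\ell^n]{\pi}):K]=\ell^n$. By maximality of $d$ there is a primitive $\ell^d$-th root of unity $\rho_d\in K$, and $\zeta:=\rho_d^{\ell^{d-n}}$ is a primitive $\ell^n$-th root of unity in $K$. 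Then $N_{K/K_0}(\zeta)$ is an $\ell^n$-th root of unity in $K_0$; since $[K_0(\rho):K_0]=2$ the field $K_0$ contains no nontrivial $\ell$-th, hence no nontrivial $\ell^n$-th, root of unity, so $N_{K/K_0}(\zeta)=1$. By Lemma~\ref{dihedral-norm1}, $K_0(\rho,\sqrt[\ell^n]{\pi})=K(\sqrt[\ell^n]{\pi})$ is dihedral over $K_0$ of degree $2\ell^n$.

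\emph{Necessity:} suppose $L/K_0$ is dihedral of degree $2\ell^n$ with $n\ge1$, and let $K\subseteq L$ be the fixed field of the rotation subgroup, so $[K:K_0]=2$ and $L/K$ is cyclic of degree $\ell^n$. As $[L:K_0]=2\ell^n\ge6>4$, Proposition~\ref{dihedral-ram} forces $K/K_0$ to be unramified, and then Proposition~\ref{totallyram} gives that $L/K$ is totally ramified. By Lemma~\ref{cores1} there is $L'\subseteq L$ with $[L':K_0]=\ell^n$ and $L'K=L$; since $K/K_0$ is unramified and $L/K$ is totally ramified, $L'/K_0$ is totally ramified of degree $\ell^n$ coprime to $\mathrm{char}(\kappa_0)$, so $L'=K_0(\sqrt[\ell^n]{\pi_0})$ for some parameter $\pi_0\in K_0$ by \cite[Lemma~2.4]{PPS}, whence $L=K(\sqrt[\ell^n]{\pi_0})$. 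Since $L/K$ is cyclic of degree $\ell^n$ presented by a radical, $K$ contains a primitive $\ell^n$-th root of unity $\zeta$, and by Lemma~\ref{dihedral-norm1} (with $a=\pi_0$) the dihedrality of $L/K_0$ forces $N_{K/K_0}(\zeta)=1$. If $\tau_0$ generates $\mathrm{Gal}(K/K_0)$, then $\tau_0(\zeta)=\zeta^s$ with $s^2\equiv1\pmod{\ell^n}$; as $\ell$ is odd $(\Z/\ell^n)^*$ is cyclic, so $s\equiv\pm1$, and $\zeta^{1+s}=N_{K/K_0}(\zeta)=1$ excludes $s\equiv1$ (since $\zeta^2\ne1$). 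Hence $\tau_0(\zeta)=\zeta^{-1}$, so $\rho:=\zeta^{\ell^{n-1}}$ is a primitive $\ell$-th root of unity with $\tau_0(\rho)=\rho^{-1}\ne\rho$; therefore $\rho\notin K_0$, $K=K_0(\rho)$, $[K_0(\rho):K_0]=2$, and $n\le d$ by maximality of $d$.

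It remains to identify $L$. Writing $\pi_0=u\pi$ with $u$ a unit of $K_0$, I would use that $K$ is complete with finite residue field and $\ell$ is coprime to $\mathrm{char}(\kappa_0)$, so $u\in K^{*\ell^n}$ if and only if its image lies in $\kappa^{*\ell^n}$, where $\kappa=\F_{q^2}$ is the residue field of $K$ and $\kappa_0=\F_q$ that of $K_0$. Now $\ell^n\mid q^2-1$ (as $\kappa$ contains a primitive $\ell^n$-th root of unity, since $n\le d$) while $\ell\nmid q-1$ (as $K_0$ has no nontrivial $\ell$-th root of unity), so $\ell^n\mid q+1$; hence the subgroup $\F_q^*$ of the cyclic group $\F_{q^2}^*$ is contained in $\F_{q^2}^{*\ell^n}$. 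Thus $u\in K^{*\ell^n}$ and $L=K(\sqrt[\ell^n]{\pi_0})=K(\sqrt[\ell^n]{\pi})=K_0(\rho,\sqrt[\ell^n]{\pi})$, proving uniqueness.

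Essentially everything above is a routine consequence of the results already proved; the two points that need a little genuine care are extracting $[K_0(\rho):K_0]=2$ rather than $1$ from the dihedral hypothesis --- which is where the structure of the involutions of the cyclic group $(\Z/\ell^n)^*$ is combined with $N_{K/K_0}(\zeta)=1$ --- and the final normalization of the uniformizer, i.e.\ showing that the unit $u$ with $\pi_0=u\pi$ is an $\ell^n$-th power in $K_0(\rho)$, which is exactly the step resting on the finiteness of the residue field and the divisibility $\ell^n\mid q+1$.
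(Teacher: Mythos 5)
Your proof is correct and follows the same overall skeleton as the paper's (same use of Lemma~\ref{dihedral-norm1} for sufficiency; and for necessity the same chain: Proposition~\ref{dihedral-ram} to get $K/K_0$ unramified, Proposition~\ref{totallyram} to get $L/K$ totally ramified, Lemma~\ref{cores1} to produce $L'$, and \cite[Lemma~2.4]{PPS} to write $L'=K_0(\sqrt[\ell^n]{\pi_0})$). Where you genuinely diverge is in two sub-steps of the necessity/uniqueness direction. First, to extract $[K_0(\rho):K_0]=2$, the paper argues by contradiction: if $\rho\in K_0$, then $L_1=K_0(\sqrt[\ell]{\pi_0})$ is a cyclic subextension of $L$ not containing $K$, and Lemma~\ref{subextndihedral-notgalois} forces $\ell\le2$, absurd. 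You instead compute the Galois action directly: $\tau_0(\zeta)=\zeta^s$ with $s^2\equiv1\pmod{\ell^n}$, and since $(\Z/\ell^n)^*$ is cyclic for odd $\ell$ the norm condition $\zeta^{1+s}=1$ singles out $s\equiv-1$, giving $\tau_0(\rho)=\rho^{-1}\ne\rho$. Both are valid; yours is more hands-on, the paper's reuses its dihedral-subextension machinery. Second, and more substantively, you spell out the uniqueness: the paper only writes ``in particular $L\simeq L_n$'', leaving implicit why the unit $u$ with $\pi_0=u\pi$ is an $\ell^n$-th power. Your residue-field computation ($\ell^n\mid q+1$, hence $\F_q^*\subseteq\F_{q^2}^{*\ell^n}$) does this correctly; a marginally shorter route consistent with the paper's preliminaries would be to invoke Corollary~\ref{localfields3} directly (since $K_0$ has no nontrivial $\ell$-th root of unity, every unit of $K_0$ lies in $K_0^{*\ell^n}\subseteq K^{*\ell^n}$). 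Either way, filling in this step is an improvement over the paper's terse phrasing.
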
 

\begin{proof} Suppose  $[K_0(\rho): K_0] = 2$  and  $1 \leq n \leq d$.
  Let $\pi \in K_0$ be a parameter. 
 Let  $L_n = K_0(\rho, \sqrt[\ell^n]{\pi})$. 
 Let $\rho_n \in K_0(\rho)$ be a primitive $\ell^n$-th root of unity. 
 Since $N_{K_0(\rho)/K_0}(\rho_n)$ is an $\ell^n$-th root of unity  in $K_0$ and 
 the only $\ell^n$ root of unity in $K_0$ is 1, $N_{K_0(\rho) /K_0}(\rho_n) = 1$. 
 By (\ref{dihedral-norm1}),  $L_n/K_0$ is a dihedral extension.

Suppose, conversely  there exists a dihedral extension  $L/K_0$    of degree $2\ell^{n}$.  
Let $K$ be the subfield of  $L$ fixed by the rotation subgroup of Gal$(L/K_0)$.  
Since $[L : K ] = \ell^n \geq 3$,    by (\ref{dihedral-ram}), $K/K_0$ is   unramified.   
Then,   by (\ref{totallyram}), $L/K$ is totally ramified.
Let $L'$ be a subfield of $L$ with $[L' : K_0] = [L : K]$ and $L'K = L$ (\ref{cores1}).
Since $K/K_0$ is unramified and $L/K$ is totally ramified, $L'/K_0$ is totally ramified.
Since the characteristic of the residue field of $K_0$ is not $\ell$ and $[L : K_0 ] = [L : K] = \ell^n$,
$L' = K_0(\sqrt[\ell^n]{\pi_0})$ for some parameter $\pi_0 \in K_0$.  Hence $L = K(\sqrt[\ell^n]{\pi_0})$. 
Since $L/K$ is cyclic, $K$ contains a primitive $\ell^{n}$-th root of unity.  Thus $n \leq d$.
Since $n \geq 1$,   $\rho \in K^*$.  

Suppose $[K_0(\rho) : K_0] \neq 2$.  Since $K_0(\rho) \subseteq K$ and $[K : K_0] = 2$, 
$\rho \in K_0$.     Let $L_1 = K_0(\sqrt[\ell]{\pi_0})$. Then $L_1/K_0$ is cyclic.  
Since $K \not\subseteq L_1$, by (\ref{subextndihedral-notgalois}), $[L_1 : K_0]  = \ell \leq 2$, leading to a contradiction.  
Hence $[K_0(\rho) : K_0] = 2$. Since $\rho \in K$ and $[K : K_0] =2$,  $K = K_0(\rho)$. 
In particular $L\simeq L_n$, proving the uniqueness of dihedral extensions of degree $2\ell^n$ over $K_0$.
 \end{proof}
 
 \begin{cor}
 \label{dihedraln}
  Let $K_0$ be a local field with  the residue field $\kappa_0$ and  $m \geq 3$ with  $2m$ is   coprime to    char$(\kappa_0)$.  
 Let $L/K_0$  be   an   extension of degree $2m$ and $\pi \in K_0$ be a parameter. 
Then  $L/K_0$ is a dihedral extension  if and only if   there exists a primitive $ m^{\rm th}$ root of unity $\rho \in L$
with $[K_0(\rho) : K_0] = 2$, $N_{K_0(\rho)/K_0}(\rho) = 1$ and $L = K_0(\rho, \sqrt[m]{\pi})$. 
 \end{cor}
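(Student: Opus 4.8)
The plan is to deduce both directions from Lemma \ref{dihedral-norm1}: for a quadratic Galois extension $F/K_0$ whose top field contains a primitive $m$th root of unity $\rho$, and for $a\in K_0^*$ with $[F(\sqrt[m]{a}):F]=m$, the extension $F(\sqrt[m]{a})/K_0$ is dihedral if and only if $N_{F/K_0}(\rho)=1$. For the ``if'' direction I would put $F=K_0(\rho)$; since $2m$ is prime to $\mathrm{char}(\kappa_0)$ we have $\mathrm{char}(K_0)\neq 2$, so $F/K_0$ is quadratic Galois, and it is unramified, being generated by a root of unity of order prime to the residue characteristic. Hence $\pi$ remains a parameter of $F$, $X^m-\pi$ is Eisenstein over the valuation ring of $F$, and $[F(\sqrt[m]{\pi}):F]=m$; as $N_{F/K_0}(\rho)=1$ by hypothesis, Lemma \ref{dihedral-norm1} with $a=\pi$ shows that $L=K_0(\rho,\sqrt[m]{\pi})=F(\sqrt[m]{\pi})$ is dihedral over $K_0$ (of degree $2m$).

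For the ``only if'' direction, let $K$ be the fixed field of the rotation subgroup of $\mathrm{Gal}(L/K_0)$, so $[K:K_0]=2$ and $L/K$ is cyclic of degree $m$. Since $2m\ge 6>4$, Proposition \ref{dihedral-ram} forces $K/K_0$ to be unramified, and then Proposition \ref{totallyram} gives that $L/K$ is totally ramified. Using Lemma \ref{cores1} I would choose $L'\subseteq L$ with $[L':K_0]=m$ and $L'K=L$; then $L'\cap K=K_0$, and since every unramified subextension of $L'/K_0$ lies in the unique unramified quadratic subextension $K$ of $L/K_0$, hence in $L'\cap K=K_0$, the extension $L'/K_0$ is totally ramified. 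As $m$ is prime to $\mathrm{char}(\kappa_0)$, \cite[Lemma 2.4]{PPS} gives $L'=K_0(\sqrt[m]{\pi_0})$ for a parameter $\pi_0$ of $K_0$, so $L=K(\sqrt[m]{\pi_0})$. Moreover $K$ must contain a primitive $m$th root of unity $\rho$: otherwise the Galois closure $K(\mu_m,\sqrt[m]{\pi_0})$ of $L/K$, being totally ramified of degree $m$ over the nontrivial unramified extension $K(\mu_m)/K$, would strictly contain $L$. If $\rho$ lay in $K_0$, then $L'=K_0(\sqrt[m]{\pi_0})/K_0$ would be a cyclic Kummer extension, and with $L'K=L$, $L'\cap K=K_0$ and both $L'/K_0$, $K/K_0$ Galois we would get $\mathrm{Gal}(L/K_0)\cong\Z/m\Z\times\Z/2\Z$, which is abelian, contradicting that it is dihedral of order $2m\ge 6$. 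Hence $[K_0(\rho):K_0]=2$ and $K=K_0(\rho)$, and applying Lemma \ref{dihedral-norm1} in reverse to $L=K(\sqrt[m]{\pi_0})$ (using that $L/K_0$ is dihedral) yields $N_{K/K_0}(\rho)=1$.

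The remaining step, which I expect to be the main obstacle, is to replace $\pi_0$ by the prescribed parameter $\pi$. Write $\pi_0=u\pi$ with $u$ a unit of $K_0$ and set $q=|\kappa_0|$. Since $\rho\notin K_0$ the residue extension $\kappa/\kappa_0$ has degree $2$, and because $K/K_0$ is unramified a Frobenius computation gives $N_{K/K_0}(\rho)=\rho^{q+1}$; thus $N_{K/K_0}(\rho)=1$ is equivalent to $m\mid q+1$. Since $\mu_m\subset K$ forces $m\mid q^2-1$, the subgroup $(\kappa^*)^m$ of $\kappa^*$ has order $(q^2-1)/m$, which is a multiple of $q-1$, so it contains the order-$(q-1)$ subgroup $\kappa_0^*$. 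Hence the image of $u$ in $\kappa_0^*$ is an $m$th power in $\kappa^*$, and lifting this, together with the unique $m$-divisibility of $1+\mathfrak{m}_K$ (valid as $m$ is prime to the residue characteristic), shows $u\in(K^*)^m$. Therefore $K(\sqrt[m]{\pi_0})=K(\sqrt[m]{\pi})$, so $L=K_0(\rho,\sqrt[m]{\pi})$, completing the argument. Alternatively, this residue-field computation can be bypassed by factoring $m$ into prime powers and applying Theorems \ref{dihedraleven} and \ref{dihedralodd}, together with Remark \ref{degree4} for a factor $2$, to identify each prime-power layer of $L/K$ with $K(\sqrt[\ell^a]{\pi})$.
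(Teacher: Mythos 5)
Your proof is correct, and the ``only if'' direction takes a genuinely different route from the paper's. The paper factors $m$ into prime powers $\ell_i^{n_i}$, cuts $L$ into the corresponding dihedral layers $M_i/K_0$ of degree $2\ell_i^{n_i}$, applies Theorems \ref{dihedraleven} and \ref{dihedralodd} (and Remark \ref{degree4}) to each layer, and reassembles $\rho$ and $\sqrt[m]{\pi}$ inside $L$. You instead work with $m$ all at once: Proposition \ref{dihedral-ram} forces $K/K_0$ unramified since $2m>4$, Proposition \ref{totallyram} gives $L/K$ totally ramified, Lemma \ref{cores1} together with \cite[Lemma 2.4]{PPS} produces a totally ramified $L'=K_0(\sqrt[m]{\pi_0})$, Kummer theory and the nonabelianness of the dihedral group pin down $K=K_0(\rho)$ with $\rho$ a primitive $m$th root of unity, and Lemma \ref{dihedral-norm1} gives $N_{K/K_0}(\rho)=1$. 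The one genuinely new ingredient in your argument is the final residue-field computation replacing the auxiliary parameter $\pi_0$ by the prescribed $\pi$: you use that $N_{K/K_0}(\rho)=\rho^{q+1}$, so $N(\rho)=1$ is equivalent to $m\mid q+1$, whence $\kappa_0^*\subseteq(\kappa^*)^m$ and Hensel's lemma give $\pi_0/\pi\in(K^*)^m$. This is a careful and necessary step (it genuinely uses $N(\rho)=1$; without $m\mid q+1$ it can fail), and it makes explicit something the paper's proof of the uniqueness in Theorems \ref{dihedraleven}/\ref{dihedralodd} passes over somewhat quickly when it writes $L\simeq L_n$. Your parenthetical alternative of running the prime-power decomposition is essentially the paper's argument. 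Both approaches are valid; yours is more self-contained and cleaner on the parameter normalization, while the paper's leans on the prior uniqueness theorems.
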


\begin{proof}   Suppose $L/K_0$ is a dihedral extension of degree $2m$.  
Suppose $m = 2^n$.  Let $d$ be the  maximum integer such that $K_0(\sqrt{-1})$ contains a 
primitive $2^d$-th root of unity. Then, by (\ref{dihedraleven}),  $\sqrt{-1} \not\in K_0$, $n < d$ and 
$L = K_0(\sqrt{-1}, \sqrt[2^n]{\pi})$.  Let $\rho \in K_0(\sqrt{-1})$ be a primitive $2^n$-th root of unity. 
Since $K_0(\rho)/K_0$ is unramified and  $K_0(\sqrt{-1})$ is the maximal unramified  extension  of $L/K_0$, 
$K_0(\sqrt{-1}) = K_0(\rho)$.  In particular $[K_0(\rho) : K_0] = 2$ and $L = K_0(\rho, \sqrt[2^n]{\pi})$.
Since  $L/K_0$ is dihedral,  by (\ref{dihedral-norm1}), $N_{K_0(\sqrt{-1})/K_0}(\rho) = 1$.

Assume that   there is an  odd prime dividing $m$. 
Let  $m = \ell_0^{n_0}\ell_1^{n_1} \cdots \ell_r^{n_r}$  with $\ell_0 = 2$,  for $i \geq 1$,
$\ell_i$ are  distinct odd primes,  $n_0 \geq 0$ and $n_i \geq 1$  for all $i \geq 1$.
For $0\leq i \leq r$, let $m_i = \frac{m}{\ell_i^{n_i}}$. 
Let $\sigma$ be a generator of the rotation subgroup of Gal$(L/K_0)$  and  $M_i  = L^{\sigma^{m_i}}$. 
Then $[M_i : K_0] = 2\ell_i^{n_i}$.
% Suppose $n_i \geq 1$. Then, by (\ref{subextndihedral}), 
%$M_i/K_0$ is a dihedral extension.

Let $1 \leq i \leq r$.  
Then $M_i/K_0$ is a dihedral extension of degree $2\ell_i^{n_i}$ with $\ell_i$ odd.
By  (\ref{dihedralodd}),   there exists a primitive $ \ell_i^{n_i}$-th root of unity $\rho_i \in M_i$, 
 $[K_0(\rho_i) : K_0] = 2$  and $M_i  = K_0(\rho_i, \sqrt[\ell_i^{n_i}]{\pi})$.
 Since $\ell_i$ are distinct primes and $M_i \subseteq L$, $\sqrt[m_0]{\pi} \in L$ and $\rho' = \rho_1 \cdots \rho_r \in L$ is a 
 primitive $m_0^{\rm th}$ root of unity.  
 If $n_0 = 0$, then $m_0 = m$.  Since  $\rho_i \not\in K_0$ for all  $i\geq 1$ and $\ell_i$'s are distinct primes, it follows that 
  $\rho' \not\in K_0$.  Since   $K_0(\rho')/K_0$ is an unramified extension and   $K_0(\sqrt[m]{\pi})/K_0$ is a totally ramified 
  extension of degree $m$, it follows that $[K_0(\rho') : K_0] = 2$ and $L = K_0(\rho', \sqrt[m]{\pi})$.   
  By (\ref{dihedral-norm1}), $N_{K_0(\rho')/K_0}(\rho') = 1$.

 Suppose $n_0  = 1$.  Then $M_0/K_0$ is the unique bi-quadratic extension  and hence 
 $M_0 = K(\sqrt{u}, \sqrt{\pi})$ (cf. \ref{degree4}). Suppose $n_0 \geq 2$. Then,  as in the first case,  
 $M_0$ contains a primitive $2^{n_0}$-th root of unity $\rho_0$, $[K_(\rho_0) : K_0] = 2$ 
  and  $M_0 = K_0(\rho_0, \sqrt[2^{n_0}]{\pi})$. 
 Hence in either case the maximal unramified extension of $M_0/K_0$ is of degree 2 over $K_0$. 
 
 Since $M_0 \subseteq E$, $\sqrt[2^{n_0}]{\pi} \in E$. Since $m = 2^{n_0}m_0$,   $\sqrt[m]{\pi} \in E$.    
 Since  $K_0(\sqrt[m](\pi))/ K_0$ is a totally ramified extension of degree $ m$,   the degree of the maximal 
 unramified extension of $L/K_0$ is 2.  Since $L$ contains a primitive $2^{n_0}$-th root of unity and 
 $m_0^{\rm th}$ root of unit,  $L$ contains a primitive $m^{\rm th}$ root of unity $\rho$.  Since $m \geq 3$, either $n_0 \geq 2$
 or $m_0 \geq 2$. Hence $[K_0(\rho) : K_0] = 2$ and $L = K_0(\rho, \sqrt[m]{\pi})$.  By (\ref{dihedral-norm1}), $N_{K_0(\rho)/K_0}(\rho) = 1$. 
 
Conversely, suppose  there exists a primitive $ m^{\rm th}$ root of unity $\rho \in L$
with $[K_0(\rho) : K_0] = 2$, $N_{K/K_0}(\rho) = 1$ and $L = K_0(\rho, \sqrt[m]{\pi})$.
Then, by (\ref{dihedral-norm1}), $L/K_0$ is  a dihedral extension.    
\end{proof}

We conclude this section with the following result on norms from dihedral extensions over local fields. 

\begin{prop}
\label{localfields-norms-n}
Let $K_0$ be a  local field  and $m \geq 2$ with $2m$ coprime to  the  characteristic the residue field of $K_0$. 
Let  $L/K_0$ be a dihedral extension  of degree $2m$.  Let $K$ be the subfield of $L$ fixed by the rotation subgroup of 
Gal$(L/K_0)$.  Let $L_0, \cdots, L_{m-1}$ be the subfields of $L$ with 
$L_iK = L$ and $[L : L_i] = 2$ (\ref{cores1}). 
Let   $ \theta_0 \in K_0^*$.  
Then  for every $0 \leq i \leq m-1$, 
 there exists $\mu_i \in L_i$,
such that $\prod_{i= 0}^{m-1} N_{L_i/K_0}(\mu_i) = \theta_0$. 
\end{prop}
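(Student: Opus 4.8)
The plan is to show that the subgroup $\prod_{i=0}^{m-1}N_{L_i/K_0}(L_i^*)$ of $K_0^*$ — which, $K_0^*$ being abelian, coincides with the set of all products $\prod_i N_{L_i/K_0}(\mu_i)$, $\mu_i\in L_i^*$ — equals $K_0^*$; once this is known, $\theta_0$ lies in that set, with all but two of the $\mu_i$ taken to be $1$. Write $\F_q$ for the residue field of $K_0$, with $q$ odd. The case $m=2$ is quick: then $L/K_0$ is biquadratic, $L_0$ and $L_1$ are the two quadratic subextensions of $L$ other than $K$, hence distinct, so by local class field theory $N_{L_0/K_0}(L_0^*)\ne N_{L_1/K_0}(L_1^*)$, and two distinct index-$2$ subgroups of an abelian group generate it.

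So assume $m\ge 3$. By (\ref{dihedral-ram}), $K/K_0$ is unramified, and by (\ref{dihedraln}) we may write $L=K_0(\rho,\sqrt[m]{\pi})$ with $\rho$ a primitive $m$-th root of unity, $K=K_0(\rho)$, and $N_{K/K_0}(\rho)=1$. Set $\alpha=\sqrt[m]{\pi}$, let $\sigma$ be the rotation with $\sigma(\alpha)=\rho\alpha$ (it fixes $\rho$) and $\tau$ the reflection with $\tau(\alpha)=\alpha$, so that $\rho\,\tau(\rho)=N_{K/K_0}(\rho)=1$ forces $\tau(\rho)=\rho^{-1}$, and $L_i=L^{\langle\sigma^i\tau\rangle}$. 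Since $\tau|_K$ is the Frobenius of the unramified extension $K/K_0$, it acts on $\mu_m$ by $\rho\mapsto\rho^q$; hence $\rho^q=\rho^{-1}$, i.e.\ $m\mid q+1$, so $\gcd(m,q-1)$ divides $\gcd(q+1,q-1)=2$, and thus $\gcd(m,q-1)=1$ if $m$ is odd and $=2$ if $m$ is even. Each $L_i$ is totally tamely ramified of degree $m$ over $K_0$: its residue field is a subfield of that of $L$ (which is $\F_{q^2}$), and were it of degree $2$ then $L_i$ would contain $\mu_m$, hence $\rho$, forcing $K\subseteq L_i$ — impossible, as $\sigma^i\tau\notin\langle\sigma\rangle$.

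I would then use the elementary inclusion $N_{E/K_0}(E^*)\supseteq (K_0^*)^{[E:K_0]}$, valid for any finite extension $E/K_0$, together with the fact that $(K_0^*)^m$ contains the principal units $1+\mathfrak{m}_{K_0}$ (a pro-$p$ group on which $m$ acts invertibly, since $p\nmid m$) and contains $\mu_{q-1}^m$; a short computation gives $N_{L_0/K_0}(\alpha)=\pm\pi$, of valuation $1$. Hence $N_{L_0/K_0}(L_0^*)$ contains $1+\mathfrak{m}_{K_0}$, $\mu_{q-1}^m$, and an element of valuation $1$. If $m$ is odd, then $\mu_{q-1}^m=\mu_{q-1}$, so $N_{L_0/K_0}(L_0^*)$ contains all units together with a valuation-$1$ element and therefore equals $K_0^*$ — done. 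If $m$ is even, $\mu_{q-1}^m$ is the group of squares of $\F_q^*$, and it suffices to place one non-square unit of $K_0$ into $\prod_i N_{L_i/K_0}(L_i^*)$. For this, put $\beta=\alpha(1+\rho)$; one checks $(\sigma\tau)(\beta)=\beta$, so $\beta$ lies in $L^{\langle\sigma\tau\rangle}$, which is one of the $L_i$, and $\beta$ is a uniformiser there since $1+\rho$ is a unit ($m\ge 3$ makes $\bar\rho\ne-1$). Computing as before, $N_{L_0/K_0}(\alpha)=-\pi$ and $N_{L^{\langle\sigma\tau\rangle}/K_0}(\beta)=-\pi\,(1+\rho)^m$ for $m$ even, so $w:=(1+\rho)^m$ lies in $\prod_i N_{L_i/K_0}(L_i^*)$; moreover $w\in K_0^*$ is a unit, being $\mathrm{Gal}(L/K_0)$-invariant.

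The hard part will be to show $\bar w$ is a non-square in $\F_q^*$. From $\rho^q=\rho^{-1}$, the minimal polynomial of $\rho$ over $K_0$ is $X^2-tX+1$ with $t=\rho+\rho^{-1}$ a unit (otherwise $\bar\rho=-1$), so $(1+\rho)^2=(t+2)\rho$ and hence $w=-(t+2)^{m/2}$. If $4\mid m$: then $m/2$ is even, so $(t+2)^{m/2}$ is a square, and $L$ contains the dihedral extension $L^{\langle\sigma^{2^{v_2(m)}}\rangle}$ of $K_0$ of degree $2^{v_2(m)+1}\ge 8$, so $\sqrt{-1}\notin K_0$ by (\ref{dihedraleven}), i.e.\ $-1$ is a non-square in $\F_q^*$, and hence so is $\bar w$. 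If $m\equiv 2\pmod 4$: then $m/2$ is odd (and $\ge 3$), so $\bar w$ and $-(\bar t+2)$ differ by a square; writing $\bar\eta:=-\bar\rho$, which is a primitive $(m/2)$-th root of unity of odd order, one has $-(\bar t+2)=(\bar\eta^{1/2}-\bar\eta^{-1/2})^2$ with $\bar\eta^{1/2}:=\bar\eta^{(m/2+1)/2}\in\F_{q^2}^*$, and since $q\equiv-1\pmod{m/2}$ the Frobenius interchanges $\bar\eta^{1/2}$ and $\bar\eta^{-1/2}$, so $\bar\eta^{1/2}-\bar\eta^{-1/2}$ is a nonzero element of $\F_{q^2}^*\setminus\F_q$, whence its square $-(\bar t+2)$ — and therefore $\bar w$ — is a non-square in $\F_q^*$. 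Finally, for $m$ even, $\prod_i N_{L_i/K_0}(L_i^*)$ now contains $1+\mathfrak{m}_{K_0}$, the squares of $\F_q^*$, the non-square unit $w$, and the valuation-$1$ element $-\pi$; these generate $\mathcal{O}_{K_0}^*$ and then all of $K_0^*$, completing the argument.
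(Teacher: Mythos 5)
Your proof is correct, but it follows a genuinely different route from the paper's. For $m\ge 3$ the paper applies Corollary~\ref{dihedraln} to \emph{every} parameter of $K_0$: since $L=K_0(\rho',\sqrt[m]{u\pi})$ for each unit $u$, the field $K_0(\sqrt[m]{u\pi})$ is one of the $L_i$, so $\pm\pi$ and $\pm u\pi$ are norms from two of the $L_i$; dividing shows every unit, and then every element of $K_0^*$, is a product of norms, with no analysis of the individual norm groups required. You invoke \ref{dihedraln} only once, for a fixed $\pi$, and then compute the norm groups head-on: the observation $m\mid q+1$ forces $\gcd(m,q-1)\le 2$, so $(K_0^*)^m\subseteq N_{L_0/K_0}(L_0^*)$ already covers all units when $m$ is odd; when $m$ is even you exhibit the explicit element $\beta=\alpha(1+\rho)$, deduce $w=(1+\rho)^m\in\prod_i N_{L_i/K_0}(L_i^*)$, and verify that $\bar w$ is a non-square in $\F_q^*$ by a residue-field computation split into $4\mid m$ and $m\equiv 2\pmod 4$. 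Both arguments succeed. The paper's is shorter and free of case analysis because it leans on the uniqueness of the dihedral extension over $K_0$; yours is more explicit about the structure of the norm subgroups (and in particular surfaces the congruence $m\mid q+1$), at the cost of a more intricate verification in the even case.
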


\begin{proof}    Suppose $m = 2$. Then $L/K_0$ is a  biquadratic extension, 
 $L_0$ and $L_1$  are non isomorphic quadratic extensions of $K_0$. 
 Then, by local class field theory (cf. \cite[Proposition 3, p.142]{CFANT}), $N_{L_0/K_0}(L_0^*)$ and $N_{L_1/K_0}(L_1^*)$ 
 are two distinct subgroups of $K_0^*$ of index 2.
Let $b \in N_{L_0/K_0}(L_0^*)$ which is not in $N_{L_1/K_0}(L_1^*)$.
 Let $a \in K_0^*$.  Suppose $a \not\in N_{L_1/K_0}(L_1^*)$.
 Then $a \in b N_{L_1/K_0}(L_1^*)$. Hence $a = bc$ for some  $c \in N_{L_1/K_0}(L_1^*)$.
 In particular $a \in N_{L_0/K_0}(L_0^*) N_{L_1/K_0}(L_1^*)$.

 Suppose $m \geq 3$.   Let $\rho$ be a primitive $m^{\rm th}$ root of unity.  Then, for any parameter $ \pi \in K_0$,  
 by (\ref{dihedraln}), $ L = K_0(\rho, \sqrt[m]{\pi})$.  Let $\pi  \in K_0$ be a parameter. 
 Since $L = K_0(\rho, \sqrt{\pi})$  $[L : K_0(\sqrt[m]{\pi})] = 2$ and $K = K_0(\rho)$, 
 $K_0(\sqrt[m]{\pi}) = L_r$ for some $r$.  In particular $(-1)^m\pi$ is a norm from the extension $L_r/K_0$.
 Let $u \in K_0$ be a unit.   Since $u\pi$ is a parameter in $K_0$,  $\sqrt[m]{u\pi} \in L$ and 
  $K_0(\sqrt[m]{u\pi}) = L_s$ for some $s$.  Hence $(-1)^mu\pi$ is a norm from the extension $L_s/K_0$.
 In particular $u$ is a product of norms from  the extensions $L_r/K_0$ and $L_s/K_0$.
Since every element in $K_0$ is   $u \pi^r$ for some $u\in K_0$ a unit, it follows that 
every element in $K_0$ is a product of norms from the extensions $L_i/K_0$. 
\end{proof}

\section{Approximation of  norms from dihedral extensions over global fields}
\label{norms-globalfields}

\begin{prop}
\label{product-of-norms} Let $K_0$ be a  global  field   and   $n \geq 2$  an integer  with $2n$ coprime to  char$(K_0)$.   
 Let $E/K_0$ be a dihedral extension of degree $2n$ and $\sigma$ and $\tau$ generators of Gal$(E/K_0)$
 with $\sigma$ a rotation and $\tau$ a reflection.   Let  $K = E^\sigma$ and  $E_i = E^{\sigma^i \tau}$ for $1 \leq i \leq n$. 
  Let $\nu$ be a place of $K_0$ and $\lambda_\nu \in K_{0\nu}$.
  Suppose that the characteristic of the residue field at $\nu$ is coprime to $2n$.
   If $\lambda_\nu$ is a norm from the extension $E \otimes_{K_0}K_{0\nu}/K \otimes_{K_0}K_{0\nu}$, 
 then $\lambda_\nu$ is  a product of norms from the extensions $E_i \otimes K_{0\nu}/K_{0\nu}$.
\end{prop}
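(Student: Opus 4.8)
The plan is to pass to the completion $M:=K_{0\nu}$, which is a local field, and to argue according to whether the étale $M$-algebra $\tilde K:=K\otimes_{K_0}M$ is a field. One may assume $\lambda_\nu\in M^*$, the case $\lambda_\nu=0$ being trivial. Since $2n$ is prime to the residue characteristic, $\tilde E:=E\otimes_{K_0}M$, $\tilde K$ and $\tilde E_i:=E_i\otimes_{K_0}M$ are étale $M$-algebras of dimensions $2n$, $2$, $n$, whose factors are the completions of $E$, $K$, $E_i$ at the places over $\nu$; the hypothesis is that the image of $\lambda_\nu$ in $\tilde K$ lies in $N_{\tilde E/\tilde K}(\tilde E^*)$, and the goal is to write $\lambda_\nu=\prod_{i=1}^nN_{\tilde E_i/M}(\mu_i)$. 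Throughout I would keep track of $G=\mathrm{Gal}(E/K_0)$ (dihedral, with rotation subgroup $\langle\sigma\rangle$, $K=E^{\langle\sigma\rangle}$ and $E_i=E^{\langle\sigma^i\tau\rangle}$, each $\sigma^i\tau$ a reflection) and, for a place $w\mid\nu$ of $E$, of its decomposition group $D_w\le G$, so that $E_w/M$ is Galois with group $D_w$, the $D_w$ are conjugate and $\langle\sigma\rangle\trianglelefteq G$.

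First I would treat the case where $\tilde K$ is not a field; then $\tilde K\cong M\times M$, which happens exactly when $D_w\subseteq\langle\sigma\rangle$ for all $w$. Hence $D_w\cap\langle\sigma^i\tau\rangle=1$, so each completion of $E_i$ over $\nu$ coincides with the $E_w$ above it, and (as the $E_w$ are mutually $M$-isomorphic) every factor of $\tilde E$ and of every $\tilde E_i$ is isomorphic to one fixed local field $\hat E/M$. Thus $N_{\tilde E_i/M}$ has image the norm subgroup $N_{\hat E/M}(\hat E^*)$, and, computing over $\tilde K=M\times M$, the hypothesis says precisely $\lambda_\nu\in N_{\hat E/M}(\hat E^*)$. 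Picking $x\in\hat E^*$ with $N_{\hat E/M}(x)=\lambda_\nu$, taking $\mu_1$ equal to $x$ in one factor of $\tilde E_1$ and $1$ in the others, and $\mu_i=1$ for $i\ge2$, settles this case --- and this is the only case in which the norm hypothesis is actually used.

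Next, suppose $\tilde K$ is the quadratic field extension of $M$. Then $\tilde E=E\otimes_K\tilde K\cong\prod_{j=1}^g\hat E$ with $\hat E/\tilde K$ cyclic of degree $f=n/g$, all factors isomorphic since $E/K$ is Galois. If $f=1$ then $\tilde E\cong\prod_1^n(K\otimes_{K_0}M)$, so Lemma \ref{ei-split} produces an $i$ with $\tilde E_i\cong M\times E_i'$; then $N_{\tilde E_i/M}$ is surjective and we are done. If $f\ge2$, fix a place $w\mid\nu$ of $E$ and set $\hat E=E_w$. Since $\tilde K$ is a field, $D_w\not\subseteq\langle\sigma\rangle$, so $D_w$ is a dihedral subgroup of $G$ of order $2f$, its rotation subgroup is $D_w\cap\langle\sigma\rangle$, and $\tilde K=\hat E^{D_w\cap\langle\sigma\rangle}$; hence $\hat E/M$ is a dihedral extension of degree $2f$, the subfield fixed by its rotation subgroup being $\tilde K$. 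Proposition \ref{localfields-norms-n} then gives subfields $\hat E_0,\dots,\hat E_{f-1}$ of $\hat E$ with $[\hat E:\hat E_j]=2$ and $\hat E_j\tilde K=\hat E$, and elements $\mu_j\in\hat E_j$ with $\prod_jN_{\hat E_j/M}(\mu_j)=\lambda_\nu$. Each $H_j:=\mathrm{Gal}(\hat E/\hat E_j)$ is generated by a reflection of $D_w$, hence by a reflection of $G$, so $E^{H_j}$ is one of the $E_i$; and since $H_j\le D_w$, the completion of $E^{H_j}$ at $w$ is $E_w^{D_w\cap H_j}=\hat E^{H_j}=\hat E_j$, i.e.\ $\hat E_j$ is a direct factor of the corresponding $\tilde E_i$. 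Lifting each $\mu_j$ into that $\tilde E_i$ by $1$'s in the remaining factors, and padding the other $\mu_i$ by $1$, yields $\lambda_\nu=\prod_{i=1}^nN_{\tilde E_i/M}(\mu_i)$. (If $\nu$ is archimedean, one always lands in the subcase $f=1$ or in the split case above, so nothing new is needed.)

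The étale-algebra bookkeeping and the appeals to Lemma \ref{ei-split} and Proposition \ref{localfields-norms-n} are routine. The step I expect to require the most care is the final matching in the subcase $f\ge2$: that the subfields $\hat E_j$ of the purely local dihedral extension $\hat E/M$ supplied by Proposition \ref{localfields-norms-n} are exactly the $w$-completions of the global subfields $E_i$. This is delivered by the decomposition-group identity $\mathrm{Gal}(\hat E/\hat E_j)=D_w\cap\mathrm{Gal}(E/E_i)$, but it needs to be set up with some attention to the Galois theory.
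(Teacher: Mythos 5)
Your proof is correct, and the overall structure coincides with the paper's: the case split on whether $\tilde K=K\otimes_{K_0}K_{0\nu}$ is a field and, when it is, on whether $\tilde E$ is totally split over $\tilde K$, with Lemma \ref{ei-split} invoked in the totally split case and Proposition \ref{localfields-norms-n} in the remaining case. What you add is the systematic decomposition-group bookkeeping. In the split case the paper argues more briefly via the $K_0$-algebra isomorphism $E\cong K\otimes_{K_0}E_i$ (from $E=KE_i$ and a degree count), which yields $\tilde E\cong \tilde E_i\times\tilde E_i$ directly with no mention of $D_w$; your decomposition-group version is equivalent but slightly longer. In the dihedral-completion case the paper asserts, without comment, that the output of Proposition \ref{localfields-norms-n} --- a product of norms from the index-two subfields $\hat E_j$ of the local dihedral extension $\hat E/M$ --- is a product of norms from the $\tilde E_i$; this is precisely the matching you verify via $\mathrm{Gal}(\hat E/\hat E_j)=D_w\cap\mathrm{Gal}(E/E_{i_j})$, and it is the one step the paper glosses over. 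Your observation that the norm hypothesis on $\lambda_\nu$ is used only in the split case is also accurate, and the remark about archimedean places is a harmless clarification. So this is a more carefully written version of the same argument, with the matching step in the last case being genuinely worth making explicit.
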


\begin{proof} Suppose $\lambda_\nu$ is a norm from the extension $E \otimes_{K_0}K_{0\nu}/K \otimes_{K_0}K_{0\nu}$. 

Suppose that $K\otimes_ {K_0} K_{0\nu}$ is not a field. Then $K\otimes_{K_0} K_{0\nu} \simeq K_{0\nu} \times K_{0\nu}$.
Since $KE_i = E$, $E \otimes_{K_0} K_{0\nu} \simeq E_i \otimes_{K_0} K \otimes_{K_0}K_{0\nu} \simeq E_i \otimes _{K_0} K_{0\nu} 
\times E_i\otimes_{K_0}K_{0\nu}$.  Since $\lambda_\nu$ is a norm from the extension $E \otimes_{K_0}K_{0\nu}/K \otimes_{K_0}K_{0\nu}$,    it follows that  $\lambda_\nu$ is a norm from $E_i \otimes_{K_0} K_{0\nu}/K_{0\nu}$. 

Suppose $K\otimes_{K_0}K_{0\nu}$ is a field. 
Suppose  $E\otimes_{K_0} K_{0\nu}   \simeq \prod_1^n K\otimes_{K_0} K_{0\nu}$. 
Then, by (\ref{ei-split}),  there exists an $i$ such that 
 $E_i  \otimes_{K_0} K_{0\nu} \simeq  K_{0\nu} \times E'_{i\nu}$ for some  $K_{0\nu}$-algebra
 $E'_{i\nu}$.   In particular $\lambda_\nu$ is norm from $E_i \otimes_{K_0} K_{0\nu}/K_{0\nu}$.

Suppose  $E\otimes_{K_0} K_{0\nu}$ is not isomorphic to    $\prod_1^n K\otimes_{K_0} K_{0\nu}$. 
Since $E/K_0$ is Galois, $E\otimes_{K_0}K_{0\nu}  \simeq \prod E_\nu$ for some field extension $E_\nu/K_{0\nu}$
and $K\otimes_{K_0}K_{0\nu}$ is a  proper subfield of   $E_\nu$.
Hence $E_\nu/K_{0\nu}$ is a dihedral extension. Since the characteristic of the residue field at $\nu$ is coprime to $2n$, 
  by (\ref{localfields-norms-n}), $\lambda_\nu$ is a product of norms from 
$E_i \otimes_{K_0} K_{0\nu}/K_{0\nu}$.  
\end{proof}

\begin{cor}
\label{norms-approximation}
Let $K_0$, $E$ and $K$ be as in (\ref{product-of-norms}).  
  Let  $S$ be a finite set of places of $K_0$ with $2n$ coprime to the characteristic of the residue field at places in $S$. 
  For $\nu \in S$, let  $\lambda_\nu \in K_{0\nu}$  be a  
   norm from the extension $E \otimes_{K_0}K_{0\nu}/K \otimes_{K_0}K_{0\nu}$. 
   Then there exists $\lambda \in K_0$ such that $\lambda$ is a norm from the extension $E/K$ and 
   $\lambda \lambda_\nu^{-1} \in K\otimes_{K_0}K_{0\nu}^{*n}$ for all $\nu \in S$.
      \end{cor}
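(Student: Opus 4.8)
The plan is to feed the local factorizations provided by Proposition \ref{product-of-norms} into weak approximation for the global fields $E_i$, and then push down norms.

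First I would apply Proposition \ref{product-of-norms} at each $\nu\in S$: since $2n$ is coprime to the residue characteristic at $\nu$ and $\lambda_\nu$ is a norm from $E\otimes_{K_0}K_{0\nu}/K\otimes_{K_0}K_{0\nu}$, there are $\mu_{i,\nu}\in (E_i\otimes_{K_0}K_{0\nu})^{*}$, $1\le i\le n$, with $\lambda_\nu=\prod_{i=1}^{n}N_{E_i\otimes_{K_0}K_{0\nu}/K_{0\nu}}(\mu_{i,\nu})$. For each $i$, decompose $E_i\otimes_{K_0}K_{0\nu}=\prod_{w\mid\nu}E_{i,w}$, a product of local fields of residue characteristic coprime to $n$; then each $E_{i,w}^{*n}$ is an open subgroup of finite index, so $\mu_{i,\nu}\,(E_i\otimes_{K_0}K_{0\nu})^{*n}$ is a non-empty open subset of $E_i\otimes_{K_0}K_{0\nu}$ avoiding $0$. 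By weak approximation for the global field $E_i$, applied to the finite set of places of $E_i$ lying over the places in $S$, there is $\mu_i\in E_i^{*}$ with $\mu_i\mu_{i,\nu}^{-1}\in (E_i\otimes_{K_0}K_{0\nu})^{*n}$ for every $\nu\in S$. I then set $\lambda:=\prod_{i=1}^{n}N_{E_i/K_0}(\mu_i)$.

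Next I would verify the two required properties. By \ref{cores1} we have $E_iK=E$ with $[E:E_i]=[K:K_0]=2$, so $E\cong E_i\otimes_{K_0}K$ and, since the norm is compatible with base change, $N_{E/K}(\mu_i)=N_{E_i/K_0}(\mu_i)\in K_0^{*}\subseteq K^{*}$ for each $i$; hence $\lambda=N_{E/K}\bigl(\prod_{i=1}^{n}\mu_i\bigr)\in K_0^{*}$ is a norm from $E/K$. For the congruence, at $\nu\in S$ one has $\lambda\lambda_\nu^{-1}=\prod_{i=1}^{n}N_{E_i\otimes_{K_0}K_{0\nu}/K_{0\nu}}\!\bigl(\mu_i\mu_{i,\nu}^{-1}\bigr)$, and since $\mu_i\mu_{i,\nu}^{-1}$ is an $n$-th power in $E_i\otimes_{K_0}K_{0\nu}$ and norms commute with raising to the $n$-th power, each factor lies in $K_{0\nu}^{*n}$. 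Therefore $\lambda\lambda_\nu^{-1}\in K_{0\nu}^{*n}\subseteq (K\otimes_{K_0}K_{0\nu})^{*n}$ for all $\nu\in S$, as asserted.

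Essentially all the arithmetic content already sits in Proposition \ref{product-of-norms} (and the local computation \ref{localfields-norms-n} it rests on); the remaining points to handle are that $n$-th powers in the relevant local fields form open subgroups, so that weak approximation for the $E_i$ applies, together with the elementary base-change identity $N_{E/K}|_{E_i}=N_{E_i/K_0}$. I do not expect a serious obstacle beyond this bookkeeping.
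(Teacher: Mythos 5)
Your proof is correct and follows essentially the same route as the paper: apply Proposition~\ref{product-of-norms} locally at each $\nu\in S$, use weak approximation in the global fields $E_i$ to choose global approximants, and push down norms via $KE_i=E$. The paper compresses the approximation step into "let $z_i\in E_i$ be close to $z_{i\nu}$", whereas you spell out that $n$-th powers form open subgroups of the completions and that norms commute with the base change $E\cong E_i\otimes_{K_0}K$; this is the same argument with the bookkeeping made explicit.
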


\begin{proof} Let   $\sigma, \tau  \in $ Gal$(E/K_0)$ be as in  (\ref{product-of-norms}). 
 Let    $E_i = E^{\sigma^i \tau}$ for $1 \leq i \leq n$. Let $\nu \in S$. Then, by (\ref{product-of-norms}), 
 for  $1 \leq i \leq n$, there exists $z_{i\nu} \in E_i \otimes_{K_0}K_{0\nu}$ such that 
 $\lambda_\nu =  \prod N_{E_i \otimes_{K_0}K_{0\nu}/K_{0\nu}}(z_{i\nu})$.

   For $1 \leq i \leq n$, let $z_i \in E_i$ be   close to $z_{i\nu}$ for all $\nu \in S$. 
   Let $\lambda = \prod N_{E_i/K_0}(z_i)$.
   Since $z_i$ is close to $z_{i\nu}$ for all $\nu \in S$, 
   $\lambda$ is close to $\lambda_\nu$ for all $\nu \in S$. In particular 
   $\lambda\lambda_\nu^{-1} \in (K\otimes_{K_0}K_{0\nu})^{*n}$.
   Since $KE_i = E$, $\lambda$ is a norm from the extension $E/K$. 
\end{proof}

\section{Central Simple Algebras With Involutions Of Second Kind Over 2-local fields}
\label{alg-2-local}

In this section we give a description of central simples algebras having involutions of second kind over complete discretely 
valued fields with residue fields local fields (such fields are called 2-local fields).

 We begin the following well known 
 
 \begin{lemma}
 \label{e=f} Let $F$ be a complete discretely valued field and $\pi \in F$ a parameter.
 Let $E/F$ be a cyclic extension and $\sigma$ a generator of Gal$(E/F)$. 
 Then the cyclic algebra $(E, \sigma, \pi)$ is unramified if and only if $E = F$.
 \end{lemma}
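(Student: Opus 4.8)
The statement is the well-known fact that a cyclic algebra $(E,\sigma,\pi)$ over a complete discretely valued field $F$, with $\pi$ a uniformizer, is unramified precisely when $E=F$ (equivalently, when the algebra is split). The plan is to compute the residue of the class of $(E,\sigma,\pi)$ in $H^2(F,\muu_n)$ under the residue map $\residue\colon H^2(F,\muu_n)\to H^1(K,\Z/n\Z)$, where $K$ is the residue field and $n=[E:F]$; one direction is trivial, so the content is in the forward direction.

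First I would dispose of the easy direction: if $E=F$, then $(E,\sigma,\pi)$ is the trivial (split) algebra, hence certainly unramified. For the converse, I would reduce to the case $E/F$ unramified versus ramified. Since $n=[E:F]$ is invertible in $K$ (this is implicit, as otherwise the residue map is not even defined; in the applications $n$ is coprime to $\mathrm{char}(K)$), we may decompose $E$ as a compositum of its maximal unramified subextension and a totally ramified part. Concretely, I would use the standard formula for the residue of a symbol/cyclic class: writing $(E,\sigma,\pi)$ as a cyclic algebra, its class in $H^2(F,\muu_n)$ is the cup product $\chi\cup(\pi)$, where $\chi\in H^1(F,\Z/n\Z)$ is the character corresponding to $(E,\sigma)$. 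Then $\residue(\chi\cup(\pi)) = v(\pi)\cdot \ol\chi = \ol\chi$, the image of $\chi$ under $H^1(F,\Z/n\Z)\to H^1(K,\Z/n\Z)$ restricted to the unramified part (using that $v(\pi)=1$). So $(E,\sigma,\pi)$ is unramified iff $\ol\chi=0$ iff the character $\chi$ is ramified-only, i.e. $E/F$ is totally ramified.

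Hence the real point is: if $E/F$ is totally ramified of degree $n>1$, then $(E,\sigma,\pi)$ is \emph{not} split (not just possibly ramified). Here I would argue that $E$ itself, being totally ramified of degree $n$, splits $(E,\sigma,\pi)$ minimally — more precisely, the index of $(E,\sigma,\pi)$ equals $n$ when $E/F$ is totally ramified, because $E$ is a maximal subfield and the algebra is a division algebra (its valuation extends, with ramification index $n$, by the classical theory of Brauer groups of complete discretely valued fields, e.g. the Witt/Serre description $\Br(F)\cong \Br(K)\oplus H^1(K,\Q/\Z)$). Combining the two reductions: $(E,\sigma,\pi)$ is unramified $\iff$ the unramified part of $E/F$ is trivial, i.e. $E/F$ is totally ramified; and if moreover it is unramified it lies in $\Br(K)$-part, but a totally ramified cyclic algebra of degree $>1$ has index $n>1$ and cannot be in the unramified part unless $n=1$. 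Therefore $E=F$.

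The main obstacle is organizing the valuation-theoretic input cleanly: one must know that for $E/F$ totally ramified cyclic of degree $n$, the algebra $(E,\sigma,\pi)$ has index exactly $n$ (so is nonsplit for $n>1$), which relies on the structure of the Brauer group of a complete discretely valued field and the fact that $\pi$ is not a norm from $E$ — equivalently that the cyclic algebra is division. I expect the cleanest route is to invoke the residue map directly: $(E,\sigma,\pi)$ is unramified iff $\residue\bigl((E,\sigma,\pi)\bigr)=(\ol{E},\ol\sigma)=0$ in $H^1(K,\Z/n\Z)$, where $\ol E$ is the residue field of $E$; and this residue character is trivial exactly when $\ol E=K$, i.e. when $E/F$ is totally ramified, which combined with the index computation forces $n=1$. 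This avoids any delicate symbol manipulation and uses only the residue map recalled in the Preliminaries.
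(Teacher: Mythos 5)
Your plan hinges on the claim that when $E/F$ is totally ramified of degree $n>1$, the cyclic algebra $(E,\sigma,\pi)$ is a division algebra of index $n$. This is false, and it is exactly the step that would be needed to handle ramified $E/F$. For a concrete counterexample take $F=\Q_5$, $\pi=5$, $E=F(\sqrt{5})$, $\sigma(\sqrt{5})=-\sqrt{5}$. Then $(E,\sigma,\pi)=(5,5)_2=(5,-1)_2$ (using $(a,-a)=0$), which is split because $-1$ is a square in $\Q_5$. So $(E,\sigma,\pi)$ is split, hence certainly unramified, yet $E\neq F$. This also shows the lemma is false as literally stated. The paper's own proof tacitly adds the hypothesis that $E/F$ is unramified: its first sentence is ``Since $E/F$ is unramified,\ldots'', and every subsequent invocation of the lemma (in Lemmas~\ref{ramified2} and~\ref{unramified}) supplies an unramified $E/F$. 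The intended assertion is therefore: for $E/F$ a cyclic \emph{unramified} extension and $\pi$ a uniformizer, $(E,\sigma,\pi)$ is unramified if and only if $E=F$. Related to this, your residue formula $\partial(\chi\cup(\pi))=\bar\chi$ only holds when $\chi$ is unramified; for ramified $\chi$ there is an extra contribution, and the residue can vanish even when $\chi\neq 0$ (as the example shows). So the intermediate step ``$(E,\sigma,\pi)$ unramified iff $E/F$ totally ramified'' is also not correct.

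Once one restricts to $E/F$ unramified, the residue machinery is unnecessary and the paper's argument is much more direct: because $E/F$ is unramified of degree $m$, the norm of any element of $E^*$ has valuation in $m\Z$, so $\pi$ has order exactly $m$ in $F^*/N_{E/F}(E^*)$, and hence $D=(E,\sigma,\pi)$ is a division algebra of degree $m$ by the classical criterion. The generator $y\in D$ with $y^m=\pi$ then satisfies $m\,\tilde{\nu}(y)=\tilde{\nu}(\pi)=e$ (where $\tilde\nu$ is the canonical valuation on $D$ and $e$ its ramification index over $F$); since $\tilde{\nu}(y)\geq 1$ this gives $e\geq m$, while $e\leq m$ always, so $e=m$. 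Thus $D$ unramified forces $m=1$, i.e.\ $E=F$. Your sketch and the paper's proof take genuinely different routes --- residue calculus versus the valuation of a division algebra --- but in the ramified case your route needs the false index claim, whereas under the correct hypothesis the paper's valuation argument is both shorter and avoids the delicate residue bookkeeping.
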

 
 \begin{proof} Let $ m = [E : F]$.  Since $E/F$ is unramified,  the order of the class of $\pi$ in  $F^*/N_{E/F}(E^*)$
  is $m$  and hence $D = (E, \sigma, \pi)$ is a division algebra of degree $m$ (\cite[Theorem 6, p. 95]{Albert}).
 Let $\nu$ be the discrete valuation on $F$ and $\tilde{\nu}$ be the extension of $\nu$ to $D$ (\cite[Theorem 12.10, p. 138]{reiner}). 
Let  $e$ be  the ramification index of $D$.
 Since there exists $y \in D$ with $y^m = \pi$, we have $\tilde{\nu}(\pi) \geq m$ and hence $e = m$ (\cite[Theorem 13.7, p. 142]{reiner}).
 Suppose $D$ is unramified. Then $e = 1$  and hence $m = 1$. In particular 
$E = F$.  
 \end{proof}
 
\begin{lemma} 
\label{ramified2}
Let  $F_0$ be  a complete discrete valued field with residue field of characteristic not 2.
Let $\pi \in F_0$ be a parameter  and $F = F_0(\sqrt{\pi})$.
Let   $E/F$ be  an  unramified cyclic extension   and $\sigma$ a generator of Gal$(E/F)$.
If cores$_{F/F_0}(E, \sigma, \sqrt{\pi})$ is unramified,  then $(E, \sigma, \sqrt{\pi})$ is zero.
\end{lemma}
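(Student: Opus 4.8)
The idea is to analyze the structure of the cyclic algebra $(E,\sigma,\sqrt\pi)$ over $F = F_0(\sqrt\pi)$ and use the hypothesis on the corestriction to pin down the extension $E/F$. Since $F/F_0$ is ramified, I would first recall from Corollary~\ref{cores-ram} that the vanishing (indeed: unramifiedness) of $\cores_{F/F_0}$ forces a dihedral structure of bounded degree. More precisely, write $D = (E,\sigma,\sqrt\pi)$ and $A_0 = \cores_{F/F_0}(D)$. The plan is to show $E = F$, whence $D = 0$.

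\textbf{Key steps.} First I would set $m = [E:F]$ and suppose $m \geq 2$ for contradiction. Because $E/F$ is \emph{unramified} and $\sqrt\pi$ is a parameter of $F$, Lemma~\ref{e=f} shows $D$ is a division algebra of degree $m$ which is itself totally ramified over $F$ (ramification index $m$). Next, I would compute the corestriction: by the projection formula, $\cores_{F/F_0}(E,\sigma,\sqrt\pi) = (E, \sigma, N_{F/F_0}(\sqrt\pi)) = (E,\sigma,-\pi)$ viewed appropriately over $F_0$ — but here one must be careful, since $E/F$ need not descend to $F_0$; the cleaner route is cohomological. Writing the class of $D$ in $H^2(F,\muu_m)$ as the cup product $\chi \cup (\sqrt\pi)$ where $\chi \in H^1(F,\Z/m\Z)$ corresponds to $(E,\sigma)$, the hypothesis is that $\cores_{F/F_0}(\chi \cup (\sqrt\pi)) \in H^2(F_0,\muu_m)$ is unramified with respect to the valuation on $F_0$. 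I would take the residue $\residue_{F_0}$ of this corestriction and relate it, via the compatibility of residue with corestriction for the ramified extension $F/F_0$ (the residue of $\cores_{F/F_0}$ equals $\cores$ of the residue composed with multiplication by the ramification index, which is $1$ here but with a twist coming from $\residue(\sqrt\pi) = 1$ in $F$'s value group mapping to $\frac12\Z$ relative to $F_0$), to the class of $(E,\sigma)$ itself in $H^1(\kappa,\Z/m\Z)$ where $\kappa$ is the common residue field. Unramifiedness of the corestriction then forces $\chi$ to have trivial residue, i.e. $E/F$ is unramified — which we already knew — so the real content is that it forces $\chi$ to be trivial, i.e. $E = F$.

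\textbf{Alternative, more elementary route.} Rather than chasing residues, I would instead argue as follows. By Corollary~\ref{cores-ram} applied to the hypothesis (unramified, hence in particular zero is stronger — but here we only have unramified, so I may first need to note that an unramified corestriction over $F_0$ with residue field as given still yields, via $H^2(F_0,\muu_m) \to H^2(\kappa_0,\muu_m)$, enough control), one gets that $E/F_0$ — if $E$ descends — is dihedral of degree $\leq 4$, so $[E:F] \leq 2$. Then I would directly check: if $[E:F] = 2$, then $D = (E,\sigma,\sqrt\pi)$ is a quaternion algebra over $F$ with $E/F$ unramified, hence $D$ is division and ramified, and its corestriction to $F_0$ is computed to be ramified (its residue in $H^1(\kappa_0,\Z/2\Z)$ is the nontrivial class corresponding to the residue field extension of $E$), contradicting the unramifiedness hypothesis. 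Hence $[E:F] = 1$ and $D = 0$.

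\textbf{Main obstacle.} The delicate point is the corestriction--residue compatibility across the \emph{ramified} quadratic extension $F/F_0$: one must track how $\residue_{F_0} \circ \cores_{F/F_0}$ acts on a class $\chi \cup (\sqrt\pi)$, using that $\sqrt\pi$ is a uniformizer of $F$ lying over the uniformizer $\pi$ of $F_0$ with ramification index $2$, and that $\chi$ is unramified on $F$. I expect the clean statement is that this residue equals $\cores_{\kappa/\kappa_0}$ of the reduction $\bar\chi \in H^1(\kappa, \Z/m\Z)$ — but since $\kappa = \kappa_0$ here (as $F/F_0$ is totally ramified), it is just $\bar\chi$ itself, forcing $\bar\chi = 0$ and hence (since $\chi$ is unramified over the complete field $F$) $\chi = 0$. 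Getting this compatibility stated and applied correctly, possibly by reducing to the case $m = \ell$ prime and invoking Lemma~\ref{e=f} together with the explicit cocycle description, is the part that needs care; everything else is formal.
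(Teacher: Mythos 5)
Your proposal has a genuine gap, and it is precisely at the point you flag as "one must be careful, since $E/F$ need not descend to $F_0$." In fact, $E$ \emph{does} descend — this is the key observation the argument hinges on, and missing it is what sends you down more complicated routes. Since $E/F$ is unramified and $F/F_0$ is totally ramified (of degree $2$, as char of the residue field is not $2$), the residue field of $F$ equals that of $F_0$; letting $E_0$ be the maximal unramified subextension of $E/F_0$ (equivalently, the unramified lift of the residue extension), one has $E_0 \cap F = F_0$ and $E_0 F = E$, with $[E_0:F_0] = [E:F]$. Moreover $E_0/F_0$ is cyclic because its residue extension equals that of the cyclic extension $E/F$. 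Once this is in place, the projection formula gives directly
$\cores_{F/F_0}(E,\sigma,\sqrt\pi) = (E_0,\sigma_0,N_{F/F_0}(\sqrt\pi)) = (E_0,\sigma_0,-\pi)$,
where $\sigma_0 = \sigma|_{E_0}$; since $-\pi$ is a parameter of $F_0$ and $E_0/F_0$ is unramified, Lemma~\ref{e=f} says this class is unramified if and only if $E_0 = F_0$, hence $E = F$ and the algebra is zero. No residue--corestriction compatibility computation is needed.

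Your "alternative, more elementary route" does not work as stated: Corollary~\ref{cores-ram} concerns $\cores_{F/F_0}(E,\sigma)$ being \emph{zero} (as an element of $H^1$), whereas here the hypothesis is only that $\cores_{F/F_0}$ of the \emph{algebra class} $(E,\sigma,\sqrt\pi) \in H^2$ is \emph{unramified}; these are different objects and different conditions, and the inference "$[E:F]\leq 2$" does not follow from the given hypothesis without further work. Moreover, even if you got down to $[E:F]=2$, your treatment of that case ("its residue in $H^1(\kappa_0,\Z/2\Z)$ is the nontrivial class corresponding to the residue field extension of $E$") is exactly the descent-plus-projection-formula computation in disguise, so you would still need the descent fact you dismissed. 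Your cohomological residue-chasing route is plausible in spirit but is left at the level of a plan; making the residue of a corestriction across a ramified extension precise (with the $e=2$ twist) is more delicate than the clean descent argument, which is the one the paper uses.
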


\begin{proof}  Let $E_0$ be the maximal unramified extension of $E/F_0$. 
Since $E/F$ is unramified and $F/F_0$ is ramified extension of degree 2, 
$E/E_0$ is of degree 2 and $E = E_0F$. Since $F/F_0$ is ramified, $E_0/F_0$ is unramified.
Since  $E/F$ is cyclic, $E_0/F_0$ is cyclic (cf. proof of \ref{dihedral-cyclic}).  Let $\sigma_0$ be the restriction of $\sigma$ to $E_0$.
Then $(E_0, \sigma_0) \otimes F = (E, \sigma)$. Hence cores$_{F/F_0}(E,   \sigma, \sqrt{\pi})  =  
(E_0, \sigma_0, N_{F/F_0}(\sqrt{\pi})) =  (E_0, \sigma_0,  -\pi)$ (\cite[Proposition 1.5.3]{Neukirch}). 

Suppose that cores$_{F/F_0}(E, \sigma, \sqrt{\pi})  = (E_0, \sigma_0,  -\pi)$ is unramified. 
Since $\pi$ is a   parameter in $F_0$ and $E_0/F_0$ is unramified, by (\ref{e=f}), 
 $E_0 = F_0$.  In particular $E = F$   and    $(E, \sigma, \sqrt{\pi}) $ is zero. 
\end{proof}
  
  \begin{lemma} 
\label{ramifiedh2}
Let  $F_0$ be  a complete discrete valued field with residue field  $K_0$
 and  $F/F_0$ a ramified quadratic  field extension. Let  $m \geq 1$ with $2m$ coprime to char$(K_0)$
   and  $\alpha \in H^2(F, \mu_m)$. 
 If  cores$_{F/F_0}(\alpha)$ is zero,  then $\alpha = \alpha_0 \otimes F$ for some 
  $\alpha_0 \in  H^2_{nr}(F_0, \mu_m)$.
 In particular per$(\alpha) \leq 2$. 
\end{lemma}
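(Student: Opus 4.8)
The plan is to reduce the general class $\alpha \in H^2(F,\mu_m)$ to cyclic pieces via the residue map and apply the preceding lemmas, especially Lemma~\ref{ramified2}. First I would write $F = F_0(\sqrt{\pi})$ for a parameter $\pi \in F_0$ (a ramified quadratic extension in residue characteristic not $2$ has this form). Decompose $m = 2^a m'$ with $m'$ odd; since the odd and $2$-primary parts behave independently, it suffices to treat each separately, and the odd-degree part is the easier case. For the main argument, let $\residue\colon H^2(F,\mu_m) \to H^1(K_F, \Z/m\Z)$ be the residue map attached to the valuation on $F$, where $K_F = K_0$ is the residue field (the extension $F/F_0$ being ramified, the residue field does not change). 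Writing $(E,\sigma) = \residue(\alpha) \in H^1(K_0,\Z/m\Z)$ and lifting to an unramified cyclic extension $\wt E/F$ with generator $\wt\sigma$, we get $\alpha - (\wt E, \wt\sigma, \sqrt\pi) \in H^2_{nr}(F,\mu_m)$. So modulo an unramified class, $\alpha$ is a cyclic algebra of the shape controlled by Lemma~\ref{ramified2}.

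Next I would chase corestrictions. Since cores$_{F/F_0}$ of an unramified class over $F$ lands in an unramified class over $F_0$ (corestriction commutes with residues up to the norm on residue fields, and here $K_0/K_0$ is trivial), and since $\mathrm{cores}_{F/F_0}(\alpha) = 0$ by hypothesis, we get that $\mathrm{cores}_{F/F_0}(\wt E, \wt\sigma, \sqrt\pi)$ is unramified over $F_0$. Lemma~\ref{ramified2} then forces $(\wt E, \wt\sigma, \sqrt\pi) = 0$, i.e. $\wt E = F$ and $\residue(\alpha) = 0$. Hence $\alpha$ itself is unramified: $\alpha \in H^2_{nr}(F,\mu_m)$. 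Because $F$ is complete, $H^2_{nr}(F,\mu_m) \isom H^2(K_0,\mu_m)$, and the same restriction isomorphism for $F_0$ (complete, same residue field $K_0$) gives $H^2_{nr}(F_0,\mu_m) \isom H^2(K_0,\mu_m)$; under these identifications the restriction $H^2_{nr}(F_0,\mu_m) \to H^2_{nr}(F,\mu_m)$ is the identity on $H^2(K_0,\mu_m)$, so $\alpha = \alpha_0 \otimes_{F_0} F$ for a unique $\alpha_0 \in H^2_{nr}(F_0,\mu_m)$, as desired.

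For the final assertion, per$(\alpha)\le 2$: here I use that $\mathrm{cores}_{F/F_0}\circ \mathrm{res}_{F/F_0}$ is multiplication by $[F:F_0] = 2$. Since $\alpha = \mathrm{res}_{F/F_0}(\alpha_0)$, we get $2\alpha_0 = \mathrm{cores}_{F/F_0}(\alpha) = 0$ in $H^2(F_0,\mu_m)$ (wait — more carefully: $2\alpha_0 = \mathrm{cores}_{F/F_0}\,\mathrm{res}_{F/F_0}(\alpha_0) = \mathrm{cores}_{F/F_0}(\alpha) = 0$), hence $\alpha_0$, and therefore $\alpha$, is killed by $2$, so per$(\alpha) \le 2$.

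The main obstacle I anticipate is the odd-primary part of $m$: Lemma~\ref{ramified2} is stated for the cyclic algebra $(E,\sigma,\sqrt\pi)$ without a parity restriction on the degree, so in principle it applies verbatim, but I would want to double-check that the decomposition $\alpha = \alpha_{(2)} + \alpha_{(\mathrm{odd})}$ and the lifting of $\residue(\alpha)$ to an unramified cyclic extension are compatible with corestriction degree by degree — the norm map $N_{F/F_0}(\sqrt\pi) = -\pi$ on residue data is what makes the ramified quadratic case work, and one must be sure the same identity feeds Lemma~\ref{ramified2} in each primary component. A secondary point to verify carefully is the compatibility of the two completeness isomorphisms $H^2_{nr}(F_0,\mu_m)\isom H^2(K_0,\mu_m)\isom H^2_{nr}(F,\mu_m)$ with restriction, but this is standard since $F_0$ and $F$ have the same residue field and $\pi\mapsto\sqrt\pi$ induces the identity on residue fields.
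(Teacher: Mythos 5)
Your proof is correct and takes essentially the same route as the paper: write $F = F_0(\sqrt{\pi})$, decompose $\alpha = \alpha' + (E,\sigma,\sqrt{\pi})$ with $\alpha'$ unramified, observe that the vanishing of $\mathrm{cores}_{F/F_0}(\alpha)$ together with the unramifiedness of $\mathrm{cores}_{F/F_0}(\alpha')$ forces $\mathrm{cores}_{F/F_0}(E,\sigma,\sqrt{\pi})$ to be unramified, invoke Lemma~\ref{ramified2} to kill the ramified piece, and then use completeness and equality of residue fields to descend $\alpha'$ to $F_0$. Your closing argument that $\mathrm{cores}\circ\mathrm{res} = [F:F_0]\cdot\mathrm{id}$ gives $2\alpha_0 = 0$, hence $\mathrm{per}(\alpha)\leq 2$, correctly supplies the ``in particular'' clause, which the paper leaves implicit. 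The only distracting element is the opening suggestion of decomposing $m = 2^a m'$ and the closing worry about the odd-primary part: as you yourself note, Lemma~\ref{ramified2} is stated for arbitrary unramified cyclic $E/F$ with no parity constraint, so no primary decomposition is needed and none is used — you can safely delete those remarks.
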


\begin{proof} 
Since $F/F_0$ is a ramified quadratic extension and char$(K_0) \neq 2$, 
 $F =F_0(\sqrt{\pi})$ for some $\pi \in F_0$ a parameter. 
 Since $m$ is coprime to char$(K_0)$,  
  $\alpha = \alpha' + (E, \sigma, \sqrt{\pi})$ for some $\alpha' \in H^2_{nr}(F, \mu_m)$ and 
 $E/F$ an unramified cyclic field extension of $F$  (cf. \cite[Lemma 4.1]{PPS}).  Since cores$_{F/F_0}(\alpha) = 0$, we have 
 cores$_{F/F_0}(-\alpha' )  = $ cores$_{F/{F_0}}(E, \sigma, \sqrt{\pi}) $.
 Since $\alpha'$ is unramified, cores$_{F/F_0}(-\alpha')$ is unramified  (cf. \cite[p. 48]{CTSB}) and hence cores$_{F/{F_0}}(E, \sigma, \sqrt{\pi}) $ is unramified.
 Thus, by (\ref{ramified2}),  $(E, \sigma, \sqrt{\pi})$ is zero and hence $\alpha = \alpha'$. 
  Since the residue field of $F$ and $F_0$ are equal and both $F$ and $F_0$ are complete, 
 it follows that $\alpha = \alpha' = \alpha_0 \otimes F$ for some $\alpha_0 \in H^2_{nr}(F_0, \mu_m)$.
 \end{proof}

  \begin{lemma}  
\label{unramified}
Let  $F_0$ be  a complete discrete valued field   and  $F/F_0$ an unramified quadratic   extension.  Let $\pi \in F_0$ be a parameter
and $m \geq 1$. Suppose $2m$ is  coprime to the characteristic of  the residue field of $F_0$. 
 Let $\alpha = \alpha' + (E, \sigma, \pi) \in H^2(F, \mu_m)$ for some $\alpha' \in H^2_{nr}(F, \mu_m)$ and 
 $E/F$ an unramified  cyclic field extension. 
 If   cores$_{F/F_0}(\alpha)$  is zero, then cores$_{F/F_0}(\alpha')$ and cores$_{F/F_0}(E, \sigma, \pi)$ are zero. 
\end{lemma}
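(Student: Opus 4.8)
The plan is to use corestriction to push the given decomposition $\alpha = \alpha' + (E,\sigma,\pi)$ down to $F_0$, and then to separate the two resulting pieces by comparing residues on $F_0$. Set $\beta = \cores_{F/F_0}(E,\sigma) \in H^1(F_0,\Z/m\Z)$. Since $\pi \in F_0^*$, the class $(\pi) \in H^1(F,\mu_m)$ is the restriction of $(\pi) \in H^1(F_0,\mu_m)$, so the projection formula gives
\[
\cores_{F/F_0}(E,\sigma,\pi) = \cores_{F/F_0}\big((E,\sigma)\cup(\pi)\big) = \beta \cup (\pi).
\]
Hence the hypothesis $\cores_{F/F_0}(\alpha) = 0$ reads $\cores_{F/F_0}(\alpha') + \beta \cup (\pi) = 0$ in $H^2(F_0,\mu_m)$.

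Next I would check that both $\cores_{F/F_0}(\alpha')$ and $\beta$ are unramified on $F_0$. Corestriction sends unramified classes to unramified classes — the argument recalled in the proof of (\ref{ramifiedh2}), cf. \cite[p. 48]{CTSB}, applies to $H^1$ exactly as to $H^2$: if $\partial_F$ annihilates a class, then so does $\partial_{F_0}\circ\cores_{F/F_0}$. Since $\alpha'$ is unramified on $F$ by hypothesis and $(E,\sigma)$ is unramified on $F$ because $E/F$ is unramified, we obtain $\cores_{F/F_0}(\alpha') \in H^2_{nr}(F_0,\mu_m)$ and $\beta \in H^1_{nr}(F_0,\Z/m\Z)$; in particular $\beta = (E_0,\sigma_0)$ for some unramified cyclic extension $E_0/F_0$, and $\beta\cup(\pi) = (E_0,\sigma_0,\pi)$.

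Finally I would apply $\partial_{F_0}$ to the relation $\cores_{F/F_0}(\alpha') + \beta\cup(\pi) = 0$. As $F/F_0$ is unramified, $\pi$ is again a parameter of $F_0$, so by the description of $H^2$ of a complete discretely valued field (cf. \cite[Lemma 4.1]{PPS}) one has $\partial_{F_0}(\cores_{F/F_0}(\alpha')) = 0$ and $\partial_{F_0}(\beta\cup(\pi)) = (\bar E_0, \bar\sigma_0)$, the specialization of $\beta$ at the residue field of $F_0$. Thus $(\bar E_0,\bar\sigma_0) = 0$, so $E_0 = F_0$ (an unramified extension of the complete field $F_0$ is determined by its residue extension), i.e. $\beta = 0$; therefore $\cores_{F/F_0}(E,\sigma,\pi) = \beta\cup(\pi) = 0$, and then $\cores_{F/F_0}(\alpha') = \cores_{F/F_0}(\alpha) - \cores_{F/F_0}(E,\sigma,\pi) = 0$ as well. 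The only ingredients beyond bookkeeping are the projection formula, the fact that corestriction preserves unramifiedness, and the structure of the residue map on $H^2$ of a complete discretely valued field; there is no genuine obstacle once these standard facts are in hand.
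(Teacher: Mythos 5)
Your proof is correct and follows essentially the same route as the paper: projection formula to write $\cores(E,\sigma,\pi)=\cores(E,\sigma)\cup(\pi)$, observe that both $\cores(\alpha')$ and $\cores(E,\sigma)$ are unramified, and conclude that $\cores(E,\sigma)\cup(\pi)$ unramified forces $\cores(E,\sigma)=0$. The only cosmetic difference is that you compute the residue of $\cores(E,\sigma)\cup(\pi)$ directly, while the paper quotes its Lemma \ref{e=f} (a cyclic algebra $(E_0,\sigma_0,\pi)$ over a complete field is unramified iff $E_0=F_0$), which encapsulates the same computation.
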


\begin{proof} Since $F/F_0$ is  unramified  extension,   $\pi $ is a parameter in $F$. 
Since $\pi \in F_0$, we have cores$_{F/F_0}(E, \sigma, \pi) = $ cores$_{F/F_0}(E, \sigma) \cdot (\pi)$.
Since cores$_{F/F_0}( \alpha)  = $ cores$_{F/F_0}(\alpha') + $ cores$_{F/F_0}(E, \sigma, \pi) $ is zero, 
we have cores$_{F/F_0}(E, \sigma) \cdot (\pi) = - $cores$_{F/F_0}(\alpha')$. 
Since $\alpha'$ is unramified, cores$_{F/F_0}(\alpha')$ is unramified.
Since $E/F$ is unramified, cores$_{F/F_0}(E,\sigma)$ is unramified.
Since cores$_{F/F_0}(E, \sigma) \cdot (\pi)$ is unramified and $F_0$ is complete,   cores$_{F/F_0}(E, \sigma)$  is  zero (\ref{e=f}).  
Hence cores$_{F/F_0}(E, \sigma, \pi) = $ cores$_{F/F_0}(E, \sigma) \cdot (\pi)$ is zero and 
in particular   cores$_{F/F_0}(\alpha')$ is zero.  
\end{proof}

\begin{lemma}
\label{branch-alg}
 Let  $F_0$ be  a complete discrete valued field with residue field  $K_0$ a  local field, 
  $F/F_0$  a quadratic field extension  and $\pi  \in F_0$  a parameter. 
  Let $m \geq 1$  with $2m$  coprime to char$(K_0)$. 
  Let   $\alpha \in H^2(F, \mu_m)$ with   cores$_{F/F_0}(\alpha) = 0$. If  ind$(\alpha)\geq 3$, then $F/F_0$ is unramified and  $\alpha = (E, \sigma,  \pi)$ for some unramified cyclic extension $E/F$.
\end{lemma}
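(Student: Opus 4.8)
The plan is to argue by cases according to whether the quadratic extension $F/F_0$ is ramified or unramified, using the hypothesis $\mathrm{ind}(\alpha)\ge 3$ only to eliminate the ramified case.

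First I would suppose $F/F_0$ is ramified and derive a contradiction. Since $K_0$ is local and $2m$ is coprime to $\mathrm{char}(K_0)$, in particular $\mathrm{char}(K_0)\ne 2$, so Lemma \ref{ramifiedh2} applies and gives $\alpha=\alpha_0\otimes_{F_0}F$ for some $\alpha_0\in H^2_{nr}(F_0,\mu_m)$, together with $\mathrm{per}(\alpha)\le 2$. As $F/F_0$ is ramified, the residue field of $F$ is again $K_0$, and $\alpha$, being the base change of the unramified class $\alpha_0$, is unramified over $F$ with residue class $\bar\alpha_0\in H^2(K_0,\mu_m)$. For an unramified class over a complete discretely valued field, both period and index agree with those of its residue class, so $\mathrm{ind}(\alpha)=\mathrm{ind}_{K_0}(\bar\alpha_0)$ and $\mathrm{per}(\alpha)=\mathrm{per}_{K_0}(\bar\alpha_0)$. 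Since period equals index over the local field $K_0$, we obtain $\mathrm{ind}(\alpha)=\mathrm{per}(\alpha)\le 2$, contradicting $\mathrm{ind}(\alpha)\ge 3$. Hence $F/F_0$ is unramified.

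Next, assuming $F/F_0$ unramified, I would deduce the stated form of $\alpha$. Then $\pi$ is a parameter of $F$, and the residue field $K$ of $F$ is the unramified quadratic extension of $K_0$, again a local field. Since $m$ is coprime to $\mathrm{char}(K)=\mathrm{char}(K_0)$, the standard decomposition (\cite[Lemma 4.1]{PPS}) yields $\alpha=\alpha'+(E,\sigma,\pi)$ with $\alpha'\in H^2_{nr}(F,\mu_m)$ and $E/F$ an unramified cyclic field extension. By Lemma \ref{unramified}, the vanishing of $\cores_{F/F_0}(\alpha)$ forces $\cores_{F/F_0}(\alpha')=0$, and it only remains to see $\alpha'=0$. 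Writing $\bar\alpha'\in H^2(K,\mu_m)={}_m\mathrm{Br}(K)$ for the residue of the unramified class $\alpha'$, the class $\cores_{F/F_0}(\alpha')$ is unramified over $F_0$ with residue $\cores_{K/K_0}(\bar\alpha')$; hence $\cores_{K/K_0}(\bar\alpha')=0$ in $\mathrm{Br}(K_0)$. For the extension $K/K_0$ of local fields, corestriction is the identity on local invariants, so $\mathrm{inv}_K(\bar\alpha')=\mathrm{inv}_{K_0}(\cores_{K/K_0}(\bar\alpha'))=0$, whence $\bar\alpha'=0$ and thus $\alpha'=0$. Therefore $\alpha=(E,\sigma,\pi)$ with $E/F$ the unramified cyclic extension above, as required.

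The content is really a short case analysis; the points requiring care are all standard — the compatibility of the residue maps with base change and with corestriction for (un)ramified extensions, together with the two facts about local fields that period equals index and that corestriction acts as the identity on Brauer invariants. I expect the delicate part of the write-up to be invoking these compatibilities with the correct normalizations rather than any genuine difficulty.
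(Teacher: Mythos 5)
Your proof is correct and follows essentially the same route as the paper: eliminate the ramified case via Lemma~\ref{ramifiedh2} and the period--index coincidence over the local residue field, then in the unramified case decompose $\alpha=\alpha'+(E,\sigma,\pi)$, apply Lemma~\ref{unramified}, and kill $\alpha'$ by reducing to the vanishing of the corestriction of its residue in $\Br(K)$. The only cosmetic difference is that where you argue via local invariants and the identity $\mathrm{inv}_{K_0}\circ\cores_{K/K_0}=\mathrm{inv}_K$, the paper simply cites the injectivity of $\cores_{K/K_0}$ on Brauer groups of local fields; these are the same fact.
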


\begin{proof}  Suppose cores$_{F/F_0}(\alpha) = 0$ and  ind$(\alpha) \geq 3$.

Suppose $F/F_0$ is ramified. Then,   by (\ref{ramifiedh2}),  $\alpha$ is unramified and per$(\alpha) \leq 2$. 
Let $K$ be the residue field of $F$ and   $\beta \in H^2(K, \mu_m)$ be the image of $\alpha$. Since per$(\alpha) \leq 2$, per$(\beta) \leq 2$.
Since $K$ is a local field, ind$(\beta) = $ per$(\beta)$.
Since  $F$ is complete, ind$(\alpha) = $ ind$(\beta)$. Hence ind$(\alpha) \leq 2$, leading to a contradiction. 
Hence $F/F_0$ is unramified and  $\pi$  is a parameter in $F$. 
Since $m$ is coprime to char$(K_0)$, $\alpha = \alpha' + (E, \sigma, \pi)$ for some 
$\alpha' \in H^2_{nr}(F, \mu_m)$ and $E/F$ an unramified cyclic extension (cf. \cite[Lemma 4.1]{PPS}). 
 Then,  by (\ref{unramified}), cores$_{F/F_0}(\alpha')$ and cores$_{F/F_0}(E, \sigma, \pi)$ are zero. 
Let $\beta' \in H^2(K, \mu_m)$ be the image of $\alpha'$. 
Since  cores$_{F/F_0}(\alpha') = 0$, cores$_{\kappa/\kappa_0}(\beta') = 0$.
Since $K/K_0$ is a quadratic field extension of local fields,  $\beta' = 0$ ((cf. \cite[Theorem 10, p. 237]{LF}) ).
Since $F$ is complete, $\alpha' = 0$  and  hence  $\alpha = (E, \sigma, \pi)$.  
\end{proof}
 
 Let $F$ be a field and $m \geq 1$ coprime to char$(F)$. Suppose $F$ contains a primitive $m^{\rm th}$ root of unity $\rho$.
 For  $a, b \in F^*$, let  $(a, b)_{m}$ be the cyclic algebra generated by  $x$ and $y$ with relations $x^{m} = a$, $y^{m} = b$ and 
$yx = \rho xy$.

 \begin{prop}
\label{branch-alg-n} Let  $F_0$ be  a complete discrete valued field with residue field  $K_0$ a   local field.
Let $m\geq 3$ with    $2m$  coprime to the characteristic of the residue field of $K_0$. 
Let $\pi \in F_0$ be a parameter and $\delta \in F_0$
a unit such that the image of $\delta$ in $K_0$ is a parameter. 
Let $F/F_0$ be a quadratic field extension and $\alpha \in H^2(F, \mu_{m})$. If     cores$_{F/F_0}(\alpha) = 0$ and 
  ind$(\alpha)  = m$,   then  $F/F_0$ is unramified, $F$ contains a primitive $m^{\rm th}$ root of unity $\rho$, 
$N_{F/F_0}(\rho) = 1$ and 
  $\alpha = (\delta, \pi)_{m}$. 
\end{prop}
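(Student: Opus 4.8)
The plan is to descend the whole configuration to the residue field $K_0$, which is a genuine local field, and then read everything off from the classification of dihedral extensions of local fields (Corollary~\ref{dihedraln}). First I would apply Lemma~\ref{branch-alg}: since $\mathrm{cores}_{F/F_0}(\alpha)=0$ and $\mathrm{ind}(\alpha)=m\ge 3$, the extension $F/F_0$ is unramified and $\alpha=(E,\sigma,\pi)$ for some unramified cyclic extension $E/F$ with a chosen generator $\sigma$ of $\mathrm{Gal}(E/F)$. As $F/F_0$ is unramified, $\pi$ remains a parameter of $F$, so by the computation recalled in the proof of Lemma~\ref{e=f} the algebra $(E,\sigma,\pi)$ is division of degree $[E:F]$; hence $[E:F]=\mathrm{ind}(\alpha)=m$.

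Next I would show $E/F_0$ is dihedral and descend it. Since $\pi\in F_0^*$, the projection formula gives $\mathrm{cores}_{F/F_0}(E,\sigma,\pi)=\mathrm{cores}_{F/F_0}(E,\sigma)\cdot(\pi)$, and arguing as in the proof of Lemma~\ref{unramified} (taking the unramified summand to be zero) the hypothesis $\mathrm{cores}_{F/F_0}(\alpha)=0$ forces $\mathrm{cores}_{F/F_0}(E,\sigma)=0$ in $H^1(F_0,\Z/m\Z)$. By Proposition~\ref{galois}, $E/F_0$ is then dihedral of degree $2m$ with $F=E^\sigma$ the fixed field of the rotation subgroup generated by $\sigma$. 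Moreover $E/F_0$ is a tower of unramified extensions, hence unramified, so reduction on residue fields induces an isomorphism $\mathrm{Gal}(E/F_0)\cong\mathrm{Gal}(\bar E/K_0)$; thus $\bar E/K_0$ is a dihedral extension of the local field $K_0$ of degree $2m$, its rotation subgroup is generated by the image $\bar\sigma$ of $\sigma$, and the residue field $\bar F$ of $F$ equals the rotation-fixed subfield $\bar E^{\bar\sigma}$.

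Now I would apply Corollary~\ref{dihedraln} to $\bar E/K_0$, using as a parameter of $K_0$ the image $\bar\delta$ of $\delta$ (a parameter by hypothesis): there is a primitive $m$-th root of unity $\bar\rho\in\bar E$ with $[K_0(\bar\rho):K_0]=2$, $N_{K_0(\bar\rho)/K_0}(\bar\rho)=1$, $\bar E=K_0(\bar\rho,\sqrt[m]{\bar\delta})$, and $\bar F=K_0(\bar\rho)$; in particular $\bar F/K_0$ is unramified, $\bar\delta$ is still a parameter of $\bar F$, and $\bar E=\bar F(\sqrt[m]{\bar\delta})$ has degree exactly $m$ over $\bar F$. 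Consequently $F(\sqrt[m]{\delta})/F$ has residue field containing $\bar F(\sqrt[m]{\bar\delta})=\bar E$, hence degree $\ge m$, hence degree $m$ and ramification index $1$; being the unramified extension of $F$ with residue field $\bar E$, it coincides with $E$. Setting $\rho:=\sigma(\sqrt[m]{\delta})/\sqrt[m]{\delta}$, a primitive $m$-th root of unity in $F$, one gets $\alpha=(E,\sigma,\pi)=(\delta,\pi)_m$ straight from the definition of the symbol. Finally, since $F/F_0$ is unramified, $N_{F/F_0}(\rho)$ reduces to $N_{\bar F/K_0}$ of a primitive $m$-th root of unity in $K_0(\bar\rho)$, hence to a power of $N_{\bar F/K_0}(\bar\rho)=1$; being itself an $m$-th root of unity in $F_0$ with trivial residue and $m$ prime to the residue characteristic, it equals $1$. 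This yields all four assertions.

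The main obstacle is the middle step: one must observe that not only $F/F_0$ but the entire extension $E/F_0$ is unramified, so that the dihedral extension produced by Proposition~\ref{galois} reduces to a dihedral extension of the honest local field $K_0$; and one must correctly identify the rotation-fixed subfield $\bar F$ with the field $K_0(\bar\rho)$ furnished by Corollary~\ref{dihedraln}, recognizing that the ``second parameter'' $\delta$ of $F_0$ is precisely what serves as a parameter of $K_0$ in that corollary. Once that bridge is built, the remaining verifications are routine.
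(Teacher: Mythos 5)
Your proof is correct and follows essentially the same route as the paper's. Both arguments use Lemma~\ref{branch-alg} to reduce to $F/F_0$ unramified with $\alpha=(E,\sigma,\pi)$, then pass to residue fields and invoke Corollary~\ref{dihedraln} to identify the residue extension as $K_0(\rho,\sqrt[m]{\bar\delta})$, and finally lift back up. The only cosmetic difference is the order of operations in the middle: you first show $E/F_0$ is dihedral over $F_0$ via Proposition~\ref{galois} and then reduce the whole dihedral tower to $\bar E/K_0$, whereas the paper reduces the corestriction condition to $\mathrm{cores}_{K/K_0}(L,\sigma_0)=0$ and applies Proposition~\ref{galois} at the residue level; these are manifestly equivalent, since $E/F_0$ is unramified so reduction induces an isomorphism of Galois groups. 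Your identification $\alpha=(\delta,\pi)_m$ via $\rho:=\sigma(\sqrt[m]{\delta})/\sqrt[m]{\delta}$ and the argument for $N_{F/F_0}(\rho)=1$ (trivial residue plus $m$ prime to the residue characteristic) are both sound.
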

 
\begin{proof}  Suppose cores$_{F/F_0}(\alpha) = 0$ and ind$(\alpha)  = m$. 
  Since    $m \geq 3$, by (\ref{branch-alg}),  $F/F_0$ is unramified   and     $\alpha = (E, \sigma, \pi)$
for some  $E/F$ an unramified cyclic extension. 

Let  $K$ be the residue field of $F$ and $L$  the residue field of $E$. 
Since $F/F_0$ and $E/F$ are unramified,  $K/K_0$ is an extension of degree 2 and 
$L/K$ is a cyclic extension of degree $[E : F]$.
Let $\sigma_0$   denote the automorphism of $L/K$   induced by $\sigma$. 
Since cores$_{F/F_0}(E, \sigma) = 0$, cores$_{K/K_0}(L, \sigma_0) = 0$.
Hence, by (\ref{galois}), $L/K_0$ is a dihedral extension. 
Let $\overline{\delta} \in K_0$ be the image of $\delta$. Then, by the assumption, $\overline{\delta}$ is a
parameter in $K_0$. 
Since $K_0$ is a  local field and  $[L : K] =  m \geq 3$,   by (\ref{dihedraln}), $K = K_0(\rho)$ for a primitive $m^{\rm th}$ root of unity,
$N_{K/K_0}(\rho)= 1$  and $L = K_0(\rho,  \sqrt[m]{\overline{\delta}})$.  
Since $F_0$ is complete, $F = F_0(\rho)$ and  $E = F(\sqrt[m]{\delta})$. 
Since $N_{K/K_0}(\rho)= 1$, $N_{F/F_0}(\rho)= 1$. 
 Since   $F$ contains a primitive $m$-th root of unity, 
 $\alpha = (E, \sigma, \pi) = (\delta, \pi)_{m}$. 
\end{proof}

 We end this section with the following:

\begin{prop}
\label{prop-order}
Let  $F$ be  a complete discrete valued field with residue field  $K$, valuation ring  $R$,  $\pi \in R$ a parameter and
 $u \in R$ a unit.  Let $n \geq 2$ which is coprime to char$(K)$. Suppose that $F$ contains a primitive $n^{\rm th}$ root of unity
and  the cyclic algebra $D = (\pi, u)_n$ is a division algebra. Let $x, y \in D$ be  with $x^n = \pi$, $y^n = u $ and $xy = \rho yx$.
Then $ R[x] + R[x]y + \cdots  + R[x]y^{n-1} = R[y] + R[y]x + \cdots + R[y]x^{n-1} \subset D$ is the maximal order of $D$.
\end{prop}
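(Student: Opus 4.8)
The plan is to show that the $R$-module
$\Lambda = R[x] + R[x]y + \cdots + R[x]y^{n-1}$
is an order in $D$, that it is integral over $R$, and that it is maximal; the equality with $R[y] + R[y]x + \cdots + R[y]x^{n-1}$ will follow by the symmetric argument once we have established maximality (both being maximal orders containing the obvious generators, and a maximal order in a division algebra over a complete DVR is unique).

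First I would recall the valuation-theoretic structure. Let $\nu$ be the normalized discrete valuation on $F$ and let $\tilde\nu$ be its unique extension to the division algebra $D$ (\cite[Theorem 12.10, p. 138]{reiner}); since $D = (\pi,u)_n$ is division and $u$ is a unit while $\pi$ is a parameter, $D$ is totally ramified of degree $n$ over $F$ with $\tilde\nu(x) = 1/n$ (after normalizing so $\tilde\nu(\pi)=1$), and $y$ is a unit for $\tilde\nu$ because $y^n = u$. The valuation ring of $D$ is $\OO_D = \{z \in D : \tilde\nu(z) \geq 0\}$, which is the unique maximal order of $D$ (it is integrally closed, contains every element integral over $R$, and in the complete case every order is contained in it; cf. \cite[Theorem 12.8]{reiner}). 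So the real content is the identification $\Lambda = \OO_D$.

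The inclusion $\Lambda \subseteq \OO_D$ is immediate: $x$ satisfies $x^n = \pi$ and $y$ satisfies $y^n = u$, so both are integral over $R$, hence lie in $\OO_D$, and $\OO_D$ is a ring. For the reverse inclusion I would argue that $\Lambda$ already has the right "size". Concretely, the residue division algebra $\overline{D} = \OO_D / \mathfrak{m}_D$ is the residue field $L = K(\bar y)$, a totally inert cyclic extension of $K$ of degree $n$ (here $\bar y^n = \bar u$), and a uniformizer of $\OO_D$ is $x$, with $x^n = \pi$ a uniformizer of $R$. Thus $\OO_D$ is a free $R$-module with basis $\{\, \bar y^{\,j} x^{i} : 0 \le i,j \le n-1\,\}$ lifted appropriately — more precisely $\OO_D = \sum_{i=0}^{n-1} S x^i$ where $S = R[\bar y\text{-lift}]$ is the unramified extension of $R$ with residue field $L$, and $S = R[y]$ since $y^n = u$ and $\bar y$ generates $L/K$. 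Hence $\OO_D = \sum_{i} R[y] x^i = R[y] + R[y]x + \cdots + R[y]x^{n-1}$, which is the second of the two displayed modules; running the same argument with the roles of $x$ and $y$ interchanged — using instead the uniformizer... no: here one must be slightly careful since $y$ is a unit, so I would instead observe directly that $R[x]y^j = y^j R[x]$ up to units of $R[x]$ (the relation $xy = \rho y x$ with $\rho \in R^*$ a primitive $n$-th root of unity shows $y^j R[x] y^{-j} = R[x]$), so $\sum_j R[x] y^j = \sum_j y^j R[x] = R[y']\cdots$ — the cleanest route is to show both modules equal $\OO_D$ separately and conclude they are equal.

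The main obstacle will be pinning down that $\Lambda$ is actually all of $\OO_D$ rather than a proper suborder: a priori $\Lambda$ could fail to contain elements like $x^{-1}\cdot(\text{something integral})$. The resolution is that $\Lambda$ is an $R$-lattice of full rank $n^2$ with $\Lambda / \pi\Lambda$ of dimension $n^2$ over $K$, and one computes that $\Lambda/x\Lambda$ (or the graded ring $\mathrm{gr}(\Lambda)$ for the $\tilde\nu$-filtration) is already a division ring, forcing $\Lambda$ to be maximal by a standard discriminant or graded-ring argument. Alternatively, and perhaps most cleanly: $R[y] \subseteq \Lambda$ is the integral closure of $R$ in the unramified field extension $F(y)$, hence is itself a complete DVR (a maximal order), and $\Lambda = \sum_{i=0}^{n-1} R[y] x^i$ with $x^n = \pi$ a uniformizer of $R[y]$; an order of this shape — the "standard order" in a totally ramified extension of a complete DVR — is always the maximal order, by the same computation that shows $R[\varpi]$ is the full ring of integers in a totally ramified extension generated by an Eisenstein uniformizer $\varpi$. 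I would spell out exactly this last reduction, since it isolates the one genuine computation (that $\mathrm{gr}(\Lambda)$, or equivalently $\Lambda / x\Lambda \cong L$, is a field) and makes the maximality transparent, after which both equalities in the statement drop out by symmetry of the argument.
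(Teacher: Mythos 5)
Your proposal is correct and follows essentially the same route as the paper: identify the unique maximal order as the valuation ring of the extension $\tilde\nu$ of $\nu$ to $D$, use integrality for the inclusion $R[y]+R[y]x+\cdots+R[y]x^{n-1}\subseteq\Lambda$, and then exploit that $F(y)/F$ is unramified (so $\tilde\nu$ takes values in $n\Z$ on $R[y]$) together with $\tilde\nu(x)=1$ to force equality. The paper's concluding computation — writing $z=\tfrac1b\sum a_ix^i$ with $a_i\in R[y]$ and comparing valuations term by term — is precisely the explicit version of the ``Eisenstein uniformizer'' argument you invoke, and the identification of the two displayed modules is indeed immediate from $y^jx^i=\rho^{-ij}x^iy^j$ with $\rho\in R^\times$ as you note.
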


 \begin{proof}
 Since $D$ is a division algebra and $F$ is complete, $\tilde{\nu} : D^* \to \Z$ given by 
 $\tilde{\nu}(z) = \nu(Nrd(z))$ is a discrete valuation on $D^*$ and 
 $\Lambda = \{ z \in D^* \mid \tilde{\nu}(z) \geq 0 \} \cup \{ 0\}$ is the unique maximal order of $D$ (\cite[Theorem 12.8]{reiner}).
 Since every element in  $R[y] + R[y]x + \cdots + R[y]x^{n-1}$ is integral over $R$,
$R[y] + R[y]x + \cdots + R[y]x^{n-1}  \subseteq \Lambda$.

Since  Nrd$(x) = (-1)^n \pi$,  $\tilde{\nu} (x) = n$.
 Since $y^n = u$ is a unit in $R$ and $n$ is coprime to the characteristic of $K$, 
 the extension $F(y)/F$ is unramified.  Since deg$(D)= n$, $[F(y) : F] = n$.
 Hence for any $a \in R[y]$, Nrd$(a) = N_{F(y)/F}(a)$.  
 Since $F(y)/F$ is unramified,  $\tilde{\nu}(a)$ is divisible by 
 $n$ for all $a \in R[y]$.    
Let $z\in  \Lambda$. Then 
   $z = \frac{1}{b}(a_0 + a_1 x + \cdots + a_{n-1}x^{n-1})$ for some $b \in R$, $b \neq 0$ and 
   $a_i \in R[y]$.  Since $\tilde{\nu}{a_i}$ is divisible by $n$ and $\tilde{\nu}(x) = 1$, 
   $\tilde{\nu}(a_0 + a_1 x + \cdots + a_{n-1}x^{n-1}) $ is equal to the minimum of   $\tilde{\nu}(a_ix^i)$ for $0 \leq i \leq n-1$. 
Since $\tilde{\nu}(z) \geq 0$, $\tilde{\nu}(a_0 + a_1x + \cdots + a_{n-1}x^{n-1}) \geq \tilde{\nu}(b)$. 
In particular $\tilde{\nu}(a_ix^i)  \geq \tilde{\nu}(b)$. Since $\tilde{\nu}(a_i x^i) = \tilde{\nu}(a_i) + i$ and $\tilde{\nu}(a_i)$,
 $\tilde{\nu}(b)$ are divisible by $n$, it  follows that $\tilde{\nu}(a_i) \geq \tilde{\nu}(b)$ for all $0 \leq i \leq n-1$.
 Hence $\frac{a_i}{b} \in R[y]$ and  $z \in R[y] + R[y]x + \cdots + R[y]x^{n-1}$. Hence 
 $\Lambda = R[y] + R[y]x + \cdots  +R[y]x^{n-1}$ is a maximal $R$-order of $D$. 
  \end{proof}

\section{Reduced unitary Whitehead  groups over 2-local fields}
\label{whitehead}

Let $F_0$ be a field of characteristic not equal to 2 and $F/F_0$ a quadratic \'etale extension.
Let $A$ be a central simple algebra over $F$  of degree $n$ with an involution $\tau$ of second kind and 
$F^{\tau} = F_0$.  Let $A^*$ denote the units in $A$ and   $S^+(A, \tau) = \{ z \in A^* \mid \tau(z) = z\}$.  
Let $\Sigma(A, \tau)$ be the subgroup of $A^*$ generated by $S^+(A, \tau)$.
Let $\Sigma'(A, \tau) = \{ z \in A^* \mid Nrd(z) \in F_0^* \}$. Then $\Sigma' (A, \tau)$ is a subgroup  of $A^*$ and 
$\Sigma(A, \tau) \subseteq \Sigma'(A, \tau)$ is a normal subgroup. The reduced unitary Whitehead group  $USK_1(A)$  of $A$
is the quotient group $\Sigma'(A, \tau)/\Sigma(A, \tau)$ (\cite[p. 267]{KMRT}).  

In this section we show that if $F_0$ is a 2-local field, then
$USK_1(A) $ is trivial.   This is an analogue of a theorem of Platonov on the triviality of $SK_1(A)$ over such fields 
 (\cite[Theorem 5.5]{Pla}).

\begin{prop}
\label{existence-of-involution} Let $F_0$ be a field  and $F/F_0$ a quadratic extension. 
Let $m  \geq 2$ with $2m$ coprime to char$(F_0)$. Suppose that $F$ contains a primitive $m$-th root of unity $\rho$.
Let $a, b \in F_0^*$. Suppose that  $F[(\sqrt[m]{a}) : F] = m$.  
Let $ A = (a, b)_{m}$ be the cyclic algebra generated by  $x$ and $y$ with relations $x^{m} = a$, $y^{m} = b$ and 
$yx = \rho xy$. 
Then  there exists a $F/F_0$-involution $\sigma$  on $A$ with $\sigma(x) = x$ and $\sigma(y) = y$    if and only if  $N_{F/F_0}(\rho) = 1$.
\end{prop}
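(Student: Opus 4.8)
The plan is to use the explicit symbol presentation of $A$. Write $\tau_0$ for the nontrivial $F_0$-automorphism of $F$. An $F/F_0$-involution $\sigma$ with $\sigma(x)=x$ and $\sigma(y)=y$ is the same thing as an additive, period-two anti-automorphism of $A$ restricting to $\tau_0$ on $F$ and fixing the two symbol generators; note that $\sigma(\lambda z)=\tau_0(\lambda)\sigma(z)$ for $\lambda\in F$ then holds automatically, since $F$ is central. The monomials $x^iy^j$ with $0\le i,j\le m-1$ form an $F$-basis of $A$, and $x,y$ are units because $x^m=a$ and $y^m=b$ lie in $F^*$; these facts are used throughout.

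For necessity, suppose such a $\sigma$ exists. The relations $x^m=a$ and $y^m=b$ impose nothing, as $a,b\in F_0^*$ are $\tau_0$-fixed. Applying $\sigma$ to $yx=\rho xy$ and using that $\sigma$ reverses products and is $\tau_0$-semilinear gives $xy=\tau_0(\rho)\,yx$; comparing with $xy=\rho^{-1}yx$ and cancelling the unit $yx$ (legitimate, since $\rho^{-1}$ and $\tau_0(\rho)$ lie in the center) yields $\rho\,\tau_0(\rho)=1$, that is, $N_{F/F_0}(\rho)=1$.

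For sufficiency, assume $N_{F/F_0}(\rho)=1$, equivalently $\tau_0(\rho)=\rho^{-1}$, and define $\sigma : A\to A$ on the basis by $\sigma(x^iy^j)=\rho^{ij}x^iy^j$, extended additively and $\tau_0$-semilinearly; this is well defined. Reducing exponents modulo $m$ and using $a,b\in F_0$ together with $\rho^m=1$ shows that in fact $\sigma(x^Iy^J)=\rho^{IJ}x^Iy^J$ for all integers $I,J\ge 0$, and this makes the remaining checks routine. By inspection $\sigma$ restricts to $\tau_0$ on $F$ and fixes $x$ and $y$; since $\sigma^2(x^iy^j)=(\rho\,\tau_0(\rho))^{ij}x^iy^j=x^iy^j$ we get $\sigma^2=\mathrm{id}$, so $\sigma$ is bijective; and anti-multiplicativity reduces, after moving $y^j$ past $x^k$ via $yx=\rho xy$, to the congruence $-jk+(i+k)(j+l)\equiv kl+ij+li\pmod m$, which collapses to the trivial identity $il=li$. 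Hence $\sigma$ is the required involution.

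All the content is in the sufficiency direction, and the only point that needs care is the exponent bookkeeping modulo $m$ in the anti-multiplicativity step; the identity $\sigma(x^Iy^J)=\rho^{IJ}x^Iy^J$ for all nonnegative $I,J$ is precisely what removes this friction. One may instead package sufficiency as the statement that, when $\tau_0(\rho)=\rho^{-1}$, the assignments $x\otimes 1\mapsto x$ and $y\otimes 1\mapsto y$ define an isomorphism of $F$-algebras $A\otimes_{F,\tau_0}F\cong A^{\mathrm{op}}$, which is exactly the datum of the desired $\sigma$; the direct construction above is, however, shorter and self-contained.
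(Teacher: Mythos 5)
Your proof is correct and follows essentially the same route as the paper. The necessity direction is the same computation: applying $\sigma$ to the relation $yx=\rho xy$ and using $\sigma(x)=x$, $\sigma(y)=y$, $\sigma\vert_F=\tau_0$ yields $\tau_0(\rho)\rho=1$. For sufficiency, your $\sigma(x^iy^j)=\rho^{ij}x^iy^j$ is literally the same map as the paper's $\sigma(x^iy^j)=y^jx^i$ (since $y^jx^i=\rho^{ij}x^iy^j$), and extending $\tau_0$-semilinearly over the $F$-basis is equivalent to the paper's definition on the $F_0$-basis $\{x^iy^j,\sqrt{c}\,x^iy^j\}$; your exponent-bookkeeping check of anti-multiplicativity is a bit more explicit than the paper's verification on $yx$ alone, but the content is the same.
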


\begin{proof} Let $\tau_0$ be the nontrivial automorphism of $F/F_0$.  Then $N_{F/F_0}(\rho) = \tau_0(\rho) \rho$.

Suppose  there exists a $F/F_0$-involution $\sigma$  on $A$ with $\sigma(x) = x$ and $\sigma(y) = y$.
Since $yx = \rho xy$, we have $xy = \sigma(yx) = \sigma(\rho xy) = \tau_0(\rho) yx = \tau_0(\rho) \rho xy$.
Hence $N_{F/F_0}(\rho) = \tau_0(\rho) \rho = 1$.

Suppose $N_{F/F_0}(\rho) = 1$.
Let  $c \in F_0^*$ with $F = F_0(\sqrt{c})$ and    $E = F(x)$. Then $A = E \oplus Ey \oplus \cdots \oplus Ey^{m-1}$.
Since $\{ x^iy^j, \sqrt{c}x^iy^j \mid 0 \leq i, j \leq m-1 \}$ is a $F_0$-basis of $A$, 
we  have a $F_0$-vector space isomorphism  $\sigma : A \to A$ given by 
$\sigma(x^iy^j) =  y^jx^i$ and   $\sigma( \sqrt{c}x^iy^j) =  - \sqrt{c}y^jx^i$.  Then 
   $\sigma(z) = \tau_0(z)$ for all $z \in F$. 
Since $a, b \in F_0$, $ \sigma(x^{m}) =   \sigma(a) = a = x^{m} = \sigma(x)^{m}$.
Similarly $\sigma(y^{m}) = b = \sigma(y)^{m}$.
Since  $\rho  = c_1 + c_2 \sqrt{c}$ for some $c_i \in F_0$ and $\tau_0(\sqrt{c}) = -\sqrt{c}$, we have 
$\sigma (\rho xy) = \sigma (c_1 xy + c_2 \sqrt{c} xy) = c_1 yx - c_2 \sqrt{c} yx = (c_1 - c_2\sqrt{c}) yx = \tau_0(\rho) yx$.
Since $yx = \rho xy$,   $\sigma(yx) = \sigma( \rho xy ) = \tau_0(\rho)  y  x = \tau_0(\rho) \rho xy = xy = \sigma(x)\sigma(y)$. 
Hence  $\sigma$ is a $F/F_0$-involution.
 \end{proof}

\begin{theorem} 
\label{branch-norms}  Let   $F_0$ be   a complete discrete valued field with residue field  $K_0$ a   local field.
Let $F/F_0$ be a quadratic field extension and $A$ a central simple algebra  of index  $m$  with $F/F_0$-involution $\tau$.
Suppose that  $2m$ is coprime to  the  characteristic of the residue field of $K_0$.
Then, every element in $ F_0^*$  is a  reduced norm  of   an element  in $\Sigma(A, \tau)$.  
\end{theorem}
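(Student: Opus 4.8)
The plan is to reduce to the case $A=D$ a division algebra and then, for each $\theta\in F_0^*$, to write down an element of $\Sigma(D,\tau)$ with reduced norm $\theta$. Writing $A\cong M_r(D)$, one has $\mathrm{cores}_{F/F_0}[D]=\mathrm{cores}_{F/F_0}[A]=0$, so $D$ also admits an $F/F_0$-involution; realizing $A$ as the endomorphism algebra of a hermitian space with underlying $D$-module $D^r$ and taking the endomorphisms supported on a rank-one nondegenerate summand (extended by the identity on its complement) exhibits a $\tau$-stable copy of $D$ inside $A$ on which $\tau$ restricts to an $F/F_0$-involution and through which $Nrd_A$ restricts to $Nrd_D$; hence it suffices to prove $F_0^*\subseteq Nrd(\Sigma(D,\tau))$. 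Moreover $\Sigma(D,-)$ does not depend on the chosen $F/F_0$-involution: two such differ by $\mathrm{Int}(u)$ for some $u\in D^*$ which, after scaling by an element of $F^*$ (Hilbert~90), may be taken symmetric, and then $S^+(D,\mathrm{Int}(u)\circ\sigma)=u\,S^+(D,\sigma)$ with $u\in S^+(D,\sigma)$, forcing equality of the generated groups. We are therefore free to work with whichever $F/F_0$-involution is convenient; in particular, in the case $m:=\mathrm{ind}(D)\ge 3$ we use the presentation $D\cong(\delta,\pi)_m$ of Proposition~\ref{branch-alg-n} together with the involution $\sigma$ of Proposition~\ref{existence-of-involution} (available since $N_{F/F_0}(\rho)=1$), for which the standard generators $x,y$ are symmetric. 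The overall claim will follow once we show $Nrd(\Sigma(D,\tau))$ contains $(F_0^*)^m$, a uniformizer of $F_0$, and a unit of each prescribed residue in $K_0^*$: since $m$ is prime to the residue characteristic of $K_0$, the group $1+\mathfrak{m}_{F_0}$ is uniquely $m$-divisible, so $1+\mathfrak{m}_{F_0}\subseteq(F_0^*)^m$, and these three facts then generate all of $F_0^*$.

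The easy inclusions are immediate. For $\eta\in F_0^*$ the scalar $\eta\cdot 1_D$ is $\tau$-symmetric and $Nrd_D(\eta\cdot 1_D)=\eta^m$, so $(F_0^*)^m\subseteq Nrd(\Sigma(D,\tau))$. For a uniformizer: if $m\ge 3$ then in the presentation $D\cong(\delta,\pi)_m$ the generator $y$ (with $y^m=\pi$, $\pi$ a uniformizer of $F_0$) is symmetric with $Nrd_D(y)=\pm\pi$; if $m\le 2$ one constructs the analogous element from a symmetric scalar times a uniformizer of a suitable subfield, or — when $F/F_0$ is ramified, $F=F_0(\sqrt{\pi_0})$ — from the central element $\sqrt{\pi_0}$ times a skew unit $w$ (then $\sqrt{\pi_0}\,w$ is symmetric with reduced norm $\pi_0$ times the unit $Nrd_D(w)\in F_0^*$). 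Thus everything reduces to showing that $Nrd(\Sigma(D,\tau))\cap\mathcal{O}_{F_0}^*$ reduces onto $K_0^*$.

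This is the main point. Let $\Lambda$ be the maximal order of $D$, $\bar D$ its residue division algebra over $K=$ residue field of $F$, and $\bar\tau$ the induced involution; for $z\in\Lambda^*$ one has the standard compatibility $\overline{Nrd_D(z)}=N_{\bar D/K}(\bar z)$. Suppose first that $F/F_0$ is unramified, which by Proposition~\ref{branch-alg} is exactly the case $m\ge 3$. Then $\bar D=L$ is a field, $L/K_0$ is dihedral by Proposition~\ref{galois} applied to the residue of $D$, and $K$ is the fixed field of the rotation subgroup. Fix a uniformizer $\Pi$ of $\Lambda$ with $\sigma(\Pi)=\Pi$ (e.g.\ $\Pi=y$). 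For each $k$, $\sigma_k:=\mathrm{Int}(\Pi^{-k})\circ\sigma$ is again an $F/F_0$-involution of $D$, the element $\Pi^k w$ is $\sigma$-symmetric precisely when $w$ is $\sigma_k$-symmetric, and $\bar\sigma_k=\overline{\mathrm{Int}(\Pi)}^{\,-k}\circ\bar\sigma$; since $\overline{\mathrm{Int}(\Pi)}$ generates $\mathrm{Gal}(L/K)$ while $\bar\sigma$ is a reflection, as $k$ varies $\bar\sigma_k$ runs over all reflections of $\mathrm{Gal}(L/K_0)$, hence $L^{\bar\sigma_k}$ over all the subfields $L_i$ of $L$ with $L_iK=L$ and $[L:L_i]=2$. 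Given $\bar\theta_0\in K_0^*$, use Proposition~\ref{localfields-norms-n} to write $\bar\theta_0=\prod_i N_{L_i/K_0}(\bar\mu_i)$; for each $i$ choose $k(i)$ with $L^{\bar\sigma_{k(i)}}=L_i$ and, averaging over $\sigma_{k(i)}$ (legitimate as $2$ is invertible), lift $\bar\mu_i$ to a $\sigma_{k(i)}$-symmetric unit $w_i\in\Lambda^*$ of residue $\bar\mu_i$, so that $z_i:=\Pi^{k(i)}w_i\in S^+(D,\sigma)$ and $\overline{Nrd_D(w_i)}=N_{L_i/K_0}(\bar\mu_i)$. Then $\prod_i z_i\in\Sigma(D,\sigma)$ has reduced norm $Nrd_D(\Pi)^{\sum k(i)}$ times a unit of residue $\bar\theta_0$; as $Nrd_D(\Pi)$ is a uniformizer of $F_0$ lying in $Nrd(\Sigma(D,\tau))$, dividing it out yields a unit of residue $\bar\theta_0$ in $Nrd(\Sigma(D,\tau))$, as required. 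The case $\mathrm{ind}(D)=2$ with $F/F_0$ unramified is identical, with $L/K_0$ biquadratic and the first assertion of Proposition~\ref{localfields-norms-n}. When $F/F_0$ is ramified (so $\mathrm{ind}(D)\le 2$ by Proposition~\ref{branch-alg}), Lemma~\ref{ramifiedh2} shows $D\cong D_0\otimes_{F_0}F$ with $D_0$ unramified over $F_0$; passing to the $F/F_0$-involution $\gamma_0\otimes\tau_0$ for $\gamma_0$ an orthogonal first-kind involution of $D_0$, the residue $(\bar D,\bar\tau)=(\bar D_0,\bar\gamma_0)$ is a quaternion (necessarily division, else $\mathrm{ind}(D)=1$) algebra over the local field $K_0$ with an orthogonal involution, so $Nrd_{\bar D}$ of the symmetric units runs through the nonzero values of a nondegenerate ternary quadratic form over $K_0$, and over a local field the subgroup such values generate is all of $K_0^*$. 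Finally, if $\mathrm{ind}(D)=1$ then $A$ is split and the statement is the elementary fact that the determinant of an invertible hermitian matrix over $F/F_0$ realizes every element of $F_0^*$. Combining the three inclusions gives $F_0^*\subseteq Nrd(\Sigma(A,\tau))$.

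The hard part is the residue computation of the third paragraph. One must (i) pin down the structure of $(\bar D,\bar\tau)$ — this is where Propositions~\ref{branch-alg}, \ref{branch-alg-n} and Lemma~\ref{ramifiedh2} are used; (ii) observe that conjugating $\sigma$ by powers of a uniformizer of $D$ produces $F/F_0$-involutions of $D$ whose reductions exhaust the reflections of $\mathrm{Gal}(\bar D/K_0)$, so that every local norm subgroup $N_{L_i/K_0}(L_i^*)$ is hit by a reduced norm of a symmetric element; and (iii) use the compatibility of the reduced norm with reduction on $\Lambda^*$. Proposition~\ref{localfields-norms-n} (and, in the ramified case, the fact that ternary forms over local fields generate the multiplicative group) then converts the required surjectivity of reduced norms of symmetric units modulo $\mathfrak{m}_{F_0}$ into the classical statement on products of norms from dihedral extensions of local fields.
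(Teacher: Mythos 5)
Your proof is correct, and for $\mathrm{ind}(A)\geq 3$ it follows essentially the same route as the paper's: the explicit presentation $D\cong(\delta,\pi)_m$, the involution fixing $x$ and $y$, the dihedral residue extension $L/K_0$ and Proposition~\ref{localfields-norms-n}, and finally $m$-divisibility of principal units. Two points differ. In lifting the local factorization $\bar\theta_0=\prod N_{L_i/K_0}(\bar\mu_i)$, you realize each $\bar\mu_i$ as the residue of a $\sigma$-symmetric element $\Pi^{k(i)}w_i\in\Lambda$ by conjugating $\sigma$ by powers of the uniformizer and then averaging; the paper instead lifts $\bar\mu_i$ into the commutative subfield $E^{\tau\sigma^i}\subset D$, whose elements are automatically symmetric under the $F/F_0$-involution of $A$ restricting to $\tau\sigma^i$ on $E$, and then invokes involution-independence of $\Sigma$ once more, so no averaging and no residue-norm compatibility for $\Lambda^*$ is needed. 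More substantively, for $\mathrm{ind}(A)\leq 2$ the paper first proves by an $H^3$-cohomological argument (injectivity of corestriction over $K_0$ and Theorem~4.12 of PPS) that every element of $F_0^*$ is a reduced norm from $A$, and then appeals to Yanchevskii's theorem; your argument bypasses both of these, handling small index by a direct residue computation (biquadratic dihedral when $F/F_0$ is unramified, a ternary-form value group over $K_0$ when it is ramified). Your route is thus more self-contained at the price of a finer case analysis; note that the cited Proposition~\ref{branch-alg-n} is stated only for $m\geq 3$, so the structure of $(D,\sigma)$ in your unramified $\mathrm{ind}=2$ case needs the one-line check that $L=K(\sqrt{\bar\delta})$ as the unique biquadratic extension, and the orthogonal involution $\gamma_0$ in your ramified case must be chosen to preserve the maximal order $\Lambda_0$. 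One small point worth making explicit: the equality $\overline{\mathrm{Nrd}_D(w_i)}=N_{L_i/K_0}(\bar\mu_i)$ you use holds because $\mathrm{Gal}(L/K)$ is a full set of coset representatives of $\mathrm{Gal}(L/L_i)$ in $\mathrm{Gal}(L/K_0)$.
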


\begin{proof}   Let $\alpha \in H^2(F, \mu_m)$ be the class of $A$. 
 Since $A$ admits a $F/F_0$-involution, cores$_{F/F_0}(\alpha) = 0$.
 Let $\lambda \in F_0$. Then cores$_{F/F_0}(\alpha \cdot (\lambda)) = $ cores$_{F/F_0}(\alpha) \cdot (\lambda) = 0$.
 Since $K_0$ is a local field, cores: $ H^3(F, \mu_n^{\otimes 2}) \to H^3(F_0, \mu_n^{\otimes 2})$ is injective (\cite[Proposition 4.6]{PPS})
 and  $\alpha \cdot (\lambda) = 0$.
By (\cite[Theorem 4.12]{PPS}), $\lambda$ is  a reduced norm from $A$.

Suppose   ind$(A) \leq 2$. Since every element of $F_0$ is a reduced norm from $A$,  
 by result of Yanchevskii (\cite{yan}, cf. \cite[Proposition 17.27]{KMRT}),  
every element in $F_0$  is the   reduced norm of an  element in $\Sigma(A, \tau)$.

Suppose that ind$(A)  = m \geq 3$.  Let $\pi \in F_0$ be a parameter and $\delta \in F_0$ a unit   such that 
the image $\overline{\delta}$  of $\delta$ in $K_0$ is a parameter. 
Then, by (\ref{branch-alg-n} ),  $F$ contains a primitive $m^{\rm th}$ root of unity $\rho$, $N_{F/F_0}(\rho) = 1$
and $A = (\delta, \pi)_n$.   Let $x, y \in A$ be such that $x^m = \delta$, $y^m = \pi$ and $yx = \rho xy$. 
Since $\pi, \delta \in F_0$ and $N_{F/F_0}(\rho) = 1$, by (\ref{existence-of-involution}), 
there exists an  $F/F_0$-involution $\tau'$ such that  $\tau'(x) = x$ and  $\tau'(y) = y$. 
Since the subgroup of $F_0^*$  consisting of   reduced norms of elements in  $\Sigma(A, \tau )$ 
does not depend on the Brauer class of $A$ 
 and  the involution $\tau$ (\cite[Proposition 17.24]{KMRT}), we assume that   $ \tau =  \tau'$. 
   
 Since $y  \in  \Sigma(A, \tau)$  and Nrd$(y)= (-1)^{m+1}\pi$,
  it is enough to show that  any  unit  in $F_0$   is the  reduced norm of an element in $\Sigma(A, \tau)$.
 
 Let $u \in F_0$  be    a unit.   
 Let $K$ be the residue field of $F$. 
 Let $E = F(\sqrt[m]{\delta})$ and $ L = K(\sqrt[m]{\overline{\delta}})$. 
 Then the residue field of $E$ is $L$. 
 Since $F$ contains a primitive $m^{\rm th}$ root of unity and $N_{F/F_0}(\rho) = 1$,
 $K$ contains a a primitive $m^{\rm th}$ root of unity  $\rho$ and $N_{K/K_0}(\rho) = 1$.
  Hence, by (\ref{dihedral-norm1}),  $E/F_0$ and $L/K_0$ are dihedral extensions.  
  
  By the choice of the involution $\tau$, the restriction of $\tau$ to $E$ is a reflection.   
  Let $\sigma$  be a generator of Gal$(E/F)$. 
  Since $E/F$ and $F/F_0$ are unramified, $E/F_0$ unramified. Since $L$ and $K_0$ are the residue fields of
  $E$ and $F_0$ respectively,  $\sigma$ and $\tau$ give rise to  elements in Gal$(L/K_0)$. Let us denote  the automorphisms of $L$ induced by 
  $\sigma$ and $\tau$  by $\sigma_0$ and $\tau_0$ respectively. 
  Then the dihedral group  Gal$(L/K_0)$ is generated by $\sigma_0$ and $\tau_0$ with $\tau_0$ a reflection. 
  Further for any $i$, $0 \leq i \leq m-1$, $E^{\tau\sigma^i}/F_0$ is unramified with residue field $L^{\tau_0\sigma_0^i}$.

  Let $\theta_0$ be the image of $u$ in $K_0$.  Then, by (\ref{localfields-norms-n}), 
  for $0 \leq i \leq m-1$, there exists $\mu_i \in L^{\tau_0\sigma_0^i}$ such that 
  $\prod_{i= 0}^{m-1} N_{L^{\tau_0\sigma_0^i}/K_0}(\mu_i) = \theta_0$. 
    
For each $0 \leq i \leq m-1$,   let   $\tilde{\mu}_i \in E^{\tau\sigma^i  }$  map to 
 $\mu_i$ in $L^{\tau_0\sigma_0^i}$.  Since $(\tau\sigma^i)^2 = id$ and the restriction of $\tau\sigma^i$ is nontrivial on $F$, 
 there exists an involution $\tau_i$ on $A$ with restriction to $E$ is $\tau\sigma^i$ (\cite[Theorem 10.1]{Sc}).
 In particular $\tilde{\mu}_i \in \Sigma(A, \tau_i)$. Since $\Sigma(A, \tau)  = \Sigma(A, \tau_i)$ (\cite[Proposition 17.24]{KMRT}), 
 $\tilde{\mu}  = \tilde{\mu}_0 \cdots \tilde{\mu}_{m-1} \in \Sigma(A, \tau)$.   Let $v = \prod_{i=0}^{m-1} N_{E^{\tau\sigma^i}/F_0}(\tilde{\mu}_i) 
 =$ Nrd$(\tilde{\mu})$. 
  Since the image of $v$ in $K_0$ is $\prod_{i= 0}^{n-1} N_{L^{\sigma^i\tau}/k_0}(\mu_i) = \theta_0$ and $\theta_0$ is the image of $u$,
  $uv^{-1} $ maps to 1 in $K_0$. Since  $m$ is coprime to char$(K_0) $, $uv^{-1} = w^{m}$ for some $w \in F_0$.
  Since $w \in S^+(A, \tau)$, $\mu w \in \Sigma(A, \tau)$. Since 
  Nrd$(\tilde{\mu} w) = v w^m = u$, the result follows.  
\end{proof}

\begin{cor} 
\label{whitehead-trivial}  Let   $F_0$ be   a complete discrete valued field with residue field  $K_0$ a   local field.
Let $F/F_0$ be a quadratic field extension and $A$ a central simple algebra  of index  $m$  with $F/F_0$-involution $\tau$.
Suppose that  $2m$ is coprime to  the  characteristic of the residue field of $K_0$.
Then, $USK_1(A) = \{ 1 \}$.  
\end{cor}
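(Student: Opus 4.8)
The plan is to obtain this as a formal consequence of Theorem~\ref{branch-norms}. First recall that $\Sigma(A,\tau)$ is a normal subgroup of $A^*$: if $a\in A^*$ and $s\in S^+(A,\tau)$, then $asa^{-1}=\bigl(as\tau(a)\bigr)\bigl(a\tau(a)\bigr)^{-1}$, and both $as\tau(a)$ and $a\tau(a)$ lie in $S^+(A,\tau)$, so $asa^{-1}\in\Sigma(A,\tau)$; in particular $\Sigma(A,\tau)$ is normal in $\Sigma'(A,\tau)$. Next, since $\mathrm{Nrd}(\tau(z))=\overline{\mathrm{Nrd}(z)}$, where the bar denotes the nontrivial automorphism of $F/F_0$, every $\tau$-symmetric unit has reduced norm in $F_0^*$, so $\mathrm{Nrd}(\Sigma(A,\tau))\subseteq F_0^*$; Theorem~\ref{branch-norms} asserts precisely that this inclusion is an equality, $\mathrm{Nrd}(\Sigma(A,\tau))=F_0^*$.

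Now let $z\in\Sigma'(A,\tau)$, say $\mathrm{Nrd}(z)=c\in F_0^*$. Choosing $\mu\in\Sigma(A,\tau)$ with $\mathrm{Nrd}(\mu)=c$ gives $z\mu^{-1}\in SL_1(A)$, so $\Sigma'(A,\tau)=SL_1(A)\cdot\Sigma(A,\tau)$, and therefore $USK_1(A)=\Sigma'(A,\tau)/\Sigma(A,\tau)\cong SL_1(A)/\bigl(SL_1(A)\cap\Sigma(A,\tau)\bigr)$. Since $[A^*,A^*]\subseteq SL_1(A)$ and, for an involution of the second kind, $[A^*,A^*]\subseteq\Sigma(A,\tau)$ (see \cite[\S 17]{KMRT}), the group on the right is a quotient of $SK_1(A)=SL_1(A)/[A^*,A^*]$. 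But $F$ is a complete discretely valued field whose residue field is the local field $K_0$ --- i.e.\ a $2$-local field --- and the index of $A$ divides $m$, hence is coprime to the residue characteristic; so $SK_1(A)=\{1\}$ by Platonov's theorem \cite[Theorem 5.5]{Pla}. Hence $USK_1(A)=\{1\}$.

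The real content is thus already in Theorem~\ref{branch-norms}: surjectivity of the reduced norm on $\Sigma(A,\tau)$ kills the ``$F_0^*/\mathrm{Nrd}(\Sigma(A,\tau))$ part'' of $USK_1(A)$, while Platonov's vanishing of $SK_1$ over $2$-local fields disposes of the ``$SK_1$ part''. No new obstacle appears in the corollary itself; the only point that needs to be checked is that Platonov's hypotheses are met, which is immediate from the assumption that $2m$ is coprime to the characteristic of the residue field of $K_0$. (Alternatively one may invoke the general fact that $SL_1(A)\subseteq\Sigma(A,\tau)$ for any involution of the second kind, which makes the final step independent of Platonov.)
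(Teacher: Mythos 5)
Your main argument is correct and is essentially the paper's proof in a slightly more packaged form: in both, Theorem~\ref{branch-norms} supplies $\mu \in \Sigma(A,\tau)$ with $\mathrm{Nrd}(\mu) = \mathrm{Nrd}(z)$, so $z\mu^{-1} \in SL_1(A)$; Platonov's theorem then gives $z\mu^{-1} \in [A^*,A^*]$, and $[A^*,A^*] \subseteq \Sigma(A,\tau)$ (\cite[Proposition 17.26]{KMRT}) finishes. Your reformulation as $USK_1(A) \cong SL_1(A)/(SL_1(A)\cap\Sigma(A,\tau))$, a quotient of $SK_1(A)$, is the same content dressed up as a homomorphism theorem.

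The one thing to retract is the closing parenthetical. The inclusion $SL_1(A) \subseteq \Sigma(A,\tau)$ is \emph{not} a general fact for involutions of the second kind; what is true in general is only $[A^*,A^*] \subseteq \Sigma(A,\tau)$. If $SL_1(A) \subseteq \Sigma(A,\tau)$ held unconditionally, then for any field $USK_1(A)$ would reduce to the cokernel of $\mathrm{Nrd}\colon\Sigma(A,\tau)\to F_0^*$ inside $\mathrm{Nrd}(\Sigma'(A,\tau))$, and the appeal to Platonov (here, or in any treatment of $USK_1$) would be superfluous. In fact the whole point of invoking $SK_1(A)=\{1\}$ is precisely to bridge the gap between $[A^*,A^*]$ and $SL_1(A)$, and that input is genuinely used; the paper's proof makes this unavoidable. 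So keep your main argument and drop the alternative.
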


\begin{proof}  Let $z \in \Sigma'(A, \tau)$ and $\theta = $ Nrd$(z) \in F_0^*$.  Then, by (\ref{branch-norms}), there exists 
$\mu \in \Sigma(A, \tau)$ such that Nrd$(\mu) = \theta$. In particular Nrd$(z\mu^{-1}) = 1$,
Hence, by (\cite[Theorem 5.5]{Pla}), $z \mu^{-1}$ is a product of commutators in $A^*$. 
Since commutators are in $\Sigma(A, \tau)$ (\cite[Proposition 17.26]{KMRT}), $z \in \Sigma(A, \tau)$. Hence $USK_1(A) = \{1\}$. 
\end{proof}

\section{Reduced norms of central simple algebras over two dimensional  complete fields} 
\label{reducednorms-2dim}

Let $R$ be a complete regular local ring of dimension 2 with residue field $\kappa$ and field of fractions 
$F$.  For a prime $\theta \in R$,  let $F_\theta$ be the completion of $F$ at the discrete valuation given by the prime  ideal
$(\theta)$ of $R$ and $\kappa(\theta)$ the residue field at $\theta$. 
Let $A$ be a central simple algebra over $F$ of index coprime  to char$(\kappa)$. 
Let $m = (\pi, \delta)$ be the maximal ideal of $R$. 
Suppose that  $A$ is unramified on $R$ except possibly at $\pi$ and $\delta$. 
Let $\lambda    = v \pi^{s}\delta^t \in F^*$ for some unit $v \in R$ and 
$r, s \in \Z$.  In this section we show that if     $\kappa$ is a finite field  and 
$\lambda  \in$  Nrd$(A\otimes F_{\pi})$, then  $\lambda \in Nrd(A)$.

\begin{remark}
\label{branch-elements}
Let $\mu \in F^*_\pi$ and $n \geq 1$ coprime to char$(\kappa)$. Then $\mu =  u \pi^r$ for some $u \in F_\pi$ which is a unit at $\pi$. 
Let $\overline{u}$ be the image of $u$ in $\kappa(\pi)$. Since $\kappa(\pi)$ is the field of fractions of 
$R/(\pi)$ and $R$ is complete, $\kappa(\pi)$  is a complete discrete valued field with residue field $\kappa$
and $\overline{\delta} \in R/(\pi)$  is a parameter. Hence $\overline{u} = \overline{v} \delta^s$ for some $v \in R$ a unit.
Then, $\mu (v\pi^r\delta^s)^{-1}$ is a unit at $\pi$ and maps to 1 in $\kappa(\pi)$.
Since $n$ is coprime to $n$, $\mu = v\pi^r \delta^s c^n$ for some $c \in F_\pi^*$. 
\end{remark}

We begin by   extracting  the following from (\cite{PPS}). 
 
 \begin{prop} 
\label{2dim-local-nrd0}  Let $R$ be a complete regular local ring of dimension 2 with residue field $\kappa$ and field of fractions 
$F$.  Let $A$ be a central simple algebra over $F$ of index $n$  coprime  to char$(\kappa)$ and 
  $\alpha \in H^2(F, \mu_n)$  be  the class of $A$. 
Let $m = (\pi, \delta)$ be the maximal ideal of $R$. 
Suppose  that  $\kappa$ is a finite field and $A$ is unramified on $R$ except possibly at $\pi$ and $\delta$. 
Let $\lambda    = v \pi^{s}\delta^t \in F^*$ for some unit $v \in R$ and 
$r, s \in \Z$. 
If   $\alpha \cdot (\lambda) = 0  \in H^3(F, \mu_n^{\otimes 2})$, then    $\lambda \in Nrd(A)$. 
\end{prop}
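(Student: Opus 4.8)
The plan is to assemble the statement from the structure theory of Brauer classes over two-dimensional complete regular local rings together with the reduced norm criterion of \cite{PPS} over complete discretely valued fields. We may assume $A$ is a division algebra of degree $n$, since passing to the underlying division algebra changes neither $\alpha$ nor the subgroup $Nrd(A^*)\subseteq F^*$. First I would determine the shape of $\alpha$. Since $A$ is unramified on $\Spec R$ away from the normal crossings divisor $\{\pi\delta=0\}$, since $R$ is regular of dimension two (so purity for the Brauer group applies), and since $R$ is complete local with finite residue field $\kappa$ (so $\Br(R)\cong\Br(\kappa)=0$), the class $\alpha$ has no nontrivial everywhere-unramified part and is concentrated along $(\pi)$ and $(\delta)$. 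Over the completion $F_\pi$ one writes, by \cite[Lemma 4.1]{PPS}, $\alpha_{F_\pi}=\alpha_0^{(\pi)}+(E_\pi,\sigma_\pi,\pi)$ with $\alpha_0^{(\pi)}$ unramified over $F_\pi$ and $E_\pi/F_\pi$ unramified cyclic, and similarly at $\delta$; the residue fields $\kappa(\pi)=\Frac(R/(\pi))$ and $\kappa(\delta)=\Frac(R/(\delta))$ are complete discretely valued fields with finite residue field, hence local fields.

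Next I would use the hypothesis $\alpha\cdot(\lambda)=0$ in $H^3(F,\mu_n^{\otimes 2})$ after localizing. Because $\lambda=v\pi^{s}\delta^{t}$ with $v\in R$ a unit, the class $\alpha\cdot(\lambda)$ is unramified on $\Spec R$ outside $(\pi)$ and $(\delta)$, and at every other height one prime $\mathfrak{p}$ of $R$ both $A$ and $\lambda$ are unramified, so $\lambda\in Nrd(A\otimes_F F_{\mathfrak{p}})$ there for trivial reasons. Restricting the vanishing to $F_\pi$ gives $\alpha_{F_\pi}\cdot(\lambda)=0$ in $H^3(F_\pi,\mu_n^{\otimes 2})$; since $\kappa(\pi)$ is a local field we have $H^3(\kappa(\pi),\mu_n^{\otimes 2})=0$, hence $H^3_{nr}(F_\pi,\mu_n^{\otimes 2})=0$, and \cite[Theorem 4.12]{PPS} applies to the complete discretely valued field $F_\pi$ (whose residue field is local) in the manner of \ref{reduced-norms}, yielding $\lambda\in Nrd(A\otimes_F F_\pi)$. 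Symmetrically $\lambda\in Nrd(A\otimes_F F_\delta)$.

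Finally I would descend these local reduced-norm statements to one over $F$ itself, which is where the real work lies. One route uses the explicit cyclic description of $A$ away from its trivial unramified part (the analogue over $R$ of \ref{branch-alg-n}): the generators of valuation type supply the prescribed powers $\pi^{s}$ and $\delta^{t}$ of $\lambda$, since such a generator $y$ has $Nrd(y)$ a unit times $\pi$ (resp.\ $\delta$) as in \ref{prop-order}, so it remains to realize the unit $v$ as a reduced norm; for this I would pass to a maximal $R$-order $\Lambda\subset A$ as in \ref{prop-order}, reduce $Nrd(z)=v$ modulo $\pi$ and modulo $\delta$ to norm equations over the local residue algebras, solve those by local class field theory, reconcile the two reductions at the closed point where the obstruction lies in $\Br(\kappa)=0$, and lift a common solution by Hensel's lemma since $R$ is complete. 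An alternative, closer to the one-variable case, is to iterate the residue construction, reducing the problem over the two-dimensional $F$ first to the local field $\kappa(\pi)$ and then to the finite field $\kappa$, where everything is classical. Either way, the main obstacle is this descent from $F_\pi$ and $F_\delta$ to $F$, and it is precisely here that the finiteness of $\kappa$ — forcing $\Br(\kappa)=0$ and the vanishing of the degree three cohomology of $\kappa(\pi)$ and $\kappa(\delta)$ — is indispensable.
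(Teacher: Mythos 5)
Your proposal correctly identifies the role of the finiteness of $\kappa$ (via $\Br(R)\cong\Br(\kappa)=0$ and the cohomological dimension of the local fields $\kappa(\pi),\kappa(\delta)$) and correctly establishes the local statements $\lambda\in\mathrm{Nrd}(A\otimes F_\pi)$ and $\lambda\in\mathrm{Nrd}(A\otimes F_\delta)$ via the analogue of \ref{reduced-norms}. But the step you yourself flag as ``where the real work lies'' --- descending from these statements over the completions $F_\pi$, $F_\delta$ to a statement over $F$ --- is not actually carried out, and the two routes you sketch would not go through as described. Route (a) presumes an explicit cyclic presentation of $A$ over $R$, together with a maximal order $\Lambda$ as in \ref{prop-order} or \ref{order}; but those propositions are proved either over a complete DVF or under the extra hypothesis that $A$ carries a unitary involution ($\mathrm{cores}_{F/F_0}(\alpha)=0$), neither of which is assumed here. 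Absent such a cyclic model, ``reduce $\mathrm{Nrd}(z)=v$ modulo $\pi$ and $\delta$, solve by local class field theory, lift by Hensel'' is not a proof but a wish. Route (b), iterating the residue map, moves cohomology classes downward to residue fields but offers no mechanism for lifting an element of $\mathrm{Nrd}$ back up, so it cannot close the argument either.

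The paper's proof takes a genuinely different path that sidesteps the descent problem. After reducing to $\mathrm{ind}(A)=\ell^d$, it proceeds by induction on $d$. If one of the exponents $s,t$ is coprime to $\ell$, a presentation of $A$ as a cyclic algebra in terms of $\lambda$ itself (via \cite[Lemma 6.1]{PPS}) finishes immediately. If both are divisible by $\ell$, it invokes \cite[Lemma 4.10]{PPS} to produce a degree-$\ell$ unramified cyclic extension $L_\pi/F_\pi$ with $\lambda$ a norm from $L_\pi$, $\alpha\cdot(\mu_\pi)=0$ over $L_\pi$, and strictly smaller index; it then \emph{algebraizes} $L_\pi$ to a cyclic extension $L/F$ whose ring of integers $S$ is again a two-dimensional complete regular local ring, normalizes $\mu_\pi$ to an element of the same controlled form $u\pi^i\delta_1^j$ over $S$, and uses \cite[Corollary 5.5]{PPS} repeatedly to transport both the norm condition and the vanishing $\alpha\cdot(\mu)=0$ from $L_\pi$ to $L$. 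The inductive hypothesis over $(S,L)$ then gives $\mu\in\mathrm{Nrd}(A\otimes L)$, hence $\lambda=N_{L/F}(\mu)\in\mathrm{Nrd}(A)$. This ladder of cyclic covers, rather than a direct gluing of local reduced norms, is the substance of the argument, and it is what is missing from your proposal.
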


\begin{proof} 
 As in (\cite[Theorem 4.12]{PPS}), we assume that ind$(A) = \ell^d$ with $\ell$ a   prime
and $F$ contains a primitive $\ell^{\rm th}$ root of unity. 
Since ind$(A)$ is coprime to char$(\kappa)$, $\ell \neq $ char$(\kappa)$. 
 We prove the result by induction on $d$.  If $ d = 0$, then $A$ is a matrix algebra and hence every element is a 
reduced norm from $A$. Suppose that $d \geq 1$. 

Suppose $\alpha \cdot (\lambda) = 0  \in H^3(F, \mu_n^{\otimes 2})$. 
Suppose $s$ is coprime to $\ell$. Then, by (\cite[Lemma 6.1]{PPS}), $A = (E, \sigma, (-1)^s \lambda)$ for some cyclic extension 
$E/F$ with $\sigma$ a generator of Gal$(E/F)$.  
In particular $(-1)^{\ell^d + 1} (-1)^s \lambda \in Nrd(A)$.  Suppose $\ell$ is odd, then $-1 \in Nrd(A)$ and hence 
$\lambda \in Nrd(A)$. Suppose $\ell = 2$. Since $s$ is odd,  $ \lambda = (-1)^{\ell^d + 1} (-1)^s \lambda \in Nrd(A)$.
Similarly if $t$ is coprime to $\ell$, then $\lambda \in Nrd(A)$.

Suppose that $s$ and $t$ are divisible by $\ell$. 
Then,  by (\cite[Lemma 4.10]{PPS}), there exists  an uramified cyclic field extension $L_\pi/F_\pi$   of degree $\ell$ and $\mu_\pi \in L_\pi$
 such that $N_{L_\pi/F_\pi}(\mu)  = \lambda$,    ind$(\alpha \otimes L_\pi) < $ ind$(A \otimes F_\pi)$ and $\alpha \cdot (\mu_\pi) = 0 \in H^3(L_\pi, \mu_{\ell^d}^{\otimes 2})$.

Since $L_\pi/F_\pi$ is an unramified cyclic extension  of degree $\ell$ and $F$ contains a primitive $\ell^{\rm th}$ root of
unity, we have $L_\pi = F_\pi(\sqrt[\ell]{a})$ for some $a \in F_\pi$  which is a unit  at $\pi$.
Since char$(\kappa) \neq \ell$ and the residue field $\kappa(\pi)$ of $F_\pi$ is the field of fractions of $R/(\pi)$, 
 we have $a = w\delta^\epsilon \in F_\pi^*/F_\pi^{*\ell}$ for some $w \in R$ a unit and 
$0 \leq \epsilon \leq \ell-1$  (cf. \ref{branch-elements}). Suppose $\epsilon \geq 1$.  Let $1 \leq \epsilon' \leq \ell-1$ with $\epsilon\epsilon' = 1$ modulo $\ell$. 
Since $F(\sqrt[\ell]{w\delta^{\epsilon}}) = F(\sqrt[\ell]{w^{\epsilon'}\delta})$, replacing $w$ by $w^{\epsilon'}$ we assume that    $ L  = F(\sqrt[\ell]{w\delta^\epsilon})$ with $0 \leq \epsilon \leq 1$. 
Then $L/F$ is a cyclic extension of degree $\ell$ and $L \otimes F_\pi \simeq L_\pi$.
Let $S$ be the integral closure of $R$ in $L$. Then $S$ is a regular local ring with maximal ideal 
$(\pi, \delta_1)$, where $\delta_1 = \delta$ or $\sqrt[\ell]{w\delta}$ depending on whether $\epsilon = 0$ or 1 (\cite[3.1, 3.2]{PS1}). 
Since ind$(\alpha \otimes L_\pi) < $ ind$(\alpha)$, by (\cite[Proposition 5.8]{PPS}), ind$(\alpha \otimes L) < $ ind$(\alpha)$.

Since $S$ is a regular local ring with maximal ideal $(\pi, \delta_1)$ and $L$ the field of fractions of $S$, 
there exists $u \in S$ a unit such that $\mu_\pi = u \pi^i\delta_1^{j} \mu_1^{\ell^{d}}$ for some $i, j \in \Z$ and $\mu_1 \in L_\pi$  (cf. \ref{branch-elements}).
Let $\mu' = u\pi^i\delta_1^{j} \in L$. Let $\lambda' = N_{L/F}(\mu') $. Then $\lambda' = v'\pi^{\ell i} \delta^{j'}$ for some unit $v' \in R$. 
Since $ \lambda = N_{L_\pi/F_\pi}(\mu_\pi) = N_{L_\pi/F_\pi}( \mu' \mu_1^{\ell^{d}}) =  \lambda'  N_{L_\pi/F_\pi}(\mu_1)^{\ell^{d}}$, 
we have
$\lambda' \lambda^{-1} \in F_{\pi}^{\ell^{d}}$. Hence, by (\cite[Corollary 5.5]{PPS}),  $\lambda = \lambda' \theta^{\ell^{d}}$ for some $\theta \in F$. 
Let $\mu = \mu' \theta^{\ell^{d}} \in L$.  Then $N_{L/F}(\mu) = \lambda$. 
Since $\mu = \mu' \theta^{\ell^{d}} = \mu_\pi \mu_1^{-\ell^{d+1}} \theta^{\ell^d}$  and  $\alpha \cdot (\mu_\pi) = 0 \in H^3(L_\pi, \mu_{\ell^d}^{\otimes 2})$,
 $\alpha \cdot (\mu) = 0 \in H^3(L_\pi, \mu_{\ell^d}^{\otimes 2})$.  Hence, by (\cite[Corollary 5.5]{PPS}), 
  $\alpha \cdot (\mu) = 0 \in H^3(L, \mu_{\ell^d}^{\otimes 2})$.  
  Since ind$(\alpha \otimes L) < $ ind$(\alpha)$, by induction $\mu \in $ Nrd$(A \otimes L)$.
  Since $N_{L/F}(\mu)= \lambda$,   $\lambda \in$ Nrd$(A)$. 
\end{proof}

 \begin{cor} 
\label{2dim-local-nrd}  Let $R$ be a complete regular local ring of dimension 2 with residue field $\kappa$ and field of fractions 
$F$.  
Let $A$ be a central simple algebra over $F$ of index coprime  to char$(\kappa)$. 
Let $m = (\pi, \delta)$ be the maximal ideal of $R$. 
Suppose  that  $\kappa$ is a finite field and $A$ is unramified on $A$ except possibly at $\pi$ and $\delta$. 
Let $\lambda    = v \pi^{s}\delta^t \in F^*$ for some unit $v \in R$ and 
$r, s \in \Z$. 
If   $\lambda  \in$  Nrd$(A\otimes F_{\pi})$, then  $\lambda \in Nrd(A)$. 
\end{cor}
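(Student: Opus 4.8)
The plan is to reduce immediately to Proposition~\ref{2dim-local-nrd0}. Set $n=\mathrm{ind}(A)$ and let $\alpha\in H^2(F,\mu_n)$ be the class of $A$; note $n$ is coprime to $\mathrm{char}(\kappa)$, and $\lambda\in F^*$. Since Proposition~\ref{2dim-local-nrd0} gives $\lambda\in\mathrm{Nrd}(A)$ as soon as $\alpha\cdot(\lambda)=0$ in $H^3(F,\mu_n^{\otimes 2})$, the whole point is to propagate the hypothesis from $\mathrm{Nrd}(A\otimes_F F_\pi)$ to a vanishing statement over the ground field $F$. So the plan has two steps: (i) deduce $\alpha\cdot(\lambda)=0$ over $F_\pi$; (ii) descend this vanishing to $F$; then quote Proposition~\ref{2dim-local-nrd0}.

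For step (i) I would use the standard fact that reduced norms lie in the kernel of cup product with the Brauer class: writing $\lambda=\mathrm{Nrd}(a)$ with $a\in (A\otimes_F F_\pi)^*$ chosen so that $F_\pi[a]$ is a separable maximal subfield (possible since the fibre of $\mathrm{Nrd}$ over $\lambda$ is a positive-dimensional geometrically irreducible variety over the infinite field $F_\pi$, so it has a rational point off the degenerate locus), one has $(\lambda)=\cores_{F_\pi[a]/F_\pi}((a))$ while $F_\pi[a]$ splits $A\otimes_F F_\pi$, hence $\alpha_{F_\pi}\cdot(\lambda)=\cores_{F_\pi[a]/F_\pi}\bigl(\alpha_{F_\pi[a]}\cdot(a)\bigr)=0$; alternatively one may cite the corresponding statement from \cite{PPS}. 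For step (ii), put $\beta=\alpha\cdot(\lambda)\in H^3(F,\mu_n^{\otimes 2})$ and examine its residues at the height-one primes of $R$. At a prime $\mathfrak q\notin\{(\pi),(\delta)\}$ the class $\alpha$ is unramified and $\lambda=v\pi^s\delta^t$ is a unit in $R_{\mathfrak q}$, so $\partial_{\mathfrak q}(\beta)=0$. At $(\pi)$ the residue factors through restriction to $F_\pi$, where $\beta=0$, so $\partial_\pi(\beta)=0$. At $(\delta)$ the residue field $\kappa(\delta)=\Frac(R/(\delta))$ is a local field, so $\partial_\delta(\beta)\in H^2(\kappa(\delta),\mu_n)$ is detected by its further residue $\partial_\kappa\partial_\delta(\beta)\in H^1(\kappa,\Z/n)$; since the second residues of $\beta$ at all the other height-one primes vanish, the reciprocity law at the closed point of $\Spec R$ (finite residue field $\kappa$) forces $\partial_\kappa\partial_\delta(\beta)=0$, hence $\partial_\delta(\beta)=0$. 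Thus $\beta$ is unramified on $\Spec R$, so by purity it comes from $H^3(\kappa,\mu_n^{\otimes 2})$, which is zero because $\kappa$ is finite (so $\mathrm{cd}(\kappa)\le 1$); therefore $\beta=0$, and Proposition~\ref{2dim-local-nrd0} yields $\lambda\in\mathrm{Nrd}(A)$.

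The main obstacle is precisely step (ii): the injectivity of $H^3(F,\mu_n^{\otimes 2})\to H^3(F_\pi,\mu_n^{\otimes 2})$ on classes of the special shape $\alpha\cdot(v\pi^s\delta^t)$ with $\alpha$ unramified on $R$ outside $(\pi)$ and $(\delta)$ — concretely, the bookkeeping of iterated residues and the reciprocity at the closed point that kill $\partial_\delta(\beta)$. In the setting of the paper this is exactly the injectivity already recorded in \cite[Corollary~5.5]{PPS} (the same statement is invoked inside the proof of Proposition~\ref{2dim-local-nrd0}), so in practice step (ii) can be replaced by a one-line citation; everything else is formal or a direct appeal to Proposition~\ref{2dim-local-nrd0} and standard properties of residues and reduced norms.
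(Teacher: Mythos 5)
Your proposal is correct and follows the same route as the paper: from $\lambda\in\mathrm{Nrd}(A\otimes F_\pi)$ deduce $\alpha\cdot(\lambda)=0$ over $F_\pi$, observe that $\alpha\cdot(\lambda)$ is unramified on $R$ except possibly at $\pi$ and $\delta$, descend the vanishing to $F$ via \cite[Corollary~5.5]{PPS}, and invoke Proposition~\ref{2dim-local-nrd0}. The iterated-residue and reciprocity argument you sketch for step (ii) is superfluous, since, as you concede at the end, it only reproves the injectivity already supplied by the cited corollary.
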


\begin{proof}       Let $\alpha \in H^2(F, \mu_{\ell^d})$  be  the class of $A$. 
Since $\lambda \in $ Nrd$(A\otimes F_{\pi})$, $\alpha \cdot (\lambda) = 0 \in H^3(F_\pi, \mu_n)$. 
Since $\alpha$ is unramified on $R$ except possibly at $\pi$, $\delta$ and $\lambda = c\pi^s\delta^r$, 
$\alpha \cdot (\lambda) $ is unramified on $R$ except possibly at $\pi$ and $\delta$.
Since $\alpha \cdot (\lambda) = 0 \in H^3(F_\pi, \mu_n)$, by (\cite[Corollary 5.5]{PPS}), $\alpha \cdot (\lambda) = 0 \in H^3(F, \mu_n)$.
Hence, by  (\ref{2dim-local-nrd0} ), $\lambda  \in Nrd(A)$. 
\end{proof}

 We end this section with the following
 \begin{lemma} 
\label{2dim-local} 
Let  $v   \in R$  be a unit   and $\mu = v\pi_1^{r}\delta^s \in F^*$   for some
$r, s \in \Z$. 
Suppose that there exists $\theta_\pi \in F_{0\pi}^*$ such that 
 $\mu  \theta_\pi  \in  Nrd(A\otimes_F F_{\pi_1})$. Then there exists $\theta \in F_0$ 
 such that $\mu \theta  \in    Nrd(A)$  and $\theta_\pi \theta^{-1} \in F_{\pi_1}^n$. 
\end{lemma}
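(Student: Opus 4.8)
The plan is to replace $\theta_\pi$ by an element $\theta$ of $F_0$ which is a monomial in the two bad primes and then to quote Corollary~\ref{2dim-local-nrd}. Let $R_0=R\cap F_0$; this is a complete regular local ring of dimension $2$ with a regular system of parameters $\pi,\delta_0$, where $\delta_0$ lies below $\delta$, so that in $R$ one has $\pi=w_1\pi_1^{e_1}$ and $\delta_0=w_2\delta^{e_2}$ for suitable $w_1,w_2\in R^*$ and $e_1,e_2\geq 1$. The residue field $\kappa(\pi)$ of the completion $F_{0\pi}$ is the fraction field of the complete discrete valuation ring $R_0/(\pi)$, hence a complete discretely valued field with finite residue field $\kappa$, and the image $\overline{\delta_0}$ of $\delta_0$ is a uniformizer of it.

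First I would decompose $\theta_\pi$ as in the argument of Remark~\ref{branch-elements}. Let $a$ be the $\pi$-adic valuation of $\theta_\pi$, so $\theta_\pi\pi^{-a}$ is a $\pi$-adic unit of $F_{0\pi}$. Since $\kappa(\pi)^*$ decomposes as a product $\langle\overline{\delta_0}\rangle\times (R_0/(\pi))^*$ and $R_0^*$ surjects onto $(R_0/(\pi))^*$, there are $z\in R_0^*$ and an integer $b$ with $0\leq b<n$ such that $\overline{\theta_\pi\pi^{-a}}=\bar z\,\overline{\delta_0}^{\,b}$ in $\kappa(\pi)^*/\kappa(\pi)^{*n}$. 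Then $\theta_\pi\pi^{-a}z^{-1}\delta_0^{-b}$ is a $\pi$-adic unit whose residue is an $n$-th power in $\kappa(\pi)$; as $n$ is coprime to char$(\kappa)$ it is invertible in $\kappa(\pi)$, so Hensel's lemma in the complete field $F_{0\pi}$ gives $\theta_\pi=z\,\delta_0^{\,b}\,\pi^{a}\,c^{\,n}$ for some $c\in F_{0\pi}^*$. Now set $\theta:=z\,\delta_0^{\,b}\,\pi^{a}\in F_0^*$. Then $\theta_\pi\theta^{-1}=c^{\,n}\in F_{0\pi}^{*n}\subseteq F_{\pi_1}^{*n}$, which is the second required conclusion.

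It remains to prove $\mu\theta\in Nrd(A)$. Using $\pi=w_1\pi_1^{e_1}$, $\delta_0=w_2\delta^{e_2}$ and $v,z\in R_0^*\subseteq R^*$, the product $\mu\theta=v\pi_1^{r}\delta^{s}\cdot z\,\delta_0^{\,b}\pi^{a}$ can be rewritten as $v'\pi_1^{\,r'}\delta^{\,s'}$ with $v'\in R^*$ and $r',s'\in\Z$. On the other hand $\mu\theta=(\mu\theta_\pi)\,c^{-n}$, where $\mu\theta_\pi\in Nrd(A\otimes_F F_{\pi_1})$ by hypothesis, while $c^{-n}\in F_{\pi_1}^{*n}\subseteq Nrd(A\otimes_F F_{\pi_1})$ since the reduced norm group contains all $n$-th powers (as ind$(A)$ divides $n$); hence $\mu\theta\in Nrd(A\otimes_F F_{\pi_1})$. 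Thus $\mu\theta$ is, up to a unit of $R$, a monomial in $\pi_1$ and $\delta$ which lies in $Nrd(A\otimes_F F_{\pi_1})$, so Corollary~\ref{2dim-local-nrd}, applied with $\pi_1$ playing the role of the prime $\pi$ there and using that $A$ is unramified on $R$ away from $\pi_1$ and $\delta$, yields $\mu\theta\in Nrd(A)$.

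I expect the only delicate point to be the bookkeeping: one has to keep the two systems of parameters, $(\pi,\delta_0)$ for $R_0$ and $(\pi_1,\delta)$ for $R$, straight, and verify that the product of the monomial $\mu$ with the monomial $\theta$ is again a monomial in $\pi_1,\delta$ with unit coefficient in $R$ --- this is exactly where any ramification of $F/F_0$ along $\pi$ or $\delta_0$ gets absorbed into $v'$. Once this is in place, the statement is a direct combination of the local decomposition used in Remark~\ref{branch-elements}, Hensel's lemma, and Corollary~\ref{2dim-local-nrd}.
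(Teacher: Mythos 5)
Your proof is correct and follows essentially the same route as the paper's: decompose $\theta_\pi$ as a monomial in $\pi$ and $\delta$ with an $R_0$-unit coefficient times an $n$-th power (Remark~\ref{branch-elements}), take $\theta$ to be the monomial part, observe that $\mu\theta$ is then a monomial in $\pi_1,\delta$ with $R$-unit coefficient lying in $Nrd(A\otimes_F F_{\pi_1})$, and apply Corollary~\ref{2dim-local-nrd}. One small simplification: the extra element $\delta_0$ you introduce is unnecessary, since in the setup of \S\ref{alg-2dimlocal} the element $\delta$ already belongs to $R_0$ and $(\pi,\delta)$ is the maximal ideal of $R_0$ as well as (together with $\pi_1$) generating the maximal ideal of $R$.
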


\begin{proof}  Since $F_0$ is the field of fractions of $R_0$ and $(\pi, \delta)$ is the maximal ideal of $R_0$, 
 $\theta_\pi = w \pi^{r_1} \delta^{s_1} c^n$ for some 
$w \in R_0$ a unit, $c \in F_{0\pi}$ (cf. \ref{branch-elements}). 
Let $\theta = w \pi^{r_1} \delta^{s_1} \in F_0^*$.
Since ind$(A) = n$,  $ c^n \in Nrd(A\otimes_F F_{\pi_1})$  and hence 
$\mu \theta \in   Nrd(A\otimes_F F_{\pi_1})$.     Since $A$ is unramified on $R$ except possibly at 
$\pi_1$ and $\delta$ and the support of  $\mu\theta$ is at most $\pi_1$ and $\delta$, by (\ref{2dim-local-nrd}), 
$\mu\theta \in Nrd(A)$. 
 \end{proof}

\section{Central simple algebras with involutions of second kind  over two dimensional  complete fields}  
\label{alg-2dimlocal}

Let $R_0$ be a  complete regular local ring  of dimension two with residue field 
$\kappa_0$ a finite field of characteristic not 2 and   $F_0$ the field of fractions of $R_0$. 
  Let $m = (\pi, \delta)$  be the maximal ideal of $R_0$.  
Let   $F/F_0$ be a quadratic field extension with $F = F_0(\sqrt{u\pi^\epsilon})$   for some  $u\in R_0$ a unit and $\epsilon = \{ 0,  1\}$. 
Let $R$ be the integral closure of $R_0$ in $F$. Then $R$ is a regular local ring with maximal ideal 
$(\pi_1 , \delta)$, where $\pi_1 = \pi$ if $\epsilon = 0$ and $\pi_1 = \sqrt{u\pi}$ if $\epsilon = 1$ (\cite[Lemma 3.1, 3.2]{PS1}).
Let $\kappa$ be the residue field of $R$. Then $[\kappa : \kappa_0] \leq 2$.

Let $A$ be  a central division  algebra over $F$    which      is unramified on $R$ except possibly at $\pi_1$ and $\delta$.  
Suppose that $n = $ ind$(A)$ is   coprime to char$(\kappa_0)$. 
In this section we show that if there is an involution $\tau$ on $A$ of second kind and $A$ is division, then  there exists a maximal $R$-order in $A$ invariant 
under $\tau$  with some additional structure. 
We  then  prove a   local global principle for certain  classes  of  hermitian forms over $(A, \tau)$.

We begin with the following

\begin{prop}
\label{2dim-alg} Let $\alpha \in H^2(F, \mu_n)$ be the class of $A$. 
Suppose    ind$(\alpha)  = n\geq 3$. 
 If cores$_{F/F_0}(\alpha)  = 0$,  then  $F/F_0$ is unramified on $R_0$, 
$F$ contains a primitive $n^{\rm th}$ root of unity $\rho$, $N_{F/F_0}(\rho) = 1$
and    $\alpha = (\delta, \pi)_{n}$.  
\end{prop}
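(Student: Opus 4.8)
The plan is to reduce Proposition \ref{2dim-alg} to its one-dimensional complete analogue, Proposition \ref{branch-alg-n}, by passing to a suitable completion of $F_0$. The point is that $\alpha$ is unramified on $R_0$ (or rather $R$) away from $\pi_1$ and $\delta$, and the residue fields of the local rings at those two primes are themselves complete discretely valued fields with local (in fact finite plus one discrete valuation) residue fields; so a single completion moves us into the setting of \S\ref{alg-2-local}.

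\textbf{Step 1: Pass to $F_{0\pi}$.} Let $F_{0\pi}$ be the completion of $F_0$ at the prime $\pi$ of $R_0$. Its residue field is $\kappa_0(\pi) = \Frac(R_0/(\pi))$, which, since $R_0$ is complete regular of dimension $2$, is a complete discretely valued field with finite residue field $\kappa_0$ — that is, a local field, with $\overline{\delta}$ a parameter. Set $F_\pi = F \otimes_{F_0} F_{0\pi}$ and $\alpha_\pi = \alpha \otimes F_\pi \in H^2(F_\pi, \mu_n)$. Since $A$ is a division algebra ramified (at worst) only at $\pi_1$ and $\delta$ on $R$, and the prime $\delta$ is a unit in the completion at $\pi_1$, one checks that $\operatorname{ind}(\alpha_\pi) = \operatorname{ind}(\alpha) = n$: the index cannot drop under this completion because $A$ being division with a ramification point at $\pi_1$ forces $\alpha \otimes F_\pi$ to have the same ramification data there. (This uses that for $\theta$ a prime dividing the ramification divisor, $\operatorname{ind}(A\otimes F_\theta)=\operatorname{ind}(A)$ when $A$ is division unramified away from two primes meeting at the closed point — a standard consequence of the structure of division algebras over two-dimensional complete regular local rings, e.g. via \cite{PS1} or as in \cite{PPS}.) Also $\cores_{F/F_0}(\alpha) = 0$ restricts to $\cores_{F_\pi/F_{0\pi}}(\alpha_\pi) = 0$.

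\textbf{Step 2: Apply Proposition \ref{branch-alg-n}.} Now $F_{0\pi}$ is a complete discretely valued field with local residue field $\kappa_0(\pi)$; the image $\overline{\delta}$ of the unit $\delta \in R_0 \subset F_{0\pi}$ in that residue field is a parameter; $2n$ is coprime to $\operatorname{char}(\kappa_0(\pi)) = \operatorname{char}(\kappa_0)$; and $\cores_{F_\pi/F_{0\pi}}(\alpha_\pi) = 0$ with $\operatorname{ind}(\alpha_\pi) = n \geq 3$. So Proposition \ref{branch-alg-n} applies and yields: $F_\pi/F_{0\pi}$ is unramified, $F_\pi$ contains a primitive $n^{\rm th}$ root of unity $\rho$, $N_{F_\pi/F_{0\pi}}(\rho) = 1$, and $\alpha_\pi = (\delta, \pi)_n$ in $H^2(F_\pi, \mu_n)$.

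\textbf{Step 3: Descend each conclusion to $F_0$.} First, $F_\pi/F_{0\pi}$ unramified means $F/F_0$ is unramified at $\pi$, i.e. $\epsilon = 0$ and $\pi_1 = \pi$, so $F = F_0(\sqrt{u})$ with $u$ a unit; this is the statement that $F/F_0$ is unramified on $R_0$. Second, $\rho \in F_\pi$: since $F_\pi/F_{0\pi}$ is unramified with residue field $\kappa$, and $\mu_n \subset \kappa^*$ (as $n$ is coprime to $\operatorname{char}\kappa$), Hensel gives $\mu_n \subset R^* \subset F$; hence $F$ contains a primitive $n^{\rm th}$ root of unity $\rho$. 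The norm relation $N_{F/F_0}(\rho) = 1$ follows from $N_{F_\pi/F_{0\pi}}(\rho) = 1$ since $\rho \in F$ and the norm is compatible with the inclusion $F_0 \hookrightarrow F_{0\pi}$. Finally, for the Brauer-class identity: both $\alpha$ and $(\delta,\pi)_n$ lie in $H^2(F,\mu_n)$ and are unramified on $R$ outside $\{\pi_1,\delta\} = \{\pi,\delta\}$; they agree after completion at $\pi$; so $\alpha - (\delta,\pi)_n$ is unramified on $R$ outside $\delta$ and vanishes in $H^2(F_\pi,\mu_n)$. Its only possible ramification is along $\delta$, but its residue there, computed in $H^1(\kappa(\delta),\Z/n)$, injects into $H^1$ of the completion at $\delta$ (a local field), and the class dies over $F_\pi$; chasing the residue sequence at $\delta$ and then invoking a Faddeev/injectivity statement for two-dimensional complete regular local rings (as in \cite[Cor.~5.5]{PPS} or \cite[3.1,3.2]{PS1}) forces $\alpha = (\delta,\pi)_n$ in $H^2(F,\mu_n)$.

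\textbf{Main obstacle.} The delicate point is Step 3's descent of the Brauer class: one must be sure that completing at the single prime $\pi$ loses no information, which requires the injectivity of $H^2_{\text{(away from }\delta)}(F,\mu_n) \to H^2(F_\pi,\mu_n) \times (\text{residue at }\delta)$. This is where the hypothesis that $R_0$ (hence $R$) is a \emph{complete} two-dimensional regular local ring with \emph{finite} residue field is essential, and one leans on the unramified-cohomology machinery already invoked in the excerpt from \cite{PPS} and \cite{PS1}. The index-stability claim in Step 1 is the other place to be careful, but it is standard for division algebras over such rings.
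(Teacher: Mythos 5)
Your proposal is correct and follows essentially the same route as the paper: complete at $\pi$, observe that $\operatorname{ind}(\alpha)$ is preserved (the paper cites \cite[Proposition~5.8]{PPS} where you gesture at the structure theory), apply Proposition~\ref{branch-alg-n} over $F_{0\pi}$, and descend via the injectivity statement \cite[Corollary~5.5]{PPS}. Two small points the paper makes explicit that you glossed over: one must first verify that $F\otimes_{F_0}F_{0\pi}=F_{\pi_1}$ is actually a field (which follows from $F=F_0(\sqrt{u\pi^\epsilon})$, completeness of $R_0$, and $\operatorname{char}(\kappa)\neq 2$) before invoking Proposition~\ref{branch-alg-n}, and the descent of the root of unity is a two-stage Hensel argument ($F_{\pi_1}\to\kappa(\pi_1)\to\kappa\to R$) rather than the single one your Step~3 describes.
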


\begin{proof} Since $F  = F_0(\sqrt{u\pi^{\epsilon}})$,  $R_0$ is complete and char$(\kappa) \neq 2$, 
it follows that   $F \otimes F_{0\pi} = F_{\pi_1}$ is a field.
Since $\alpha$ is unramfied on $R$ except possibly at $\pi_1$ and $\delta$,
ind$(\alpha) = $ ind$(\alpha \otimes F_{\pi_1})$ (\cite[Proposition 5.8]{PPS}).

 The residue field of $F_{0\pi}$ is a local field with residue field $\kappa_0$. 
Suppose  cores$_{F/F_0}(\alpha)  = 0$.  Then cores$_{F_{\pi_1}/F_{0\pi}}(\alpha) = 0$. 
Since ind$(\alpha \otimes F_{\pi_1} )  = n\geq 3$,  by (\ref{branch-alg-n}), 
$F_{\pi_1} /F_{0\pi}$ is unramified, $F_{\pi_1}$ contains a primitive $n^{\rm th}$ root of unity $\rho$, 
$N_{F_{\pi_1}/F_{0\pi}}(\rho) = 1$ and  $\alpha \otimes F_{\pi_1} = (\delta,\pi)_{n}$.
Since the residue field $\kappa(\pi_1)$ of $F_{\pi_1}$ is a  complete  discretely valued field with residue field $\kappa$,
$\kappa$ contains a primitive $n^{\rm th}$ root of unity.  Hence $F$ contains a primitive $n^{\rm th}$ root of unity.  
By (\cite[Corollary 5.5]{PPS}),    $\alpha = (\delta, \pi)_{n}$.
Since $F/F_0$ is unramified except possible at $\pi$ and $F_{\pi_1}/F_{0\pi}$ is unramified, $F/F_0$ is unramified on $R_0$.
Since $N_{F_{\pi_1}/F_{0\pi}}(\rho) = 1$, $N_{F/F_0}(\rho) = 1$. 
\end{proof}

Let $\alpha \in H^2(F, \mu_n)$ be the class of $A$. 
 Suppose    ind$(\alpha)  = n\geq 3$.  
 Since $(\pi, \delta) $ is a maximal ideal of $R$, $(\delta, \pi)_n$ is a division algebra.
 Let $D = (\pi, \delta)_n$. Then,     by (\ref{2dim-alg}),  $\alpha$ is the class of $D$. 
  Thus there exist   $x, y \in D$   such that 
$x^{n} = \delta$ and $y^{n} = \pi$ and $yx = \rho xy$.  
  Since $D \otimes F_\pi$ and $D \otimes F_\delta$ are division algebras (\cite[Proposition  5.8]{PPS}), 
the valuation $\nu_\pi$ and $\nu_\delta$  given by $\pi$ and $\delta$ on $F$
 extend to valuations $w_\pi$ and $w_\delta$ on $D \otimes F_\pi$
and $D \otimes F_\delta$ respectively (\cite[Theorem 12.6]{reiner}).  We have $ e_\pi : = [w_{\pi}(D^*) : \nu_\pi(F^*) ] = n$
and $e_\delta  : = [w_{\delta}(D^*) : \nu_\delta(F^*) ] = n$. 

Let $S = R[\sqrt[n]{\delta}] = R[x] $. Then $S$ is the integral closure of $R$ in $F(\sqrt[n]{\delta})$ and
$S$ is a regular local ring of dimension 2 with maximal ideal $(\sqrt[n]{\delta}, \pi)$ (\cite[Lemma 3.2]{PS1}).
Since  $D \simeq (\delta, \pi)_{n}$ and $N_{F/F_0}(\rho) = 1$, 
by (\ref{existence-of-involution}),
there exists an $F/F_0$-involution $\sigma$ on  $D$ with $\sigma(x) = x$  and $\sigma(y) = y$.

\begin{lemma}  (cf. \cite[Lemma 3.7]{Wu})
\label{order} Let $\Lambda = S + Sy + \cdots + Sy^{n -1} \subset D$. 
Then $\Lambda$ is a  maximal $R$-order  in $D$.  
\end{lemma}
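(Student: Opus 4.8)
The plan is to show that $\Lambda$ is an $R$-order and then to reduce its maximality to the height-one primes of $R$, where at the two special primes $(\pi)$ and $(\delta)$ we can invoke Proposition~\ref{prop-order}.

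First I would record that $\Lambda$ really is an $R$-order in $D$. Since $\sigma(S)=S$ and $\rho\in R$ is a unit, conjugation by $y$ carries $S$ into itself and $y^{n}=\pi\in R$, so $\Lambda=\bigoplus_{i=0}^{n-1}Sy^{i}$ is a subring of $D$; as $S=\bigoplus_{j=0}^{n-1}Rx^{j}$ is $R$-free of rank $n$ and $\{\,x^{j}y^{i}:0\le i,j\le n-1\,\}$ is an $F$-basis of $D$, the ring $\Lambda=\bigoplus_{i,j}Rx^{j}y^{i}$ is a free $R$-module of rank $n^{2}$ with $\Lambda\otimes_{R}F=D$. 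Being $R$-free, $\Lambda$ is reflexive. Hence, by the standard criterion for maximality over a normal Noetherian domain (an order is maximal iff it is reflexive and its localizations at all height-one primes are maximal; cf.\ \cite[\S 11]{reiner}), it suffices to prove that $\Lambda_{\mathfrak p}$ is a maximal $R_{\mathfrak p}$-order in $D$ for every height-one prime $\mathfrak p$ of $R$; and since $R_{\mathfrak p}$ is a discrete valuation ring this may be checked after completion.

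Because $R$ is a two-dimensional regular local ring, it is a unique factorization domain and, as $(\pi,\delta)$ is a regular system of parameters, $(\pi)$ and $(\delta)$ are height-one primes. If $\mathfrak p$ is a height-one prime different from $(\pi)$ and $(\delta)$, then $\pi,\delta$ and $n$ are units in $R_{\mathfrak p}$, so $\Lambda_{\mathfrak p}$ is the $R_{\mathfrak p}$-algebra generated by $x,y$ with $x^{n}=\delta$, $y^{n}=\pi$, $yx=\rho xy$; all its structure constants being units and $n$ being invertible, this is an Azumaya $R_{\mathfrak p}$-algebra and hence a maximal order. For $\mathfrak p=(\pi)$ I pass to the completion $F_{\pi}$: there $y^{n}=\pi$ is a parameter, $x^{n}=\delta$ is a unit, $F_{\pi}$ contains a primitive $n^{\rm th}$ root of unity, and $D\otimes_{F}F_{\pi}=(\delta,\pi)_{n}$ is a division algebra by \cite[Proposition~5.8]{PPS}; applying Proposition~\ref{prop-order} to $D\otimes_{F}F_{\pi}$ with $\pi$ as the parameter, $\delta$ as the unit, and the evident matching of generators, shows that
\[
\widehat{\Lambda}_{(\pi)}=\widehat{R}_{(\pi)}[x]+\widehat{R}_{(\pi)}[x]\,y+\cdots+\widehat{R}_{(\pi)}[x]\,y^{\,n-1}
\]
is the maximal order of $D\otimes_{F}F_{\pi}$. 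The prime $\mathfrak p=(\delta)$ is handled symmetrically: now $x^{n}=\delta$ is the parameter, $y^{n}=\pi$ is a unit, $D\otimes_{F}F_{\delta}=(\delta,\pi)_{n}$ is again division by \cite[Proposition~5.8]{PPS}, and Proposition~\ref{prop-order} (with $\delta$ the parameter and $\pi$ the unit) gives that $\widehat{\Lambda}_{(\delta)}$ is the maximal order of $D\otimes_{F}F_{\delta}$. Thus $\Lambda_{\mathfrak p}$ is maximal for every height-one prime $\mathfrak p$, and therefore $\Lambda$ is a maximal $R$-order in $D$.

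The main obstacle is the bookkeeping at the two special primes $(\pi)$ and $(\delta)$: correctly matching the generators $x,y$ and the chosen root of unity $\rho$ to the hypotheses of Proposition~\ref{prop-order} in each of the two cases, and using that the relevant completions $D\otimes_{F}F_{\pi}$ and $D\otimes_{F}F_{\delta}$ remain division algebras, which is exactly \cite[Proposition~5.8]{PPS}. The rest is the routine reduction of maximality over a regular local ring of dimension two to reflexivity together with maximality at all height-one primes.
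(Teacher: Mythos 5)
Your proof is correct and follows essentially the same route as the paper: show $\Lambda$ is free (hence reflexive), reduce maximality to height-one primes, observe that away from $(\pi)$ and $(\delta)$ the order is Azumaya, invoke Proposition~\ref{prop-order} (after completing) at those two primes, and conclude by the Auslander--Goldman criterion. The paper simply cites \cite[Theorem 11.5]{reiner} and \cite[1.5]{AG1} for the reduction steps that you spell out in more detail.
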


\begin{proof}  Since $S$ is a free $R$-module, $\Lambda$ is a free $R$-module. 
Let $P \subset R$ be a height one prime ideal.
Suppose $P \neq (\pi)$ and $P \neq (\delta)$.  Since $\pi$ and $\delta$ are units at $P$, $\Lambda_P = \Lambda \otimes R_P$ is an 
Azumaya algebra  and hence a maximal  $R_P$-order in $D$.
Suppose $P = (\pi) $ or $(\delta)$. Then, by (\ref{prop-order}) and (\cite[Theorem 11.5]{reiner}),
 $\Lambda_P$ is a maximal $R_P$-order in $D$.
Since $R$ is noetherian,  integrally closed and  
$\Lambda$ is a reflexive $R$-module,  by (\cite[1.5]{AG1}),  $\Lambda$ is a maximal $R$-order of $D$.  
\end{proof}

\begin{lemma} (cf. \cite[Lemma 3.1]{Wu})
\label{rank1form}
Let $\sigma$ and $\Lambda$ be as above.  
 Let $a \in \Lambda$ with   $\sigma(a ) = a$.
If Nrd$(a) = u\pi^r\delta^s$ for some $u \in R_0$ unit and $r, s \in \Z$,
then  there exist   $\theta \in \Lambda$ a unit, 
$r', s' \in \{ 0, 1\}$ with $r \equiv r'$ and $s \equiv s'$ modulo 2 such that 
$<\!a\!> \simeq <\!  \theta  x^{r'}y^{s'}\!>$ as hermitian forms over $(D, \sigma)$.
 \end{lemma}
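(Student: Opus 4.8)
The plan is to use the hermitian-form relation $<\!a\!> \simeq <\!\lambda a \sigma(\lambda)\!>$ for any unit $\lambda \in \Lambda^*$, together with the explicit structure of the maximal order $\Lambda = S + Sy + \cdots + Sy^{n-1}$, to reduce $a$ modulo the radical and modulo products $\lambda a \sigma(\lambda)$ to one of the $2n$ (or rather $4$) standard elements $\theta x^{r'} y^{s'}$ with $r', s' \in \{0,1\}$. First I would use that $\Lambda$ is the unique maximal order of $D$ (Lemma~\ref{order}), so it is stable under $\sigma$; in fact $\sigma(\Lambda) = \Lambda$ since $\sigma$ fixes $x$ and $y$ and acts on $R$ through the nontrivial automorphism of $F/F_0$, hence preserves $S = R[x]$. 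Next, because $D \otimes F_\pi$ and $D \otimes F_\delta$ are division, the valuations $w_\pi$, $w_\delta$ on $D$ extend those on $F$ with ramification index $n$, and $w_\pi(y) = 1$, $w_\delta(x) = 1$. The key numerical observation is: since $\mathrm{Nrd}(a) = u \pi^r \delta^s$, we have $w_\pi(a) = (\text{something} \equiv r \bmod n)$ — more precisely, $\mathrm{Nrd}(a) = N$ forces $n \cdot w_\pi^{\mathrm{norm}}(a) = w_\pi(\mathrm{Nrd}(a))$, so after dividing by the central element $\pi$ (which is $y^n$) and $\delta$ (which is $x^n$), the "leading term" of $a$ has $w_\pi$ and $w_\delta$ controlled mod $n$, and we may write $a = (\text{unit part}) \cdot x^{r_0} y^{s_0}$ up to lower-order corrections where $r_0 \equiv r$, $s_0 \equiv s \bmod$ the relevant indices.

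The main step is then the reduction $x^{r_0} y^{s_0} \rightsquigarrow x^{r'} y^{s'}$ with $r', s' \in \{0,1\}$. Here I would use that $x^2 = $ (if $n$ is such that...) — wait, rather: $x^n = \delta \in R_0^*\cdot(\text{parameter})$ is central and $\sigma$-fixed, so multiplying $a$ by $x^{-2k}$ on the appropriate side changes $<\!a\!>$ only by the square central factor $\delta^{-k}$, which is $\mathrm{Nrd}$ of a $\sigma$-symmetric central element hence rescales within the same isometry class up to adjusting $\theta$; similarly for $y$. Thus we can bring $r_0, s_0$ into $\{0,1\}$ while keeping track that $r' \equiv r$, $s' \equiv s \bmod 2$. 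What remains is to absorb the unit part: write $a = \eta \cdot x^{r'} y^{s'}$ with $\eta \in D^*$; since $\sigma(a) = a$ and $\sigma$ fixes $x^{r'} y^{s'}$ up to a known unit scalar (coming from the commutation relation $yx = \rho xy$ and $N_{F/F_0}(\rho) = 1$), one checks $\eta$ lies in $\Lambda$, is a unit there (by comparing valuations, as $a$ and $x^{r'}y^{s'}$ have the same $w_\pi$, $w_\delta$), and is $\sigma$-symmetric up to the relevant twist; then $<\!a\!> = <\!\eta \cdot x^{r'}y^{s'}\!>$ is already of the desired form with $\theta = \eta$.

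The hard part will be the bookkeeping of the twist factors: the element $x^{r'} y^{s'}$ is not itself $\sigma$-symmetric in general (it satisfies $\sigma(x^{r'}y^{s'}) = y^{s'}x^{r'} = \rho^{\pm r's'} x^{r'}y^{s'}$), so one must verify that the hypothesis $N_{F/F_0}(\rho) = 1$ — guaranteed by Proposition~\ref{2dim-alg} and used in Proposition~\ref{existence-of-involution} to build $\sigma$ — makes the relevant power of $\rho$ a norm-one unit that can be absorbed into $\theta$ without disturbing $\sigma$-symmetry of the product. I would handle this by noting $\sigma$ was constructed precisely so that $\sigma(x) = x$, $\sigma(y) = y$, and then $<\!x^{r'}y^{s'}\!>$ makes sense as a rank-one hermitian form exactly because $x^{r'}y^{s'} \cdot \sigma(x^{r'}y^{s'})^{-1}$ is a scalar; the congruences $r' \equiv r$, $s' \equiv s \bmod 2$ come out of the fact that each substitution $x \mapsto x \cdot x^{-2}$, $y \mapsto y \cdot y^{-2}$ changes exponents by $2$. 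The second subtle point is ensuring $\theta \in \Lambda$ is genuinely a unit: this follows since $\Lambda$ is the unique maximal order, $\theta = a (x^{r'}y^{s'})^{-1}$ has $w_\pi(\theta) = w_\delta(\theta) = 0$ by the norm computation, and $\Lambda_{(\pi)}$, $\Lambda_{(\delta)}$ are valuation rings while $\Lambda_P$ is Azumaya for other heights — so $\theta$ and $\theta^{-1}$ both lie in every localization, hence in $\Lambda$.
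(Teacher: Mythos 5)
Your overall strategy is the right one, but the key construction is not carried out, and the reduction step you describe is not a valid hermitian isometry. The paper's proof is a single clean move: write $r = 2r_1 + r'$, $s = 2s_1 + s'$ with $r',s' \in \{0,1\}$, set $z = x^{s_1}y^{r_1}$, and \emph{define} $\theta := \sigma(z)^{-1}\,a\,z^{-1}(x^{s'}y^{r'})^{-1}$. Then $a = \sigma(z)\,\theta x^{s'}y^{r'}\,z$, so $\langle a\rangle \simeq \langle \theta x^{s'}y^{r'}\rangle$ (the hermitian scaling is $a \mapsto \sigma(z)^{-1}az^{-1}$, a two-sided, $\sigma$-twisted operation). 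One then computes $\mathrm{Nrd}(\theta) = \pm u$, a unit of $R_0$, and invokes the argument of Wu to conclude $\theta \in \Lambda^*$. There is no need to exhibit $a$ as a unit times a monomial, nor to analyze "leading terms".

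The step in your proposal where you claim ``multiplying $a$ by $x^{-2k}$ on the appropriate side changes $\langle a\rangle$ only by the square central factor $\delta^{-k}$'' is not right as stated. A one-sided multiplication of $a$ by $x^{-2k}$ is not a hermitian isometry; the permitted move is $\langle a\rangle \simeq \langle \sigma(\lambda)\,a\,\lambda\rangle$. Taking $\lambda = x^{-k}$ gives $x^{-k}ax^{-k}$, and to simplify this you would need to commute $x^{-k}$ past $a$, which requires knowing $a$ is a monomial in $x,y$ up to units — precisely what you are trying to prove. Likewise, $x^{2k}$ is $\delta^{2k/n}$ only when $n \mid 2k$, so the central rescaling by $\delta^{-k}$ is not available in general. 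The related claim ``write $a = \eta\, x^{r'}y^{s'}$ with $\eta$ a unit'' is also impossible for the original $a$: with $r',s' \in \{0,1\}$, $\mathrm{Nrd}(\eta)$ would pick up nontrivial powers of $\pi$ and $\delta$. You conflate the equality with the isometry. Finally, a small bookkeeping point: since $x^n = \delta$ and $y^n = \pi$, the $\delta$-exponent $s$ goes with $x$ and the $\pi$-exponent $r$ with $y$, so the representative should be $\theta x^{s'}y^{r'}$, as in the paper's proof (there is a harmless index swap in the lemma statement).
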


\begin{proof}  Let $r = 2r_1 + r'$ and $s = 2s_1 + s'$ with $r', s' \in \{0, 1 \}$. 
Let $z = x^{s_1}y^{r_1} \in \Lambda$. Then   Nrd$(z) = $ Nrd$(\sigma(z)) = \delta^{s_1}\pi^{r_1}$. 
 Let $\theta = \sigma(z)^{-1}  a  z^{-1} ( x^{s'}y^{r'})^{-1}$.
Then $a   = \sigma(z) \theta x^{s'}y^{r'} z$ and hence 
$<\!a \!> \simeq <\!\theta  x^{s'}y^{r'}\!>$.   Since Nrd$(\theta ) = u \in R_0$ is a unit, it follows,
 as in the proof of (\cite[Lemma 3.1]{Wu}), that  $\theta  \in\Lambda$  and  is  a unit in $\Lambda$. 
\end{proof}

\begin{cor}(cf. \cite[Corollary 3.2]{Wu})
\label{ranknforms}
Let $\sigma$ and $\Lambda$ be as in (\ref{order}).  
Let $h = <a_1,\cdots , a_r>$ be an hermitian form over $(A, \sigma)$
with   $a_i \in \Lambda$,  $\sigma(a_i) = a_i$ and  Nrd$(a_i) $ is a product of a unit in $R$, 
a power of $\pi$ and a power of $ \delta$.
Then 
$$   h \simeq <\!u_1, \cdots , u_{m_0}\!> \perp <\!v_1, \cdots, v_{n_1}\!>x \perp 
<\!w_1, \cdots, w_{n_2}\!>  y \perp <\!\theta_1, \cdots, \theta_{n_3}\!>xy$$ 
for some $u_i, v_i, w_i, \theta_i \in \Lambda$  units. 
\end{cor}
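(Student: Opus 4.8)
The plan is to deduce Corollary \ref{ranknforms} from Lemma \ref{rank1form} by diagonalizing $h$ one rank-one piece at a time and collecting terms. First I would recall that an hermitian form over $(A,\sigma)$ with a $\sigma$-invariant representing element admits an orthogonal decomposition $h \simeq \perp_{i=1}^r <\!a_i\!>$, which is already the hypothesis. Applying Lemma \ref{rank1form} to each $<\!a_i\!>$, since $\mathrm{Nrd}(a_i)$ is a unit times a power of $\pi$ times a power of $\delta$, I obtain $<\!a_i\!> \simeq <\!\theta_i\, x^{r_i'}y^{s_i'}\!>$ with $\theta_i\in\Lambda$ a unit and $r_i', s_i'\in\{0,1\}$. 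Thus $h \simeq \perp_{i=1}^r <\!\theta_i\, x^{r_i'}y^{s_i'}\!>$, and there are exactly four possibilities for the pair $(r_i',s_i')$, namely $(0,0)$, $(1,0)$, $(0,1)$, $(1,1)$. Grouping the summands according to which of the four cases occurs (noting $x^{0}y^{0}=1$, $x^{1}y^{0}=x$, $x^{0}y^{1}=y$ and $x^{1}y^{1}=xy$, up to relabelling $x^{s'}y^{r'}$ versus $x^{r'}y^{s'}$ which differ only by the unit $\rho$-factor absorbed into $\theta_i$) yields precisely the claimed shape
$$h \simeq <\!u_1,\dots,u_{m_0}\!> \perp <\!v_1,\dots,v_{n_1}\!>x \perp <\!w_1,\dots,w_{n_2}\!>y \perp <\!\theta_1,\dots,\theta_{n_3}\!>xy$$
with all coefficients units in $\Lambda$.

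The one point that needs a little care is the passage between $x^{s'}y^{r'}$ and a canonical monomial: in Lemma \ref{rank1form} the conclusion is $<\!\theta\,x^{r'}y^{s'}\!>$ (or $x^{s'}y^{r'}$ depending on the bookkeeping), and since $yx=\rho xy$ with $\rho$ a root of unity in $F$, one has $x^{s'}y^{r'} = \rho^{\pm r's'} y^{r'}x^{s'}$, so the two orderings differ by multiplication by a unit of $R$, which can be pushed into the coefficient $\theta$ without affecting whether $\theta$ is a unit of $\Lambda$ or the isometry class of the rank-one form $<\!\theta\,x^{r'}y^{s'}\!>$ (conjugating by a unit and scaling by a central unit norm preserves the hermitian form up to isometry). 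Hence all four monomials $1, x, y, xy$ suffice as representatives.

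The main obstacle, such as it is, is purely organizational: verifying that the four classes of monomials $\{1,x,y,xy\}$ exhaust the images $x^{r'}y^{s'}$ with $r',s'\in\{0,1\}$ and that relabelling $xy$ versus $yx$ costs only a unit, together with keeping track that the units produced lie in $\Lambda$ (which is exactly the content of the last sentence of the proof of Lemma \ref{rank1form}, using that $\mathrm{Nrd}(\theta_i)$ is a unit in $R_0$). No new input beyond Lemma \ref{rank1form} and the relation $yx=\rho xy$ is required; the corollary is essentially a clean restatement obtained by sorting the rank-one summands into four bins.
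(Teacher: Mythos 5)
Your proof is correct and matches the paper's (implicit) argument: the corollary carries no separate proof precisely because it follows by applying Lemma~\ref{rank1form} to each rank-one summand $\langle a_i\rangle$ and sorting the resulting $\langle \theta_i\, x^{r_i'}y^{s_i'}\rangle$ into four bins by monomial. The worry you raise about $yx=\rho xy$ and the $x^{s'}y^{r'}$ versus $x^{r'}y^{s'}$ bookkeeping is unnecessary, since $r_i',s_i'$ range independently over $\{0,1\}$, so either ordering produces the same set of four monomials $\{1,x,y,xy\}$; no relation in $D$ is needed, only the sorting.
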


We have the following (cf. \cite[Corollary 3.3]{Wu}).
\begin{cor}
\label{isotropic}
Let $\sigma$ and $\Lambda$ as above.  Let $a_i \in\Lambda$   be as in (\ref{ranknforms})
and $h = <\!a_1, \cdots , a_r\!> $. 
If $h \otimes F_{0\pi}$ is isotropic, then $h$ is isotropic over $F_0$.   
\end{cor}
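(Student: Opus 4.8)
The plan is to reduce the isotropy of $h$ over $F_0$ to the isotropy of the much simpler diagonal forms produced in \ref{ranknforms}, using the normal form
$$
h \simeq \langle u_1, \dots, u_{m_0}\rangle \perp \langle v_1, \dots, v_{n_1}\rangle x \perp \langle w_1,\dots,w_{n_2}\rangle y \perp \langle \theta_1,\dots,\theta_{n_3}\rangle xy,
$$
with all entries units in $\Lambda$. First I would pass to residues. Since $S = R[x]$ is a regular local ring with maximal ideal $(x,\pi)$ and $\Lambda = S + Sy + \cdots + Sy^{n-1}$ is a maximal $R$-order whose reduction modulo the maximal ideal of $R$ is a central simple algebra over $\kappa$ with an induced unitary involution $\bar\sigma$, reducing the unit entries $u_i, v_i, w_i, \theta_i$ gives an hermitian form $\bar h$ over $(\bar\Lambda, \bar\sigma)$. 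Because $\kappa$ is finite (or a quadratic extension of the finite field $\kappa_0$), the residue algebra is split over $\kappa$ and every hermitian form over it of rank $\geq 2$ is isotropic; one reads off that an hermitian form over $(A,\sigma)$ with all entries units in $\Lambda$ is isotropic over $F_0$ as soon as the number of such entries (in any one of the four homogeneous blocks, or after combining appropriately) is large enough, and more importantly the anisotropic such forms are severely constrained. The standard lifting argument — an isotropic vector for $\bar h$ over the residue field lifts by completeness/Hensel to an isotropic vector for $h$ over $F_0$, since $R_0$ is complete — then transfers isotropy back up.

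The substantive step is to compare isotropy over $F_{0\pi}$ with isotropy over $F_0$. Over $F_{0\pi}$ the algebra $A \otimes F_{0\pi}$ is a division algebra (by \cite[Proposition 5.8]{PPS}, as used in \S\ref{alg-2dimlocal}), whose residue division algebra over the local field $\kappa(\pi)$ carries a unitary involution; Wu's analysis (\cite[Corollary 3.3]{Wu}) in the 2-local setting shows exactly which diagonal forms $\langle u_1,\dots\rangle \perp \langle v_1,\dots\rangle x \perp \cdots$ with unit entries are isotropic over $F_{0\pi}$, namely the obstruction is detected by the two first residues at $\pi$: the "even" part $\langle u_i\rangle \perp \langle \theta_j\rangle xy$ reduces to a form over $\kappa(\pi)$, and likewise $\langle v_i\rangle x \perp \langle w_j\rangle y$ gives a second residue form. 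So $h\otimes F_{0\pi}$ isotropic forces each of these two residue forms over $\kappa(\pi)$ to be isotropic, or one block to be large. Now $\kappa(\pi)$ is itself a complete discretely valued field with residue field $\kappa$ finite, so isotropy of these forms over $\kappa(\pi)$ descends by the same residue-and-lift mechanism to isotropy of the corresponding residue forms over $\kappa$; and these residue-over-$\kappa$ forms are literally the reductions of $h$'s blocks. Hence $\bar h$ is isotropic over $\kappa$, and lifting back over the complete ring $R_0$ yields isotropy of $h$ over $F_0$.

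Concretely, the steps in order: (1) apply \ref{ranknforms} to put $h$ in the stated four-block unit-diagonal form; (2) reduce $\Lambda$, $\sigma$, and the four blocks modulo the maximal ideal of $R$, obtaining an hermitian form $\bar h$ over the residue algebra $(\bar\Lambda, \bar\sigma)$ with its decomposition into the $\langle \bar u_i\rangle$, $\langle\bar v_i\rangle$, $\langle\bar w_i\rangle$, $\langle\bar\theta_i\rangle$ pieces; (3) compute the two residue forms of $h\otimes F_{0\pi}$ at $\pi$ and observe that, after identifying $\kappa(\pi)$'s residue field with $\kappa$, these residue forms of residue forms recover $\bar h$'s two natural halves; (4) invoke the hypothesis that $h\otimes F_{0\pi}$ is isotropic to conclude $\bar h$ is isotropic over $\kappa$ — here using that over the finite field $\kappa$ (or $\kappa_0$) the hermitian forms with respect to a split unitary involution behave like quadratic forms over a finite field, so isotropy is controlled purely by rank and does not vanish spuriously; (5) lift the isotropic vector from $\bar h$ over $\kappa$ to an isotropic vector of $h$ over $F_0$ using completeness of $R_0$ (Hensel-type argument, exactly as in the rank-one normalization of \cite[Lemma 3.1]{Wu}).

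The main obstacle I expect is step (3)–(4): correctly bookkeeping how the two successive residues (first at $\pi$ inside $F_{0\pi}$, landing in $\kappa(\pi)$, then at the discrete valuation of $\kappa(\pi)$, landing in $\kappa$) interact with the grading by $x$ and $y$ on $D$, and checking that no isotropy is lost or gained in passing between $\Lambda$-lattice hermitian forms and their residues — in particular verifying that a unit entry of $\Lambda$ really does reduce to a nonzero anisotropic-or-not element of the residue algebra and that the involution is compatible with these reductions. Once the residue dictionary is set up, the finiteness of $\kappa$ makes the isotropy criterion essentially automatic. This is precisely the 2-local analogue of Wu's \cite[Corollary 3.3]{Wu}, and I would follow that proof's structure closely, the only new input being the explicit maximal order $\Lambda$ from \ref{order} and the involution from \ref{existence-of-involution} that makes the reduction mod $\pi$ and $\delta$ behave well.
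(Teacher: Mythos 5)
Your high-level strategy — decompose $h$ via Corollary~\ref{ranknforms}, reduce to residue data, invoke finiteness of the residue field, and lift — is indeed the spirit of Wu's argument that the paper follows. But two concrete things go wrong.

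First, you assert that $\Lambda/\mathfrak{m}_R\Lambda$ is a split central simple algebra over $\kappa$ carrying an induced unitary involution $\bar\sigma$. This is false: since $x^n=\delta$ and $y^n=\pi$ lie in $\mathfrak{m}_R$, the elements $\bar x$ and $\bar y$ are nilpotent in $\Lambda/\mathfrak{m}_R\Lambda$, which therefore has a large Jacobson radical and is nowhere near a central simple $\kappa$-algebra. The actual residue structure that the argument needs is the iterated one you sketch later (first at $\pi$, landing over the $1$-local field $\kappa(\pi)$, then at the valuation of $\kappa(\pi)$), and the residue division algebra there is a field, not a split matrix algebra — so the reduction of a unit diagonal entry is a nonzero scalar, and the "hermitian form over a split algebra over a finite field" picture has to be replaced by norm/trace conditions. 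Relatedly, your pairing of the four blocks into $\langle u_i\rangle\perp\langle\theta_j xy\rangle$ and $\langle v_i x\rangle\perp\langle w_j y\rangle$ does not match the $\pi$-adic residue splitting, since $\mathrm{Nrd}(u_i)$ and $\mathrm{Nrd}(v_i x)$ are both units at $\pi$ while $\mathrm{Nrd}(w_j y)$ and $\mathrm{Nrd}(\theta_j xy)$ both have $\pi$-valuation one; the correct first splitting is $\{u, vx\}$ versus $\{wy, \theta xy\}$.

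Second, and more importantly, the one piece of genuinely new content in the paper's proof is missing from your write-up. To run Wu's argument you must treat each of the four blocks as an hermitian (or $\epsilon$-hermitian) form with coefficients that are \emph{units}, which means twisting the involution: the block $\langle\theta_1,\dots\rangle xy$ must be read against $\mathrm{Int}(xy)\circ\sigma$. In the quaternion case this is automatic ($\sigma(xy)=-xy$), but here $\sigma(xy)=yx=\rho xy$, so $\mathrm{Int}(xy)\circ\sigma$ is an involution of the second kind exactly when $\rho\,\sigma(\rho)=N_{F/F_0}(\rho)=1$. The entire point of establishing $N_{F/F_0}(\rho)=1$ in Proposition~\ref{2dim-alg} (via the dihedral analysis of \S\ref{dihedral-local}) is to make this step legitimate; without observing it, the $xy$-block has no usable involution and the residue-and-lift mechanism for that block is not available. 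Your proposal cites the involution from \ref{existence-of-involution}, but that is $\sigma$ itself (fixing $x$ and $y$), not the twist by $xy$ — and it is precisely this twist, and why it is an involution, that the paper's proof is careful to record before deferring the rest to \cite[Corollary 3.3]{Wu}.
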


\begin{proof} Since $\sigma(xy) = yx = \rho xy$ and $\rho \sigma(\rho) = N_{F/F_0}(\rho) = 1$, it follows that 
 Int$(xy) \circ \sigma$ is an involution on $D$.
Following the proof of (\cite[Corollary 3.3]{Wu}), it follows that 
 if $h$ is isotropic over $F_{0\pi}$, then $h$ is isotropic over $F_0$. 
%
%This is proved, using a result of  Larmour (\cite{Larmour}), 
% by Wu (\cite[Corollary 3.3]{Wu}) under the additional assumption that 
% $\sigma(xy) = \pm xy$. This assumption is used only  to prove that Int$(xy)  \circ \sigma$ is an involution on $D$. 
% However since $\sigma(xy) = yx = \rho xy$ and $\rho \sigma(\rho) = N_{F/F_0}(\rho) = 1$, it follows that 
% Int$(xy) \circ \sigma$ is an involution on $D$. Hence, as in the proof of (cf. \cite[Corollary 3.3]{Wu}), 
%
\end{proof}

\section{An application of refinement of patching to local global principle} 
\label{refinement}

Let $T$ be a complete discrete valuation ring and $K$ its field of fractions.  
We recall a few basic definitions from (\cite{HHK3}, \cite{HHK5}). 
Let $F$ be a function field of a curve over $K$. Let $\YY \to $ Spec$(T)$ be a  proper normal model of 
$F$ and $Y$ the special fibre.  For a  point $x$ of $Y$, let 
$F_x$ be the field of fractions of the completion $\hat{R}_x$  of the local ring at $x$.
Let $U$ be a nonempty proper  subset of  an irreducible component of $Y$ not containing the singular points of
$X$. Let $R_U$ be the subset of $F$ containing all those elements of $F$ which are regular at every closed point of $U$.
Let $t \in T$ be a parameter,  $\hat{R}_U$ be the $(t)$-adic completion of $R_U$ and $F_U$ the field of fractions of $\hat{R}_U$. 
 Let $P \in Y$ be a closed point. A height one prime ideal   $\pp$ of $\hat{R}_P$ containing $t$ is called a {\it branch} at $P$.   For a branch $\pp$,   
let $F_\pp$ be the completion of $F_P$ at the discrete valuation given by $\pp$.

Let $\PP$ be a  finite set of closed points of $Y$ containing all singular points of $Y$ and at least one point from each irreducible component of 
$Y$. Let $\UU $ be the set of irreducible components of $Y \setminus \PP$ and $\BB$ the set of branches at points in $\PP$. 
Let $G$ be a linear algebraic group over $F$. We say that  {\it factorization}  holds for $G$  with respect to $(\PP, \UU)$ 
if given $(g_{\pp}) \in \prod_{\pp \in \BB}G(F_{b})$, there exists $(g_Q) \in \prod_{Q \in \PP} G(F_Q)$
and $(g_{U}) \in \prod_{U \in \UU}G(F_{U})$ such that  if  $\pp$ is a branch at $P$ along $U$, then    $g_{\pp} = g_{Q} g_{U}$. 
If the  factorization  holds for $G$  with respect to  all possible pairs $(\PP, \UU)$, then we say that
{\it  factorization}  holds for $G$   over $F$ with respect to $\YY$. 
Let $Z$ be a variety over $F$ with a $G$-action. 
We say that $G$ {\it acts transitively} on points of $Z$ if $G(E)$ acts transitively on $Z(E)$ for all extensions $E/F$.

 Let $\XX \to \YY$ be a sequence of blow ups and   $X$  the special fibers of $\XX$. 
 Let  $P \in \YY$ be a closed point and  $V$ the fibre over $P$. 
Suppose that dim$(V)  = 1$.   
Let $\PP'$ be a  finite set of closed points of $V$ containing 
all the singular points of $V$ and at least one point from each irreducible component of $V$.
Let $\UU'$ be the set of connected components of $V \setminus \PP'$.
Let  $\BB'$ be the set of 
branches at the points of $\PP'$.   
We say that  {\it factorization}  holds for $G$  with respect to $(\PP', \UU')$ 
if given $(g_{\pp}) \in \prod_{\pp \in \BB'}G(F_{\pp})$, there exists $(g_Q) \in \prod_{Q \in \PP'} G(F_Q)$
and $(g_{U}) \in \prod_{U \in \UU'}G(F_{U})$ such that  if  $\pp$ is a branch at $P$ along $U$, then    $g_{\pp} = g_{Q} g_{U}$. 

Let $\PP$ be a finite set of  closed points of $X$ containing $\PP'$, 
all singular points of $X$ and at least one closed point from each irreducible component of $X$.
Let $\UU$ be the set of irreducible components of $X\setminus \PP$ and $\BB$ the set of branches at points in $\PP$.

The following results are immediate consequences   of results of  Harbater, Hartmann and Krashen  (\cite{HHK5}).

\begin{theorem}
\label{refinesha}
 Let $F$, $P$, $\PP$, $\PP'$, $\UU$ and $\UU'$ be as above.  
Let $G$ be a connected  linear algebraic group over $F$. If the   factorization holds for $G$   with respect to $(\PP, \UU)$,  
 then the  kernel of natural map 
$$H^1(F_P, G) \to \prod_{U' \in \UU'}H^1(F_{U'}, G) \times \prod_{Q \in \PP'}H^1(F_Q, G)$$ 
is trivial. 
\end{theorem}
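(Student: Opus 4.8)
The plan is to deduce Theorem \ref{refinesha} directly from the patching machinery of Harbater, Hartmann and Krashen. The key point is that the field $F_P$ sits inside a diamond of patching fields attached to the special fibre $V$ of the blow-up $\XX \to \YY$ over $P$: namely $F_P \subseteq F_Q$ for $Q \in \PP'$, $F_P \subseteq F_{U'}$ for $U' \in \UU'$, and $F_P \subseteq F_{\pp}$ for $\pp \in \BB'$, and these fit into a factorization inverse-system setup exactly as in \cite{HHK5}. First I would recall that, by the local-global principle for torsors in the patching setting (\cite[\S 6]{HHK1}, refined in \cite{HHK5}), the kernel of $H^1(F_P, G) \to \prod_{U' \in \UU'} H^1(F_{U'}, G) \times \prod_{Q \in \PP'} H^1(F_Q, G)$ is in bijection with a double-coset space $\prod_{\pp \in \BB'} G(F_{\pp}) \big/ \bigl(\prod_{Q} G(F_Q), \prod_{U'} G(F_{U'})\bigr)$, provided one knows that $G$ is rational or at least that $G(F_{\pp})$ decomposes appropriately.

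Second, I would invoke the factorization hypothesis: by assumption, factorization holds for $G$ with respect to $(\PP, \UU)$ on the larger curve $X$, and since $\PP' \subseteq \PP$, $V \setminus \PP'$ refines into the components of $X \setminus \PP$ lying over $P$, the factorization over $(\PP, \UU)$ restricts (or rather, passes through the nesting of completions) to give factorization with respect to $(\PP', \UU')$. Concretely, given $(g_{\pp}) \in \prod_{\pp \in \BB'} G(F_{\pp})$, one lifts along the inclusions of branch fields into the branch fields of $X$, applies the factorization on $X$, and then pushes the resulting $(g_Q)_{Q \in \PP}$ and $(g_U)_{U \in \UU}$ back down; the components of $\UU$ not meeting the fibre over $P$ contribute trivially, and the remaining data assembles into the required $(g_Q)_{Q \in \PP'}$ and $(g_{U'})_{U' \in \UU'}$. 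Thus every element of the double-coset space is trivial, and the kernel vanishes.

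The main obstacle I anticipate is the bookkeeping in the second step: making precise that factorization on the big model $\XX$ with respect to $(\PP, \UU)$ actually implies factorization for the sub-configuration $(\PP', \UU')$ sitting over the single point $P$. This requires matching up the branches at points of $\PP'$ (which are height-one primes of $\hat R_{P'}$ for $P' \in \PP'$) with branches on $X$, and checking that the patching field $F_P$ itself plays the role of the "ambient" field in the smaller problem — i.e. that $F_{U'}$ and $F_Q$ for the sub-configuration are the same fields whether computed relative to $X$ or relative to $V$. This compatibility is exactly what is established in \cite{HHK5} (the refinement-of-patching results cited just before the theorem), so the proof amounts to citing the relevant statement there and observing that the hypothesis of the theorem supplies precisely its input. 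I would phrase the argument as: by \cite{HHK5} the displayed kernel is controlled by factorization for $G$ with respect to $(\PP', \UU')$, and factorization with respect to $(\PP, \UU)$ implies the latter by the refinement procedure, whence the kernel is trivial.
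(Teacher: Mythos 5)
Your proposal follows essentially the same route as the paper: reduce factorization with respect to $(\PP,\UU)$ to factorization with respect to $(\PP',\UU')$ by the refinement results of \cite{HHK5}, observe that this gives patching for the diamond attached to $F_P$ over the fibre $V$, and conclude injectivity of the $H^1$ map from the HHK patching machinery. The paper makes this precise by citing \cite[Proposition 3.14]{HHK5} (passage from $(\PP,\UU)$ to $(\PP',\UU')$), \cite[Proposition 3.10]{HHK5} (factorization implies patching for the injective diamond), and \cite[Theorem 2.13]{HHK5} (patching for the diamond implies injectivity of $H^1$), so your references to the ``refinement-of-patching results'' in \cite{HHK5} are on target. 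One small correction: you hedge the double-coset description of the kernel with ``provided one knows that $G$ is rational,'' but rationality is not what is needed here; the relevant input is precisely the factorization hypothesis (which yields patching for the diamond via \cite[Proposition 3.10]{HHK5}), and that hypothesis is already part of the theorem, so no rationality assumption or separate decomposition of $G(F_\pp)$ is required.
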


\begin{proof}   Suppose the   factorization holds  for $G$  with respect to $(\PP, \UU)$. 
Then, by (\cite[Proposition 3.14]{HHK5}),  factorization holds  for $G$  with respect to $(\PP', \UU')$. 
 By (\cite[Proposition 3.10]{HHK5}), patching holds for the  injective diamond $F_{P_\bullet} = 
(F_P \leq \prod_{Q \in \PP'} F_Q, \prod_{U' \in \UU'}F_{U'} \leq \prod_{b' \in \BB'} F_{b'})$.
Hence,  by (\cite[Theorem 2.13]{HHK5}),  
 the map $H^1(F_P, G) \to \prod_{U' \in \UU'}H^1(F_{U'}, G) \times \prod_{Q \in \PP'}H^1(F_Q, G)$ is injective. 
\end{proof}

\begin{cor} Let $F$,  $\YY$, $P$ and  $\XX$ be   as above. Let  $G$ be a connected linear algebraic group over $F$. 
If the   factorization holds for $G$ over $F$, 
then the  kernel of natural map 
$$H^1(F_P, G) \to \prod_{x \in V}H^1(F_x, G)$$
is trivial. 
\end{cor}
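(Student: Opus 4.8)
The plan is to deduce the corollary from Theorem~\ref{refinesha} together with the refinement results of Harbater, Hartmann and Krashen (\cite{HHK5}), which compare the local--global obstruction measured by the standard patches $F_{U'}$, $F_Q$ with the one measured by all the completions $F_x$, $x\in V$. First I would fix a finite set $\PP'$ of closed points of $V$ containing all the singular points of $V$ and at least one point from each irreducible component of $V$, let $\UU'$ be the set of connected components of $V\setminus\PP'$, and then choose a finite set $\PP$ of closed points of $X$ containing $\PP'$, all singular points of $X$, and at least one closed point from each irreducible component of $X$; let $\UU$ be the associated set of components of $X\setminus\PP$. Since factorization holds for $G$ over $F$ (with respect to $\YY$), it holds for the pair $(\PP,\UU)$, so Theorem~\ref{refinesha} applies and the natural map $H^1(F_P,G)\to\prod_{U'\in\UU'}H^1(F_{U'},G)\times\prod_{Q\in\PP'}H^1(F_Q,G)$ has trivial kernel. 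Hence it is enough to show that if $\xi\in H^1(F_P,G)$ becomes trivial over $F_x$ for every point $x$ of $V$, then $\xi$ becomes trivial over every $F_{U'}$ and every $F_Q$.

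For $Q\in\PP'$ this is immediate, since $Q$ is a point of $V$. For $U'\in\UU'$ I would argue as follows. The set $U'$ is a nonempty open subset of a single irreducible component $Z$ of $V$, and in the patching construction one has, compatibly with the maps from $F_P$, field maps $F_{U'}\to F_x$ for the closed points $x$ of $U'$ and $F_{U'}\to F_{\eta_Z}$ for the generic point $\eta_Z$ of $Z$ (all of these points $x$ and $\eta_Z$ lie on $V$). Consequently the class $\xi_{F_{U'}}$ becomes trivial over each of these fields; the assertion that $\xi_{F_{U'}}$ is then itself trivial is exactly the content of the comparison in \cite{HHK5}, namely that the local--global obstruction over the patch field $F_{U'}$ is detected by the completions at the points of the closed fibre $U'$ of $\Spec\hat R_{U'}$ (this is the ``refinement'' underlying \cite[Propositions 3.10 and 3.14]{HHK5} and its consequences). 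Granting this, $\xi$ lies in the kernel displayed above, hence $\xi=1$.

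The only genuine point is this last step --- passing from triviality of $\xi$ over the fields $F_x$ attached to the points of $U'$ to triviality over the patch field $F_{U'}$ --- and it is precisely here that one must invoke the fine structure of the HHK patching construction. Equivalently, one may simply cite the statement of \cite{HHK5} identifying the obstruction set over all points of the closed fibre with the obstruction set over the standard patches, in which case the corollary is an immediate consequence of that identification and Theorem~\ref{refinesha}. Everything else is formal: the choice of $\PP'\subseteq\PP$ and the functoriality of $H^1(-,G)$ along the maps $F_P\to F_{U'}\to F_x$.
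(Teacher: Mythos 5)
Your overall strategy (reduce to Theorem~\ref{refinesha}) matches the paper's, but there is a genuine gap in how you get there, and it is located exactly where you flag ``the only genuine point.'' You fix the finite set $\PP'$ of closed points of $V$ arbitrarily at the outset, and then for each $U'\in\UU'$ you must show that $\xi$ becomes trivial over the patch field $F_{U'}$ knowing only that it becomes trivial over $F_x$ for every point $x$ of $U'$. That implication is itself a local--global principle over $F_{U'}$ with respect to the points of the closed fibre of $\Spec\hat R_{U'}$. It is not the content of \cite[Propositions 3.10, 3.14]{HHK5}: those results concern patching for an injective diamond and inheritance of factorization, not a local--global statement for $H^1(F_{U'},G)$ in terms of the points of $U'$. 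Citing them here does not close the step, and without an actual reference or argument the proof is incomplete.

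The paper avoids this by choosing $\PP'$ adaptively, depending on $\xi$. Precisely, following \cite[Corollary~5.9]{HHK3}: since $\xi$ dies over $F_{\eta}$ for each generic point $\eta$ of $V$, one applies \cite[Proposition~5.8]{HHK3} (a limit/approximation argument, $F_\eta$ being a filtered colimit of the fields $F_U$) to get, for each irreducible component, a nonempty open $U$ over which $\xi$ already dies. Taking $\PP'$ to be the complement of the union of these opens, triviality of $\xi$ over each $F_{U'}$ ($U'\in\UU'$) is built into the construction, while triviality over $F_Q$ for $Q\in\PP'$ is immediate since $Q\in V$. Then Theorem~\ref{refinesha} finishes. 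If you replace your fixed $\PP'$ by this $\xi$-dependent choice, your argument becomes the paper's; as written, the deduction from ``trivial over all $F_x$ with $x\in U'$'' to ``trivial over $F_{U'}$'' needs its own proof and the references you give do not supply it.
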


\begin{proof} Let $\xi$ be in the kernel of the map $H^1(F_P, G) \to \prod_{x \in V}H^1(F_x, G)$.
Then, as in (\cite[Corollary 5.9]{HHK3}), there exists a finite set $\PP'$ of closed points of $V$ containing 
all the singular points of $V$ and at least one closed point from each irreducible component of $V$
such that if $\UU'$ is the set of irreducible components of $V \setminus \PP'$, 
then $\xi$ is in the kernel of $H^1(F_P, G) \to \prod_{U'  \in \UU}H^1(F_U, G) \times \prod_{Q  \in \PP'}H^1(F_Q, G)$.
Hence, by (\ref{refinesha}), $\xi$ is trivial.
\end{proof}
 
\begin{theorem} 
 \label{refinesha1} Let $F$, $P$, $F_P$ and $F_U$ be as above.
Let $G$ be a connected  linear algebraic group over $F$.
Suppose  the   factorization holds   for $G$ with respect to $(\PP, \UU)$. 
Let $Z$ be a  $F$-variety with $G$ acting transitively  on points $Z$.  
If $Z(F_{U'}) \neq \emptyset$ and $Z(F_Q) \neq \emptyset $ for all $U' \in \UU'$ and $Q \in \PP'$, then 
$Z(F_P) \neq \emptyset$. 
\end{theorem}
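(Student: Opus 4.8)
The plan is to follow the proof of Theorem~\ref{refinesha}, but to glue rational points of $Z$ directly rather than to argue with $H^1$. First I would observe that, since factorization holds for $G$ with respect to $(\PP,\UU)$, factorization also holds for $G$ with respect to $(\PP',\UU')$ by (\cite[Proposition 3.14]{HHK5}), and that patching holds for the injective diamond $F_{P_\bullet} = (F_P \leq \prod_{Q \in \PP'} F_Q,\ \prod_{U' \in \UU'}F_{U'} \leq \prod_{b' \in \BB'} F_{b'})$ by (\cite[Proposition 3.10]{HHK5}). In particular the diagram $F_P \to \bigl(\prod_{Q}F_Q\bigr)\times\bigl(\prod_{U'}F_{U'}\bigr) \rightrightarrows \prod_{b'}F_{b'}$ is an equalizer, so $F_P$ is the fibre product of $\prod_{Q}F_Q$ and $\prod_{U'}F_{U'}$ over $\prod_{b'}F_{b'}$; consequently, for every affine $F$-scheme $W$ of finite type the natural map
\[
 W(F_P)\ \longrightarrow\ \Bigl(\textstyle\prod_{Q}W(F_Q)\Bigr)\ \times_{\prod_{b'}W(F_{b'})}\ \Bigl(\textstyle\prod_{U'}W(F_{U'})\Bigr)
\]
is a bijection, since $\Hom_F(\OO_W(W),-)$ commutes with fibre products. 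The same gluing statement for the (possibly non-affine) $G$-variety $Z$, with $G$ acting transitively on points, is part of the patching formalism of Harbater--Hartmann--Krashen for homogeneous spaces (\cite{HHK3}, \cite{HHK5}).

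Next I would produce a compatible family of local points. By hypothesis choose $z_Q \in Z(F_Q)$ for each $Q \in \PP'$ and $z_{U'} \in Z(F_{U'})$ for each $U' \in \UU'$. If $b' \in \BB'$ is a branch at $Q$ along $U'$, then the images of $z_Q$ and $z_{U'}$ both lie in $Z(F_{b'})$, which is therefore nonempty; since $G(F_{b'})$ acts transitively on $Z(F_{b'})$, there exists $g_{b'} \in G(F_{b'})$ with $g_{b'}\cdot z_{U'} = z_Q$ in $Z(F_{b'})$. Applying factorization for $G$ with respect to $(\PP',\UU')$ to $(g_{b'})_{b'\in\BB'}$ yields $(g_Q) \in \prod_{Q\in\PP'}G(F_Q)$ and $(g_{U'}) \in \prod_{U'\in\UU'}G(F_{U'})$ with $g_{b'} = g_Q g_{U'}$ whenever $b'$ is a branch at $Q$ along $U'$.

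Then I would replace $z_Q$ by $g_Q^{-1}z_Q \in Z(F_Q)$ and $z_{U'}$ by $g_{U'}z_{U'} \in Z(F_{U'})$. For a branch $b'$ at $Q$ along $U'$ we have, in $Z(F_{b'})$,
\[
 g_{U'}z_{U'} = g_Q^{-1}g_{b'}z_{U'} = g_Q^{-1}z_Q ,
\]
so the modified local points agree over every branch in $\BB'$. By the gluing statement of the first step they descend to a point $z_P \in Z(F_P)$, and hence $Z(F_P)\neq\emptyset$.

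The step I expect to be the main obstacle is the first one: making precise that ``patching holds'' for the injective diamond $F_{P_\bullet}$ produces a genuine $F_P$-point of $Z$ from a compatible family of local points when $Z$ is not affine. For affine $Z$ this is immediate from $F_P$ being the relevant fibre product of rings, as indicated above; in general one must invoke the patching of $G$-homogeneous spaces together with the factorization for $G$, exactly as the $H^1$-injectivity of (\cite[Theorem 2.13]{HHK5}) was used in the proof of Theorem~\ref{refinesha}. Once that input is in place, the argument is the formal translation of factorization plus transitivity carried out above.
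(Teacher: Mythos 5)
Your proof is correct and takes essentially the same route as the paper, whose own proof is a two-line citation: by HHK5 Proposition 3.10 patching holds for the injective diamond, and the conclusion then follows from Theorem \ref{refinesha} together with \cite[Corollary 2.8]{HHK3} — what you have written out is precisely the factorization-plus-gluing argument underlying that Corollary, including the reduction from $(\PP,\UU)$ to $(\PP',\UU')$ via \cite[Proposition 3.14]{HHK5}. You also correctly isolate the one step that is not formal nonsense about equalizers of rings — patching an $F_P$-point of a possibly non-affine $Z$ from a compatible family over the $F_Q$, $F_{U'}$ — and correctly attribute it to the HHK patching formalism; that is exactly the content supplied by the cited \cite[Corollary 2.8]{HHK3}.
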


\begin{proof} By (\cite[Proposition 3.10]{HHK5}), patching holds for the  injective diamond $F_{P_\bullet} = 
(F_P \leq \prod_{Q \in \PP'} F_Q, \prod_{U' \in \UU'}F_{U'} \leq \prod_{b' \in \BB'} F_{b'})$.  Result follows from 
(\ref{refinesha}) and  (\cite[Corollary 2.8]{HHK3}). 
\end{proof}
 
 \begin{cor}
 \label{refinesha2}(cf. \cite[Theorem 9.1]{HHK3})
  Let $F$,  $\YY$, $P$, $\XX$ and $V$ be   as above. Let  $G$ be a connected linear algebraic group over $F$. 
Suppose the   factorization holds for $G$ over $F$. 
 Let $Z$ be a  $F$-variety with $G$ acting transitively  of points of $Z$. 
If $Z(F_x) \neq \emptyset$   for all $x \in V $, then 
$Z(F_P) \neq \emptyset$. 
\end{cor}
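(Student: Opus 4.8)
The plan is to deduce Corollary~\ref{refinesha2} from Theorem~\ref{refinesha1} in exactly the way the preceding corollary (triviality of the kernel on $H^1$) is deduced from Theorem~\ref{refinesha}. Assume $Z(F_x)\neq\emptyset$ for every $x\in V$.

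First I would carry out a spreading-out step on the one-dimensional scheme $V$, passing from local points at all of its points to the finite package of data that Theorem~\ref{refinesha1} consumes. Precisely, I would produce a finite set $\PP'$ of closed points of $V$ containing all singular points of $V$ and at least one point from each irreducible component, such that, writing $\UU'$ for the set of connected components of $V\setminus\PP'$, one has $Z(F_{U'})\neq\emptyset$ for all $U'\in\UU'$ and $Z(F_Q)\neq\emptyset$ for all $Q\in\PP'$. The points over the $F_Q$ are already given by hypothesis; the content is obtaining points over the $F_{U'}$. This is precisely the argument of \cite[Corollary~5.9]{HHK3} already invoked for the preceding corollary: since $Z$ is of finite type over $F$, $G$ is connected and acts transitively, and $V$ is Noetherian of dimension one, a point of $Z$ over $F_x$ (for $x$ a closed point, together with the generic points of the components of $V$) spreads out to a point over $F_U$ for a suitable open neighbourhood $U$ of $x$, and by quasi-compactness of $V$ finitely many such $U$ exhaust $V$ away from a finite set $\PP'$.

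Second, I would choose a finite set $\PP$ of closed points of the special fibre $X$ of $\XX$ containing $\PP'$, all singular points of $X$, and at least one closed point from each irreducible component of $X$, and set $\UU$ to be the set of irreducible components of $X\setminus\PP$. Since factorization is assumed to hold for $G$ over $F$, it holds for the pair $(\PP,\UU)$. Theorem~\ref{refinesha1} then applies verbatim (it internally handles the refinement from $(\PP,\UU)$ to $(\PP',\UU')$), and yields $Z(F_P)\neq\emptyset$.

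The only genuine obstacle is the first step: upgrading ``points everywhere on $V$'' to the finite data $(\PP',\UU')$ with points over all the $F_{U'}$. This is not formal --- it relies on the finite-type hypothesis on $Z$, on spreading out a chosen model of $Z$ over the arithmetic surface (using the transitive $G$-action), and on Noetherian compactness of the curve $V$ --- but it is exactly the argument of \cite[Corollary~5.9]{HHK3} used for the analogous $H^1$-statement above, so in the write-up it suffices to refer to it. Everything else is bookkeeping: verifying that ``$G$ connected'' and ``$G$ acts transitively on $Z$'' are the hypotheses Theorem~\ref{refinesha1} needs, and that factorization over $F$ supplies factorization for any pair $(\PP,\UU)$ refining $\PP'$.
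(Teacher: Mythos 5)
Your proposal is correct and follows essentially the same route as the paper's proof: spread out from the generic points $\eta_i$ of the components of $V$ (the paper cites \cite[Proposition 5.8]{HHK3} directly; you invoke the surrounding argument of \cite[Corollary 5.9]{HHK3}, same circle of ideas) to obtain opens $U_i$ with $Z(F_{U_i})\neq\emptyset$, let $\PP'$ be the finite complement, use the hypothesis for the $F_Q$, and conclude via Theorem~\ref{refinesha1}. One small caution in your write-up: the spread-out is only from generic points to shrinking opens (using $F_\eta=\bigcup_U F_U$ and $Z$ of finite type); for a closed point $Q$ there is no spreading to a neighbourhood, you simply have $Z(F_Q)\neq\emptyset$ by hypothesis, so the parenthetical suggesting closed points also "spread out to a neighbourhood" should be dropped.
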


\begin{proof} Suppose $Z(F_x) \neq \emptyset$ for all $x \in V$.
Let $X_i$ be an irreducible component of $V$ and $\eta_i \in V$ the generic point of $X_i$.
Since $Z(F_{\eta_i}) \neq \emptyset$, by (\cite[Proposition 5.8]{HHK3}), 
there exists a  non-empty affine open subset $U_i$ of $X_i$  such that $Z(F_{U_i}) \neq \emptyset$.
Let $\PP'$ be the complement of the union of $U_i$'s in $V$. 
Let $Q \in \PP'$. Then, by the assumption on $Z$, $Z(F_Q) \neq \emptyset$.
Hence, by (\ref{refinesha1}), $Z(F_P) \neq \emptyset$.
\end{proof}

\section{Local global principle for projective homogeneous spaces under general linear groups}
\label{lgp-gl}

Let $K$ be a  complete discretely valued field  with residue field   $\kappa$   and $F$ the function field of a smooth projective 
 curve over $K$.   Let $A$ be a central simple algebra over $F$ of index $n$ coprime to char$(k)$ and $G = PGL(A)$. 
 Let $Z$ be a projective homogeneous space under $G$  over $F$.
 If $F$ contains a primitive $n^{\rm th}$ root of unity, then from the results in (\cite{RS}), it follows that 
 $Z(F) \neq \emptyset $ if and only if $Z(F_\nu) \neq \emptyset$ for all divisorial discrete valuations of $F$.
 In this section, we dispense with the condition on the roots of unity if $K$ is a local field. 
 
  Let $\XX $ be a normal  proper model of $F$ over the valuation ring of $K$. 
 Let $P$ be a closed point of $\XX$.  
 A discrete valuation  $\nu$ of $F$ (resp. $F_P$)   is called a {\it divisorial} discrete valuation  if 
 it is  given by a  codimension one point of a model of $F$ (resp. with center $P$).

Let $M$ be a field and $A$ a central simple algebra over $M$ of degree $n$. For a sequence of  integers 
$0 < n_1 < n_2 < \cdots < n_k < n$, let 
$$X(n_1, \cdots , n_k) = \{ (I_1, \cdots , I_k) \mid I_1 \subseteq I_2 \subseteq \cdots \subseteq I_k \subseteq A,$$  
$$ ~~~~~~~~~{\rm ~a ~
sequence~ of~ right~ ideals~ of~}  A {\it ~with} ~{\rm dim}_F(I_j)= n\cdot n_j, j = 1, \cdots , k \}.$$

\begin{theorem}
\label{proj-pgl} Let $K$ be a  local field   with residue field   $\kappa$   and $F$ the function field of a smooth projective  curve over $K$.   Let $A$ be a central simple algebra over $F$ of index coprime to char$(k)$. 
Let $\XX $ be regular proper model of $F$ over the valuation ring of $K$ and $P \in \XX$ be a closed point.
Let $L$ be the field $F$ or $F_P$.  Let $Z$  be a projective homogeneous space under $PGL(A)$ over $L$.
If $Z(L_\nu) \neq \emptyset$ for all divisorial  discrete valuation  $\nu$ of $L$, then $Z(L) \neq \emptyset$.  
\end{theorem}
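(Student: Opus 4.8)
The structure of a projective homogeneous space $Z$ under $PGL(A)$ is classical: $Z$ becomes, after base change, a flag variety, and the possible $F$-forms are precisely the generalized flag varieties $X(n_1,\dots,n_k)$ attached to sequences of right ideals of a central simple algebra Brauer-equivalent to powers of $A$. More precisely, writing $Z_M = X(n_1,\dots,n_k)$ for a field $M\supseteq L$, a point of $Z$ over $M$ exists if and only if for each $j$ the algebra $A\otimes_L M$ has a right ideal of the prescribed reduced dimension, which in turn is governed by divisibility of $n_j$ by $\mathrm{ind}(A\otimes_L M)$ — equivalently, by the vanishing of certain index conditions on $A$. So the local–global statement for $Z$ reduces to a local–global statement for the splitting behaviour of $A$, and ultimately to one about reduced dimensions of ideals. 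I would first reduce to the case $k=1$, i.e. to the generalized Severi–Brauer variety $\mathrm{SB}(d,A)$ of right ideals of reduced dimension $d$: a flag $(I_1\subseteq\cdots\subseteq I_k)$ over $L$ exists iff each $\mathrm{SB}(n_j,A)$ has an $L$-point, because from a point of $\mathrm{SB}(n_k,A)$ one can successively cut down inside the corresponding ideal (the relevant intermediate varieties are themselves projective homogeneous under the automorphism group of the larger ideal, which is an inner form of a general linear group over the same field). Thus it suffices to prove the theorem for $Z=\mathrm{SB}(d,A)$.

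Next I would invoke the known unconditional-modulo-roots-of-unity result of Reddy–Suresh \cite{RS}: if $L$ contains a primitive $n$-th root of unity, then $\mathrm{SB}(d,A)$ satisfies the local–global principle with respect to divisorial discrete valuations. The task is therefore to remove the roots-of-unity hypothesis when $K$ is a local field. The standard device is a coprime base change. Let $\ell\mid n$ be a prime; choose a field extension $L'/L$ of degree coprime to $n$ such that $L'$ contains a primitive $n$-th root of unity — concretely, adjoin roots of unity in stages and discard the $\ell$-primary part of the degree, or, since $K$ is local, enlarge the residue field unramifiedly. Because $[L':L]$ is coprime to $n=\mathrm{ind}(A)$, the restriction map on Brauer classes is injective on the $n$-torsion, index is preserved ($\mathrm{ind}(A\otimes_L L')=\mathrm{ind}(A)$), and — crucially — a point of $\mathrm{SB}(d,A)$ over $L$ exists iff one exists over $L'$ (this is the theorem of Artin/others on Severi–Brauer varieties and their generalizations: a finite coprime-degree extension is a splitting field for the existence of an ideal of given reduced dimension iff the ground field is). Since every divisorial discrete valuation of $L'$ lies over one of $L$, the local hypotheses ascend to $L'$; apply \cite{RS} over $L'$ to get a point over $L'$, hence over $L$.

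The only genuine subtlety — and the step I expect to be the main obstacle — is justifying that one may pass to $L'$ while keeping a \emph{model}-theoretic/divisorial description of the valuations consistent, and that the descent of rational points along $L'/L$ is valid for the generalized Severi–Brauer varieties $\mathrm{SB}(d,A)$ and not merely for ordinary Severi–Brauer varieties. For $\mathrm{SB}(d,A)$ the descent statement says: if $A\otimes_L L'$ has a right ideal of reduced dimension $d$ and $[L':L]$ is coprime to $\mathrm{ind}(A)$, then $A$ already has one; this follows from the compatibility of $\mathrm{ind}$ with corestriction ($\mathrm{ind}(A)\mid [L':L]\cdot\mathrm{ind}(A\otimes_L L')$, combined with $\mathrm{ind}(A\otimes_L L')\mid \mathrm{ind}(A)$ and the coprimality) together with the criterion that $\mathrm{SB}(d,A)(M)\neq\emptyset$ iff $\mathrm{ind}(A\otimes_L M)\mid d$ — a standard fact. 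One also needs that a divisorial discrete valuation $\nu$ of $L$, after base change, still has the property that $Z(L'_{\nu'})\neq\emptyset$ for each extension $\nu'\mid\nu$: this is immediate since $L_\nu\hookrightarrow L'_{\nu'}$ and rational points go up. With these pieces in place — the flag-to-Severi–Brauer reduction, the coprime base change, the ascent of local points, the Reddy–Suresh theorem over $L'$, and the coprime descent of the global point — the proof is complete, uniformly for $L=F$ and $L=F_P$, since $F_P$ is again (the completion of) a function field of a curve over the complete discretely valued field $K$ and carries its own notion of divisorial valuations with center $P$.
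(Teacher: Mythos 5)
Your proposal takes a genuinely different route from the paper. The paper argues directly by patching: choose a regular model $\XX'$ on which the ramification of $A$ and the special fibre form a normal‑crossings configuration; since $PGL(A)$ is rational, factorization holds (HHK), so by \cite[Theorems 5.10 and 9.1]{HHK3} (and the refinement in \S\ref{refinement} for the case $L=F_P$) it suffices to check $Z(L_x)\neq\emptyset$ at every point $x$ of the relevant fibre; at a closed point $Q$ with maximal ideal $(\pi,\delta)$ one uses the divisorial hypothesis at $\nu_\pi$ together with the equality $\mathrm{ind}(A\otimes_F L_Q)=\mathrm{ind}(A\otimes_F L_{Q,\pi})$ from \cite[Corollary 5.6]{PPS}. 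No roots of unity or base change ever enter; removing the roots‑of‑unity hypothesis of \cite{RS} by this intrinsic argument is precisely the content of the theorem. Your approach — reduce flags to generalized Severi–Brauer varieties, pass to a coprime‑degree extension $L'$ containing the needed roots of unity, apply \cite{RS} over $L'$, and descend using the index criterion — is conceptually attractive but has two genuine gaps.

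First, and most seriously, it does not address the case $L=F_P$. The field $F_P$ is the fraction field of the $2$-dimensional complete local ring $\widehat{R}_P$, not ``the completion of a function field of a curve over $K$'' as you assert; it is not the function field of a $p$-adic curve, and \cite{RS} simply does not apply to it. The paper needs the $L=F_P$ version of this theorem as an input to the hermitian and unitary local–global results later (via \ref{refinesha2}), so this is not a cosmetic omission: it is the part of the theorem that cannot be obtained by citing known results over global-type base fields, and it is exactly what the refinement of patching in \S\ref{refinement} is designed for.

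Second, the coprime base change is more delicate than stated. If the hypothesis in \cite{RS} is the existence of a primitive $n$-th root of unity for $n=\mathrm{ind}(A)$, and $n=\ell^m$ with $m\geq 2$, then $[L(\zeta_{\ell^m}):L]$ generally has nontrivial $\ell$-part (e.g.\ $[\mathbb{Q}(\zeta_{2^m}):\mathbb{Q}]=2^{m-1}$), so the extension you adjoin is \emph{not} coprime to $\mathrm{ind}(A)$ and the descent step fails. This can be salvaged if \cite{RS} only needs $\zeta_\ell$ for each prime $\ell\mid n$: then one decomposes $A$ into its $\ell$-primary parts, adjoins $\zeta_\ell$ (degree dividing $\ell-1$, hence coprime to $\ell$), and works one prime at a time — but you need to state explicitly which form of the \cite{RS} hypothesis you are invoking and carry out the $\ell$-primary reduction, since it is not automatic. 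Even then, the $F_P$ gap remains, so you would need something like \S\ref{refinement} in any case, at which point the paper's direct argument is shorter.
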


\begin{proof} Let $f : \XX' \to \XX$  be a sequence of blow ups such that 
$\XX'$ is regular, 
the ramification locus of $A$ on $\XX'$  and the special fibre of $\XX'$  is a union of regular curves with 
normal crossings.    By blowing up $P$, we assume that the dimension of the fibre over $P$ is  1.
Let $V$ be either the special fibre of  $f$ or the fibre over $P$, depending on $L = F$ or $L  = F_P$.

 Suppose that $Z(L_\nu) \neq \emptyset$ for all  divisorial discrete valuations $\nu$ of $L$. 
 Since $PGL(A)$ is rational, factorization holds for $PGL(A)$ over $F$ (\cite[Theorem 3.6.]{HHK1}). 
 Thus, by (\cite[Theorem 5.10 and Theorem 9.1]{HHK3} for the case $L = F$ and \ref{refinesha2} for
  the case $L = F_P$), it is enough to show that $Z(F_x) \neq \emptyset$ for all 
 $x \in V$. 

Let $x \in V$. Suppose $x$ is a  generic point of $V$. Then $x$ defines a  divisorial discrete valuation $\nu_x$ of $L$ and 
 $L_x = L_{\nu_x}$.  Hence $Z(L_x) \neq \emptyset$. 
 
 Suppose $x = Q$  is a closed point of $V$.  
 Then, by the choice of $\XX'$,
the local ring $R_Q$ at $Q$ on $\XX'$ is generated by $(\pi, \delta)$ such that 
$A$ is unramified on $R_Q$ except possibly at $(\pi)$ and $(\delta)$.

 Let $n = $ deg$(A)$. 
Then $Z$ is isomorphic to  $X(n_1, \cdots , n_r)$ for some  sequence of integers 
$0 < n_1 < \cdots < n_r < n$ (cf. \cite[\S 5]{MPW1}).  Let $d$ be the  lcm of $n_1, \cdots , n_r, n$. 
Then,   for any field extension $M/F$, 
$Z(M) \neq \emptyset$ if and only if ind$(A\otimes_FM)$ divides  $d$. 
(cf. \cite[5.3]{MPW1}).

Let  $\nu_\pi$ be the discrete valuation given by $\pi$ on $L$ and $L_{\nu_\pi}$   the  completion of  $L$ at $\nu_\pi$.
Since $\nu_\pi$ is a divisorial discrete valuation of $L$, $Z(L_{\nu_\pi})  \neq \emptyset$.  
Hence ind$(A\otimes_FL_{\nu_\pi})$ divides $d$. 
 Since $L_{\nu_\pi} \subseteq L_{Q, \pi}$, ind$(A\otimes_F L_{Q, \pi})$ divides $d$. 
 Since,  by (\cite[Corollary 5.6]{PPS}), ind$(A\otimes_FL_Q) = $ ind$(A\otimes_FL_{Q, \pi})$,  ind$(A\otimes_F L_{Q})$ divides
 $d$. Hence $Z(L_Q) \neq \emptyset$. 
\end{proof}

\begin{cor}
\label{lgp-index} Let $K$ be a  local field   with residue field   $\kappa$   and $F$ the function field of a smooth projective  curve over $K$.   Let $A$ be a central simple algebra over $F$ of index coprime to char$(k)$. 
Let $\XX $ be normal  proper model of $F$ over the valuation ring of $K$ and $P \in \XX$ be a closed point.
Let $L$ be the field $F$ or $F_P$. 
Then ind$(A \otimes_F L ) =$ lcm$\{ {\rm ind}(A\otimes_{F}L_\nu) \mid \nu {\rm ~a ~divisorial ~discrete ~valuation~ of~} L \}$.
\end{cor}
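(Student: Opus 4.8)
Corollary \ref{lgp-index} follows from Theorem \ref{proj-pgl} applied to a suitable projective homogeneous space, together with the inequality that local indices always divide the global index. The plan is as follows.

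First I would observe the trivial direction: for every divisorial discrete valuation $\nu$ of $L$, the natural map $A\otimes_F L \to A\otimes_F L_\nu$ forces $\mathrm{ind}(A\otimes_F L_\nu)$ to divide $\mathrm{ind}(A\otimes_F L)$, since extending scalars can only drop the index and the index of the base algebra is divisible by the index of any scalar extension. Hence $\operatorname{lcm}\{\mathrm{ind}(A\otimes_F L_\nu)\}$ divides $\mathrm{ind}(A\otimes_F L)$. It remains to prove the reverse divisibility, i.e. that $\mathrm{ind}(A\otimes_F L)$ divides $d:=\operatorname{lcm}\{\mathrm{ind}(A\otimes_F L_\nu) \mid \nu \text{ divisorial}\}$.

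For the reverse direction, let $n=\deg(A)$ and let $d$ be as above. Consider the generalized Severi--Brauer variety $\mathrm{SB}(d,A)$ of right ideals of $A$ of reduced dimension $d$ (when $d\mid n$; in general one works with the variety $X(d)$ in the notation set up before Theorem \ref{proj-pgl}, whose $M$-points are nonempty iff $\mathrm{ind}(A\otimes_F M)$ divides $d$, as recalled there from \cite[5.3]{MPW1}). This $Z:=X(d)$ is a projective homogeneous space under $PGL(A)$ over $L$. For each divisorial discrete valuation $\nu$ of $L$, we have $\mathrm{ind}(A\otimes_F L_\nu)\mid d$ by the definition of $d$, hence $Z(L_\nu)\neq\emptyset$. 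By Theorem \ref{proj-pgl} (with the given $\XX$ and $P$, using that $\XX$ may be taken regular after blowing up, and that the statement of \ref{proj-pgl} is insensitive to replacing the normal model by a regular one), we conclude $Z(L)\neq\emptyset$, which means $\mathrm{ind}(A\otimes_F L)$ divides $d$. Combining the two divisibilities gives $\mathrm{ind}(A\otimes_F L)=d$.

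The only mild subtlety — and the step I would be most careful about — is the passage from an arbitrary normal proper model $\XX$ (as in the statement of the corollary) to the regular model with a one-dimensional fibre over $P$ required by the proof of Theorem \ref{proj-pgl}; this is handled exactly as in that proof by a sequence of blow-ups, noting that blowing up does not change $F$, does not change $F_P$ (only refines the set of divisorial valuations with center $P$, all of which already satisfy the hypothesis since any divisorial valuation of $L_P$ in the refined model is still a divisorial valuation of $L$), and that divisorial valuations are defined model-independently. Everything else is immediate from Theorem \ref{proj-pgl} and the dictionary between points of $X(d)$ and index divisibility.
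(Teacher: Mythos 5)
Your proof is correct and takes essentially the same route as the paper: the easy direction from index dropping under scalar extension, then applying Theorem \ref{proj-pgl} to $Z = X(d)$ via the criterion from \cite[5.3]{MPW1} that $X(d)(M)\neq\emptyset$ iff $\mathrm{ind}(A\otimes_F M)\mid d$. Your remark about passing from the normal model in the corollary's statement to the regular model required by Theorem \ref{proj-pgl} is a reasonable extra caution that the paper leaves implicit, but it is not a substantive difference.
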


\begin{proof}   Let $d = $ lcm$\{ {\rm ind}(A\otimes_{F}L_\nu) \mid \nu {\rm ~a ~divisorial ~discrete ~valuation~ of~} L \}$.
Then clearly  $d$ divides ind$(A\otimes_FL)$.
Thus  it is enough to show that ind$(A\otimes_FL)$ divides $d$.

Let  $Z = X(d)$.  Since for every divisorial  discrete valuation   $\nu$ of $L$, ind$(A\otimes_FL_\nu)$ divides $d$,
  $Z(F_\nu) \neq \emptyset$ (cf. \cite[5.3]{MPW1}). Hence, by (\ref{proj-pgl}), $Z(L) \neq \emptyset$.
  Thus  ind$(A\otimes_FL)$ divides $d$ (cf. \cite[5.3]{MPW1}).
\end{proof}

\begin{cor}
\label{proj-gl} Let $K$ be a  local field   with residue field   $\kappa$   and $F$ the function field of a smooth projective  curve over $K$.   Let $A$ be a central simple algebra over $F$ of index coprime to char$(k)$. 
Let $\XX $ be a regular proper model of $F$ over the valuation ring of $K$ and $P \in \XX$ be a closed point.
Let $L$ be the field $F$ or $F_P$.  Let $G = GL(A)$ and $Z$  be a projective homogeneous space under $G$ over $L$.
If $Z(L_\nu) \neq \emptyset$ for all divisorial  discrete valuation of $L$, then $Z(L) \neq \emptyset$.  
\end{cor}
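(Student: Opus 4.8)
The plan is to reduce the corollary to Theorem~\ref{proj-pgl} by observing that every projective homogeneous space under $GL(A)$ is automatically a projective homogeneous space under $PGL(A)$. First I would recall the basic structure of $G = GL(A) = GL_1(A)$: it is the Zariski-open subvariety $\{ a \in A \mid \mathrm{Nrd}(a) \neq 0 \}$ of the affine space underlying the $L$-algebra $A$, so it is a connected linear algebraic group; it is reductive, its center is the central torus $\Gm$ of scalars, and $G/\Gm = PGL(A)$.

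Next, let $Z$ be a projective homogeneous space under $G$ over $L$. By the definition adopted in this paper, the stabilizer in $G$ of a point of $Z$ over the separable closure is a parabolic subgroup of $G$. Since a maximal torus $T$ of a connected reductive group is its own centralizer, we have $Z(G) \subseteq Z_G(T) = T$, so every maximal torus contains the center; as every parabolic subgroup contains a maximal torus, this stabilizer contains $\Gm$. Being central, $\Gm$ therefore lies in the kernel of the $G$-action on $Z$, and hence the action factors through $G/\Gm = PGL(A)$. The induced $PGL(A)$-action is still transitive over the separable closure, and the stabilizer of a point is the image under the quotient map $G \to PGL(A)$ of a parabolic subgroup of $G$ containing $\Gm = \ker(G \to PGL(A))$, hence is a parabolic subgroup of $PGL(A)$. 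Thus $Z$, with its induced action, is a projective homogeneous space under $PGL(A)$ over $L$.

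Finally, the underlying $L$-variety $Z$ has not changed, so for every field extension $M/L$ the set $Z(M)$ is the same whether $Z$ is viewed as a $G$-variety or a $PGL(A)$-variety. In particular the hypothesis $Z(L_\nu) \neq \emptyset$ for all divisorial discrete valuations $\nu$ of $L$ is exactly the hypothesis of Theorem~\ref{proj-pgl} for the $PGL(A)$-variety $Z$, and that theorem gives $Z(L) \neq \emptyset$. I do not expect a serious obstacle here: the one point to get right is the group-theoretic fact that parabolic subgroups contain the center, which is precisely what makes the $G$-action descend to $PGL(A)$. If one preferred to avoid invoking this for general reductive groups, one could instead use the explicit description of $Z$ as some $X(n_1, \cdots, n_r)$ (flags of right ideals of $A$ of prescribed reduced dimensions), on which scalar matrices act trivially by inspection, and run the identical argument.
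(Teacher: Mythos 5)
Your reduction to Theorem~\ref{proj-pgl} is correct and is exactly the route the paper takes; the paper simply cites \cite[Theorem 2.20(i)]{BT} for the identification of projective homogeneous spaces under $GL(A)$ with those under $PGL(A)$, whereas you supply a self-contained proof that the central $\Gm$ lies in every parabolic and hence the action factors through the adjoint quotient. Both versions are sound, and your observation that the conclusion can also be read off the explicit flag-variety description $X(n_1,\dots,n_r)$ is a valid shortcut as well.
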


\begin{proof}  Since the projective homogeneous spaces under $GL(A)$  are in bijection with the projective homogeneous spaces 
under $PGL(A)$ (\cite[Theorem 2.20(i)] {BT}), the corollary follows from (\ref{proj-pgl}).
\end{proof}

\section{Local global principle for homogeneous spaces under unitary groups}
\label{lgp-herm}

Let $K$ be a  local field with residue field   $\kappa$ of characteristic not 2  and $F_0$ the function field of a smooth projective 
 curve over $K$.  Let $F/F_0$ be a quadratic field extension.  
Let $A$ be a central simple algebra over $F$ with an involution $\sigma$ of second kind and 
$F^{\sigma} = F_0$. Let $(V, h)$ be an hermitian form over $(A, \sigma)$ and  $G = U(A, \sigma, h)$.
If ind$(A) = 1$, then  the validity of the conjecture 1  for $G$ is a consequence of results  proved in (\cite{CTPS}). 
If   ind$(A)  =  2$, Wu (\cite{Wu}) proved the validity of Conjecture 1 for $G$.   
In this section we dispense with the condition ind$(A) \leq 2$ for the good characteristic case. 

We begin by recalling  the structure of projective homogeneous spaces  under a unitary group over any field.
Let $F_0$ be a field and $F/F_0$ a separable quadratic extension. 
Let $A$ be a central simple algebra over $F$ of degree $n$  with an involution $\sigma$ of second kind and 
$F^{\sigma} = F_0$. Let $(V, h)$ be an hermitian form over $(A, \sigma)$ and  $G = U(A, \sigma, h)$.

Let $W$ be a finitely generated module over $A$. The {\it reduced dimension}  rdim$_A(W)$ of $W$ over $A$ is
defined as dim$_F(W)/n$ (\cite[Definition 1.9]{KMRT}).   For $0  < n_1 <  \cdots <  n_r \leq  n/2$ a sequence of integers and for any field extesnion
$L/F$, let 
$$
\begin{array}{lll} 
X(n_1, \cdots , n_r) &  = &  \{  (W_1, \cdots ,W_r) \mid \{ 0 \} \subsetneq W_1 \subsetneq \cdots \subsetneq W_r, W_i {\it ~a ~totally } \\
& & {\it ~~ isotropic ~subspace ~of} ~V ~{\it with ~rdim}_FW_i = n_i \}.
\end{array}
$$

We recall the following from (\cite{MPW1}, \cite{MPW2}, cf. \cite[\S 2]{Wu}). 

\begin{theorem}
\label{proj-hom-spaces}
Let $F_0$ be a field and $F/F_0$ a separable quadratic extension. 
Let $A$ be a central simple algebra over $F$ of degree $n$  with an involution $\sigma$ of second kind and 
$F^{\sigma} = F_0$. Let $(V, h)$ be an hermitian form over $(A, \sigma)$ and  $G = U(A, \sigma, h)$. Then \\
i) A  projective variety $X$  over $F_0$ is a projective homogeneous space  under $G$ over $F_0$
if and only if $X \simeq  X(n_1, \cdots , n_r)$  for some increasing sequence of integers $0 < n_1 <  \cdots < n_r \leq n/2$.\\
ii) For any field extension $L/F_0$,  $X(n_1, \cdots, n_r)(L) \neq \emptyset$ if and only if 
$X(n_r)(L) \neq \emptyset$ and ind$(A_L)$ divides $n_i$ for all $i$.\\
iii) If $A = M_r(D)$ for some central simple algebra over $F$ and $G_0 = SU(D, \sigma_0)$ for some unitary involution $\sigma_0$ 
on $D$, then there is a bijection assigning  projective homogeneous spaces  $X$ 
under $G$  and to  projective homogeneous spaces $X_0$ under $G_0$. Further for any field extension $L/F_0$,
$X(L) \neq \emptyset$ if and only is $X_0(L) \neq \emptyset$.
\end{theorem}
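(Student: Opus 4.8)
The plan is to deduce the three statements from the general theory of projective homogeneous spaces together with the explicit flag–variety description of Merkurjev--Panin--Wadsworth, and I only indicate the argument. Over $\ol{F_0}$ the projective homogeneous spaces under a connected linear algebraic group are precisely the varieties of parabolic subgroups of a fixed type; these are classified by subsets of the set of vertices of the Dynkin diagram, and such a variety is defined over $F_0$ exactly when the corresponding subset is stable under the $*$-action of $\mathrm{Gal}(\ol{F_0}/F_0)$, in which case it is automatically a projective homogeneous space over $F_0$ (Borel--Tits; see also \cite[\S 2]{Wu}). Since $\sigma$ is of the second kind, $G = U(A,\sigma,h)$ is an outer form of type $A_{n-1}$, so the diagram is $A_{n-1}$ with the $*$-action given by the nontrivial diagram automorphism (the ``flip'' $i \mapsto n-i$ on vertices $\{1,\dots,n-1\}$). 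The flip-stable subsets are exactly the ``symmetric'' ones, encoded by an increasing sequence $0 < n_1 < \cdots < n_r \le n/2$ (recording the part of the subset in $\{1,\dots,\lfloor n/2\rfloor\}$, the remaining vertices being its mirror image, together with the central vertex when $n$ is even and $n_r=n/2$). For part~i) it then remains to identify the variety of parabolic subgroups attached to such a subset with the flag variety $X(n_1,\dots,n_r)$; this is the computation of \cite{MPW1,MPW2}, carried out via the standard representation of $U(A,\sigma,h)$ on $V$, by checking that the parabolic subgroups of the prescribed type are precisely the stabilizers of flags $W_1 \subsetneq \cdots \subsetneq W_r$ of totally isotropic right $A$-submodules of $V$ with $\mathrm{rdim}(W_i) = n_i$.

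For part~ii), fix a field extension $L/F_0$ and put $e = \mathrm{ind}(A_L)$, so $A_L \cong M_{n/e}(D_L)$ for a division algebra $D_L$ with $\deg D_L = e$; the simple right $A_L$-module has reduced dimension $e$, hence every finitely generated right $A_L$-module has reduced dimension divisible by $e$. If $X(n_1,\dots,n_r)(L)\neq\emptyset$, a flag $W_1\subsetneq\cdots\subsetneq W_r$ as above exists over $L$; each $W_i$ is an $A_L$-module, so $e \mid n_i$ for every $i$, and $W_r$ alone witnesses $X(n_r)(L)\neq\emptyset$. Conversely, suppose $e \mid n_i$ for all $i$ and $X(n_r)(L)\neq\emptyset$; choose a totally isotropic right $A_L$-submodule $W_r \subseteq V_L$ with $\mathrm{rdim}(W_r)=n_r$. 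Then $W_r$ is a direct sum of $n_r/e$ copies of the simple $A_L$-module, so it contains $A_L$-submodules $W_1\subsetneq\cdots\subsetneq W_{r-1}\subsetneq W_r$ of reduced dimensions $n_1,\dots,n_{r-1}$, and each $W_i$ is totally isotropic because it lies inside $W_r$. Hence $X(n_1,\dots,n_r)(L)\neq\emptyset$. This shows the only genuinely geometric nonemptiness condition is that of $X(n_r)$.

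For part~iii), write $A = M_r(D)$ and invoke Morita equivalence for hermitian modules (\cite[Ch.~I]{KMRT}): after absorbing the relevant hermitian data into the involution, there is a unitary involution $\sigma_0$ on $D$ and an equivalence between the category of hermitian spaces over $(A,\sigma)$ and that over $(D,\sigma_0)$, under which $(V,h)$ corresponds to some hermitian space and which induces an isomorphism $U(A,\sigma,h)\cong U(D,\sigma_0,h_0)$ of algebraic groups carrying the flag variety $X(n_1,\dots,n_r)$ attached to $(V,h)$ to the corresponding flag variety attached to the Morita image. Since the variety of parabolic subgroups of a reductive group depends only on its derived subgroup, the projective homogeneous spaces under $U(D,\sigma_0,h_0)$ coincide with those under its semisimple part $SU(D,\sigma_0)$, and rational points correspond; tracing this back yields the asserted bijection between projective homogeneous spaces under $G$ and under $G_0$, compatibly with $L$-points for every $L/F_0$.

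The main obstacle is part~i): verifying that the $*$-action on the Dynkin diagram is the flip and, above all, matching the resulting abstract variety of parabolic subgroups with the concrete flag variety $X(n_1,\dots,n_r)$ of totally isotropic $A$-submodules of $V$ — this is the substantive input borrowed from \cite{MPW1,MPW2}. Once that dictionary is available, parts~ii) and~iii) reduce to bookkeeping with reduced dimensions and to Morita theory.
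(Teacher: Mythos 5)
The paper does not actually prove this statement; it is labelled a ``recall'' and cited to \cite{MPW1}, \cite{MPW2} and \cite[\S 2]{Wu}, so there is no internal argument of the authors' to compare against. With that caveat, your sketch is a sound reconstruction of the standard route those references take: the Borel--Tits classification of projective homogeneous spaces by $*$-stable subsets of the Dynkin diagram, the observation that the $*$-action for an outer form of $\GL_N$ of type ${}^2\!A_{N-1}$ is the flip, and the Merkurjev--Panin--Wadsworth dictionary identifying the variety of parabolics of a flip-stable type with the flag variety of totally isotropic right $A$-submodules of prescribed reduced dimensions. Your module-theoretic bookkeeping for part~ii) (semisimplicity of $A_L$-modules, each simple of reduced dimension $\mathrm{ind}(A_L)$, extract a subchain of $W_r$) is correct as written. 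Your honest acknowledgement that the substantive content of part~i) is the MPW computation is appropriate, since the paper itself outsources exactly that.

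One point of care in part~iii). You write that the projective homogeneous spaces under $U(D,\sigma_0,h_0)$ coincide with those under ``its semisimple part $SU(D,\sigma_0)$.'' The derived subgroup of $U(D,\sigma_0,h_0)$ is $SU(D,\sigma_0,h_0)$, the kernel of the reduced norm on the unitary group of the Morita-transported form $h_0$; it is a form of $\SL_{\mathrm{rdim}(V)}$. The group that is literally denoted $SU(D,\sigma_0)$ elsewhere in the paper (the kernel of $\mathrm{Nrd}$ on $\{x\in D^\times:\sigma_0(x)x=1\}$, as appears in the exact sequence of \S\ref{lgp-patching}) is a form of $\SL_{\deg D}$, which has the wrong absolute type unless $h_0$ has rank one. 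The paper's statement of iii) already carries this notational looseness, and your argument tacitly supplies the correct reading (the Morita image carries the hermitian data along), but the wording conflates two different groups; writing $SU(D,\sigma_0,h_0)$ for the derived group would make the step unambiguous. Apart from that, the invocation of Morita equivalence for hermitian modules and the invariance of the variety of parabolics under passage to the derived subgroup is the right mechanism and matches what the cited sources do.
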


\begin{theorem}
\label{isotropic0}
Let $K$ be a local field with residue field $\kappa$.
Let $F_0$ be  the function field of  a curve over $K$.
Let $F/F_0$ be a quadratic extension and $A$ be a  central simple  algebra over $F$ 
 with an $F/F_0$- involution $\sigma$. 
 Suppose that $2$ ind$(A)$ is coprime to char$(\kappa)$. 
  Let $h$ be an hermitian form over $(A, \sigma)$ and $G = U(A, \sigma, h)$. 
  If $A = F$,  then assume that rank of $h$ is  at least 2. 
  Let $Z$ be a projective homogeneous space under $G$ over $F$. 
  Let $\XX_0$ be a normal proper model of $F_0$    and $P \in \XX_0$ be a closed point with 
  $F\otimes_{F_0}F_{0P}$ a field. 
 If $Z(F_{0\nu}) \neq \emptyset$  for  all divisorial   discrete valuations $\nu$ of $F_{0}$, then $Z( F_{0P}) \neq \emptyset$.  
\end{theorem}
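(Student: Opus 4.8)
The plan is to imitate the proof of \ref{proj-pgl}: reduce, via the patching refinement of \S\ref{refinement}, to closed points of a fibre, and then handle those points with the structure theory of \S\ref{alg-2dimlocal}. First I would choose a finite sequence of blow-ups $f\colon \XX_0'\to\XX_0$ so that $\XX_0'$ is regular, so that the ramification locus of $A$ (on the integral closures in $F$ of the local rings), the special fibre of $\XX_0'$, and the fibre over $P$ form a union of regular curves with normal crossings, and so that after blowing up $P$ the fibre $V$ over $P$ is one-dimensional. Since $U(A,\sigma,h)$ is connected and rational, factorization holds for $G$ over $F_0$ (cf.\ \cite[Theorem 3.6]{HHK1}), so by \ref{refinesha2} it suffices to prove $Z(F_{0,x})\neq\emptyset$ for every point $x\in V$. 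If $x$ is a generic point of $V$, it is a codimension-one point of $\XX_0'$, hence gives a divisorial discrete valuation $\nu_x$ of $F_0$ with $F_{0,x}=F_{0,\nu_x}$, and then $Z(F_{0,x})\neq\emptyset$ by hypothesis. (When $F\otimes_{F_0}F_{0P}$ splits the assertion is already \ref{proj-gl} applied over $F_{0P}$; the stated hypothesis serves to reduce to the genuinely unitary case.) So everything comes down to the case of a closed point $Q$ of $V$.

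Fix such a $Q$; its complete local ring $R_Q=\hat{\OO}_{\XX_0',Q}$ is a two-dimensional complete regular local ring with finite residue field and maximal ideal $(\pi,\delta)$, and $A$ is unramified on $R_Q$ away from $\pi$ and $\delta$. Write $F_{0,Q}=\Frac(R_Q)$. By \ref{proj-hom-spaces}(i), $Z\simeq X(n_1,\dots,n_r)$ with $0<n_1<\cdots<n_r\le n/2$, and by \ref{proj-hom-spaces}(ii) it suffices to show (a) $\mathrm{ind}\bigl(A\otimes_F(F\otimes_{F_0}F_{0,Q})\bigr)$ divides each $n_i$, and (b) $X(n_r)(F_{0,Q})\neq\emptyset$. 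For (a): the component $\{\pi=0\}$ through $Q$ is a divisor on $\XX_0'$ and defines a divisorial valuation $\nu_\pi$ of $F_0$, and $F_{0,\nu_\pi}$ embeds into the completion of $F\otimes_{F_0}F_{0,Q}$ at the prime lying over $\pi$; since $Z(F_{0,\nu_\pi})\neq\emptyset$, the index of $A$ over $F\otimes_{F_0}F_{0,\nu_\pi}$ divides each $n_i$, and, as $A$ is unramified away from $\pi,\delta$, completing at that prime does not change the index of $A$ (\cite[Proposition 5.8, Corollary 5.6]{PPS}); so (a) follows, irrespective of whether $F\otimes_{F_0}F_{0,Q}$ is a field.

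For (b), I distinguish whether $F_Q:=F\otimes_{F_0}F_{0,Q}$ is a field. If $F_Q\simeq F_{0,Q}\times F_{0,Q}$, then $Z$ over $F_{0,Q}$ is a projective homogeneous space under $\GL$ of a central simple $F_{0,Q}$-algebra $B$ (a factor of $A\otimes_F F_Q$), which has an $F_{0,Q}$-point once $\mathrm{ind}(B)$ divides each $n_i$, and that is (a). If $F_Q$ is a field, then by \ref{proj-hom-spaces}(iii) (Morita equivalence) $X(n_r)(F_{0,Q})\neq\emptyset$ is equivalent to the isotropy of a hermitian form $h_0$ over $(D,\sigma_0)$, where $D$ is the division algebra Brauer-equivalent to $A\otimes_F F_Q$, with a unitary involution. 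When $\mathrm{ind}(D)\le 2$ this is the classical isotropy question for a quadratic form over the two-dimensional local field $F_{0,Q}$, and is known. When $m:=\mathrm{ind}(D)\ge 3$: since $A\otimes_F F_Q$ carries a unitary involution, $\cores_{F_Q/F_{0,Q}}$ of its Brauer class vanishes, so by \ref{2dim-alg} $F_Q/F_{0,Q}$ is unramified, $F_Q$ contains a primitive $m$-th root of unity $\rho$ with $N_{F_Q/F_{0,Q}}(\rho)=1$, and $D\simeq(\delta,\pi)_m$. By \ref{existence-of-involution} we may take $\sigma_0$ with $\sigma_0(x)=x$, $\sigma_0(y)=y$, and by \ref{order} we obtain the explicit $\sigma_0$-invariant maximal order $\Lambda$. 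Diagonalising $h_0$ and adjusting the diagonal entries by reduced norms --- using the reduced-norm results of \S\ref{reducednorms-2dim} (\ref{2dim-local-nrd}, \ref{2dim-local}) together with \ref{rank1form} --- one brings $h_0$ into the shape of \ref{ranknforms}. Then \ref{isotropic} reduces the isotropy of $h_0$ over $F_{0,Q}$ to that over $F_{0,Q,\pi}$, which holds because $F_{0,\nu_\pi}\subseteq F_{0,Q,\pi}$ and $Z(F_{0,\nu_\pi})\neq\emptyset$ forces $X(n_r)(F_{0,\nu_\pi})\neq\emptyset$, i.e.\ $h_0$ is already isotropic over $F_{0,\nu_\pi}$.

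I expect the main obstacle to be this last reduction: showing that, up to isometry, the given hermitian form over $(D,\sigma_0)$ admits a diagonalisation whose entries are units of $\Lambda$ times monomials in $\pi$ and $\delta$, so that \ref{ranknforms} applies. This needs a careful analysis of the reduced norms of $\sigma_0$-symmetric elements modulo $N_{F_Q/F_{0,Q}}(\mathrm{Nrd}(D^*))$, and it is here that the good-characteristic hypothesis and the completeness of $R_Q$ enter essentially. By comparison the blow-up and patching bookkeeping, the split case, and the index-at-most-two case are routine.
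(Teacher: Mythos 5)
Your outline reproduces the paper's first reduction (patch over the fibre $V$ over $P$ via \ref{refinesha2}, then reduce to $X(n_r)$ by the index-divisibility argument) and correctly locates the structural results of \S\ref{alg-2dimlocal} that give a $\sigma$-stable maximal order $\Lambda$ when $\mathrm{ind}\geq 3$. But the step you yourself flag as ``the main obstacle'' — bringing the diagonalised form $h_Q=\langle a_1,\dots,a_n\rangle$ with $a_i\in\Lambda$ into the shape required by \ref{ranknforms}, i.e.\ with each $\mathrm{Nrd}(a_i)$ a unit times a monomial in $\pi$ and $\delta$ — is a genuine gap, and the ``adjust by reduced norms'' move you gesture at does not close it. A $\sigma$-symmetric $a_i\in\Lambda$ can have $\mathrm{Nrd}(a_i)$ supported on divisors through $Q$ other than $\{\pi=0\}$ and $\{\delta=0\}$; the results of \S\ref{reducednorms-2dim} you cite concern precisely the opposite situation (elements already supported on $\pi,\delta$) and do not let you clear the extraneous support.

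The paper's mechanism is different and essential: after fixing $\Lambda$ and a diagonalisation with entries in $\Lambda$, it performs a \emph{second} sequence of blow-ups $\XX_0''\to\XX_0'$ over $Q$ so that the supports of all the $\mathrm{Nrd}(a_i)$ become normal crossings together with the special fibre, the ramification locus and the branch locus of $F/F_0$, while preserving conditions i)--iii), and then it applies the refinement patching result \ref{refinesha2} a \emph{second} time, now over $Q$, to reduce to the closed points $Q'$ of the new fibre $V''$. At such a $Q'$ where $D_Q\otimes F_{0Q'}$ is still division, the local parameters at $Q'$ carry the entire support of the $a_i$, so \ref{ranknforms} and \ref{isotropic} apply as you wanted. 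This also exposes a second gap: at a $Q'$ where $D_Q\otimes F_{0Q'}$ is no longer division the index drops, and the paper handles this with an outer \emph{induction on} $\mathrm{ind}(A\otimes F_{0Q})$, whose base case $\mathrm{ind}=1$ is the quadratic-form case settled by \cite[Corollary 4.7]{HHK5} (not a separate two-dimensional isotropy fact), and whose $\mathrm{ind}=2$ case is folded into the inductive step using the $\sigma$-stable order of \cite[Lemma 3.7]{Wu}. Without this inductive scheme and the second blow-up, your argument does not go through.
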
 

\begin{proof}    Let $n$ be the degree of $A$. 
Since $Z$ is a projective homogeneous space under $G$, by (\ref{proj-hom-spaces}), 
$Z \simeq X(n_1,\cdots , n_r)$ for some sequence of integers $0 < n_1 < \cdots < n_r \leq n/2$.
 Suppose that $Z(F_{0\nu}) \neq \emptyset$  for  all divisorial   discrete valuations $\nu$ of $F_{0}$. 
 Then, by (\ref{proj-hom-spaces}),  ind$(A\otimes_{F_0}F_{0\nu})$ divides $n_i$ for all $i$.
 Since    $K$ is a local field, it follows from \cite[Proposition 5.10]{PPS} and \cite[Theorem 5.5]{HHK1}) that 
 ind$(A)$ is the lcm of ind$(A\otimes_{F_0}F_{0\nu})$ as $\nu$ varies over all divisorial discrete valuations of $F$.
 Hence   ind$(A)$  divides $n_i$ for all $i$. Thus,  by (\ref{proj-hom-spaces}), 
 $X(n_r)(F_{0\nu})  \neq \emptyset$ for all all divisorial discrete valuations $\nu$  of $F$.
 To prove the theorem, by (\ref{proj-hom-spaces}),   it suffices  to show that 
 $X(n_r) (F_{0P}) \neq \emptyset$.  Thus we assume that  $ Z = X(m)$   with $m = n_r$. 

 Let $T$ be  the valuation   ring of $K$. Then there  exists a  sequence of blow ups   $\XX'_0 \to \XX_0 $    such that 
   the normalization $\XX$  of $\XX'_0$ in $F$ is regular and 
the ramification locus of $A$ on $\XX$ and the special fibre of $\XX$ is a union of regular curves with normal crossings
(\cite{A}, \cite{L}, cf. \cite{Wu}).  If necessary, by blowing up $P$, we assume that the fibre $V$ over $P$ is of dimension 1. 
Then, by (\ref{refinesha2}), it is enough to show that $Z(F_{0x}) \neq \emptyset$ for all $x \in V$. 

Let $x \in V$ be a generic point.  Then $x$ gives a  divisorial discrete valuation $\nu_x$ on $F_{0}$ 
such that $F_{0x} = F_{\nu_x}$.
Hence $Z(F_{0x}) \neq \emptyset$. 

Let $Q \in V$ be a closed point.  We show  that $Z(F_{0Q}) \neq \emptyset$ by  induction on ind$(A\otimes_{F_0}F_{0Q})$. 
Suppose   ind$(A \otimes_{F_0}F_{0Q} ) = 1$. Then the hermitian form  $h$ corresponds to a quadratic form over 
$q_h$ over $F_{0Q}$ such that $h$ is isotropic over any field extension  $M$ of $F_Q$ if and only if 
$q_h$ is isotropic over $M$ (\cite[Theorem 1.1, p. 348]{Sc}).  Since $Z = X(m)$,  for every divisorial discrete valuation $\nu$ of $F_0$, 
there is a totally isotropic subspace of $V\otimes _{F_0}F_{0\nu}$ of dimension $m$. 
Thus to prove the theorem,   it is enough to show that 
there is a totally isotropic subspace  of $V \otimes _{F_0} F_{0Q}$  of  dimension $m$.  
By induction on dim$(q_h)$, it is enough to show that $q_h$ is isotropic over $F_{0Q}$. 
By the assumption on the rank of $h$, rank of $q_h$ is at least 4.
Since   for every 
(divisorial) discrete valuation $\nu$ of $F_0$ centered on $Q$, $q_h$ is isotropic over $F_{0Q_\nu}$,   by (\cite[Corollary 4.7]{HHK5}), $q_h$ is isotropic over $F_{0Q}$.

Suppose  ind$(A \otimes_{F_0}F_{0Q} ) \geq 2$.
 Then by the choice of the model $\XX'_0$, we have   the  following:\\
i) the local ring $R_Q$ at $Q$ on $\XX_0'$ is regular with   $(\pi, \delta)$ as the maximal ideal,\\
ii) $F\otimes F_{0Q} = F_{0Q}(\sqrt{u\pi^{\epsilon}})$ for some unit $u \in R_Q$ and $\epsilon = 0, 1$, \\
iii) $A$ is unramified on the integral closure of $R_Q$ in $F_{0Q}(\sqrt{u\pi^{\epsilon}})$, except possibly at 
$\pi_1$ and $\delta$, where $\pi_1 = \pi$ or $\sqrt{u\pi}$ depending on $\epsilon = 0$ or 1.  

Let $D_Q$ be the central division algebra over $F \otimes_{F_0} F_{0Q}$  which is Brauer equivalent to $A \otimes_{F_0} F_{0Q}$.
The there is a unitary involution $\sigma_Q$ on $D_Q$ and   the hermitian form $(V, h)$  corresponds to a 
hermitian form  $(V_Q, h_Q)$ over $D_Q$.
By (\ref{proj-hom-spaces}), $Z_{F_{0Q}} = X(m)_{F_{0Q}}$ corresponds to a projective homogeneous space $Z_Q = X(m')$ under $U(D_Q, \sigma_Q)$ for some suitable $m'$ which is divisible by ind$(D_Q)$. 
Further to show that $Z(F_{0Q}) \neq \emptyset$, it is enough to show that $Z_Q(F_{0Q}) \neq \emptyset$.

Since ind$(A\otimes_{F_0}F_{0Q}) \geq 2$,   deg$(D_Q) \geq 2$. 
If deg$(D_Q)= 2$, let $\Lambda$ be the maximal $R_Q$-order of $D_Q$ as in (\cite[Lemma 3.7]{Wu}).
If deg$(D_Q) \geq 3$, let $\Lambda$ be the maximal $R_Q$-order of $D_Q$ as in (\ref{order}). 
Since $D_Q$ is a division algebra,  $h_Q \simeq <a_1, \cdots, a_n>$ for some $a_i \in \Lambda_P$.
Once again there exists a sequence of blow ups $\XX''_0 \to \XX'_0$ such that support of Nrd$(a_i)$ for all $i$
is a union of regular curves with normal crossings (\cite{A}, \cite{L}, cf. \cite[\S 4]{Wu}).  Further, by blowing up, we also assume that  $\XX''_0$ satisfies i), ii) and iii). 
Let $V''$ be the fibre over $Q$.   Once again we assume that dim$(V'') = 1$.   Thus to show that 
$Z_Q(F_{0Q}) \neq \emptyset$, by (\ref{refinesha2}), it is enough to show that 
$Z_Q(F_{0x}) \neq \emptyset$ for all $x \in V''$. 

Let $x'  \in V''$ be a  generic point, then, as above,  $Z_Q(F_{0x'}) \neq \emptyset$. 

Let $Q' \in V''$ be a closed point. 
Suppose that $D_Q \otimes_{F_0Q} F_{0Q'}$ is division. 
By (\ref{proj-hom-spaces}), it is  enough to show that 
there is a totally isotropic subspace  of $V_Q \otimes F_{0Q'}$  of  dimension $m'$.
By induction on the reduced dimension of $V_Q$, it is enough to show that 
$h_Q$ is isotropic over $F_{0Q'}$.  Since  $\XX_0''$ satisfies i), ii) and iii), 
the maximal ideal at $Q'$ on $\XX_0''$ is generated by $(\pi, \delta)$, 
$h_Q =  <a_1, \cdots, a_n>$ for some $a_i \in \Lambda_P$ with Nrd$(a_i)$ is a supported along only $\pi$, $\delta$, 
and  $h_Q\otimes F_{0Q'\pi}$ is isotropic. Hence, by (\ref{isotropic}), $h_Q$ is isotropic over $F_{0Q'}$. 

Suppose that $D_Q\otimes_{F_0Q}F_{0Q'}$ is not division. Then ind$(A\otimes_{F_0}F_{0Q'}) < $ ind$(A\otimes_{F_0} F_{0Q})$.
Hence, by induction, $Z_Q(F_{0Q'}) \neq \emptyset$. 
\end{proof}

\begin{theorem}
\label{isotropic1}
Let $K$ be a local field with residue field $\kappa$.
Let $F_0$ be a function field of  a curve over $K$.
Let $F/F_0$ be a quadratic extension and $A$   a central simple  algebra over $F$ of index $n$
 with an $F/F_0$- involution $\sigma$. 
 Suppose that $2n$ is coprime to char$(\kappa)$. 
 Let $h$ be an hermitian form over $(A, \sigma)$. 
  If $A = F$,  then assume that rank of $h$ is  at least 2. 
  Let $Z$ be a projective homogeneous space under $U(A, \sigma, h)$  over $F$. 
 If $Z(F_{0\nu}) \neq \emptyset$  for  all (divisorial)   discrete valuations $\nu$ of $F_{0}$, then $Z( F_{0}) \neq \emptyset$.  \end{theorem}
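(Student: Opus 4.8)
The plan is to reduce Theorem~\ref{isotropic1} to the already-established local statement at a closed point, Theorem~\ref{isotropic0}, together with the classical patching machinery of Harbater--Hartmann--Krashen. First I would choose a regular proper model $\XX_0$ of $F_0$ over the valuation ring $T$ of $K$, arranged (after a sequence of blow-ups, using \cite{A},\cite{L}) so that the normalization $\XX$ of $\XX_0$ in $F$ is regular, the ramification locus of $A$ and the special fibre $X_0$ of $\XX_0$ form a union of regular curves with normal crossings, and at every closed point $Q$ of $X_0$ the local conditions i), ii), iii) of the proof of Theorem~\ref{isotropic0} hold. As in Theorem~\ref{proj-hom-spaces}, we may assume $Z = X(m)$ for a single integer $m$, since the hypotheses pass to the relevant sub-statements and the index of $A$ is controlled locally via \cite[Proposition 5.10]{PPS} and \cite[Theorem 5.5]{HHK1}. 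Since $U(A,\sigma,h)$ is a connected linear algebraic group that is rational (or one reduces to $SU(D,\sigma_0)$ which is likewise handled by the HHK factorization results), factorization holds for $G$ over $F_0$, so by \cite[Theorem 5.10 and Theorem 9.1]{HHK3} it suffices to prove $Z(F_{0x}) \neq \emptyset$ for every point $x$ of the special fibre $X_0$.

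For a generic point $x$ of a component of $X_0$, the field $F_{0x}$ is the completion of $F_0$ at the corresponding divisorial discrete valuation, so $Z(F_{0x}) \neq \emptyset$ by hypothesis. The real content is at a closed point $Q \in X_0$. Here I would split into the case $F \otimes_{F_0} F_{0Q}$ a field and the case it is not. If $F \otimes_{F_0} F_{0Q}$ is a field, then Theorem~\ref{isotropic0} applies directly: its hypotheses are that $F \otimes_{F_0}F_{0P}$ is a field and that $Z$ has points over all divisorial valuations of $F_0$ (which we have globally, hence in particular for those centered at $Q$), and it yields $Z(F_{0Q}) \neq \emptyset$. If instead $F \otimes_{F_0} F_{0Q} \simeq F_{0Q} \times F_{0Q}$ splits, then $A \otimes_{F_0} F_{0Q}$ together with $\sigma$ becomes (via the $\times$-decomposition) a single central simple algebra $B$ over $F_{0Q}$ with $A \otimes_{F_0} F_{0Q} \simeq B \times B^{\mathrm{op}}$, and the unitary group becomes a general linear group $GL(B)$-type group; the projective homogeneous space $Z$ over $F_{0Q}$ becomes a variety of the form $X(n_1,\dots,n_k)$ for $GL(B)$. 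In this split case I would invoke Corollary~\ref{proj-gl} (equivalently Theorem~\ref{proj-pgl}) applied over the two-dimensional local field $F_{0Q}$: its hypothesis is exactly that $Z$ has points over all divisorial discrete valuations of $F_{0Q}$, which again follows from the global hypothesis restricted to valuations centered at $Q$, together with the index formula of Corollary~\ref{lgp-index}. That gives $Z(F_{0Q}) \neq \emptyset$.

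The main obstacle I anticipate is the bookkeeping needed to ensure that in the split case the structures really do degenerate to the $GL$-situation covered by \S\ref{lgp-gl}, and more importantly verifying that in both cases the hypotheses of the closed-point theorems are genuinely met --- namely that having $Z(F_{0\nu}) \neq \emptyset$ for the divisorial valuations $\nu$ of the global field $F_0$ implies the same for all divisorial discrete valuations of the completed local field $F_{0Q}$. This is where one uses that $F_{0Q}$ is the fraction field of the two-dimensional complete regular local ring $\hat{R}_Q$, so its divisorial valuations are pulled back (after further blow-ups of $\XX_0$ centered at $Q$) from divisorial valuations of $F_0$; combined with the refinement-of-patching results of \S\ref{refinement} (specifically Corollary~\ref{refinesha2}), this closes the argument. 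The low-index base case $\mathrm{ind}(A\otimes F_{0Q}) = 1$, where $h$ becomes a quadratic form and one appeals to \cite[Corollary 4.7]{HHK5} for isotropy of quadratic forms of rank $\geq 4$ (using the rank hypothesis on $h$), is handled exactly as inside the proof of Theorem~\ref{isotropic0} and needs no new idea.
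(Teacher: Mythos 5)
Your proposal is correct and takes essentially the same approach as the paper: reduce to the special fibre of a model via the HHK patching/factorization machinery (using rationality and connectedness of $U(A,\sigma,h)$), handle generic points by hypothesis, and at closed points $P$ split according to whether $F\otimes_{F_0}F_{0P}$ is a field --- invoking Theorem~\ref{isotropic0} in the field case and Corollary~\ref{proj-gl} (via $U(A,\sigma,h)\otimes F\simeq GL(A_1)$ with $A\otimes_{F_0}F\simeq A_1\times A_1^{\mathrm{op}}$) in the split case. The only minor deviations are cosmetic: the paper performs the reduction to $X(m)$ and the heavy blow-up arrangement inside the proof of Theorem~\ref{isotropic0} rather than in Theorem~\ref{isotropic1} itself, and cites \cite[Corollary 6.5]{HHK3} rather than Theorem~5.10 there.
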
 

\begin{proof}  Suppose 
 $Z(F_{0\nu}) \neq \emptyset$  for  all (divisorial)   discrete valuations $\nu$ of $F_{0}$. 
  Let $\XX_0$ be a normal proper model of $F_0$  over the valuation ring  of $K$  such that 
  the normal closure of $\XX_0$ and $X_0$ the 
  special fibre. 
  Let $x \in X_0$ be  a  codimension 0  point in $X_0$. Then $F_{0x} = F_{0\nu_x}$ for the discrete valuation 
  $\nu_x$ of $F_0$  given by $x$. Hence $Z(F_{0x}) \neq \emptyset$. 
 
  Let $P \in X_0$ be   a closed point. 
  We have $A \otimes_{F_0}F\simeq A_1 \times A_1^{\rm op}$ for some central simple algebra $A_1/F$ (\cite[Proposition 2.14]{KMRT})
   and
  $U(A, \sigma, h)\otimes F \simeq GL(A_1)$ (\cite[p. 346]{KMRT}). 
  Suppose $F \otimes_{F_0}F_{0P}$ is not a field. 
  Then  $F \subset F_{0P}$ and $A\otimes_{F_0}F_{0P} \simeq A_1 \otimes F_{0P}  \times A_1^{\rm op} \otimes {F_0}$.
  Let $\XX$ be the normal  closure of $\XX_0$ in $F$.  
   Since $F\otimes_{F_0}F_{0P}$ is not a field, there exists a closed point $Q$ of $\XX$ such that 
   $F_Q \simeq F_{0P}$.     
    Hence, by (\ref{proj-gl}),  $Z(F \otimes_{F_0}F_{0P}) \neq \emptyset$.
    
  Suppose $F \otimes_{F_0}F_{0P}$ is a field.  
  Then, by (\ref{isotropic0}),   $Z( F_{0P}) \neq \emptyset$. 
  Since $U(A, \sigma, h)$  is rational and connected   (\cite[p. 195, Lemma 1]{Merkurjev}),    by (\cite[Corollary 6.5 and Theorem 9.1]{HHK3}), $Z(F_0) \neq \emptyset$.  
\end{proof}

\begin{theorem} 
\label{uas}
Let $K$ be a local field with residue field $\kappa$.
Let $F_0$ be a function field of  a curve over $K$.
Let $F/F_0$ be a quadratic extension and $A$ a central simple algebra over $F$ of index $n$
 with an $F/F_0$- involution $\sigma$. 
 Suppose that $2n$ is coprime to char$(\kappa)$.
 Let $h$ be an hermitian form over $(A, \sigma)$. 
 Then the canonical map   $$H^1(F_0, U(A, \sigma, h) ) \to \prod_{\nu \in \Omega_{F_0}} H^1(F_{0\nu}, U(A, \sigma, h))$$
 has trivial kernel. 
\end{theorem}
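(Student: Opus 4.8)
The plan is to translate the statement into a local--global principle for isotropy of hermitian forms and then invoke Theorem~\ref{isotropic1}. Since $U(A,\sigma,h)$ is the automorphism group of the hermitian module underlying $h$, the pointed set $H^1(F_0,U(A,\sigma,h))$ is in natural bijection with the set of isometry classes of hermitian forms $h'$ over $(A,\sigma)$ with $\mathrm{rdim}_A(h')=\mathrm{rdim}_A(h)$, the distinguished point corresponding to $h$ itself (see \cite{KMRT}); under this bijection a class $\xi$ has trivial image in $H^1(F_{0\nu},U(A,\sigma,h))$ exactly when the corresponding form $h'$ satisfies $h'\cong h$ over $F_{0\nu}$. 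Discarding the trivial case in which the underlying module of $h$ is zero, it therefore suffices to show: if $h'$ is a hermitian form over $(A,\sigma)$ with the same reduced dimension as $h$ and $h'\cong h$ over $F_{0\nu}$ for every $\nu\in\Omega_{F_0}$, then $h'\cong h$ over $F_0$.

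First I would form the hermitian form $\phi=h\perp(-h')$ over $(A,\sigma)$. By Witt cancellation for hermitian forms (\cite{Sc}), for any field $L\supseteq F_0$ one has $h'_L\cong h_L$ if and only if $\phi_L$ is hyperbolic; so the hypothesis says precisely that $\phi$ is hyperbolic over $F_{0\nu}$ for every $\nu\in\Omega_{F_0}$. Put $m=\mathrm{rdim}_A(h)\geq 1$, so that $\mathrm{rdim}_A(\phi)=2m$, and let $Z=X(m)$ be the projective homogeneous space under $U(A,\sigma,\phi)$ parametrizing the maximal totally isotropic submodules of $\phi$ (i.e.\ those of reduced dimension $m$; cf.\ Theorem~\ref{proj-hom-spaces}). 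A hermitian form of reduced dimension $2m$ is hyperbolic if and only if its Witt index equals $m$, i.e.\ if and only if $Z$ has a rational point; hence $Z(F_{0\nu})\neq\emptyset$ for every $\nu\in\Omega_{F_0}$, and in particular for every divisorial discrete valuation of $F_0$.

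Next I would apply Theorem~\ref{isotropic1} to the datum $(A,\sigma,\phi)$ and the projective homogeneous space $Z$ under $U(A,\sigma,\phi)$. The coprimality hypothesis ``$2\,\mathrm{ind}(A)$ coprime to $\mathrm{char}(\kappa)$'' is the one we are given, and when $A=F$ (i.e.\ $n=1$) the rank of $\phi$ is $2m\geq 2$, so the exceptional rank hypothesis of Theorem~\ref{isotropic1} is satisfied. Theorem~\ref{isotropic1} then yields $Z(F_0)\neq\emptyset$, i.e.\ $\phi$ has Witt index $m$ over $F_0$, i.e.\ $\phi$ is hyperbolic over $F_0$; by Witt cancellation once more, $h'\cong h$ over $F_0$. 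Thus $\xi$ is trivial, and the kernel in question is trivial.

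The substance of the argument sits entirely in Theorem~\ref{isotropic1}; what remains here is bookkeeping. The only point needing a moment's care is the rank restriction in Theorem~\ref{isotropic1} in the split case $A=F$: it causes no trouble precisely because $\phi=h\perp(-h')$ automatically has rank at least $2$, so one never has to treat the rank-one unitary group (a norm-one torus) separately. Everything else --- the torsor/hermitian-form dictionary, Witt cancellation, and the identification of maximal totally isotropic submodules with rational points of a projective homogeneous space --- is standard, so I expect no genuinely hard step beyond correctly matching hypotheses.
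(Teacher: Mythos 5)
Your proof is correct and follows essentially the same route as the paper: form $h_0 = h \perp(-h')$, observe that $Z = X(m)$ under $U(A,\sigma,h_0)$ has local points by the hyperbolicity hypothesis, and apply Theorem~\ref{isotropic1} to conclude $Z(F_0)\neq\emptyset$ and hence $h\cong h'$. Your explicit remarks on Witt cancellation and on why the rank-$\geq 2$ hypothesis of Theorem~\ref{isotropic1} is automatic for $h\perp(-h')$ are useful bookkeeping details that the paper leaves implicit.
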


\begin{proof}  Let 
$\xi \in H^1(F_{0}, U(A, \sigma, h))$.  Then   $\xi$ corresponds to a 
hermitian spaces $h'$  over $(A, \sigma)$ of  reduced rank  equal to the  reduced rank of $h$.
Let $h_0 = h\perp -h'$ and  $m$ the reduced rank of $h$. 
Let $G = U(A,\sigma, h_0)$ and $Z = X(m)$.
Then $Z$ is a projective homogeneous variety under $G$ over $F_0$.
Suppose $\xi$ maps to the trivial element in $H^1(F_{0\nu}, U(A,\sigma, h))$ for all  (divisorial) discrete valuations 
$\nu$  of $F_0$.  Then $h' \otimes F_{0\nu} \simeq h\otimes F_{0\nu}$ and hence $h_0$ is hyperbolic.
Thus $Z(F_{0\nu}) \neq \emptyset$ for all  (divisorial) discrete valuations 
$\nu$  of $F_0$.  Hence, by (\ref{isotropic1}), $Z(F_0) \neq \emptyset$.
In particular $h_0$ is hyperbolic. Since the reduced rank of $h$ and $h'$ are equal, 
$h \simeq h'$ and $\xi$ is the trivial element in $H^1(F_0, U(A, \sigma, h))$.
\end{proof}

\section{Local global principle for special unitary groups - patching setup}
\label{lgp-patching}

Let $F_0$ be a field of characteristic not equal to 2 and $F/F_0$ a quadratic \'etale extension.
Let $A$ be a central simple algebra over $F$  of degree $n$ with an involution $\sigma$ of second kind and 
$F^{\sigma} = F_0$.   
Then we have an  exact sequence of algebraic groups 
$$ 1 \to SU(A, \sigma) \to U(A, \sigma) \to R^1_{F/F_0}(\Gm) \to 1.$$
For any  field extension $L/F_0$, we have an 
induced  exact sequence 
$$U(A, \sigma)(L) \to (L\otimes_{F_0}F)^{*1} \to H^1(L, SU(A, \sigma)) \to H^1(L, U(A, \sigma)) \hskip 1.5cm (\star)$$
where $(L\otimes_{F_0}F)^{*1} = R^1_{F/F_0}(\Gm)(L)$ is the subgroup of $(L\otimes_{F_0}F)^*$ consisting of norm one elements and the map $U(A,\sigma)(L) \to 
(L\otimes_{F_0}F)^{*1}$ is given by the reduced norm. 
Further  the image of 
 $U(A, \sigma)(L) \to (L\otimes_{F_0} F)^{*1}$ is equal to 
$\{ \theta^{-1} \sigma(\theta) \mid \theta \in Nrd(A\otimes_{F_0}L^*) \} $ (\cite[p. 202]{KMRT}).

Let $K$ be a  local field   with the residue field $\kappa$  and $F_0$ the function field of a smooth projective 
 curve over $K$. Let $F/F_0$ be a separable quadratic extension.
   Let $A$ be a central simple algebra over $F$  of degree $n$ with an involution $\sigma$ of second kind and 
$F^{\sigma} = F_0$. Suppose that $2n$ is coprime to char$(\kappa)$.  
In this section we show that there is a local global principal for principal homogeneous spaces under $SU(A, \sigma)$ over $F_0$ 
in the patching setup  (cf. \ref{patching-su}).

Let $\mu \in F^*$. Let  $F= F_0(\sqrt{d})$,  $d \in F_0^*$.  
 Let $T$ be the valuation  ring   of  $K$.  
 Then there exists a regular proper model   $\XX_0 \to Spec(T)$ of $F_0$ with the  
 normalization $\XX$ of $\XX_0$ in $F$ regular and   with the property that 
 the special fibre  $X$  of $\XX$, the ramification locus of $F/F_0$ on $\XX$, 
 the ramification locus of $A$ on $\XX$ and the support of $\mu$ on $\XX$ are
a union of regular curves with normal crossings (\cite{A}, \cite{L}).
Let $X_0$ be the reduce special fibre of $\XX_0$ and $\{\eta_1, \cdots , \eta_m \}$ be the generic points of $X_0$.

Let  $\PP_0$ be a finite set of closed points of $X_0$ containing 
all the singular points of $X_0$ and at least one closed point from each irreducible component of $X_0$. 
Let $\UU_0$ be the  set of irreducible components of $X_0 \setminus \PP_0$.  
We fix the data $\mu \in F^*$,  $\XX_0, \PP_0$ and $\UU_0$ for  until (\ref{patching-su}).
Let $\BB_0$ be the set of branches at $\PP_0$. Since $X_0$ is a union of regular curves with normal crossings,  
$\BB_0$ is in bijection with pairs $(P, U)$ with $P \in \PP_0$, $U \in \UU_0$ and $P $ is in the closure of $U$. 

Let  $\eta \in X_0$ be  a generic point and   $P \in \overline{\{ \eta \} }$ a closed point. 
Then $\eta$ defines a discrete valuation $\nu_\eta$ on $F_{0P}$. Then the completion of 
$F_0$ at  the restriction of $\nu_\eta$  to $F_0$ is denoted by $F_{0\eta}$ and 
the completion of $F_{0P}$ at $\nu_\eta$ denoted by $F_{0P,\eta}$. 
The  closed point $P$ induces a discrete valuation $\nu_P$ on the residue field $\kappa(\eta)$ of 
$F_{0\eta}$ such that the completion $\kappa(\eta)_P$  of $\kappa(\eta)$ at $\nu_P$ is the residue field of 
$F_{0P, \eta}$.

Let $P \in X_0$ be a closed point and $A_{P}$ the local ring at $P$ on $\XX_0$. 
Since the normalization of $\XX_0$ in $F$ is regular, 
$d = \pi u$  or $d = u$ for some $\pi \in A_{P}$ a regular parameter and $u \in A_{P}$ a unit. 
Hence $B_P = A_P[\sqrt{d}]$ is the   integral closure of $A_P$ in $F_0$.  
Let $\delta \in A_P$ be such that $m_P = (\pi, \delta)$ be the maximal ideal of $A_P$.
If $d= \pi u $, then  $B_P$ is local and $(\sqrt{\pi u}, \delta)$ is the maximal ideal of $B_P$.  
Suppose $d=u$ a unit in $A_P$. If $u$ is not a square in the residue field $\kappa(P)$, then 
$B_P$ is local and the maximal ideal of $B_P$ is generated by $\pi$ and $\delta$.

 We begin with the following.   
 
 \begin{lemma}
 \label{correction-eta}
 Let $\eta$ be a generic point of $X_0$ and $S$ be a finite set of closed points of $\overline{\{ \eta \} }$.
 For every $P \in S$, let $\theta_{\eta, P} \in F_{0P, \eta}^*$ be  unit at $\eta$ which is  a reduced norm from $A\otimes F_{0P, \eta}$.
 Then there exists $\theta_\eta \in F_{0\eta}$ which is a reduced norm from $A \otimes F_{0\eta}$ such that 
 $\theta_\eta \theta^{-1}_{\eta, P} \in F_{0P, \eta}^{*n}$ for all $P \in S$. 
 \end{lemma}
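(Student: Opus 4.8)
The plan is to pass to the residue field of $F_{0\eta}$, which is the function field $\kappa(\eta)$ of the irreducible curve $\overline{\{\eta\}}$ over the finite residue field $\kappa$ of $K$, hence a global field with no real orderings and of characteristic coprime to $n$, and to produce the residue of the desired $\theta_\eta$ there by means of the approximation result for norms from dihedral extensions over global fields (Corollary \ref{norms-approximation}). All the ambient fields in play ($F_{0\eta}$, $F\otimes_{F_0}F_{0\eta}$, $F_{0P,\eta}$, $F\otimes_{F_0}F_{0P,\eta}$) are complete discretely valued with residue characteristic dividing that of $\kappa$, and $2n$ is invertible there; since the elements $\theta_\eta,\theta_{\eta,P}$ are units for $\nu_\eta$, Hensel's lemma makes a relation $\theta_\eta\theta_{\eta,P}^{-1}\in F_{0P,\eta}^{*n}$ equivalent to the corresponding relation between the images $\bar\theta_\eta\in\kappa(\eta)$ and $\bar\theta_{\eta,P}\in\kappa(\eta)_P$, so it suffices to work with residues. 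Throughout I use the fact that $\theta\in\mathrm{Nrd}$ of a central simple algebra $C$ forces $[C]\cup(\theta)=0$, and that taking the $\nu_\eta$-residue of $[C]\cup(\theta)$ with $\theta$ a $\nu_\eta$-unit gives $\partial[C]\cup(\bar\theta)$, which vanishes exactly when $\bar\theta$ is a norm from the cyclic residue extension $\partial[C]$.

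I would first dispose of two easy cases. If $F\otimes_{F_0}F_{0\eta}$ is not a field, then $A\otimes_{F_0}F_{0\eta}\cong B\times B^{\mathrm{op}}$ with $B$ central simple over $F_{0\eta}$ and ``$\theta$ a reduced norm'' means $\theta\in\mathrm{Nrd}(B^*)$; writing $\partial[B]=(L_0,\sigma_0)\in H^1(\kappa(\eta),\Z/n\Z)$, Proposition \ref{reduced-norms} shows every $\nu_\eta$-unit whose image in $\kappa(\eta)$ is a norm from $L_0/\kappa(\eta)$ is a reduced norm, and the hypothesis at each $P\in S$ forces $\bar\theta_{\eta,P}$ to be a local norm from $L_0\otimes_{\kappa(\eta)}\kappa(\eta)_P$ over $\kappa(\eta)_P$; plain weak approximation in $L_0$ then yields $\bar\theta_\eta\in\kappa(\eta)^*$, a norm from $L_0/\kappa(\eta)$, as close as wanted to each $\bar\theta_{\eta,P}$, and lifting to a $\nu_\eta$-unit $\theta_\eta\in F_{0\eta}^*$ finishes it. If $F\otimes_{F_0}F_{0\eta}$ is a ramified quadratic field extension, then by Lemma \ref{ramifiedh2} the class of $A\otimes_{F_0}F_{0\eta}$ is unramified, its $\Z/n\Z$-residue is trivial, hence every $\nu_\eta$-unit is a reduced norm (Proposition \ref{reduced-norms}) and weak approximation in $\kappa(\eta)$ provides $\bar\theta_\eta$; the same remark covers the sub-case below where $A\otimes_{F_0}F_{0\eta}$ is unramified.

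The main case is $F_\eta:=F\otimes_{F_0}F_{0\eta}$ an \emph{unramified} quadratic field extension of $F_{0\eta}$, with residue field $\kappa(\eta)'$ the quadratic extension of $\kappa(\eta)$ determined by $F$. Put $A_\eta=A\otimes_F F_\eta$ and let $(L,\sigma')\in H^1(\kappa(\eta)',\Z/m\Z)$ be its residue, $m\mid n$; we may assume $m\geq 2$ by the previous paragraph. Since $A$ carries an $F/F_0$-involution, $\mathrm{cores}_{F_\eta/F_{0\eta}}[A_\eta]=0$, whence (separating the unramified and ramified parts by Lemma \ref{unramified}) $\mathrm{cores}_{\kappa(\eta)'/\kappa(\eta)}(L,\sigma')=0$, so by Proposition \ref{galois} the extension $L/\kappa(\eta)$ is dihedral with rotation subfield exactly $\kappa(\eta)'$. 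For $P\in S$ the point $P$ gives a place of $\kappa(\eta)$; from the hypothesis that $\theta_{\eta,P}$ is a reduced norm from $A\otimes_{F_0}F_{0P,\eta}$ one gets $[A\otimes_{F_0}F_{0P,\eta}]\cup(\theta_{\eta,P})=0$, and taking the $\nu_\eta$-residue shows $\bar\theta_{\eta,P}\in\kappa(\eta)_P$ is a norm from $L\otimes_{\kappa(\eta)}\kappa(\eta)_P$ over $\kappa(\eta)'\otimes_{\kappa(\eta)}\kappa(\eta)_P$. Now apply Corollary \ref{norms-approximation} with $(K_0,K,E)=(\kappa(\eta),\kappa(\eta)',L)$ and the finite set of places of $\kappa(\eta)$ coming from $S$: it produces $\bar\theta_\eta\in\kappa(\eta)^*$, a norm from $L/\kappa(\eta)'$, arbitrarily close to each $\bar\theta_{\eta,P}$ in $\kappa(\eta)_P$, so that $\bar\theta_\eta\bar\theta_{\eta,P}^{-1}\in\kappa(\eta)_P^{*n}$ for a tight enough choice. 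Lifting $\bar\theta_\eta$ to a $\nu_\eta$-unit $\theta_\eta\in F_{0\eta}^*$, Proposition \ref{reduced-norms} applied over $F_\eta$ (whose residue field $\kappa(\eta)'$ is a global field with no real orderings) shows $\theta_\eta$ is a reduced norm from $A_\eta=A\otimes_{F_0}F_{0\eta}$, and Hensel's lemma over $F_{0P,\eta}$ gives $\theta_\eta\theta_{\eta,P}^{-1}\in F_{0P,\eta}^{*n}$ for all $P\in S$.

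The step I expect to be the main obstacle is the passage to residues at the places $P\in S$: one must verify carefully that ``$\theta_{\eta,P}$ is a reduced norm from $A\otimes_{F_0}F_{0P,\eta}$'' translates, via vanishing of the cup product and compatibility of $\nu_\eta$-residues with cup products by units, into ``$\bar\theta_{\eta,P}$ is a norm from the (cyclic) residue extension of $A_\eta$ base-changed to $\kappa(\eta)_P$'', and that the residue data of $A_\eta$ is organized so that the rotation subfield of the dihedral extension $L/\kappa(\eta)$ is precisely $\kappa(\eta)'$, which is what allows Corollary \ref{norms-approximation} to be applied. Granting this identification, the genuine approximation is entirely absorbed into that corollary, and the rest is the bookkeeping of residue fields and Hensel lifting indicated above.
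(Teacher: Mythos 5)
Your proposal is correct and follows essentially the same route as the paper's own proof: split into the three cases according to whether $F\otimes_{F_0}F_{0\eta}$ is split, a ramified quadratic field extension, or an unramified quadratic field extension, reduce everything to the residue field $\kappa(\eta)$ via the cup-product/residue computation, and in the unramified field case apply Corollary \ref{norms-approximation} to the dihedral extension obtained from Proposition \ref{galois}, then lift by Hensel. The only cosmetic differences from the paper are the ordering of the cases and that in the ramified case the paper invokes the Hasse--Maass--Schilling theorem for the specialization $B_0$ directly rather than routing through Proposition \ref{reduced-norms} with trivial residue, which amounts to the same thing.
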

 
 \begin{proof} Suppose $F_\eta = F \otimes F_{0\eta}/F_{0\eta}$ is a ramified field extension.
  Then    by (\ref{ramifiedh2}), 
 there exists an unramified algebra $A_0$ over $F_{0\eta}$ such that  
  $A \otimes _{F_0} F_{0\eta} \simeq A_0 \otimes_{F_0\eta} F_{\eta}$.  
  For $P \in S$, let $\overline{\theta}_{\eta, P} \in \kappa(\eta)^*_P$ be the image of 
  $\theta_{\eta, P} \in F_{0P, \eta}^*$.   
  We choose $\overline{\theta}_\eta \in \kappa(\eta)^*$ be close to $\overline{\theta}_{\eta, P}$ for all 
  $P \in S$.   Since $A_0$ is unramified  over $F_{0\eta}$, 
  its specialization   $B_0$  is a central simple algebra over $\kappa(\eta)$.
  Since $\kappa(\eta)$ is a global field of positive characteristic, 
  by Hasse-Maass-Schilling theorem $\overline{\theta}_\eta$ is a reduced norm from $B_0$.
  Let $\theta_\eta \in F_{0\eta}$ be a lift of $\overline{\theta}_\eta$.
   Since $F_{0\eta}$ is complete,  $\theta_\eta$ is a  reduced norm from $A_0\otimes_{F_0}F_{0\eta}$
   and hence a reduced norm from  $A\otimes_{F}F_{\eta}$.
   Since $\overline{\theta}_\eta$ is close to $\overline{\theta}_{\eta, P}$ for all $P \in S$
    and $n$ is coprime to char$(\kappa)$, 
    $\overline{\theta}_\eta \overline{\theta}_{\eta, P}^{-1}
   \in \kappa(\eta)_P^{*n}$ for all $P \in S$.
  Since $F_{0P, \eta}$ is complete with residue field $\kappa(\eta)_P$,  
   $\theta_\eta \theta_{\eta, P}^{-1} \in F_{0P, \eta}^{*n}$ for all $P \in S$.

Suppose that $F_\eta/F_{0\eta}$ is an unramified field extension. 
Then the residue field $\tilde{\kappa}( \eta)$  of $F_\eta$ 
is a quadratic extension of $\kappa(\eta)$.
Let $(L_\eta, \sigma_\eta)$ be the residue of  $A$ at $\eta$. 
Since the residue commutes with the corestriction, cores$_{\tilde{\kappa}(\eta) /\kappa(\eta)}(L_\eta, \sigma_\eta) = 0$.
Thus, by (\ref{galois}),   $L_\eta/\kappa(\eta)$ is a dihedral extension. 
Since $\theta_{\eta, P} $ is a reduced norm from $A\otimes F_{0P, \eta}$,  we have 
 $A \cdot (\theta_{\eta, P}) = 0 \in H^3(F\otimes F_{0P, \eta}, \mu_n)$. 
 Let $\overline{\theta}_{\eta, P}$ be the image  of   $\theta_{\eta, P}$  in the residue field $\kappa(\eta)_P$  of 
 $F_{P, \eta}$.   By  taking the residue of $A  \cdot (\theta_{\eta, P})$, we get that 
 $(L_\eta, \sigma_\eta, \overline{\theta}_{\eta, P}) = 0$ (cf. \cite[Proof of 4.7]{PPS}).
 Hence   $\overline{\theta}_{\eta, P}$   is a norm from the extension  $L_\eta \otimes_{\kappa(\eta)} \kappa(\eta)_P/ \tilde{\kappa}(\eta)  \otimes_{\kappa(\eta)} \kappa(\eta)_P$.
 Since $\kappa(\eta)$ is a  global field,  by (\ref{norms-approximation}), there exists $\overline{\theta}_\eta \in \kappa(\eta)^*$
 with $\overline{\theta}_\eta$  a norm from 
 $L_\eta/\tilde{\kappa}(\eta)$ and $\overline{\theta}_\eta \overline{\theta}_{\eta, P}^{-1} \in\kappa(\eta)_P^{*n}$.

Let $\theta_\eta \in F_{0\eta}$ be a  lift of $\overline{\theta}_\eta \in \kappa(\eta)$. 
Then $\theta_\eta\theta_{\eta, P}^{-1} \in F_{0P, \eta}^{*n}$. 
Since $\overline{\theta}_\eta$ is a norm from $L_\eta/\tilde{\kappa}(\eta)$,  
by (\ref{reduced-norms}), $\theta_\eta$ is a reduced norm from $A \otimes F_{0\eta}$. 

Suppose $F_\eta = F \otimes_{F_0} F_{0\eta}$ is not a field.  Then $F_\eta \simeq F_{0\eta} \times F_{0\eta}$
and $A \otimes_{F_0} F_{0\eta} \simeq A_1 \times A_1^{op}$, where $A_1^{op}$ is the opposite algebra.  Since 
$\theta_{\eta, P}  \in F_{0P, \eta}$ is a reduced norm from $A\otimes F_{0P, \eta}$, $\theta_{\eta, P}$ is a reduced norm from $A_1 \otimes F_{0P, \eta}$.
Then,  as above, we can find $\theta_\eta \in F_{0\eta}$ such that $\theta_\eta\theta_{\eta, P}^{-1} \in F_{0P, \eta}^{*n}$
and $\theta_\eta$ is a reduced norm from $A_1$. Then $\theta_\eta$ is a reduced norm from $A\otimes_{F_0} F_{0\eta}$. 
 \end{proof}
 
 \begin{lemma}
\label{choice-at-eta}
Suppose that  for every generic point $\eta$ of $X_0$ there exists 
 $c_\eta \in F_{0\eta}^*$ such that $\mu c_\eta $ is a reduced norm from  $A \otimes F_{0, \eta}$. 
 Then  for every  generic point $\eta$ of $X_0$, there exists 
 $a_\eta \in F_{0\eta}^*$ such that $\mu a_\eta$ is a reduced norm from  $A \otimes F_{0\eta}$
 with the following property:  if  $\eta_1$ and $\eta_2$ are two generic points of $X_0$ and 
  $P \in \overline{ \{\eta_1\}} \cap \overline{ \{\eta_2\}}$  with $F\otimes F_{0P}$ is a field,
  then  there exists $a_P \in F_{0P}^*$ such that 
$\mu a_P$   is a reduced norm from $A \otimes F_{0P}$
and $a_{\eta_i} a_P^{-1} \in F_{0P, \eta_i}^{*n}$ for $ = 1, 2$. 
\end{lemma}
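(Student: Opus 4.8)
The plan is to construct the elements $a_\eta$ in two steps. First I would produce, at each crossing point $P$ of $X_0$ over which $F$ remains a field, a good local element $a_P\in F_{0P}^{*}$ by applying Lemma~\ref{2dim-local}; then, for each generic point $\eta$ of $X_0$, I would use Lemma~\ref{correction-eta} at $\eta$ to find $a_\eta$ that agrees, modulo $n$-th powers, with every $a_P$ for $P$ in the closure of $\eta$.

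So, first, fix for every crossing point $P$ of $X_0$ with $F\otimes_{F_0}F_{0P}$ a field an ordering $\eta_1(P),\eta_2(P)$ of the two generic points of $X_0$ whose closures pass through $P$. By the choice of the model $\XX_0$, the completed local ring $\widehat{\OO}_{\XX_0,P}$ is a two-dimensional complete regular local ring with finite residue field, $A$ is unramified on its integral closure in $F$ away from the two branch divisors, $\mu$ is a unit of that integral closure times a monomial in the two branch parameters, and $F\otimes_{F_0}F_{0P}$ is the quadratic extension of Section~\ref{alg-2dimlocal}; thus Lemma~\ref{2dim-local} applies with $\widehat{\OO}_{\XX_0,P}$ in the role of $R_0$. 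Since $\mu c_{\eta_1(P)}\in Nrd(A\otimes F_{0\eta_1(P)})\subseteq Nrd(A\otimes F_{0P,\eta_1(P)})$ by hypothesis, applying that lemma with $\theta_\pi=c_{\eta_1(P)}$ (along the branch $\eta_1(P)$) yields $a_P\in F_{0P}^{*}$ with $\mu a_P\in Nrd(A\otimes F_{0P})$ and $a_P\,c_{\eta_1(P)}^{-1}\in F_{0P,\eta_1(P)}^{*n}$; choosing the monomial representative produced in the proof of that lemma I may moreover arrange $v_{\eta_1(P)}(a_P)=v_{\eta_1(P)}(c_{\eta_1(P)})$.

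Now fix a generic point $\eta$ of $X_0$ and let $S_\eta$ be the finite set of crossing points $P$ in the closure of $\eta$ with $F\otimes_{F_0}F_{0P}$ a field. For $P\in S_\eta$ put $t_{\eta,P}:=a_P\,c_\eta^{-1}\in F_{0P,\eta}^{*}$; since $t_{\eta,P}=(\mu a_P)(\mu c_\eta)^{-1}$ is a quotient of two reduced norms from $A\otimes F_{0P,\eta}$, it is itself a reduced norm from $A\otimes F_{0P,\eta}$. I would then arrange --- this being the main technical point, discussed below --- that, after replacing $c_\eta$ by an element of its coset modulo $Nrd(A\otimes F_{0\eta})$, every $t_{\eta,P}$ is a unit at $\eta$. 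Lemma~\ref{correction-eta} then furnishes $\theta_\eta\in F_{0\eta}^{*}$, a reduced norm from $A\otimes F_{0\eta}$, with $\theta_\eta\,t_{\eta,P}^{-1}\in F_{0P,\eta}^{*n}$ for all $P\in S_\eta$, and I set $a_\eta:=\theta_\eta\,c_\eta$.

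It remains to check the two required properties. First, $\mu a_\eta=(\mu c_\eta)\,\theta_\eta$ is a product of two reduced norms from $A\otimes F_{0\eta}$, hence a reduced norm. Second, if $\eta_1,\eta_2$ are generic points and $P\in\overline{\{\eta_1\}}\cap\overline{\{\eta_2\}}$ with $F\otimes_{F_0}F_{0P}$ a field, then $P\in S_{\eta_1}\cap S_{\eta_2}$, so for $i=1,2$
\[
a_{\eta_i}\,a_P^{-1}=\theta_{\eta_i}\,c_{\eta_i}\,a_P^{-1}\equiv t_{\eta_i,P}\,c_{\eta_i}\,a_P^{-1}=(a_P\,c_{\eta_i}^{-1})\,c_{\eta_i}\,a_P^{-1}=1\pmod{F_{0P,\eta_i}^{*n}},
\]
with the same $a_P$ as constructed above, for which $\mu a_P\in Nrd(A\otimes F_{0P})$ has already been arranged; this is exactly the asserted compatibility. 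The hard part will be the normalisation in the previous paragraph --- making all the correction factors $t_{\eta,P}$ units at $\eta$ simultaneously without destroying the relations $a_\eta\equiv a_P$ modulo $n$-th powers. For the $P\in S_\eta$ with $\eta_1(P)=\eta$ the valuations already match by the construction of $a_P$, but for those with $\eta_2(P)=\eta$ one must reconcile the $\delta$-adic data attached to $c_{\eta_1(P)}$ at $P$ with $v_\eta(c_\eta)$; this requires the description, from Sections~\ref{reducednorms-2dim}--\ref{alg-2dimlocal}, of which valuations are realised by reduced norms over the complete local fields $F_{0\eta}$, together with the approximation over the (global) residue fields $\kappa(\eta)$ that is already internal to Lemma~\ref{correction-eta} --- possibly organised as an induction along the dual graph of $X_0$.
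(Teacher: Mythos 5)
Your overall two–step outline (fix a candidate $a_P$ at each crossing point, then glue by using Lemma~\ref{correction-eta} along each branch curve) is the right shape, but the way you build $a_P$ creates exactly the obstruction you flag, and that flagged step is a genuine gap, not a technicality that can be absorbed by a graph induction. Here is the problem concretely. You construct $a_P$ by applying Lemma~\ref{2dim-local} to the single element $\theta_\pi = c_{\eta_1(P)}$ along the branch $\eta_1(P)$. That pins down $v_{\eta_1(P)}(a_P) = v_{\eta_1(P)}(c_{\eta_1(P)})$, but the $\eta_2(P)$-adic valuation of the resulting monomial $a_P$ is dictated by the $\eta_2$-adic expansion of $c_{\eta_1(P)}$ at $P$ (the ``$\delta$-adic'' exponent $s_1$ in (\ref{branch-elements})); it has no reason to equal or even be congruent modulo $n$ to $v_{\eta_2(P)}(c_{\eta_2(P)})$. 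Consequently for a generic point $\eta$ and the crossing points $P$ with $\eta_2(P)=\eta$, the quotient $t_{\eta,P}=a_P c_\eta^{-1}$ need not be a unit at $\eta$, and Lemma~\ref{correction-eta} --- whose proof reduces modulo $\eta$ and works over the global residue field --- simply does not accept non-units as input. Rescaling $c_\eta$ by a reduced norm changes $v_\eta(t_{\eta,P})$ by the same amount for every $P\in S_\eta$ simultaneously, so it cannot reconcile several independent constraints at once; and orienting the dual graph only lets you control the one outgoing (or incoming) edge at each vertex, not all incident edges. The gap therefore cannot be closed within your construction of $a_P$.

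The paper sidesteps this entirely by making $a_P$ symmetric and valuation-tautological from the outset. Fix local parameters $\pi_\eta\in F_0$ adapted to the normal crossings model so that at any crossing point $P\in\overline{\{\eta_1\}}\cap\overline{\{\eta_2\}}$ the maximal ideal is $(\pi_{\eta_1},\pi_{\eta_2})$, set $r_\eta := \nu_\eta(c_\eta)$, and simply declare
\[
a_P := \pi_{\eta_1}^{\,r_{\eta_1}}\,\pi_{\eta_2}^{\,r_{\eta_2}}\in F_{0P}^{*}.
\]
This is not produced by Lemma~\ref{2dim-local}; it is defined directly. By construction $v_\eta(a_P)=r_\eta=v_\eta(c_\eta)$ for \emph{both} branches through $P$, so after writing $c_\eta = u_P\,\pi_\eta^{r_\eta}\pi_{\eta'}^{\,s_P}\,b_P^{\,n}$ locally (via (\ref{branch-elements})), the correction $\theta_{\eta,P}:= a_P\,c_\eta^{-1}\,b_P^{\,n}=u_P^{-1}\pi_{\eta'}^{\,r_{\eta'}-s_P}$ is automatically a unit at $\eta$. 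It lies in $F_{0P}^{*}\subseteq F_{0P,\eta}^{*}$ and hence, since $F\otimes F_{0P,\eta}$ is a field, is a reduced norm from $A\otimes F_{0P,\eta}$ by Theorem~\ref{branch-norms} --- this is a second point your argument does not use and which replaces your ``quotient of reduced norms'' step. Lemma~\ref{correction-eta} then yields $\theta_\eta$ and one sets $a_\eta := c_\eta\theta_\eta$, whereupon $a_\eta a_P^{-1}\equiv c_\eta\theta_{\eta,P}a_P^{-1}=b_P^{\,n}\in F_{0P,\eta}^{*n}$. Finally, the fact that $\mu a_P\in\mathrm{Nrd}(A\otimes F_{0P})$ is deduced \emph{after} the fact: since $\mu a_\eta$ is a reduced norm over $F_{0P,\eta}$ and $a_\eta a_P^{-1}$ is an $n$-th power there, $\mu a_P$ is a reduced norm over $F_{0P,\eta}$, and it is a unit-monomial in $\pi_{\eta_1},\pi_{\eta_2}$, so Corollary~\ref{2dim-local-nrd} upgrades this to a reduced norm over $F_{0P}$. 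That last step is where the Section~\ref{reducednorms-2dim} machinery enters --- but as a consequence of the choice of $a_P$, not as part of its construction.

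So: same skeleton, but your $a_P$ is built from one branch and is therefore incompatible with the other branch, while the paper's $a_P$ is built from the two global exponents $r_{\eta_1},r_{\eta_2}$ and is compatible with both by design. You would need to replace your first step by this symmetric definition (and invoke Theorem~\ref{branch-norms} for the reduced-norm property of the unit correction) for the argument to go through.
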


\begin{proof}  Since the special fibre is a union of regular curves with normal crossings,  
 for a  generic point $\eta$ of $X_0$,  there exists $\pi_\eta \in F_0$   a parameter at $\eta$ such that 
 for every closed point  $P \in \overline{ \{\eta_1\}} \cap \overline{ \{\eta_2\}}$  for any two distinct 
 generic points $\eta_1$ and $\eta_2$ of $X_0$, the maximal ideal at $P$ is $(\pi_{\eta_1}, \pi_{\eta_2})$. 
 
 Suppose that  for every generic point $\eta$ of $X_0$ there exists 
 $c_\eta \in F_{0\eta}$ such that $\mu c_\eta $ is a reduced norm from  $A \otimes F_{0, \eta}$. 
 For every generic point $\eta$ of $X_0$, let $r_\eta = \nu_\eta(c_\eta)$.
 For every closed point  $P \in \overline{ \{\eta_1\}} \cap \overline{ \{\eta_2\}}$  with $F\otimes F_{0P}$ is a field,
 let $a_P  = \pi_{\eta_1}^{r_{\eta_1}} \pi_{\eta_2}^{r_{\eta_2}} \in F_{0P}^*$. 
 
Let $\eta$ be a generic point of $X_0$. Let $P \in \overline{ \{\eta \}} \cap \overline{ \{\eta'\}}$  for some generic point 
$\eta' \neq \eta$. Suppose that  $F\otimes F_{0P}$ is a field. By the choice of $\XX_0$, 
$F/F_0$ is unramified at $P$ except possibly at $\pi_{\eta}$ and $\pi_{\eta'}$. 
Since the maximal ideal at $P$ is $(\pi_{\eta}, \pi_{\eta'})$, by (\cite[Corollary 5.5]{PPS}), $F\otimes F_{0P, {\eta} }$ is a field. 
Since  $n$ is coprime to  char$(\kappa(P))$,   
 $c_\eta = u_P \pi_{\eta}^{r_{\eta}} \pi_{\eta'}^{s_P} (b_P)^n $ for some $s_P \in \Z$, $u_P \in \hat{A}_P$ a unit and 
 $b_P \in F_{0P, \eta}^*$ (cf. \ref{branch-elements}). 
Let $\theta_{\eta, P}  = u_P^{-1}\pi_{\eta'}^{r_{\eta'}-s_P}$.  
Since $F\otimes_{F_0}F_{0P}$ is a field,  $\theta_{\eta, P}$ is a reduced norm from $A\otimes_{F_0}F_{0P, \eta}$ (\ref{branch-norms}). 
Since $\theta_{\eta, P}$ is a unit at $\eta$,  by (\ref{correction-eta}), there exists $\theta_{\eta} \in F_{0\eta}$ 
which is a reduced norm from $A \otimes F_{0\eta}$ such that $\theta_\eta  \theta_{\eta, P}^{-1} \in F_{0P, \eta}^{*n}$. 

Let $a_\eta = c_\eta \theta_\eta$.  Since $\mu c_\eta$ and $\theta_\eta$ are reduced norms from $A \otimes F_{0\eta}$,
$\mu a_\eta$ is a reduced norm from $A \otimes F_{0\eta}$. 
Let $P \in \overline{ \{\eta \}} \cap \overline{ \{\eta'\}}$  for some generic point 
$\eta' \neq \eta$ with   $F\otimes F_{0P}$ is a field.
Then, by the choice of $a_\eta$, we have  $a_\eta = \pi_\eta^{r_\eta} \pi_{\eta'}^{r_{\eta'}}$ modulo $F_{0P, \eta}^{*n}$.
Hence $a_\eta a_P^{-1} \in F_{0P, \eta}^{*n}$. 
\end{proof}

\begin{lemma}
\label{choice-at-split-point}
  Let $\eta_1$  and $\eta_2$ be two distinct generic points of $X_0$.
Suppose  $P \in \overline{\{ \eta_1 \}} \cap \overline{\{ \eta_2 \}}$ is  a closed point with 
$F \otimes F_{0P}$ is not a field. 
Suppose there exist $a_{\eta_i} \in F_{\eta_i}^*$ with   
$\mu a_{\eta_i}$ is a reduced norm from  $A \otimes F_{0\eta_i}$. 
Then there exists $a_P \in F_{0P}^*$ such that 
$\mu a_P$   is a reduced norm from $A \otimes F_{0P}$
and $a_{\eta_i} a_P^{-1} \in F_{0P, \eta_i}^{*n}$ for $ i = 1, 2$. 
\end{lemma}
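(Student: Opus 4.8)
The plan is to use the splitting of $A$ at the closed point $P$ to reduce the reduced-norm conditions to the two-dimensional results of Section~\ref{reducednorms-2dim}. First I would record the splitting: since $F\otimes_{F_0}F_{0P}$ is not a field it is isomorphic to $F_{0P}\times F_{0P}$, and correspondingly $A\otimes_{F_0}F_{0P}\simeq A_1\times A_1^{\mathrm{op}}$ for a central simple $F_{0P}$-algebra $A_1$ (cf.\ \cite[Proposition 2.14]{KMRT}); by the choice of the model $\XX_0$, $A_1$ is unramified on the complete regular local ring $\hat{A}_P$ except possibly along the two components of the maximal ideal $m_P=(\pi_{\eta_1},\pi_{\eta_2})$, and $\mu$ is supported along these same components. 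Writing $\mu\mapsto(\mu^{(1)},\mu^{(2)})$ and embedding elements of $F_{0P}^*$ diagonally, a scalar $\mu a$ is a reduced norm from $A\otimes F_{0P}$ if and only if $\mu^{(1)}a$ and $\mu^{(2)}a$ both lie in $Nrd(A_1)$ (here one uses $Nrd(A_1^{\mathrm{op}})=Nrd(A_1)$ and that $Nrd(A_1)$ is a subgroup of $F_{0P}^*$). Since $F\subseteq F_{0P}\subseteq F_{0P,\eta_i}$, the same splitting holds over each $F_{0P,\eta_i}$, so the hypothesis that $\mu a_{\eta_i}$ is a reduced norm from $A\otimes F_{0\eta_i}$ gives, after base change, $\mu^{(j)}a_{\eta_i}\in Nrd(A_1\otimes F_{0P,\eta_i})$ for $i,j=1,2$ (I read $a_{\eta_i}$ as lying in $F_{0\eta_i}^*$, as in \ref{choice-at-eta}).

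Next I would construct a provisional $a_P$ that already handles the $\eta_1$-condition. Put $r_i=\nu_{\eta_i}(a_{\eta_i})$, and use \ref{branch-elements} to write $a_{\eta_1}=u\,\pi_{\eta_1}^{r_1}\pi_{\eta_2}^{s}\,b^{n}$ in $F_{0P,\eta_1}$ with $u\in\hat{A}_P$ a unit and $b\in F_{0P,\eta_1}^*$; set $a_P^{(0)}=u\,\pi_{\eta_1}^{r_1}\pi_{\eta_2}^{s}\in F_{0P}^*$, so that $a_{\eta_1}(a_P^{(0)})^{-1}\in F_{0P,\eta_1}^{*n}$. Then $\mu^{(j)}a_P^{(0)}=(\mu^{(j)}a_{\eta_1})\cdot\bigl(a_P^{(0)}a_{\eta_1}^{-1}\bigr)\in Nrd(A_1\otimes F_{0P,\eta_1})$, and since $\mu^{(j)}a_P^{(0)}$ is supported only along $\pi_{\eta_1}$ and $\pi_{\eta_2}$, Corollary \ref{2dim-local-nrd} (applied to $A_1$ over $\hat{A}_P$, and likewise to $A_1^{\mathrm{op}}$) upgrades this to $\mu^{(j)}a_P^{(0)}\in Nrd(A_1)$. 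Hence $\mu a_P^{(0)}$ is a reduced norm from $A\otimes F_{0P}$ and the $\eta_1$-congruence already holds for $a_P^{(0)}$.

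It remains to correct $a_P^{(0)}$ at $\eta_2$ without spoiling $\eta_1$. Because $\mu a_{\eta_2}$ and $\mu a_P^{(0)}$ are both reduced norms from $A\otimes F_{0P,\eta_2}$ and reduced norms form a group, the element $\lambda:=a_{\eta_2}(a_P^{(0)})^{-1}\in F_{0P,\eta_2}^*$ lies in $Nrd(A_1\otimes F_{0P,\eta_2})$. I would then produce $z\in F_{0P}^*$ with $z\in Nrd(A_1)$, $z\in F_{0P,\eta_1}^{*n}$, and $\lambda z^{-1}\in F_{0P,\eta_2}^{*n}$, and put $a_P=a_P^{(0)}z$: then $\mu a_P$ remains a reduced norm from $A\otimes F_{0P}$, the $\eta_1$-congruence is preserved since $z$ is an $n$-th power at $\eta_1$, and $a_{\eta_2}a_P^{-1}=\lambda z^{-1}\in F_{0P,\eta_2}^{*n}$. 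To get such a $z$ I would decompose $\lambda$ near $\eta_2$ by \ref{branch-elements}, isolate its axis-monomial part, reduce the $\pi_{\eta_1}$-exponent modulo $n$, and apply \ref{2dim-local} / \ref{2dim-local-nrd} together with an approximation over the global function field $\kappa(\eta_2)$ in the spirit of \ref{correction-eta} and \ref{choice-at-eta}.

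The hard part will be this last step, i.e.\ descending $\lambda$ to a global reduced norm while simultaneously controlling it at both $\eta_1$ and $\eta_2$: one must check that the $\pi_{\eta_1}$-adic exponent occurring in the local expansion of $\lambda$ at $\eta_2$ is divisible by $n$ (equivalently, that the valuations of $a_{\eta_1}$ and $a_{\eta_2}$ are compatible at $P$), which is exactly where one must use that $\mu a_{\eta_i}$ is a \emph{reduced norm} at each $\eta_i$ rather than an arbitrary element; if need be this is arranged by a further blow-up making $A$, $\mu$ and the $a_{\eta_i}$ have normal-crossings support at $P$. The rest is routine bookkeeping with the splitting $A\otimes F_{0P}\simeq A_1\times A_1^{\mathrm{op}}$ and with Hensel's lemma for $n$-th powers, $n$ being coprime to the residue characteristic.
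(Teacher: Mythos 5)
Your opening moves are right and match the paper: split $F\otimes F_{0P}\simeq F_{0P}\times F_{0P}$, $A\otimes F_{0P}\simeq A_1\times A_1^{\mathrm{op}}$, write $\mu=(\mu_1,\mu_2)$, and observe that both $a_{\eta_i}\mu_1$ and $a_{\eta_i}\mu_2$ (hence also $\mu_1\mu_2^{-1}$) lie in $\mathrm{Nrd}(A_1\otimes F_{0P,\eta_i})$. From that point, however, your route diverges and the ``hard part'' you flag at the end is a genuine gap, not just an omitted routine verification.

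The difficulty is exactly the one you raise. After constructing $a_P^{(0)}$ from the branch decomposition of $a_{\eta_1}$ at $\eta_1$ and setting $\lambda=a_{\eta_2}(a_P^{(0)})^{-1}\in F_{0P,\eta_2}^*$, you want $z\in F_{0P}^*$ with $z\in F_{0P,\eta_1}^{*n}$ and $\lambda z^{-1}\in F_{0P,\eta_2}^{*n}$. The natural candidate (the monomial-times-unit part of $\lambda$ given by \ref{branch-elements} at $\eta_2$) automatically handles the $\eta_2$-congruence and lies in $\mathrm{Nrd}(A_1)$ by \ref{2dim-local-nrd}, but there is no reason its $\pi_{\eta_1}$-exponent $s$ is $\equiv 0\ (\mathrm{mod}\ n)$, nor that its residue along $\pi_{\eta_1}$ is an $n$-th power; that exponent is controlled by the valuation of the residue of $a_{\eta_2}$ in $\kappa(\eta_2)_P$, which the hypotheses do not constrain modulo $n$. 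Further blow-ups do not obviously remove this obstruction either, since the issue lives in the comparison of the two given local data $a_{\eta_1},a_{\eta_2}$, which a blow-up does not alter. So the sequential ``fix $\eta_1$ then repair $\eta_2$'' strategy does not close.

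The paper's proof sidesteps all of this by approximating in the algebra rather than in the base field. Choose $z_i\in A_1\otimes F_{0P,\eta_i}$ with $\mathrm{Nrd}(z_i)=a_{\eta_i}\mu_1$, then use weak approximation in the finite-dimensional $F_{0P}$-vector space $A_1\otimes F_{0P}$ (dense in the product over the two discrete valuations $\nu_{\eta_1},\nu_{\eta_2}$ of $F_{0P}$) to find a single $z\in A_1\otimes F_{0P}$ close to both $z_1$ and $z_2$. Set $a_P=\mu_1^{-1}\mathrm{Nrd}(z)$. Continuity of $\mathrm{Nrd}$ gives $\mathrm{Nrd}(z)\approx a_{\eta_i}\mu_1$, hence $a_Pa_{\eta_i}^{-1}\in F_{0P,\eta_i}^{*n}$ at both branches by Hensel ($n$ coprime to the residue characteristic), so both congruences are achieved simultaneously with no compatibility constraint to check. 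Finally, $\mu_1 a_P=\mathrm{Nrd}(z)\in \mathrm{Nrd}(A_1\otimes F_{0P})$ by construction, and to get $\mu_2 a_P\in\mathrm{Nrd}(A_1\otimes F_{0P})$ one uses that $\mu_1\mu_2^{-1}\in\mathrm{Nrd}(A_1\otimes F_{0P,\eta_i})$ (already observed) together with \ref{2dim-local-nrd} and the normal-crossings hypothesis to promote it to a reduced norm over $F_{0P}$; then $\mu_2 a_P=(\mu_1 a_P)\cdot(\mu_2\mu_1^{-1})$ is a reduced norm, so $\mu a_P\in\mathrm{Nrd}(A\otimes F_{0P})$. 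You should replace your sequential correction by this simultaneous approximation; once you do, the argument is short and there is nothing left to patch up.
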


\begin{proof} Since $F\otimes F_{0P}$ is not a field and cores$_{F/F_0}(A) = 0$, 
$F\otimes F_{0P} \simeq F_{0P} \times F_{0P}$
and $A \otimes F_{0P} \simeq A_1 \times A_1^{op}$ for some central simple algebra 
$A_1$ over $F_{0P}$. 
Write $\mu = (\mu_1, \mu_2)$. 
Since $a_{\eta_i}\mu$ is a reduced norm  from $A \otimes F_{\eta_i} $and  $a_{\eta_i} \in F_{0\eta_i}^*$, 
$a_{\eta_i}\mu_1$  and $\mu_1\mu_2^{-1}$  are reduced norms from $A_1 \otimes F_{0P,  \eta_i}$. 
Since by the choice of $\XX_0$, the support of $\mu$ on $\XX$ and the ramification locus of $A$ on
$\XX$  is a union of regular curves with normal crossings,  by (\ref{2dim-local-nrd}),
$\mu_1\mu_2^{-1}$ is a reduced norm from $A_1\otimes F_{0P}$.

The generic points $\eta_1$ and $\eta_2$ give discrete valuations $\nu_1$ and $\nu_2$ on $F_{0P}$
with completions $F_{0\eta_1, P}$ and $F_{0\eta_2, P}$.
Let $z_i  \in A_1 \otimes F_{0P, \eta_i}$ with reduced norm $a_{\eta_i}\mu_1$.
Let $z \in A_1 \otimes F_{0P}$ be close to $z_i$ for $i = 1, 2$. 
Let $a_P = \mu_1^{-1}Nrd(z) \in F_{0P}$.
Then $\mu_1 a_P $ is a reduced norm from $A_1 \otimes  F_{0P}$.
Since $z$ is close to $z_i$ and Nrd$(z_i) = a_{\eta_i}\mu_1$,
Nrd$(z)$ is close to $a_{\eta_i} \mu_1$.  Hence $a_P$ is close to $a_{\eta_i}$.
Therefore $a_{\eta_i}a_P^{-1} \in F_{0P, \eta_i}^{*n}$.
Since $\mu_1\mu_2^{-1}$ is a reduced norm and $a_P \mu_1$ is a reduced norm, 
$a_P\mu_2$ is a reduced norm. In particular $a_P \mu$ is a reduced norm. 
\end{proof}

\begin{lemma}
\label{choice-at-curve-point}
   Let $\eta$  be a  generic point of $X_0$ and  $P \in \overline{\{ \eta\}} $   a closed point. 
Suppose there exist $a_{\eta} \in F_{\eta}$ with   
$\mu a_{\eta}$ is a reduced norm from  $A \otimes F_{0\eta}$. 
Then there exists $a_P \in F_{0P}^*$ such that 
$\mu a_P$   is a reduced norm from $A \otimes F_{0P}$
and $a_{\eta} a_P^{-1} \in F_{0P, \eta}^{*n}$.  
\end{lemma}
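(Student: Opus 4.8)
The proof distinguishes two cases according to whether $F\otimes_{F_0}F_{0P}$ is a field. First recall the local data at $P$ supplied by the chosen model: $\hat A_P$ is a complete regular local ring of dimension $2$ with finite residue field and fraction field $F_{0P}$, its maximal ideal is generated by $\pi:=\pi_\eta$ (a parameter cutting out the component through $\eta$) and a second parameter $\delta$, and near $P$ the ramification of $F/F_0$, the ramification of $A$ and the support of $\mu$ all lie in $\{\pi\delta=0\}$. The completion $F_{0P,\eta}$ of $F_{0P}$ at $(\pi)$ has residue field the local field $\kappa(\eta)_P$, and $F_{0\eta}\subseteq F_{0P,\eta}$. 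Since $a_\eta\in F_{0\eta}^{*}$ is a scalar and $\mu a_\eta$ is a reduced norm from $A\otimes F_{0\eta}$, extending scalars gives $\mu a_\eta\in\mathrm{Nrd}(A\otimes_{F_0}F_{0P,\eta})$.

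Suppose first that $F_P:=F\otimes_{F_0}F_{0P}$ is a field. By the choice of the model and \cite[Corollary 5.5]{PPS}, $F\otimes_{F_0}F_{0P,\eta}$ is then also a field, which is the completion of $F_P$ at the unique prime $\pi_1$ of the integral closure $B_P$ of $\hat A_P$ in $F_P$ lying over $(\pi)$; here $B_P$ is again complete regular local of dimension $2$ with finite residue field and maximal ideal $(\pi_1,\delta)$ (\cite{PS1}), $\pi$ is a unit of $B_P$ times a power of $\pi_1$, and $A\otimes F_P$ is unramified on $B_P$ away from $\pi_1$ and $\delta$. Applying \ref{branch-elements} to $a_\eta\in F_{0P,\eta}^{*}$ (licit as $n$ is coprime to $\mathrm{char}(\kappa)$) write $a_\eta=v\pi^{r}\delta^{s}c^{n}$ with $v\in\hat A_P$ a unit, $r,s\in\Z$ and $c\in F_{0P,\eta}^{*}$, and set $a_P:=v\pi^{r}\delta^{s}\in F_{0P}^{*}$. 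Then $a_\eta a_P^{-1}=c^{n}\in F_{0P,\eta}^{*n}$. Moreover $\mu a_P=(\mu a_\eta)\,c^{-n}$, and since $\mu a_\eta\in\mathrm{Nrd}(A\otimes F_{0P,\eta})$ while $c^{-n}$ is the reduced norm of the scalar $c^{-1}$ in the degree-$n$ algebra $A\otimes_{F_0}F_{0P,\eta}$, we get $\mu a_P\in\mathrm{Nrd}(A\otimes_{F_0}F_{0P,\eta})$. As the support of $\mu$ near $P$ lies in $\{\pi_1\delta=0\}$, the element $\mu a_P$ is a unit of $B_P$ times powers of $\pi_1$ and $\delta$; since $F\otimes F_{0P,\eta}$ is the $\pi_1$-adic completion of $F_P$, \ref{2dim-local-nrd}, applied to $B_P$ and $A\otimes F_P$, yields $\mu a_P\in\mathrm{Nrd}(A\otimes_{F_0}F_{0P})$, as required.

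Now suppose $F\otimes_{F_0}F_{0P}\simeq F_{0P}\times F_{0P}$, so $A\otimes_{F_0}F_{0P}\simeq A_1\times A_1^{\mathrm{op}}$ for a central simple $F_{0P}$-algebra $A_1$ (\cite[Proposition 2.14]{KMRT}) and $\mu=(\mu_1,\mu_2)$ with $\mu_i\in F_{0P}^{*}$. Base-changing along $F_{0P}\hookrightarrow F_{0P,\eta}$, from $\mu a_\eta\in\mathrm{Nrd}(A\otimes F_{0P,\eta})$ with $a_\eta$ a scalar we obtain $a_\eta\mu_1,\,a_\eta\mu_2\in\mathrm{Nrd}(A_1\otimes F_{0P,\eta})$, hence $\mu_1\mu_2^{-1}\in\mathrm{Nrd}(A_1\otimes F_{0P,\eta})$. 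By the choice of the model $\mu_1\mu_2^{-1}$ is a unit of $\hat A_P$ times powers of $\pi$ and $\delta$ and $A_1$ is unramified on $\hat A_P$ away from $\pi,\delta$, so \ref{2dim-local-nrd} gives $\mu_1\mu_2^{-1}\in\mathrm{Nrd}(A_1\otimes F_{0P})$. Writing $a_\eta\mu_1=v\pi^{r}\delta^{s}c^{n}$ by \ref{branch-elements} ($v\in\hat A_P$ a unit, $c\in F_{0P,\eta}^{*}$), set $a_P:=v\pi^{r}\delta^{s}\mu_1^{-1}\in F_{0P}^{*}$. Then $a_P\mu_1=v\pi^{r}\delta^{s}=(a_\eta\mu_1)c^{-n}\in\mathrm{Nrd}(A_1\otimes F_{0P,\eta})$ is a unit of $\hat A_P$ times powers of $\pi,\delta$, so \ref{2dim-local-nrd} gives $a_P\mu_1\in\mathrm{Nrd}(A_1\otimes F_{0P})$, and consequently $a_P\mu_2=(a_P\mu_1)(\mu_1\mu_2^{-1})^{-1}\in\mathrm{Nrd}(A_1\otimes F_{0P})$. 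Hence $\mu a_P=(a_P\mu_1,a_P\mu_2)\in\mathrm{Nrd}(A\otimes_{F_0}F_{0P})$, while $a_\eta a_P^{-1}=(a_\eta\mu_1)v^{-1}\pi^{-r}\delta^{-s}=c^{n}\in F_{0P,\eta}^{*n}$, which completes the proof.

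The substance of the argument is carried entirely by \ref{branch-elements} and \ref{2dim-local-nrd}; what I expect to be the only delicate point is the bookkeeping needed to bring these into play — identifying which parameter of $\hat A_P$ (respectively $B_P$) corresponds to $\eta$, tracking how $F$ behaves (split versus field) over the various completions, and checking that the chosen model makes the relevant elements ($\mu a_P$, respectively $\mu_1\mu_2^{-1}$ and $a_P\mu_1$) units times powers of the two parameters so that \ref{2dim-local-nrd} applies.
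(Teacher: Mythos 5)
Your proof is correct and takes essentially the same route as the paper: the field case is an unpacking of the paper's citation of Lemma \ref{2dim-local} (itself a combination of \ref{branch-elements} and \ref{2dim-local-nrd}), and the split case follows the skeleton of the proof of \ref{choice-at-split-point}. The one minor deviation is in the split case, where the paper's proof of \ref{choice-at-split-point} approximates an element $z\in A_1\otimes F_{0P}$ whose reduced norm is close to $a_\eta\mu_1$, whereas you instead apply \ref{branch-elements} to $a_\eta\mu_1$ and then invoke \ref{2dim-local-nrd} on $a_P\mu_1$ directly; since only the single branch at $\eta$ is involved here, this avoids the approximation step and is a clean simplification, but it rests on the same two lemmas.
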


\begin{proof}  Suppose that $F\otimes F_{0P}$ is a field.
Then,  by the choice of $\XX_0$ and by (\ref{2dim-local}), 
there exists $a_P \in F_{0P}$ such that 
$\mu a_P$   is a reduced norm from $A \otimes F_{0P}$
and $a_{\eta} a_P^{-1} \in F_{0U, P}^n$. 

Suppose $F\otimes F_{0P}$ is a not  field. 
Then, we get the required $a_P$ as in the proof of (\ref{choice-at-split-point}).
\end{proof}

\begin{theorem}
\label{patching-su}
Let $K$ be a  local field   with the residue field $\kappa$ and valuation ring $T$.
Let $F_0$ be the function field of a smooth projective   curve over $K$ and  $F/F_0$  a separable quadratic extension.
Let $A$ be a central simple algebra over $F$  of degree $n$ with an involution $\sigma$ of second kind and 
$F^{\sigma} = F_0$. Suppose that $2n$ is coprime to char$(\kappa)$.   
Let  $\XX_0 \to Spec(T)$ be a proper normal model  of $F_0$ with special fibre $X_0$.
Let $\PP_0$ be a finite set of closed points of  $X_0$ containing all the singular points of   $X_0$
and $\UU_0$ the set of irreducible components of $X_0 \setminus \PP_0$. 
 Then the canonical map   $$H^1(F_0, SU(A, \sigma)  \to \prod_{U \in \UU_0}   H^1(F_{0U}, SU(A, \sigma)) \times \prod_{P\in \PP_0} H^1(F_{0P}, SU(A, \sigma))$$
 has trivial kernel. 
 \end{theorem}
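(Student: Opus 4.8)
The plan is to translate the triviality of the kernel of the patching map into a statement about reduced norms, using the exact sequence $(\star)$, and then to build a compatible family of reduced norm elements over the patches $F_{0U}$ and $F_{0P}$ that descends to $F_0$. Concretely, let $\xi \in H^1(F_0, SU(A,\sigma))$ lie in the kernel of the patching map. By $(\star)$ applied over $F_0$, $\xi$ is the image of some $\lambda \in (F_0 \otimes_{F_0} F)^{\ast 1} = F^{\ast 1}$, well-defined modulo $\{\theta^{-1}\sigma(\theta) : \theta \in Nrd(A^{\ast})\}$; writing $\lambda = \theta^{-1}\sigma(\theta)$ over $F_0$ amounts to finding $\mu \in F^{\ast}$ with $Nrd$-related data, and by Hilbert 90 for $F/F_0$ together with the norm-one condition, this ultimately reduces to showing that a certain element $\mu \in F^{\ast}$ (chosen once and for all, determining the model $\XX_0$ fixed before the statement) is a reduced norm from $A$ over $F_0$. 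The hypothesis that $\xi$ dies over each $F_{0U}$ and each $F_{0P}$ says exactly that $\mu$ is a reduced norm from $A$ over each $F_{0U}$ and each $F_{0P}$, hence (passing to generic points of $X_0$, which lie in the closures of the $U$'s) over each $F_{0\eta}$.

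First I would apply Lemma~\ref{choice-at-eta}: since $\mu$ is a reduced norm from $A\otimes F_{0\eta}$ for every generic point $\eta$ of $X_0$ (take $c_\eta = 1$), there exist elements $a_\eta \in F_{0\eta}^{\ast}$ with $\mu a_\eta$ a reduced norm from $A\otimes F_{0\eta}$, enjoying the compatibility at the nodal points $P \in \overline{\{\eta_1\}}\cap\overline{\{\eta_2\}}$ where $F\otimes F_{0P}$ is a field, namely the existence of $a_P \in F_{0P}^{\ast}$ with $\mu a_P \in Nrd(A\otimes F_{0P})$ and $a_{\eta_i}a_P^{-1} \in F_{0P,\eta_i}^{\ast n}$. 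Then I invoke Lemma~\ref{choice-at-split-point} to get the same compatibility at nodal points where $F\otimes F_{0P}$ is split, and Lemma~\ref{choice-at-curve-point} for closed points $P$ lying on just one component. This produces a family $(a_U)_{U\in\UU_0}$ — one for each irreducible component, obtained from the generic point of that component — together with $(a_P)_{P\in\PP_0}$, such that along each branch $\pp$ at $P$ in the direction of $U$, the elements $a_U$ and $a_P$ agree up to an $n$-th power in $F_{0\pp}^{\ast}$, and $\mu a_U$, $\mu a_P$ are reduced norms from $A$ over $F_{0U}$, $F_{0P}$ respectively.

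Now I would run the patching argument of Harbater–Hartmann–Krashen. The $n$-th power discrepancies $a_U a_P^{-1} \in F_{0\pp}^{\ast n}$ can be absorbed: since $\mu a_U, \mu a_P$ are reduced norms and ($n$-th powers are reduced norms from $A$ of degree $n$), one adjusts $a_U$ and $a_P$ within their reduced-norm classes so that the branch conditions become exact equalities $a_U = a_P$ in $F_{0\pp}^{\ast}$ — this is where one uses that $Nrd(A\otimes F_{0\pp})$ contains $F_{0\pp}^{\ast n}$ and a compatibility/approximation step as in the proofs of the preceding lemmas. With an exactly matching family, the patching theorem for $\Gm$ (or rather for the variety of elements with prescribed reduced norm, which is a torsor under $SL_1$ and hence rational, so factorization holds by \cite{HHK1}) glues $(a_U, a_P)$ to a single $a \in F_0^{\ast}$ with $\mu a$ a reduced norm from $A$ over $F_0$; replacing $\mu$ by $\mu a$ changes $\lambda$ only by $\theta^{-1}\sigma(\theta)$ for $\theta \in Nrd(A^{\ast})$, so $\xi$ becomes trivial by $(\star)$.

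\textbf{Main obstacle.} The delicate point is the bookkeeping in the gluing step: the lemmas only give agreement of the $a_U$'s and $a_P$'s up to $n$-th powers along branches, so one must simultaneously (i) correct each $a_U$ by an element of $F_{0U}^{\ast}\cap(\text{branch } n\text{-th powers})$ to achieve exact matching and (ii) preserve the property that $\mu$ times the corrected element is still a reduced norm. This is exactly the kind of argument carried out in \cite{PPS} and \cite{HHK5} for reduced norms, and it hinges on the fact that over the 2-local branch fields $F_{0\pp}$ every unit is a reduced norm up to controlled modifications (cf. \ref{branch-norms}), together with a patching/approximation of reduced norm elements. I expect the reduction $(\star)$ and the invocation of the three preceding lemmas to be routine; the genuine work is organizing this final correction-and-glue into a clean application of the Harbater–Hartmann–Krashen local-global principle for the norm-one torus.
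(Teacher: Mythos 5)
Your plan traces the paper's route: use the exact sequence $(\star)$ to reduce to a statement about reduced norms, build compatible local representatives via Lemmas~\ref{choice-at-eta}, \ref{choice-at-split-point}, \ref{choice-at-curve-point}, absorb the $n$-th power discrepancies on branches by $\Gm$-factorization, and patch. However, the final step is wrongly justified, and this is a genuine gap. Having patched the $a_x$'s to a single $a' \in F_0^*$ with $\mu a'$ a reduced norm from $A\otimes_{F_0} F_{0x}$ for all $x$, you claim the conclusion follows because the variety of elements of prescribed reduced norm ``is a torsor under $SL_1$ and hence rational, so factorization holds by \cite{HHK1}.'' This is not correct: $SL_1(A)$ is in general not a rational group, so the factorization theorem of \cite{HHK1} does not apply to it (nor to its torsors) — and if factorization for $SL_1(A)$ were available, the local-global principle for reduced norms would be nearly trivial rather than the substantial theorem it is. The paper's proof patches only the scalars $a_x$ (via \cite[Corollary 3.4]{HHK3}, which is $\Gm$-factorization) and then appeals to the deep local-global principle for reduced norms over function fields of $p$-adic curves, namely \cite[Corollary 11.2]{PPS} together with \cite[Proposition 8.2]{HHK3}, to upgrade ``$\mu a'$ is a reduced norm over each patch'' to ``$\mu a'$ is a reduced norm over $F_0$.'' Your proposed substitute does not close this gap.

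Two further points need repair. First, to lift $\xi$ to $\lambda \in F^{*1}$ via $(\star)$ over $F_0$ you must first show that $\xi$ vanishes in $H^1(F_0, U(A,\sigma))$; this is done by observing that $U(A,\sigma)$ is rational and connected and invoking \cite[Theorem 3.7]{HHK1} with the given local vanishing of $\xi$. Second, your claim that the local vanishing of $\xi$ ``says exactly that $\mu$ is a reduced norm'' over each $F_{0U}$ is not quite right: what $(\star)$ gives is some $c_U \in F_{0U}^*$ with $c_U\mu$ a reduced norm from $A\otimes F_{0U}$, so one should apply Lemma~\ref{choice-at-eta} with $c_\eta = c_{U_\eta}$, not $c_\eta = 1$; and one needs an Artin-approximation step to descend $a_\eta\in F_{0\eta}^*$ to an element of some $F_{0V_\eta}$ (for a nonempty open $V_\eta\subseteq U_\eta$) preserving the reduced-norm property, before the patching formalism applies. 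These are fixable, but as written your argument does not compile into a proof.
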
 

\begin{proof} 
Let $\xi \in H^1(F_0, SU(A, \sigma))$. Suppose that $\xi $ maps to 0 in $H^1(F_{0x}, SU(A, \sigma))$ for all $x \in \UU_0 \cup \PP_0$.
Since $U(A, \sigma)$ is rational  and connected    (\cite[p. 195, Lemma 1]{Merkurjev}), by (\cite[Theorem 3.7]{HHK1}), $\xi$ maps to 0 in $H^1(F_0, U(A, \sigma))$. 
Hence from the  exact sequence ($\star$ of \S \ref{lgp-patching}), there exists $\lambda \in F^{*1}$  such that 
$\lambda$   maps to $\xi$ in $H^1(F_0, SU(A, \sigma)$.
  Let $\mu \in F^*$ be such that $\lambda = \mu^{-1} \sigma(\mu)$.  
  Since $\xi$ maps to 0 in $H^1(F_{0U}, SU(A, \sigma))$, there exists $c_U \in F_{0U}$ such that 
  $c_{U} \mu $ is a reduced norm from $A\otimes_{F_0}F_{0U}$ (cf. (\cite[p. 202]{KMRT})). 
  
Then, there exists a sequence of blow-ups $\XX_0' \to \XX_0$ such that  $\XX_0'$ is regular, 
the integral closure  $\XX'$  of $\XX_0'$ in $F$ is regular and the union of the  special fibre of $\XX'$,
the ramification locus of $A$ on $\XX'$ and the support of $\mu$ on $\XX'$ is a union of regular curves with normal crossings.
Let $\PP_0'$ be a finite set of closed points of $\XX_0'$ containing all the singular points of the special fibre $X_0'$ of $\XX_0'$
and  at least one closed point lying over   points of $\PP_0$. Let $\UU_0'$ be the set of components  of $X_0' \setminus \PP_0'$.
Then $\xi $ maps to 0 in $ H^1(F_{0x'}, SU(A, \sigma))$ for all $x' \in  \PP_0' \cup \UU_0'$ (\cite[\S 5]{HHK3}).
Thus, replacing $\XX_0$ by $\XX_0'$, we assume that 
the integral closure  $\XX$  of $\XX_0$ in $F$ is regular and the union of the  special fibre of $\XX$,
the ramification locus of $A$ on $\XX$ and the support of $\mu$ on $\XX$ is a union of regular curves with normal crossings.

Let $\eta$ be a generic point of $X_0$. Then $\eta \in U_\eta$ for some $U_\eta \in \UU$.
Let $c_\eta = c_{U_\eta}$. Since $F_{0U_\eta} \subset F_{0\eta}$, $c_\eta \in F_{0\eta}^*$
and $c_\eta \mu $ is a reduced norm from $A\otimes_{F_0}F_{0\eta}$. 
Let   $a_\eta \in F_{0\eta}$ be as in (\ref{choice-at-eta}). 
  Then,  by Artin's approximation (\cite[Theorem 1.10]{Artin},   as in the proof of    (\cite[Lemma 7.2]{PPS}), 
  there exists a nonempty open subset $V_\eta$ of $U_\eta$ such that 
$a_\eta \in F_{0V_\eta}$ (\cite[Lemma 3.2.1]{HHK4}) and $a_\eta \mu $ is a reduced norm from 
$A\otimes_{F_0}F_{0V_\eta}$.   Let  $a_{V_\eta} = a_\eta^h \in F_{0V_\eta}$.
Let $\UU'$ be the set of these $V_\eta$'s.
Let $\PP_0'$ be the complement of the union of $V_\eta$'s in $X_0$.
Then  $\UU' $  is the set of components of $X_0 \setminus \PP_0'$.

Let $P \in \PP'_0$. Suppose that $P \in \overline{\{\eta\}}  \cap \overline{\{\eta'\}}$ for two distinct 
generic points $\eta$ and $\eta'$ of $X_0$.  Then $P \in \PP_0$. 
If $F\otimes F_{0P}$ is a field, then let $a_P \in F_{0P}$ be as in (\ref{choice-at-eta}).
If $F\otimes F_{0P}$ is not a field,  let $a_P \in F_{0P}$ be as in (\ref{choice-at-split-point}).
Suppose $P \in \overline{\{\eta\}}$ for some  generic point $\eta$ of $X_0$
and   $P \not\in \overline{\{\eta'\}}$ for all  generic points $\eta'$ of $X_0$ not equal to $\eta$. 
Let $a_P \in F_{0P}$ be as in (\ref{choice-at-curve-point}).

Let $(V, P)$ be a branch. Then  $P \in \overline{V}$.  By the choice of $a_P$ and $a_V$, we have 
$a_V a_P^{-1} = b_{V,P}^n$ for some $b_{V, P} \in F_{0V, P}^*$.
By  (\cite[Corollary 3.4]{HHK3}), for every $x \in \UU'_0 \cup \PP'_0$, there exists $b_x \in F_{0x}^*$ such that 
$b_{V, P} = b_Vb_P$ for all branches $(V, P)$.

For $V \in \UU_0$, let  $a_V' =  a_V b_V^{-n}$ 
and for $P \in \PP'_0$, let $a_P' = a_P b_P^n$. 
Then, we have $a_V'  = a'_P \in F_{0V, P}$ for all branches $(V, P)$.
Hence there exists $a' \in F_0$ such that $a' = a'_x \in F_{0x}$ for all $x \in \UU'_0 \cup \PP'_0$ (\cite[\S 3]{HHK3}).
Since $\mu a_x $ is a reduced norm from $A \otimes F_{0x}$ for all $x \in \UU'_0 \cup \PP'_0$
and $n$ is the degree of $A$, $\mu a'_x$ is a reduced norm from $A \otimes F_{0x}$ for all $x \in \UU'_0 \cup \PP'_0$. 
Thus, by (\cite[Corollary 11.2]{PPS} and  \cite[Proposition 8.2]{HHK3}), $\mu a'$ is a reduced norm from $A$. 
Since $\lambda = (\mu a')^{-1} \sigma(\mu a')$, $\lambda$ is in the   image of 
$U(A, \sigma)(F_0) \to F^{*1}$ and hence $\xi$ is trivial. 
\end{proof}

The following is immediate from (\ref{patching-su}) and (\cite[Corollary 5.9]{HHK3})

\begin{cor}
\label{pointsha-su}
 Let $K$ be a local field with residue field $\kappa$.
Let $F_0$ be a function field of  a curve over $K$.
Let $F/F_0$ be a quadratic extension and $A$ a central simple algebra over $F$ of index $n$
 with an $F/F_0$- involution $\sigma$. 
 Suppose that $2n$ is coprime to char$(\kappa)$. 
 Then the canonical map   $$H^1(F_0, SU(A, \sigma))  \to \prod_{x \in X_0}   H^1(F_{0x}, SU(A, \sigma))  $$
 has trivial kernel. 
 \end{cor}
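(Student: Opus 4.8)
The plan is to deduce the statement directly from Theorem~\ref{patching-su}, combined with the standard spreading-out argument of \cite[Corollary 5.9]{HHK3} that upgrades vanishing at every point of the special fibre to vanishing on a finite partition of that fibre into closed points and open complements. Since $2n$ is coprime to $\mathrm{char}(\kappa)$, the group $SU(A,\sigma)$ is a smooth connected linear algebraic group, so all the patching machinery of \cite{HHK3} applies.

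First I would fix the set-up implicit in the statement: let $T$ be the valuation ring of $K$ and $\XX_0 \to \Spec(T)$ a proper normal model of $F_0$ with reduced special fibre $X_0$; the local-global map in the corollary is the one attached to $\XX_0$. Let $\xi \in H^1(F_0, SU(A,\sigma))$ be a class in the kernel, so that $\xi$ becomes trivial in $H^1(F_{0x}, SU(A,\sigma))$ for every point $x$ of $X_0$, in particular for each of the finitely many generic points $\eta_1,\dots,\eta_m$ of $X_0$ and for every closed point.

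Next, since $SU(A,\sigma)$ is connected and $\xi$ is trivial at each $\eta_i$, I would apply \cite[Corollary 5.9]{HHK3} to obtain a finite set $\PP_0$ of closed points of $X_0$ containing all the singular points of $X_0$ (and at least one point on each component) with the property that, writing $\UU_0$ for the set of irreducible components of $X_0 \setminus \PP_0$, the class $\xi$ maps to the trivial element of $H^1(F_{0U}, SU(A,\sigma))$ for every $U \in \UU_0$. Since the points of $\PP_0$ are in particular points of $X_0$, $\xi$ is also trivial in $H^1(F_{0P}, SU(A,\sigma))$ for every $P \in \PP_0$. Thus $\xi$ lies in the kernel of the map $H^1(F_0, SU(A,\sigma)) \to \prod_{U \in \UU_0} H^1(F_{0U}, SU(A,\sigma)) \times \prod_{P\in \PP_0} H^1(F_{0P}, SU(A,\sigma))$, which is trivial by Theorem~\ref{patching-su} applied to $\XX_0$, $\PP_0$, $\UU_0$. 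Hence $\xi = 1$.

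There is no genuine obstacle here: the entire substance of the corollary is already packaged in Theorem~\ref{patching-su}, and the only extra ingredient is the elementary reduction of \cite[Corollary 5.9]{HHK3}, which for a connected group lets one enlarge the finite set of closed points so that the vanishing at the generic points propagates to nonempty open subsets of the components. One small point to check while writing is simply that the hypotheses of Theorem~\ref{patching-su} match those in force ($2n$ coprime to $\mathrm{char}(\kappa)$, $\PP_0$ containing all singular points of $X_0$), which is exactly how $\PP_0$ was produced.
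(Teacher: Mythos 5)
Your argument is exactly the paper's (one-line) proof spelled out: the corollary is stated to be immediate from Theorem~\ref{patching-su} together with \cite[Corollary~5.9]{HHK3}, and your reduction — using connectedness of $SU(A,\sigma)$ to spread vanishing at the generic points of $X_0$ to an appropriate finite set $\PP_0$ and complementary components $\UU_0$, then invoking Theorem~\ref{patching-su} — is precisely this. Nothing to change.
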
 

\section{Local global principle for special unitary groups - discrete valuations}
\label{lgp-dvr}

\begin{theorem}
\label{lgp-unitary}
Let $K$ be a local field with residue field $\kappa$.
Let $F_0$ be a function field of  a curve over $K$.
Let $F/F_0$ be a quadratic extension and $A$ a central simple algebra over $F$ of index $n$
 with an $F/F_0$- involution $\sigma$. 
 Suppose that $2n$ is coprime to char$(\kappa)$. 
 Then the canonical map   $$H^1(F_0, SU(A, \sigma) ) \to \prod_{\nu \in \Omega_{F_0}} H^1(F_{0\nu}, SU(A, \sigma))$$
 has trivial kernel. 
\end{theorem}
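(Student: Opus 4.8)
The plan is to reduce the local-global principle over all discrete valuations of $F_0$ to the local-global principle over the points of the special fibre of a model, which has already been established in Corollary \ref{pointsha-su}. Let $\xi \in H^1(F_0, SU(A,\sigma))$ be an element lying in the kernel of the map to $\prod_{\nu \in \Omega_{F_0}} H^1(F_{0\nu}, SU(A,\sigma))$. As in the proof of Theorem \ref{patching-su}, since $U(A,\sigma)$ is rational and connected, the image of $\xi$ in $H^1(F_0, U(A,\sigma))$ is trivial, so by the exact sequence $(\star)$ there is $\lambda \in F^{*1}$ mapping to $\xi$, and we may write $\lambda = \mu^{-1}\sigma(\mu)$ for some $\mu \in F^*$. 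Thus the statement to prove is: if for every discrete valuation $\nu$ of $F_0$ there exists $c_\nu \in F_{0\nu}^*$ with $\mu c_\nu$ a reduced norm from $A \otimes_{F_0} F_{0\nu}$, then there exists $a \in F_0^*$ with $\mu a$ a reduced norm from $A$.

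First I would choose a proper normal model $\XX_0 \to \Spec(T)$ of $F_0$ over the valuation ring $T$ of $K$, together with a finite set of closed points $\PP_0$ containing all singular points of the special fibre $X_0$ and at least one point from each component, and let $\UU_0$ be the set of irreducible components of $X_0 \setminus \PP_0$. By Corollary \ref{pointsha-su} (equivalently Theorem \ref{patching-su}), it suffices to show that $\xi$ maps to the trivial class in $H^1(F_{0x}, SU(A,\sigma))$ for every $x \in X_0$, i.e.\ for every $x \in \UU_0 \cup \PP_0$ after a suitable refinement. For a codimension-zero point $x$ of $X_0$, the field $F_{0x}$ is the completion $F_{0\nu_x}$ at the divisorial valuation $\nu_x$, so the hypothesis gives triviality there directly. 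The substantive point is the closed points $P \in \PP_0$: here $F_{0P}$ is the fraction field of a two-dimensional complete regular local ring, and I need $\xi$ to vanish in $H^1(F_{0P}, SU(A,\sigma))$, i.e.\ $\mu$ to become a reduced norm from $A \otimes_{F_0} F_{0P}$ up to scaling by an element of $F_{0P}^*$.

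To handle the closed points I would invoke Theorem \ref{isotropic0}/\ref{isotropic1}-type reasoning locally at $P$: over $F_{0P}$ the hypothesis provides, for each divisorial discrete valuation of $F_{0P}$, a scaling making $\mu$ a reduced norm; then the two-dimensional local results of \S\ref{reducednorms-2dim} and \S\ref{alg-2dimlocal} (Corollary \ref{2dim-local-nrd}, Lemma \ref{2dim-local}, and when $F \otimes F_{0P}$ is not a field the splitting argument of Lemma \ref{choice-at-split-point}) together with the branch-field triviality (Theorem \ref{branch-norms}, and Corollary \ref{whitehead-trivial}) let me produce $a_P \in F_{0P}^*$ with $\mu a_P$ a reduced norm from $A \otimes_{F_0} F_{0P}$; hence $\xi$ is trivial in $H^1(F_{0P}, SU(A,\sigma))$. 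After refining the model so that the ramification of $A$ and $F/F_0$ and the support of $\mu$ form regular curves with normal crossings, the same refinement-compatibility used in \S\ref{lgp-patching} shows $\xi$ vanishes on all of $\UU_0 \cup \PP_0$, and Corollary \ref{pointsha-su} finishes the proof. The main obstacle I anticipate is the bookkeeping at the closed points where $F \otimes F_{0P}$ is a field and $\operatorname{ind}(A \otimes F_{0P}) \geq 3$: one must feed the hypothesis at the divisorial valuations centered at $P$ into the two-dimensional reduced-norm machinery of \S\ref{reducednorms-2dim} correctly, using that a divisorial valuation of $F_0$ refines to one of $F_{0P}$, and then patch the local solutions together across branches exactly as in the proof of Theorem \ref{patching-su}; this is essentially a matter of carefully repeating that patching argument with "reduced norm" data rather than cohomological vanishing data.
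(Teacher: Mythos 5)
Your overall strategy matches the paper's: reduce to the patching local--global principle of Corollary~\ref{pointsha-su} by showing $\xi$ is trivial over $F_{0x}$ for all $x$ in the special fibre, handling codimension-zero points directly and closed points $P$ via the two-dimensional complete local results (Lemma~\ref{2dim-local} et al.) applied along a divisorial valuation centered at $P$. That is essentially the paper's proof.

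There is, however, a genuine gap in your first step. You assert that ``as in the proof of Theorem~\ref{patching-su}, since $U(A,\sigma)$ is rational and connected, the image of $\xi$ in $H^1(F_0, U(A,\sigma))$ is trivial.'' But the rationality/factorization input used in the proof of Theorem~\ref{patching-su} (via \cite[Theorem~3.7]{HHK1}) requires knowing $\xi$ vanishes over the patching fields $F_{0x}$ for $x \in \UU_0 \cup \PP_0$, which in particular includes the two-dimensional local fields $F_{0P}$. In the present theorem you are only given vanishing over the completions $F_{0\nu}$ for $\nu \in \Omega_{F_0}$, and $F_{0P}$ is not a completion at a discrete valuation; so this step does not follow. (Indeed, getting $\xi$ trivial over $F_{0P}$ is exactly what you set out to prove later in your argument, so the logic as written is circular.) The paper fills this gap by invoking Theorem~\ref{uas}, the local--global principle for $U(A,\sigma,h)$ over $\Omega_{F_0}$, which is itself established from the hermitian isotropy local--global principle (Theorem~\ref{isotropic1}), not from rationality. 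Replacing your appeal to rationality with Theorem~\ref{uas} repairs the proof, and the remainder of your outline then aligns with the paper's argument.
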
 

\begin{proof} Let $\xi \in H^1(F_0, SU(A, \sigma))$. Suppose that $\xi$ maps to 0 in $H^1(F_{0\nu}, SU(A,\sigma))$ for all $\nu \in \Omega_{F_0}$. 
By (\ref{uas}),   the image of $\xi$ in $H^1(F_0, U(A, \sigma))$ is zero.
Hence from the  exact sequence ($\star$) of \S \ref{lgp-patching}, there exists $\lambda \in F^{*1}$  such that 
$\lambda$   maps to $\xi$ in $H^1(F_0, SU(A, \sigma)$.
Write $\lambda = \mu^{-1} \sigma(\mu)$ for some $\mu \in F^*$. 

 Let $d \in F_0^*$ be such that 
$F = F_0(\sqrt{d})$. 
There exists a  regular proper model $\XX_0$  of $F_0$ such that the special fibre and the support of 
$d$ is a union of regular curves with normal crossings. Further the integral closure $\XX$  of $\XX_0$ in $F$ has the property:
$\XX$ is regular, 
the special fibre of $\XX$, the ramification locus  of $(A, \sigma)$,  the support of $d$, $\mu$ and $\lambda$
 is a union of regular curves  with normal crossings (cf. \cite{Wu}). Let $X_0$ be the special fibre of $\XX_0$. 

Let $x \in X_0$ be codimension zero point. Then $x$ gives a discrete valuation  $\nu_x$ on $F_0$ and $F_{0x} = F_{0\nu_x}$.
Hence $\xi$ maps to zero in  $H^1(F_{0x}, SU(A,\sigma))$.

Let $P \in X_0$ be a closed point.  Let $A_P$ be the local ring at $P$ and $B_P$ the integral closure of 
$A_P$ in $F$.  Since $B_P$ is regular, there is at most one  irreducible 
curve  of $\XX$  in the support of $d$ which passes  through $P$. 
Further there are at most two curves passing through $P$ which are in the union of special fibre of $\XX'$, 
the support of $\mu$ and ramification locus of $A$.  Let $x$ be one such curve and $\nu_x$ the discrete valuation of $F_0$ given by $x$. 
Then $F_{0 \nu_x} \subset F_{0P, \nu_x}$ and hence  $\xi$ maps to 0 in $H^1(F_{0P, \nu_x}, SU(A, \sigma))$.

Since $\lambda$ maps to $\xi$, there exists $\theta_x \in F_{0P, \nu_x}$ such that 
$\mu \theta_x \in Nrd(A\otimes F_{0P, \nu_x})$. Hence,
by (\ref{2dim-local}), there exists $\theta_P \in F_{0P}$ such that $\mu\theta_P \in Nrd(A\otimes F_P)$.
In particular $\xi \otimes F_P$ is trivial.  Hence, by (\ref{pointsha-su}), $\xi$ is trivial.
\end{proof}

\section{Conjectures 1 and 2 for classical groups} 
\label{conjectures}

In this section we prove the validity of Conjecture 1 and Conjecture 2 for all groups of classical type in the good 
characteristic case. In fact we prove local global principles for  function fields of curves over any local field.

  \begin{theorem}
 \label{conjecture1}
  Let $K$ be a  local  field with residue field $\kappa$ and  $F$  the  function field of  a curve over $K$.
  Let $G$ be a connected linear algebraic group over $F$ of classical type ($D_4$ nontrialitarian) with char$(\kappa)$ good for $G$. 
  Let $Z$ be a projective homogeneous space under $G$ over $F$.
  If $Z(F_\nu) \neq \emptyset$ for all divisorial discrete valuations of $F$, then $Z(F_\nu) \neq \emptyset$. 
     \end{theorem}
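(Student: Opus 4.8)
The plan is to reduce Theorem~\ref{conjecture1} to the cases already established for the various simple classical types, namely $^1\!\!A_n$ (Theorem~\ref{proj-pgl}), $^2\!\!A_n$ (Theorem~\ref{isotropic1}), and the orthogonal and symplectic types $B_n$, $C_n$, $D_n$ (work of Wu cited in the introduction). First I would recall the standard structure theory of projective homogeneous spaces: a projective homogeneous space $Z$ under a connected linear algebraic group $G$ over $F$ depends only on $G/\mathrm{Rad}(G)$, and after passing to the semisimplification we may write the simply connected cover $\tilde G$ as a product $\tilde G = \prod_j R_{E_j/F}(G_j)$ of Weil restrictions of absolutely simple simply connected groups $G_j$ over finite separable extensions $E_j/F$. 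Correspondingly $Z$ decomposes as a product $Z \simeq \prod_j R_{E_j/F}(Z_j)$ of Weil restrictions of projective homogeneous spaces $Z_j$ under $G_j$ over $E_j$, and $Z(F_\nu)\neq\emptyset$ for all $\nu$ translates (via $R_{E_j/F}(Z_j)(F_\nu) = \prod_{w\mid\nu} Z_j((E_j)_w)$ and the fact that $E_j$ is again a function field of a curve over a finite extension of $K$, with $w$ running over divisorial valuations of $E_j$) into $Z_j((E_j)_w)\neq\emptyset$ for all divisorial $w$. Since char$(\kappa)$ is good for $G$ it is good for each $G_j$, and the residue characteristic of the constant field of $E_j$ is still char$(\kappa)$, so all hypotheses are inherited.

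Next I would treat each simple factor according to its type. For a factor of type $^1\!\!A_n$, the group $G_j$ is $SL_1(A)$ for a central simple algebra $A$ over $E_j$ of index dividing $n+1$, hence of index coprime to char$(\kappa)$ by goodness; every projective homogeneous space under $SL_1(A)$ — equivalently under $PGL(A)$ or $GL(A)$ via \cite[Theorem 2.20]{BT} and Theorem~\ref{proj-hom-spaces}(iii) — is of the form $X(n_1,\dots,n_k)$, and Theorem~\ref{proj-pgl} (with Corollary~\ref{proj-gl}) gives the local-global principle. For a factor of type $^2\!\!A_n$, $G_j$ is a (special) unitary group $SU(B,\sigma)$ for $B$ a central simple algebra over a quadratic extension $E_j'/E_j$ with $F'/F_0'$-involution and index coprime to $2(n+1)$, hence coprime to char$(\kappa)$; by Theorem~\ref{proj-hom-spaces}(iii) its projective homogeneous spaces correspond to those of a full unitary group $U(B,\sigma,h)$ for a suitable hermitian form $h$, and Theorem~\ref{isotropic1} applies (the rank restriction in that theorem is automatically met since the relevant homogeneous space $X(m)$ with $m\geq 1$ forces $\mathrm{rdim}\,h\geq 2$, and when $B=E_j'$ the type is $^1\!\!A_1$ or orthogonal anyway). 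For factors of type $B_n$, $C_n$, $D_n$ (non-trialitarian, char$(\kappa)\neq 2$), the group is the (spin or special) unitary group of a quadratic, symplectic, or orthogonal-with-involution space, and \cite[Corollary 1.4]{Wu} together with \cite{CTPS} gives the result; I would cite these as already recorded in the introduction.

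Then I would assemble: $Z(F)=\prod_j R_{E_j/F}(Z_j)(F)$, and each $Z_j(E_j)\neq\emptyset$ by the case analysis, whence $Z(F)\neq\emptyset$. Two technical points deserve care. First, the passage from "$Z$ under $G$" to "$Z_j$ under the simple factors of $\tilde G$" uses that projective homogeneous spaces are unaffected by isogenies and by passage to the semisimplification of $G/\mathrm{Rad}(G)$; this is standard (Borel--Tits) and I would invoke it with a reference. Second, one must check that "divisorial discrete valuation of $F$" restricts/extends correctly under the finite extension $E_j/F$: every divisorial valuation $w$ of $E_j$ lies over a divisorial valuation $\nu$ of $F$, and $(E_j)_w$ is a finite extension of $F_\nu$ contained in $E_j\otimes_F F_\nu$, so the hypothesis on $Z$ at all divisorial $\nu$ of $F$ indeed yields the hypothesis on $Z_j$ at all divisorial $w$ of $E_j$; conversely $E_j$ is the function field of a curve over a finite extension of $K$ so "divisorial" makes sense there. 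I expect the main obstacle to be purely bookkeeping: correctly tracking the decomposition of $Z$ into Weil-restricted factors and verifying that goodness of $p$ and the coprimality-to-char$(\kappa)$ conditions survive each reduction, rather than any new geometric input — all the genuine content is already in Theorems~\ref{proj-pgl} and~\ref{isotropic1} and in Wu's work.
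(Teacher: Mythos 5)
Your proposal is correct and follows essentially the same route as the paper: reduce via Borel--Tits and the semisimplification to a product of almost simple simply connected factors, then invoke Theorem~\ref{proj-pgl} for type $^1\!\!A_n$, Theorem~\ref{isotropic1} for type $^2\!\!A_n$, and Wu's results for types $B_n$, $C_n$, $D_n$. The only difference is one of explicitness: the paper compresses the decomposition into ``a central isogeny $G_1\times\cdots\times G_r\to G^{ss}$ with each $G_i$ almost simple'' and the passage to a single factor into a citation of Borel--Tits and Merkurjev--Panin--Wadsworth (cf.\ Wu's Corollary~4.6), whereas you unwind the Weil-restriction step $\tilde G=\prod_j R_{E_j/F}(G_j)$ and track how the hypothesis at all divisorial valuations of $F$ passes to all divisorial valuations of the finite extensions $E_j$ (and how goodness of the residue characteristic is inherited). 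This extra bookkeeping is genuinely needed for the cited theorems, which are stated over function fields of curves over local fields, to be applicable to each factor, so spelling it out is a useful clarification of what the paper treats as ``well known.''
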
   
   
   \begin{proof}  Let $G^{ss}$ be the semisimplification of $G/rad(G)$.
    Since $G$ is of classical type, there exists     a central 
    isogeny $G_1\times \cdots  \times G_r \to G^{ss}$ with each $G_i$  an almost simple simply connected group of 
   the classical type  ($D_4$ nontrialitarian) with char$(\kappa)$  good. 
   It is well know that using the results of (\cite[Theorem 2.20]{BT}) and (\cite[Proposition 6.10]{MPW2}), 
   one reduces to the case $r = 1$ (cf.  proof of \cite[Corollary 4.6]{Wu}). 
   
   Let $G$ be a connected linear algebraic group  with an  isogeny $G' \to G^{ss}$ 
   for some almost simple connected group  $G'$ of 
   classical type  ($D_4$ nontrialitarian).  
    If $G'$ is of type $^1\!\!A_n$, then the  result follows   from (\ref{proj-pgl}).  
    If $G'$ is of type $^2\!\!A_n$, then the  result    follows from (\ref{isotropic1}). 
    If $G'$ is of type $B_n$, $C_n$ or $D_n$, then the result follows from  (\cite{Wu}). 
   \end{proof}
  
  \begin{theorem} 
   \label{conjecture2}
 Let $K$ be a local field with residue field $\kappa$ and  $F$  the  function field of  a curve over $K$.
  Let $G$ be a  semisimple simply connected  linear algebraic group over $F$ with   char$(\kappa)$  is   good for $G$. 
  Suppose $G$ is of the classical type ($D_4$ nontrialitarian). 
  Let $Z$ be a  principal   homogenous space under $G$ over $F$.
  If $Z(F_\nu) \neq \emptyset$ for all divisorial discrete valuations of $F$, then $Z(F_\nu) \neq \emptyset$. 
   then Conjecture 2 holds for $G$.
   \end{theorem}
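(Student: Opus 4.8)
The plan is to follow the same strategy as in the proof of Theorem \ref{conjecture1}, replacing projective homogeneous spaces by principal homogeneous spaces and invoking the local--global principle for special unitary groups (Theorem \ref{lgp-unitary}) in place of the isotropy statement (Theorem \ref{isotropic1}). Since $G$ is semisimple and simply connected, by the structure theory of such groups it is a direct product $G \simeq R_{L_1/F}(G_1') \times \cdots \times R_{L_r/F}(G_r')$, where each $L_i/F$ is a finite separable extension and each $G_i'$ is absolutely almost simple and simply connected over $L_i$. Because $H^1$ commutes with finite direct products over $F$ and over every completion $F_\nu$, a principal homogeneous space $Z$ under $G$ corresponds to a tuple of principal homogeneous spaces, and $Z(F) \neq \emptyset$ iff each factor is trivial; thus, using Shapiro's lemma $H^1(F, R_{L_i/F}(G_i')) \simeq H^1(L_i, G_i')$ together with $F_\nu \otimes_F L_i \simeq \prod_{w \mid \nu} L_{i,w}$, it suffices to prove the statement for $G$ absolutely almost simple and simply connected. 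Here one must note that $L_i$ is again the function field of a smooth projective curve over a local field (the algebraic closure of $K$ in $L_i$), that $\mathrm{char}(\kappa)$ stays good for $G_i'$, and that every divisorial discrete valuation of $L_i$ lies over a divisorial discrete valuation of $F$, so the divisorial local hypotheses descend along $L_i/F$.

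Having reduced to $G$ absolutely almost simple and simply connected, I would split into cases by type. If $G$ is of type $^1\!\!A_n$, then $G \simeq SL_1(A)$ for a central simple algebra $A$ over $F$ of degree $n+1$; goodness forces $p \nmid n+1$, hence $p$ is coprime to $\mathrm{ind}(A)$, and the assertion is the local--global principle for reduced norms of \cite{PPS}. If $G$ is of type $^2\!\!A_n$, then $G \simeq SU(A, \sigma)$ for a quadratic extension $F/F_0$ and a central simple algebra $A$ over $F$ carrying an $F/F_0$-involution $\sigma$; writing $m = \mathrm{ind}(A)$, goodness forces $p$ odd and $p \nmid \deg(A)$, hence $2m$ is coprime to $p$, so the assertion is precisely Theorem \ref{lgp-unitary}. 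If $G$ is of type $B_n$, $C_n$ or $D_n$ ($D_4$ nontrialitarian), then $p$ is odd and the assertion is due to Hu \cite{Hu} and Preeti \cite{preeti}. Finally, $Z(F) \neq \emptyset$ (resp. $Z(F_\nu) \neq \emptyset$) exactly means that the class of $Z$ in $H^1(F, G)$ (resp. $H^1(F_\nu, G)$) is trivial, so vanishing over all divisorial completions forces vanishing over $F$; since the divisorial discrete valuations form a subset of $\Omega_F$, this is in particular Conjecture 2 for $G$.

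The substantive input is Theorem \ref{lgp-unitary}, whose proof occupies most of this paper; granting it together with the already known cases $^1\!\!A_n$ and $B_n,C_n,D_n$, the only place requiring genuine care is the reduction step, namely verifying that passing to Weil-restriction data $(L_i/F, G_i')$ preserves the hypothesis ``function field of a curve over a local field'' and the goodness of $p$, and that the divisorial local conditions both descend and ascend correctly along $L_i/F$. I expect that bookkeeping, rather than any individual case of the classification, to be the main point to get right.
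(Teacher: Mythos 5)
Your proposal is correct and follows essentially the same route as the paper: a case-by-case reduction to the known results for $^1\!A_n$ (\cite{PPS}), $B_n, C_n, D_n$ (\cite{Hu}, \cite{preeti}), and the new Theorem~\ref{lgp-unitary} for $^2\!A_n$. The only difference is that you spell out the reduction to the absolutely almost simple case via Weil restriction and Shapiro's lemma (including the descent of the divisorial hypotheses along $L_i/F$), a routine step that the paper's proof of this theorem leaves implicit (though the analogous reduction is cited explicitly in the proof of Theorem~\ref{conjecture1}).
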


   \begin{proof} For  $G$   of type $B_n$, $C_n$ or $D_n$  ($D_4$ nontrialitarian), 
   this result  is   proved in (\cite{Hu} and \cite{preeti}). 
   
   Suppose  $G$ is of type $^1\!\!A_n$. Then 
   $G \simeq SL(A)$ for some central simple algebra $A$ over $F$ and the principal homogeneous spaces 
   under $G$ are classified by $H^1(F, G) \simeq F^*/Nrd(A)$.
   Since char$(\kappa)$ is good for $G$, the degree of $A$ is coprime to char$(\kappa)$. 
   Hence the result   follows from (\cite[Corollary 11.2]{PPS}). 
   
   Suppose $G$ is of type $^2\!\!A_n$. 
   Then  there exists a separable quadratic extension $F/F_0$ and central simple algebra over $F$ with 
   an $F/F_0$-involution $\sigma$ such that $G \simeq SU(A, \sigma)$.
   Since   char$(\kappa)$ is   good for $G$,  2(deg$(A)$) is coprime to char$(\kappa)$. 
    Hence the result      follows from (\ref{lgp-unitary}). 
    \end{proof}

\providecommand{\bysame}{\leavevmode\hbox to3em{\hrulefill}\thinspace}

\end{document}